\newtheorem{theorem}{Theorem}[section]
\newtheorem{lemma}[theorem]{Lemma}
\newtheorem{corollary}[theorem]{Corollary}
\newtheorem{proposition}[theorem]{Proposition}
\newtheorem{construction}[theorem]{Construction}
\theoremstyle{definition}
\newtheorem{example}[theorem]{Example}
\newtheorem{remark}[theorem]{Remark}
\newcommand{\N}{\mathbb N}
\newcommand{\Z}{\mathbb Z}
\newcommand{\R}{\mathbb R}
\newcommand{\Q}{\mathbb Q}
\newcommand{\red}{\text{\rm red}}
\newcommand{\eq}{\text{\rm eq}}
\newcommand{\adj}{\text{\rm adj}}
\newcommand{\mon}{\text{\rm mon}}
\newcommand{\tensor}{\otimes}
 \newcommand{\id}[1]{\mathfrak{#1}}
\newcommand{\m}{\id{m}}
\newcommand{\n}{\id{n}}
\newcommand{\mc}[1]{\mathcal{#1}}
\newcommand{\mcA}{\mc{A}}
\newcommand{\mcB}{\mc{B}}
\newcommand{\mcD}{\mc{D}}
\newcommand{\mcC}{\mathfrak{C}}
\newcommand{\mcT}{\mathcal{T}}
\newcommand{\mcM}{\mathcal{M}}
\newcommand{\Cl}{\mathcal{C}}
 \DeclareMathOperator{\ord}{ord}
\DeclareMathOperator{\spec}{spec} \DeclareMathOperator{\supp}{supp}
  \DeclareMathOperator{\End}{End}
\DeclareMathOperator{\rank}{rank} \DeclareMathOperator{\spl}{spl}
\DeclareMathOperator{\add}{add}
\renewcommand{\t}{\, | \,}
\begin{document}

\title[Monoids of modules and arithmetic of direct-sum decompositions]{Monoids of modules and arithmetic of direct-sum decompositions}

\thanks{The first author was a Fulbright-NAWI Graz Visiting Professor in the Natural Sciences and supported by the Austrian-American Education Commission. The second author was supported by the Austrian Science Fund FWF, Project Number P21576-N18}

\author{Nicholas R. Baeth and Alfred Geroldinger}

\address{Department of Mathematics and Computer Science \\ University of Central Missouri \\ Warrensburg, MO 64093, USA; \normalfont{ baeth@ucmo.edu}}

\address{Institut f\"ur Mathematik und Wissenschaftliches Rechnen \\
Karl-Franzens-Universit\"at Graz \\
Heinrichstra\ss e 36\\
8010 Graz, Austria; \normalfont{alfred.geroldinger@uni-graz.at} }

\subjclass[2010]{13C14, 16D70, 20M13}

\keywords{Krull monoids, sets of lengths, direct-sum decompositions, indecomposable modules}

\begin{abstract}
Let $R$ be a (possibly noncommutative) ring and let $\mathcal C$ be a class of finitely generated (right) $R$-modules which is closed under finite direct sums, direct summands, and isomorphisms. Then the set $\mathcal V (\mathcal C)$ of isomorphism classes of modules is a commutative semigroup with operation induced by the direct sum. This semigroup encodes all possible information about direct sum decompositions of modules in $\mathcal C$. If the endomorphism ring of each module in $\mathcal C$ is semilocal, then $\mathcal V (\mathcal C)$ is a Krull monoid. Although this fact was observed nearly a decade ago, the focus of study thus far has been on ring- and module-theoretic conditions enforcing that $\mathcal V(\mathcal C)$ is Krull. If $\mathcal V(\mathcal C)$ is Krull, its arithmetic depends only on the class group of $\mathcal V(\mathcal C)$ and the set of classes containing prime divisors. In this paper we provide the first systematic treatment to study the direct-sum decompositions of modules using  methods from Factorization Theory of Krull monoids. We do this when $\mathcal C$ is the class of finitely generated torsion-free modules over certain one- and two-dimensional commutative Noetherian local rings.
\end{abstract}

\maketitle


\bigskip
\section{Introduction} \label{1}
\bigskip

The study of direct-sum decompositions of finitely generated modules is a classical topic in Module Theory dating back over a century. In the early 1900s, Wedderburn, Remak, Krull, and Schmidt proved unique direct-sum decomposition results for various classes of groups (see \cite{Wed09}, \cite{Rem11}, \cite{Kru25}, and \cite{Sch29}). In the middle of the last century, Azumaya \cite{Azu50} proved uniqueness of (possibly infinite) direct-sum decomposition of modules provided that each indecomposable module has a local endomorphism ring. In the commutative setting, Evans \cite{Eva73} gave an example due to Swan illustrating a non-unique direct-sum decomposition of a finitely-generated module over a local ring. The past decade has seen a new semigroup-theoretical approach. This approach was first introduced by Facchini and Wiegand \cite{Fa-Wi04} and has been used by several authors (for example, see \cite{Ba07b}, \cite{Ba09a}, \cite{Ba-Lu11a}, \cite{Ba-Sa12a}, \cite{Di07a}, \cite{Fa02}, \cite{Fa06a}, \cite{Fa12a}, \cite{Fa-HK03}, \cite{F-H-K-W06}, \cite{Fa-Wi04}, \cite{H-K-K-W07}, \cite{He-Pr10a}, and \cite{Le-Od96a}). Let $R$ be a ring and let $\mathcal C$ be a class of right $R$-modules which is closed under finite direct sums, direct summands, and isomorphisms. For a module $M$ in $\mathcal C$, let $[M]$ denote the isomorphism class of $M$. Let $\mathcal V (C)$ denote the set of isomorphism classes of modules in $\mathcal C$. (We assume here that $\mathcal V(C)$ is indeed a set, and note that this hypothesis holds for all examples we study.) Then $\mathcal V (\mathcal C)$ is a commutative semigroup with operation defined by $[M] + [N] = [M \oplus N]$ and all information about direct-sum decomposition of modules in $\mathcal C$ can be studied in terms of factorization of elements in the semigroup $\mathcal V (\mathcal C)$. In particular, the direct-sum decompositions in $\mathcal C$ are (essentially) unique (in other words, the Krull-Remak-Schmidt-Azumaya Theorem --- briefly, KRSA --- holds) if and only if $\mathcal V (C)$ is a free abelian monoid. This semigroup-theoretical point of view was justified by Facchini \cite{Fa02} who showed that $\mathcal V (\mathcal C)$ is a reduced Krull monoid provided that the endomorphism ring $\End_R (M)$ is semilocal for all modules $M$ in $\mathcal C$. This result allows one to describe the direct-sum decomposition of modules in terms of factorization of elements in Krull monoids, a well-studied class of commutative monoids.

\smallskip
However, thus far much of the focus in this direction has been on the study of module-theoretic conditions which guarantee that all endomorphism rings are semilocal, as well as on trying to describe the monoid $\mathcal V(\mathcal C)$ in terms of various ring- and module-theoretic conditions. Although some factorization-theoretic computations have been done in various settings (eg. the study of elasticity in \cite{Ba09a}, \cite{Ba-Lu11a}, and \cite{Ba-Sa12a} and the study of the $\omega$-invariant in \cite{Di07a}), the general emphasis has not been on the arithmetic of the monoid $\mathcal V(\mathcal C)$. Our intent is to use known module-theoretic results along with factorization-theoretic techniques in order to give detailed descriptions of the arithmetic of direct-sum decompositions of finitely generated torsion-free modules over certain one- and two-dimensional local rings. We hope that this systematic approach will not only serve to inspire others to consider more detailed and abstract factorization-theoretic approaches to the study of direct-sum decompositions, but to provide new and interesting examples for zero-sum theory over torsion-free groups. We refer to \cite{Fa03a} and to the opening paragraph in the recent monograph \cite{Le-Wi12a} for broad information on the Krull-Remak-Schmidt-Azumaya Theorem, and to two recent surveys \cite{Fa12a} and \cite{Ba-Wi13a} promoting this semigroup-theoretical point of view. More details and references will be given in Section \ref{4}.

\smallskip
Krull monoids, both their ideal theory and their arithmetic, are well-studied; see \cite{Ge-HK06a} for a thorough treatment. A reduced Krull monoid is uniquely determined (up to isomorphism) by its class group $G$, the set of classes $G_{\mathcal P} \subset G$ containing prime divisors, and the number of prime divisors in each class. Let $\mathcal V (C)$ be a monoid of modules and suppose $\mathcal V(\mathcal C)$ is Krull with class group $G$ and with set of classes containing prime divisors $G_{\mathcal P}$. We are interested in determining what this information tells us about direct-sum decompositions of modules. Let $M$ be a module in $\mathcal C$ and let $M = M_1 \oplus \cdots \oplus M_{\ell}$ where $M_1, \ldots, M_{\ell}$ are indecomposable right $R$-modules. Then $\ell$ is called the length of this factorization (decomposition into indecomposables), and the set of lengths $\mathsf L (M) \subset \N$  is defined as the set of all possible factorization lengths. Then KRSA holds if and only if $|G|=1$. Moreover, it is easy to check that $|\mathsf L (M)|=1$ for all $M$ in $\mathcal C$ provided that $|G| \le 2$. Clearly, sets of lengths are a measure how badly KRSA fails. Assuming that $\mathcal V (\mathcal C)$ is Krull, $M$ has at least one direct-sum decomposition in terms of indecomposable right $R$-modules, and, up to isomorphism, only finitely many distinct decompositions. In particular, all sets of lengths are finite and nonempty. Without further information about the class group $G$ and the subset $G_{\mathcal P}\subset G$, this is all that can be said. Indeed, there is a standing conjecture that for every infinite abelian group $G$ there is a Krull monoid with class group $G$ and set $G_{\mathcal P}$ such that every set of lengths has cardinality one (see \cite{Ge-Go03}). On the other hand, if the class group of a Krull monoid is infinite and every class contains a prime divisor, then every finite subset of $\mathbb N_{\ge 2}$ occurs as a set of lengths (see Proposition \ref{7.2}).

\smallskip
Thus an indispensable prerequisite for the study of sets of lengths (and other arithmetical invariants) in Krull monoids is detailed information about not only the class group $G$, but also on the set $G_{\mathcal P} \subset G$ of classes containing prime divisors. For the monoid $\mathcal V (\mathcal C)$, this is of course a module-theoretic task which depends on both the ring $R$ and the class $\mathcal C$ of $R$-modules. Early results gave only extremal sets $G_{\mathcal P}$ and thus no further arithmetical investigations were needed.
In Sections \ref{5} and \ref{6} we determine, based on deep module-theoretic results,   the class group $G$ of $\mathcal V(\mathcal C)$. We then exhibit well-structured sets $G_{\mathcal P}$ providing a plethora of arithmetically interesting direct-sum decompositions.
In particular, we study the classes of finitely generated modules, finitely generated torsion-free modules, and maximal Cohen-Macaulay modules over one- and two-dimensional commutative Noetherian local rings. We restrict, if necessary, to specific families of rings in order to obtain explicit results for $G_{\mathcal P}$, since it is possible that even slightly different sets $G_{\mathcal P}$ can induce completely different behavior in terms of the sets of lengths. Given this information, we use transfer homomorphisms, a key tool in Factorization theory and introduced in Section \ref{4}, which make it possible to study sets of lengths and other arithmetical invariants of general Krull monoids instead in an associated monoid of zero-sum sequences (see Lemma \ref{4.4}).  These monoids can be studied using methods from Additive (Group and Number) Theory (see \cite{Ge09a}).

\smallskip
Factorization Theory describes the non-uniqueness of factorizations of elements in rings and semigroups into irreducible elements by arithmetical invariants such as sets of lengths, catenary, and tame degrees. We will define each of these invariants in Section \ref{3}. The goal is to relate the arithmetical invariants with algebraic parameters (such as class groups) of the objects under consideration. The study of sets of lengths in Krull monoids is a central topic in Factorization Theory. However, since much of this theory was motivated by examples in number theory (such as holomorphy rings in global fields), most of the focus so far has been on Krull monoids with finite class group and with each class containing a prime divisor. This is in contrast to Krull monoids stemming from Module Theory which often have infinite class group (see Section \ref{5}). A key result in Section \ref{7} shows that the arithmetic of these two types of Krull monoids can have drastically different arithmetic.

\smallskip
In combination with the study of various arithmetical invariants of a given Krull monoid, the following dual question has been asked since the beginning of Factorization Theory: Are arithmetical phenomena characteristic for a given Krull monoid (inside a given class of Krull monoids)? Affirmative answers have been given for the class of Krull monoids with finitely generated class groups where every class contains a prime divisor. Since sets of lengths are the most investigated invariants in Factorization Theory, the emphasis in the last decade has been on the following question:
Within the class of Krull monoids having finite class group and such that every class contains a prime divisor, does the system of sets of lengths of a monoid $H$ characterize the class group of $H$?
A survey of these problems can be found in \cite[Sections 7.1 and 7.2]{Ge-HK06a}. For recent progress, see \cite{Sc09b}, \cite{Sc09c}, and \cite{B-G-G-P13a}. In Corollary \ref{7.9} we exhibit that for many Krull monoids stemming from the Module Theory of Sections \ref{5} and \ref{6},  the system of sets of lengths and the behavior of absolutely irreducible elements  characterizes the class group of these monoids.

\smallskip
In Section \ref{3} we introduce some of the main arithmetical invariants studied in Factorization Theory as well as their relevance to the study of direct-sum decompositions. Our focus is on sets of lengths and on parameters controlling their structure, but we will also need other invariants such as catenary and tame degrees. Section \ref{4} gives a brief introduction to Krull monoids, monoids of modules, and transfer homomorphisms. Sections \ref{5} and \ref{6} provide explicit constructions stemming from Module Theory of class groups and distribution of prime divisors in the classes. Finally, in Section \ref{7}, we present our results on the arithmetic of direct-sum decomposition in the Krull monoids discussed in Sections \ref{5} and \ref{6}.

\smallskip
We use standard notation from Commutative Algebra and Module Theory (see \cite{Le-Wi12a}) and we follow the notation of \cite{Ge-HK06a} for Factorization Theory. All monoids of modules $\mathcal V (\mathcal C)$ are written additively, while all abstract Krull monoids are written multiplicatively. This follows the tradition in Factorization Theory, and makes sense also here because our crucial tool, the monoid of zero-sum sequences, is written multiplicatively. In particular, our arithmetical results in Section \ref{7} are written in a multiplicative setting but they are derived for the additive monoids of modules discussed in Sections \ref{5} and \ref{6}. Since we hope that this article is readable both for those working in Ring and Module Theory as well as those working in Additive Theory and Factorization Theory, we often recall concepts of both areas which are well known to the specialists in the respective fields.

\bigskip
\section{Arithmetical Preliminaries} \label{3}
\bigskip

In this section we gather together the concepts central to describing the arithmetic of non-factorial monoids. In particular, we exhibit the arithmetical invariants which will be studied in Section \ref{7} and which will give a measure of non-unique direct sum decompositions of classes of modules studied in Sections \ref{5} and \ref{6}. When possible, we recall previous work in the area of  direct-sum decompositions for which certain invariants have been studied. For more details on non-unique factorization, see \cite{Ge-HK06a}. First we record some preliminary terminology.

\medskip
\noindent
{\bf Notation.}\label{2} We denote by $\N$ the set of
positive integers and set $\N_0 = \N \cup \{0\}$. For every $n \in \mathbb N$, $C_n$ denotes a cyclic group of order $n$. For real
numbers $a, b \in \R$ we set $[a, b] = \{ x \in \Z \, \colon a \le x \le
b \}$. We use the convention that $\sup \emptyset = \max \emptyset = \min \emptyset = 0$.

\medskip
\noindent
{\bf Subsets of the integers.}
Let $L, L' \subset \Z$.  We denote by $L+L' = \{a+b \, \colon
a \in L,\, b \in L' \}$ the {\it sumset} of $L$ and $L'$. If $\emptyset \ne L
\subset \N$, we call
\[
\rho (L) = \sup \Bigl\{ \frac{m}{n} \; \colon \; m, n \in L
\Bigr\} = \frac{\sup L}{\min L} \, \in \Q_{\ge 1} \cup \{ \infty
\}
\]
the \ {\it elasticity} \ of $L$. In addition,  we set
$\rho (\{0\}) = 1$.
Distinct elements $k, l \in L$ are called \emph{adjacent} if $L \cap [\min\{k,l\},\max\{k,l\}]=\{k,l\}$.
A positive integer $d \in \N$ is called a \ {\it distance} \ of $L$ \ if there exist adjacent elements $k,l \in L$ with $d=|k-l|$.  We denote by \ $\Delta (L)$ \ the {\it set of
      distances} of $L$. Note that $\Delta (L) = \emptyset$ if and only if $|L| \le 1$,
and that $L$  is an arithmetical progression with difference $d \in \N$ if and only if $\Delta (L)
\subset \{d\}$.

\medskip
\noindent
{\bf Monoids and Rings.}
By a {\it monoid} $H$ we always mean a commutative semigroup with
identity $1$ which satisfies the cancellation law; that is, if $a$, $b$, and $c$
are elements of the $H$ with $ab = ac$, then $b = c$.

Let $H$ be a monoid. We denote by $\mathcal A (H)$ the set of atoms (irreducible elements) of $H$, by $\mathsf q (H)$ a quotient group of $H$ with $H \subset \mathsf q (H) = \{ a^{-1}b  \, \colon \, a, b \in H \}$, and by  $H^{\times}$  the set of
invertible elements of $H$. We say that $H$ is  {\it reduced}
if $H^{\times} = \{1\}$, and we denote by  $H_{\red} = H/H^{\times} = \{
aH^{\times} \, \colon a \in H \}$  the associated reduced monoid.
 Let $H' \subset
H$ be a subset. We say that $H'$ is {\it divisor-closed} if $a \in H'$ and $b \in H$ with $b \t a$ implies that $b \in H'$. Denote by $[H'] \subset H$ the submonoid generated by $H'$.

A monoid $F$ is called \ {\it free abelian with basis $\mathcal P \subset
F$} \ if every $a \in F$ has a unique representation of the form
\[
a = \prod_{p \in \mathcal P} p^{\mathsf v_p(a) } \quad \text{with} \quad
\mathsf v_p(a) \in \N_0 \ \text{ and } \ \mathsf v_p(a) = 0 \ \text{
for almost all } \ p \in \mathcal P \,.
\]
If $F$ is free abelian with basis $\mathcal P$, we set $F = \mathcal F(\mathcal P)$ and call
\[
 |a| = \sum_{p \in \mathcal P} \mathsf v_p (a) \quad \text{the \ {\it
length}} \ \text{of} \ a \  \quad \text{and} \quad \supp (a) = \{p \in \mathcal P \, \colon \, \mathsf v_p (a) > 0 \} \quad \text{the {\it support} of} \ a \,.
\]
The multiplicative monoid $\mathcal F (\mathcal P)$ is, of course,  isomorphic to the additive monoid $(\N_0^{(\mathcal P)}, +)$.

Throughout this manuscript, all rings have a unit element and, apart from a a few motivating remarks in Section \ref{4}, all rings are commutative. Let $R$ be a ring.  Then we let $R^{\bullet} = R \setminus \{0\}$ denote the nonzero elements of $R$ and let $R^{\times}$ denote its group of units.
 Note that if $R$ is a domain, then
$R^{\bullet}$ is a monoid as defined above. By the dimension of a ring we always mean its Krull dimension.

\medskip
\noindent
{\bf Abelian groups.} Let $G$ be an additive abelian group and let $G_0 \subset G$ a subset.
Then $-G_0 = \{-g \, \colon g \in G_0\}$, $G_0^{\bullet} = G_0 \setminus \{0\}$,  and
$\langle G_0 \rangle \subset G$ denotes the subgroup generated by
$G_0$. A family $(e_i)_{i \in I}$ of elements of $G$ is said to be \
{\it independent} \ if $e_i \ne 0$ for all $i \in I$ and, for every
family $(m_i)_{i \in I} \in \Z^{(I)}$,
\[
\sum_{i \in I} m_ie_i =0 \qquad \text{implies} \qquad m_i e_i =0 \quad \text{for all} \quad i \in I\,.
\]
The family $(e_i)_{i \in I}$ is called a {\it basis} for $G$ if
 $G = \bigoplus_{i \in I} \langle e_i \rangle$. The {\it total rank} $\mathsf r^* (G)$ is the supremum of the cardinalities of independent subsets of $G$. Thus $\mathsf r^* (G) = \mathsf r_0 (G) + \sum_{p \in \mathbb P} \mathsf r_p (G)$, where $\mathsf r_0 (G)$ is the torsion-free rank of $G$ and $\mathsf r_p (G)$ is the $p$-rank of $G$ for every prime $p \in \mathbb P$.

\medskip
\noindent
{\bf Factorizations.}
Let $H$ be a monoid. The free abelian monoid \ $\mathsf Z (H) = \mathcal F \bigl( \mathcal
A(H_\red)\bigr)$ \ is called the \ {\it factorization monoid} \ of
$H$, and the unique homomorphism
\[
\pi \colon \mathsf Z (H) \to H_{\red} \quad \text{satisfying} \quad
\pi (u) = u \quad \text{for each} \quad u \in \mathcal A(H_\red)
\]
is called the \ {\it factorization homomorphism} \ of $H$. For $a
\in H$ and $k \in \N$,

\begin{itemize}
\item $\mathsf Z_H (a) = \mathsf Z (a)  = \pi^{-1} (aH^\times) \subset
\mathsf Z (H) \quad
\text{is the \ {\it set of factorizations} \ of \ $a$}$,
\item $\mathsf Z_k (a)  = \{ z \in \mathsf Z (a) \, \colon |z| = k \} \quad
\text{is the \ {\it set of factorizations} \ of \ $a$ of length \
$k$}$,
\item $\mathsf L_H (a) = \mathsf L (a)  = \bigl\{ |z| \, \colon \, z \in
\mathsf Z (a) \bigr\} \subset \N_0 \quad \text{is the \ {\it set of
lengths} \ of $a$},  \quad \text{and}$
\item $\mathcal L (H)  = \{ \mathsf L (b) \, \colon b \in H \} \quad \text{is the \ {\it system of sets of lengths} of $H$}$.
\end{itemize}

By definition, we have \ $\mathsf Z(a) = \{ 1 \}$ and $\mathsf L (a) =\{0\}$ for all $a \in H^\times$. If $H$ is assumed to be Krull, as is the case in the monoids of modules we study, and $a \in H$, then the set of factorizations $\mathsf Z(a)$ is finite and nonempty and hence $\mathsf L(a)$ is finite and nonempty. Suppose that there is $a \in H$ with $|\mathsf L(a)|>1$ with distinct $k,l \in \mathsf L(a)$. Then for all $N \in \N$, $\mathsf L \big(a^N \big) \supset \{(N-i)k+il\, \colon\, i \in [0,N]\}$ and hence $|\mathsf L \big(a^N \big)|>N$. Thus, whenever there is an element $a\in H$ that has at least two  factorizations of distinct lengths, there exist elements of $H$ having arbitrarily many factorizations of  distinct lengths. This motivates the need for more refined measures of nonunique factorization.

Several invariants such as elasticity and the $\Delta$-set measure non-uniqueness in terms of sets of lengths. Other invariants such as the catenary degree provide an even more subtle measurement in terms of the distinct factorizations of elements. However, these two approaches cannot easily be separated and it is often the case that a factorization-theoretical invariant is closely related to an invariant of the set of lengths. Thus the exposition that follows will introduce invariants as they are needed and so that the relations between these invariants can be made as clear as possible.

The monoid $H$ is called
\begin{itemize}
\item  {\it atomic} \  if  \ $\mathsf Z(a) \ne \emptyset$ \
      for all \ $a \in H$,

\item {\it factorial} \ if \ $|\mathsf Z (a)| = 1$ \ for all $a \in
H$ (equivalently, $H_\red$ is free abelian), and

\item {\it half-factorial} \ if \ $|\mathsf L (a)| = 1$ \ for all $a \in H$.
\end{itemize}
Let $z,\, z' \in \mathsf Z (H)$. Then we can write
\[
z = u_1 \cdot \ldots \cdot u_lv_1 \cdot \ldots \cdot v_m \quad
\text{and} \quad z' = u_1 \cdot \ldots \cdot u_lw_1 \cdot \ldots
\cdot w_n\,,
\]
where  $l,\,m,\, n\in \N_0$ and $u_1, \ldots, u_l,\,v_1, \ldots,v_m,\,
w_1, \ldots, w_n \in \mathcal A(H_\red)$ are such that
\[
\{v_1 ,\ldots, v_m \} \cap \{w_1, \ldots, w_n \} = \emptyset\,.
\]
Then $\gcd(z,z')=u_1\cdot\ldots\cdot u_l$, and we call
\[
\mathsf d (z, z') = \max \{m,\, n\} = \max \{ |z \gcd (z, z')^{-1}|,
|z' \gcd (z, z')^{-1}| \} \in \N_0
\]
the {\it distance} between $z$ and $z'$. If $\pi (z) = \pi (z')$ and $z
\ne z'$, then clearly
\begin{equation*}\label{E:Dist}
2 +
      \bigl| |z |-|z'| \bigr| \le \mathsf d (z, z') \,.
\end{equation*}
For subsets $X, Y \subset \mathsf Z (H)$,
we set
\[
\mathsf d (X, Y) = \min \{ \mathsf d (x, y ) \, \colon x \in X, \, y \in
Y \} \,,
\]
and thus $\mathsf d (X, Y) =
0$ if and only if ($X \cap Y \ne \emptyset$, $X = \emptyset$, or $Y = \emptyset$).

\bigskip
\noindent
From this point on, we will assume all monoids to be atomic. Since the the monoids described in Sections \ref{5} and \ref{6} are of the form $\mathcal V(\mathcal C)$ for $\mathcal C$ a subclass of finitely generated modules over a commutative Noetherian ring, they are  Krull and hence atomic.
\smallskip

\medskip
\noindent
{\bf The set of distances and chains of factorizations.} We now recall the $\Delta$-set of a monoid $H$, an invariant which describes the sets of lengths of elements in $H$,  and illustrate its relationship with distances between factorizations of elements in $H$. We denote by
\[
\Delta (H) \ = \ \bigcup_{L \in \mathcal L (H) } \Delta (L) \ \subset \N
\]
the {\it set of distances} of $H$. By definition, $\Delta (H) = \emptyset$ if and only if $H$ is half-factorial. For a more thorough investigation of factorizations in $H$, we will need a distinguished subset of the set of distances. Let $\Delta^* (H)$ denote the set of all $d = \min \Delta (S)$ for some divisor-closed submonoid $S \subset H$ with $\Delta (S) \ne \emptyset$. By definition, we have $\Delta^* (H) \subset \Delta (H)$.

Suppose that $H$ is not factorial. Then there exists an element $a \in H$ with $|\mathsf Z(a)|>1$, and so there exist distinct $z,z'\in \mathsf Z(a)$. Then for $N\in \N$, $\mathsf Z \big(a^N \big) \supset \{z^{N-i}(z')^i\, \colon\, i \in [0,N]\}$. Although $\mathsf d \big(z^N, (z')^N \big)=Nd(z,z')>N$ suggests that the factorizations $z^N$ and $(z')^N$ of $a^N$ are very different,  $$\mathsf d \big(z^{N-i}(z')^i,z^{N-i+1}(z' )^{i-1} \big)= \mathsf d(z,z')$$ for each $i \in [1,N]$. This illustrates that the distance alone is too coarse of an invariant, and motivates the study of the catenary degree as a way of measuring how distinct two factorizations are. As will be described below, there is a structure theorem for the set of lengths of a Krull monoid. However, except in very simple situations, there is no known structure theorem for the set of factorizations of an element in a Krull monoid. Thus we use the catenary degree, its many variations, the tame degree, and other invariants help to measure the subtle distinctions between factorizations.

Let $a \in H$ and
$N \in \mathbb N_0 \cup \{\infty\}$. A finite sequence $z_0, \ldots,
z_k \in \mathsf Z (a)$ is called a {\it $($monotone$)$ $N$-chain of
factorizations} if $\mathsf d (z_{i-1}, z_i) \le N$ for all $i \in
[1, k]$ and ($|z_0| \le \cdots \le |z_k|$ or $|z_0| \ge \cdots \ge
|z_k|$ resp.). We denote by  $\mathsf c (a)$ (or by $\mathsf c_{\mon} (a)$ resp.)  the smallest $N \in \N _0 \cup \{\infty\}$ such
      that any two factorizations $z,\, z' \in \mathsf Z (a)$ can be
      concatenated by an $N$-chain (or by a monotone $N$-chain resp.).
Then
\[
\mathsf c(H) = \sup \{ \mathsf c(b) \, \colon b \in H\} \in \N_0 \cup
\{\infty\} \quad \text{and} \quad \mathsf c_{\mon} (H) = \sup \{
\mathsf c_{\mon} (b) \, \colon b \in H\} \in \N_0 \cup \{\infty\} \quad
\,
\]
denote  the \ {\it catenary degree} \ and the \ {\it monotone
catenary degree} of $H$. The monotone catenary degree is studied by
using the two auxiliary notions of the equal and the adjacent
catenary degrees. Let $\mathsf c_{\eq} (a)$ denote the smallest $N
\in \mathbb N_0 \cup \{\infty\}$ such that any two factorizations $z, z' \in \mathsf Z (a)$ with $|z| = |z'|$ can be concatenated by a monotone $N$-chain.
We call
      \[
      \mathsf c_{\eq} (H) = \sup \{ \mathsf c_{\eq} (b) \, \colon b \in H \} \in \mathbb N_0 \cup \{\infty\}
      \]
      the {\it equal catenary degree} of $H$.
We set
      \[
      \mathsf c_{\adj}(a)  = \sup \{ \mathsf d \big( \mathsf Z_k (a),
      \mathsf Z_l (a) \big) \, \colon k, l \in \mathsf L (a) \ \text{are
      adjacent} \} \,,
      \]
      and the {\it adjacent catenary degree} of $H$ is defined as
      \[
      \mathsf c_{\adj} (H) = \sup \{ \mathsf c_{\adj} (b) \, \colon b \in H \} \in \mathbb N_0 \cup
      \{\infty\} \,.
      \]
Obviously,  we have
\[
\mathsf c (a) \le \mathsf c_{\mon} (a) = \sup \{ \mathsf c_{\eq}
(a), \mathsf c_{\adj} (a) \} \le \sup \mathsf L (a) \quad \text{for
all} \quad a \in H \,,
\]
and hence
\begin{equation*}
\mathsf c (H) \le \mathsf c_{\mon} (H) = \sup \{ \mathsf c_{\eq}
(H), \mathsf c_{\adj} (H) \} \,. \label{basic2}
\end{equation*}
Note that $\mathsf c_{\adj} (H) = 0$ if and only if $H$ is half-factorial, and if $H$ is not half-factorial, then $2 + \sup \Delta (H) \le \mathsf c (H)$. Moreover,  $\mathsf c_{\eq} (H) = 0$ if and only if for all $a \in H$ and all $k \in \mathsf L (a)$ we have $|\mathsf Z_k (a)| = 1$. A
recent result of Coykendall and Smith implies that
  if $D$ is a domain, we have that $\mathsf c_{\eq} (D^{\bullet}) = 0$ if and only if $D^{\bullet}$ is factorial \cite[Corollary 2.12]{Co-Sm11a}.

We call
\[
\sim_{H, \eq} = \big\{ (x,y) \in \mathsf Z (H) \times \mathsf Z (H)
\, \colon \pi (x) = \pi (y) \ \text{and} \ |x| = |y| \big\}
\]
the {\it monoid of equal-length relations} of $H$.
Let $Z \subset \mathsf Z (H)$ be a subset. We say that an element $x \in
Z$ is {\it minimal} in $Z$ if for all
elements $y \in Z$ with $y \t x$ it follows that $x =
y$. We denote by $\text{\rm Min} \bigl( Z \bigr)$ the
{\it set of minimal elements} in $Z$. Let $x \in
Z$. Since the number of elements $y \in Z$
with $y \t x$ is finite, there exists an $x^* \in \text{\rm Min}
\bigl( Z \bigr)$ with $x^* \t x$. We will need the following lemma.

\smallskip
\begin{lemma} \label{3.1}
Let $H$ be an atomic monoid.
\begin{enumerate}[(1)]
\item $\mathsf c_{\eq} (H) \le \sup \big\{ |x| \, \colon (x,y) \in \mathcal A (\sim_{H, \eq}) \ \text{for some} \ y \in \mathsf Z (H)\setminus \{x\} \big\}$.

\smallskip
\item For $d \in \Delta (H)$ let $A_d = \big\{ x \in \mathsf Z (H) \, \colon |x|-d \in \mathsf L ( \pi (x) \big) \big\}$.  Then \newline
      $\mathsf c_{\adj} (H) \le \sup \{ |x| \, \colon x \in \text{\rm Min} (A_d), \,  d \in \Delta (H) \}$.
\end{enumerate}
\end{lemma}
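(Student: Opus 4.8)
The plan is to bound each of the two equal/adjacent catenary degrees by producing, for any element $a$ and any relevant pair of factorizations of $a$, a chain of factorizations each step of which is controlled by a factorization coming from one of the sets on the right-hand side. Throughout I would work in $H_{\red}$ and use that $H$ is atomic so that $\mathsf Z(a)$ is well-defined; since no finiteness of $\mathsf Z(a)$ is assumed here, the ``$\sup$'' on the right may be infinite, in which case there is nothing to prove, so I may assume both suprema are finite.

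For part (1): fix $a \in H$ and two factorizations $z, z' \in \mathsf Z(a)$ with $|z| = |z'|$; set $M = \sup\{|x| : (x,y) \in \mathcal A(\sim_{H,\eq}),\ y \ne x\}$. I would induct on the distance $\mathsf d(z,z')$ (or on $|z\gcd(z,z')^{-1}|$). If $\mathsf d(z,z') = 0$ then $z = z'$ and we are done. Otherwise write $z = wx_0$, $z' = wy_0$ with $w = \gcd(z,z')$, so $(x_0, y_0)$ lies in $\sim_{H,\eq}$ (same image under $\pi$, same length), and $x_0 \ne y_0$. Now pick a minimal atom of $\sim_{H,\eq}$ dividing $(x_0,y_0)$: more precisely, $(x_0,y_0) = (x_0,y_0)$ can be written in the monoid $\sim_{H,\eq}$ as a product of atoms, and in particular there is an atom $(u,v) \in \mathcal A(\sim_{H,\eq})$ with $u \t x_0$ and $v \t y_0$ in $\mathsf Z(H)$. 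Replace the ``$u$-part'' of $z$ by $v$: the factorization $z_1 := z u^{-1} v = w (x_0 u^{-1} v)$ satisfies $\pi(z_1) = \pi(z)$ (since $\pi(u) = \pi(v)$) and $|z_1| = |z|$ (since $|u| = |v|$, as $(u,v) \in \sim_{H,\eq}$), so $z_1 \in \mathsf Z_{|z|}(a)$. Moreover $\mathsf d(z, z_1) \le \max\{|u|,|v|\} \le M$, and $z_1$ shares strictly more with $z'$ than $z$ did (it picks up the factor $v$ of $y_0$), so $\mathsf d(z_1, z') < \mathsf d(z, z')$ — or at least the relevant induction parameter strictly decreases. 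By induction $z_1$ and $z'$ are joined by an equal-length $M$-chain, and prepending the step $z \to z_1$ finishes the argument.

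For part (2): fix $a \in H$ and adjacent lengths $k, l \in \mathsf L(a)$, say $k < l$ with $d = l - k \in \Delta(\pi^{-1}(a)) \subseteq \Delta(H)$; set $M' = \sup\{|x| : x \in \mathrm{Min}(A_d),\ d \in \Delta(H)\}$. Take any $z \in \mathsf Z_l(a)$. Since $|z| - d = k \in \mathsf L(a)$, we have $z \in A_d$, so there is $x^* \in \mathrm{Min}(A_d)$ with $x^* \t z$, and $|x^*| \le M'$. Because $x^* \in A_d$, the element $b^* := \pi(x^*)$ has $|x^*| - d \in \mathsf L(b^*)$, so there is a factorization $y^*$ of $b^*$ with $|y^*| = |x^*| - d$. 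Now replace $x^*$ by $y^*$ inside $z$: set $z' := z (x^*)^{-1} y^* \in \mathsf Z(a)$, which has $|z'| = |z| - d = k$, hence $z' \in \mathsf Z_k(a)$, and $\mathsf d(z, z') \le \max\{|x^*|, |y^*|\} \le |x^*| \le M'$ (using $|y^*| < |x^*|$). The same construction applied to an arbitrary $\tilde z \in \mathsf Z_k(a)$: here we want to go \emph{up}, so we instead start from some $w \in \mathsf Z_l(a)$, run the above to get $w' \in \mathsf Z_k(a)$ with $\mathsf d(w,w') \le M'$, and then use that $\mathsf Z_k(a)$ is (as in part (1), but for the plain equal catenary degree, which need not be small) — this is where I need to be careful. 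The cleaner route: $\mathsf c_{\adj}(a) = \sup\{\mathsf d(\mathsf Z_k(a), \mathsf Z_l(a))\}$ is a \emph{minimum} over pairs, so it suffices to exhibit, for each adjacent pair $(k,l)$, one $z \in \mathsf Z_l(a)$ and one $z' \in \mathsf Z_k(a)$ with $\mathsf d(z,z') \le M'$ — and the construction above does exactly that, starting from \emph{any} $z \in \mathsf Z_l(a)$.

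The main obstacle I anticipate is bookkeeping in part (1): verifying that replacing a minimal-atom ``sub-relation'' genuinely decreases the induction parameter and that a minimal atom of $\sim_{H,\eq}$ dividing $(x_0, y_0)$ actually splits as $u \t x_0$, $v \t y_0$ with $u \ne v$ — this uses that $(x_0, y_0)$ itself is not the identity of $\sim_{H,\eq}$ and that $\mathsf Z(H)$ is free abelian, so divisibility in the product monoid is componentwise. Part (2) is more routine once one observes that $\mathsf c_{\adj}$ being defined via a minimum over factorization pairs means we only need to hit \emph{one} good pair per adjacent length pair, which is precisely what $\mathrm{Min}(A_d)$ supplies.
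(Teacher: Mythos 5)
Your argument is correct, and it is genuinely more informative than what the paper offers: the paper proves Lemma \ref{3.1} only by citing \cite[Proposition 4.4]{Bl-Ga-Ge11a}, whereas you give a self-contained proof. The three points on which your sketch hinges all check out. For (1): $\sim_{H,\eq}$ is a saturated submonoid of the free abelian monoid $\mathsf Z(H)\times \mathsf Z(H)$ (cancellativity of $H_{\red}$ gives $\pi(x'')=\pi(y'')$ for the complementary factor), hence atomic with componentwise divisibility; and since $\gcd(x_0,y_0)=1$, any atom $(u,v)$ dividing $(x_0,y_0)$ must have $u\ne v$ (otherwise $u\mid\gcd(x_0,y_0)=1$), so its length is controlled by the supremum in the statement. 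The induction closes because $\gcd(z_1,z')$ is divisible by $wv$, giving $\mathsf d(z_1,z')\le |x_0|-|u|<|x_0|=\mathsf d(z,z')$, and the resulting chain has constant length, hence is monotone. For (2), your key observation — that $\mathsf d(\mathsf Z_k(a),\mathsf Z_l(a))$ is a \emph{minimum} over pairs, so a single good pair per adjacent $(k,l)$ suffices — is exactly what makes $\mathrm{Min}(A_d)$ the right object, and the existence of a minimal $x^*\mid z$ is guaranteed by the remark preceding the lemma. The only cosmetic blemish is the detour in (2) where you briefly worry about connecting an arbitrary element of $\mathsf Z_k(a)$ before correcting yourself; the final argument is the right one.
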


\begin{proof}
See \cite[Proposition 4.4]{Bl-Ga-Ge11a}.
\end{proof}

\medskip
\noindent
{\bf Unions of sets of lengths and the refined elasticities.} We now return to studying sets of lengths. We note that the elasticity of certain monoids of modules were studied in \cite{Ba-Lu11a} and \cite{Ba-Sa12a}, but that in Section \ref{7} we will provide results which generalize these results to larger classes of Krull monoids. In addition, we will fine tune these results by also computing the refined elasticities. Let $k, l  \in \mathbb N$. If $H \ne H^{\times}$, then
\[
      \mathcal U_k (H) \ = \ \bigcup_{k \in L,  L \in \mathcal L (H)} L
\]
is the union of all sets of lengths containing $k$.
In other words,
$\mathcal U_k (H)$ is set of all  $m \in \mathbb N$  for which there exist  $u_1,
\ldots, u_k, v_1, \ldots, v_m \in \mathcal A (H)$ with
$u_1 \cdot \ldots \cdot u_k = v_1 \cdot \ldots \cdot v_m$.
When $
H^\times=H$, we set $\mathcal U_k (H) =\{k\}$. In both cases, we define
$\rho_k (H) = \sup \mathcal U_k (H) \in \N \cup \{\infty\}$  and $\lambda_k (H)= \min
\mathcal U_k (H) \in [1,k]$. Clearly, we have $\mathcal U_1 (H) =\{1\}$, $k
\in \mathcal U_k (H)$, and since $\mathcal U_k (H) + \mathcal
      U_l (H) \ \subset \ \mathcal U_{k+l} (H)$, it follows that
      \[
      \lambda_{k+l} (H) \le \lambda_k (H) + \lambda_l (H) \le k+l \le \rho_k
      (H) + \rho_l (H) \le \rho_{k+l} (H) \,.
      \]
The {\it elasticity} $\rho (H)$ of $H$ is defined as
\[
\rho (H)  = \sup \{ \rho (L) \, \colon
L \in \mathcal L(H) \} \in \R_{\ge 1} \cup \{ \infty \} \,,
\]
and it is not difficult to verify that
\[
\rho (H) = \sup \Bigl\{ \frac{\rho_{k} (H)}{k} \, \colon k \in \N
\Bigr\} = \lim_{k \to \infty}\frac{\rho_k(H)}{k}\,.
\]

\medskip
\noindent
{\bf The structure of sets of lengths.} To describe the structure of sets of lengths and of their unions, we need the concept of
arithmetical progressions as well as  various generalizations.
Let $l, M \in \mathbb N_0$, $d \in \N$,  and  $\{0,d\} \subset \mathcal D
\subset [0,d]$. We set
\[
P_l (d) = d \Z \cap [0,\, ld] = \{ 0,\,d,\, 2d,\, \ldots , \,ld\}\,.
\]
Thus a  subset $L \subset \mathbb Z$ is an  arithmetical progression
       (with \ {\it  difference} \ $d \in \N$  and  {\it length}  $l \in \N_0$)  if  $L = \min L + P_l (d)$.
A subset  $L \subset \Z$  is called an {\it almost
arithmetical multiprogression} \ ({\rm AAMP} \ for
      short) \ with \ {\it difference} \ $d$, \ {\it period} \ $\mathcal D$,
      \  and \ {\it bound} \ $M$, \ if
\[
L = y + (L' \cup L^* \cup L'') \, \subset \, y + \mathcal D + d \Z \qquad \text{where}
\]
\begin{itemize}
\item  $L^*$ is finite and nonempty with $\min L^* = 0$ and $L^* =
       (\mathcal D + d \Z) \cap [0, \max L^*]$,

\item  $L' \subset [-M, -1]$ \ and \ $L'' \subset \max L^* + [1,M]$, and
\item      $y \in \Z$.
\end{itemize}
Note that an AAMP is finite and nonempty, and that an AAMP with period $\{0,d\}$ and bound $M=0$ is a (usual) arithmetical progression with difference $d$.

\medskip
\noindent
{\bf The $\omega$-invariant and the tame degrees.} We now study the $\omega$-invariant as well as local and global tame degrees. We note that these notions have been studied in specific noncommutative module-theoretic situations in terms of the so-called {\it semi-exchange property} (see \cite{Di07a}). Moreover,  when describing the sets of lengths of elements within a Krull monoid $H$ in terms of AAMPs (see Proposition \ref{7.2}), the bound $M$ (described above) is a tame degree related to the monoid $H$. We begin with the definition.
For  an atom $u \in H$,  let  $\omega (H,u)$ \ denote the
      smallest  $N \in \N \cup \{\infty\}$  having the following property{\rm \,:}
      \begin{enumerate}
      \smallskip
      \item[] For any multiple $a$ of $u$ and any factorization $a = v_1 \cdot \ldots \cdot v_n$ of $a$,  there exists a
              subset \ $\Omega \subset [1,n] $ \ such that \ $|\Omega | \le N $ \ and
              \[
              u \Bigm| \, \prod_{\nu \in \Omega} v_\nu \,.
              \]
      \end{enumerate}
Furthermore, we set
\[
\omega (H) = \sup \{ \omega (H, u) \, \colon u \in \mathcal A (H) \} \in \mathbb N \cup \{\infty\} \,.
\]
An atom $u \in H$ is prime if and only if $\omega (H, u) = 1$, and thus $H$ is factorial if and only if $\omega (H) = 1$.  If $H$ satisfies the ACC on divisorial ideals  (in particular, $H$ is a Krull monoid or a Noetherian domain), then
$\omega (H, u) < \infty$ for all $u \in \mathcal A (H)$ \cite[Theorem 4.2]{Ge-Ha08a}. Roughly speaking, the tame degree $\mathsf t (H,u)$ is  the maximum of  $\omega (H,u)$ and  a factorization length of $u^{-1} \prod_{\nu \in \Omega} v_{\nu}$ in $H$. More precisely, for an atom $u \in H$, the local tame degree $\mathsf t (H, u)$ is the smallest $N \in \N_0 \cup \{\infty\}$ having the following property:
\begin{enumerate}
\smallskip
\item[] For any multiple $a$ of $u$ and any factorization $a = v_1 \cdot \ldots \cdot v_n$ of $a$ which does not contain $u$, there is a short subproduct which is a multiple of $u$, say $v_1 \cdot \ldots \cdot v_m$, and a refactorization of this subproduct which contains $u$, say $v_1 \cdot \ldots \cdot v_m = u u_2 \cdot \ldots \cdot u_{\ell}$, such that $\max \{\ell, m \} \le N$.
 \smallskip
      \end{enumerate}
Thus the local tame degree $\mathsf t (H, u)$ measures the distance between any factorization of a multiple $a$ of $u$ and a factorization of $a$ which contains $u$. As before, we set
\[
\mathsf t (H) = \sup \{ \mathsf t (H, u) \, \colon u \in \mathcal A (H) \} \in \mathbb N_0 \cup \{\infty\} \,.
\]

We conclude this section with the following lemma (see \cite[Chapter 1]{Ge-HK06a} and \cite{Ge-Ka10a}) which illustrates how the primary invariants measure the non-uniqueness of factorizations and show that all of these invariants are trivial if the monoid is factorial.

\begin{lemma} \label{2.2}
Let $H$ be an atomic monoid.
\begin{enumerate}[(1)]
\item $H$ is half-factorial if and only if $\rho(H)=1$ if and only if $\rho_k(H)=k$ for every $k \in \mathbb N$.
\item $H$ is factorial if and only if $  \mathsf c(H) = \mathsf t(H)=0$ if and only if $\omega (H) = 1$.
\item $\mathsf c(H)=0$ or $\mathsf c(H) \geq 2$, and if $\mathsf c(H)\leq 2$, then $H$ is half-factorial.
\item $\mathsf c(H) \leq \omega (H) \le \mathsf t(H) \le \omega (H)^2$, and if $H$ is not factorial, then $\max\{2,\rho(H)\} \leq \omega (H)$.
\item If $\mathsf c(H)=3$, then every $L \in \mathcal L(H)$ is an arithmetical progression with difference $1$.
\end{enumerate}
\end{lemma}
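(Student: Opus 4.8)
The plan is to dispose of the five items in order, deriving the elementary equivalences directly from the definitions and importing from the literature only the two substantial inequalities $\mathsf c(H) \le \omega(H)$ and $\mathsf t(H) \le \omega(H)^2$; granting these, items (3) and (5) follow formally from facts already recorded in this section.

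\emph{Items (1) and (2).} For (1): if $H$ is half-factorial then every $L \in \mathcal L(H)$ is a singleton, so $\rho(L) = 1$ and hence $\rho(H) = 1$; conversely $\rho(H) = 1$ forces $\sup L = \min L$, i.e.\ $|L| = 1$, for every $L \in \mathcal L(H)$. Since $k \in \mathcal U_k(H)$ always, $\rho_k(H) \ge k$; half-factoriality gives $\mathcal U_k(H) = \{k\}$ because a relation $u_1 \cdots u_k = v_1 \cdots v_m$ between atoms forces $k = m$, so $\rho_k(H) = k$ for all $k$; and if $\rho_k(H) = k$ for all $k$ then $\rho(H) = \sup_k \rho_k(H)/k = 1$, returning to the first equivalence. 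For (2): the equivalence with $\omega(H) = 1$ is precisely the statement recalled just before the lemma. If $H$ is factorial then $|\mathsf Z(a)| = 1$ for every $a$, whence $\mathsf c(H) = 0$; moreover no multiple of a prime $u$ has a factorization avoiding $u$, so the condition defining $\mathsf t(H, u)$ is vacuous and $\mathsf t(H) = 0$. Conversely, if $\mathsf c(H) = 0$ then for every $a$ any two factorizations are joined by a $0$-chain, hence coincide, so $|\mathsf Z(a)| = 1$ and $H$ is factorial; in particular $\mathsf c(H) = \mathsf t(H) = 0$ implies $H$ is factorial.

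\emph{Item (4).} The bound $\omega(H) \le \mathsf t(H)$ is immediate: in the definition of $\mathsf t(H, u)$ one extracts, after reindexing, a subproduct $v_1 \cdots v_m$ that is a multiple of $u$ with $m \le \mathsf t(H, u)$, and this subproduct supplies a set $\Omega$ of size $\le \mathsf t(H,u)$ of the kind demanded in the definition of $\omega(H, u)$ (if the factorization already contains $u$, a single index suffices). The two remaining inequalities $\mathsf c(H) \le \omega(H)$ and $\mathsf t(H) \le \omega(H)^2$ are the only genuinely non-formal ingredients, and I would cite \cite[Chapter~1]{Ge-HK06a} and \cite{Ge-Ka10a} for them rather than reprove them; the sharper estimate $\mathsf t(H) \le \omega(H)^2$ is the main obstacle in a self-contained account, its proof applying the $\omega$-property once to extract from an arbitrary factorization of a multiple of $u$ a subproduct of length $\le \omega(H)$ divisible by $u$, and then controlling the length of a refactorization of that subproduct containing $u$ by a further factor of $\omega(H)$. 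Finally, when $H$ is not factorial we have $\omega(H) \ge 2$ (some atom is not prime, and primality of an atom $u$ is equivalent to $\omega(H,u) = 1$), while $\rho(H) \le \omega(H)$ follows from the standard estimate that every $z \in \mathsf Z(a)$ with $a = u_1 \cdots u_\ell$ satisfies $|z| \le \sum_{i=1}^{\ell} \omega(H, u_i) \le \ell\, \omega(H)$ (see \cite{Ge-Ha08a}), so $\max \mathsf L(a) \le \omega(H)\min \mathsf L(a)$ for every $a$.

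\emph{Items (3) and (5).} If $\mathsf c(H) \ne 0$ there is an $a$ with two distinct factorizations $z \ne z'$ satisfying $\pi(z) = \pi(z')$; with $g = \gcd(z, z')$, both $zg^{-1}$ and $z'g^{-1}$ are nontrivial (if $zg^{-1} = 1$ then $z \mid z'$ in $\mathsf Z(H)$ with $\pi(z) = \pi(z')$, forcing $z = z'$) and they cannot both be single atoms (these would be coprime yet have equal image under $\pi$, hence be the same atom), so $\mathsf d(z, z') \ge 2$ and therefore $\mathsf c(H) \ge 2$; thus $\mathsf c(H) \in \{0\} \cup \N_{\ge 2}$. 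If moreover $\mathsf c(H) \le 2$ but $H$ were not half-factorial, choose $z, z' \in \mathsf Z(a)$ with $|z| \ne |z'|$; along any $2$-chain the length is constant, since $z_{i-1} \ne z_i$ and $\mathsf d(z_{i-1}, z_i) \le 2$ force $\bigl|\, |z_i| - |z_{i-1}| \,\bigr| \le 0$ by the inequality $2 + \bigl|\, |z| - |z'| \,\bigr| \le \mathsf d(z, z')$ recorded above, so no $2$-chain joins $z$ to $z'$, contradicting $\mathsf c(a) \le 2$; hence $H$ is half-factorial. For (5): if $\mathsf c(H) = 3$ and $H$ is half-factorial, every $L \in \mathcal L(H)$ is a singleton, hence an arithmetical progression with difference $1$; and if $H$ is not half-factorial, then the recalled fact that non-half-factoriality gives $2 + \sup \Delta(H) \le \mathsf c(H)$ yields $\Delta(H) \subseteq \{1\}$, so $\Delta(L) \subseteq \{1\}$ for every $L \in \mathcal L(H)$, which by the characterization in the paragraph on subsets of the integers means exactly that each such $L$ is an arithmetical progression with difference $1$.
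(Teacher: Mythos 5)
Your proposal is correct. The paper offers no argument for this lemma at all: it is stated as a compendium of known facts with a bare pointer to \cite[Chapter 1]{Ge-HK06a} and \cite{Ge-Ka10a}, so the comparison is between your derivation and a citation. What you do differently is to make items (1), (2), (3), (5) and the elementary parts of (4) genuinely self-contained, importing from the literature only the two nontrivial inequalities $\mathsf c(H)\le\omega(H)$ and $\mathsf t(H)\le\omega(H)^2$; this is the right division of labour, since everything else follows formally from the definitions together with the inequality $2+\bigl|\,|z|-|z'|\,\bigr|\le\mathsf d(z,z')$ already displayed in Section \ref{3} (which you in fact reprove in item (3) rather than quote), and your two-case reduction of (5) to ``half-factorial'' versus ``$2+\sup\Delta(H)\le\mathsf c(H)$'' is the standard one. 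Your appeal to $\sup\mathsf L(a)\le\sum_i\omega(H,u_i)$ for $\rho(H)\le\omega(H)$ is also the correct source (\cite{Ge-Ha08a}). One small caveat, which is a defect of the lemma as stated rather than of your argument: your verification of $\omega(H,u)\le\mathsf t(H,u)$ really yields $\omega(H,u)\le\max\{1,\mathsf t(H,u)\}$, since for a prime atom $u$ one has $\mathsf t(H,u)=0$ but $\omega(H,u)=1$; this is the only consistent reading of the chain of inequalities in (4) alongside item (2) when $H$ is factorial, and your phrase ``if the factorization already contains $u$, a single index suffices'' is exactly where that $\max$ enters.
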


\bigskip
\section{Krull monoids, monoids of modules, and transfer homomorphisms} \label{4}
\bigskip

The theory of Krull monoids is presented in detail in the monographs
\cite{HK98} and \cite{Ge-HK06a}. Here we gather the terminology required for our treatment. We then present an introduction to monoids of modules --- the key objects of our study. Finally, we recall important terminology and results about monoids of zero-sum sequences and transfer homomorphisms --- the key tools in our arithmetical investigations.

\medskip
\noindent {\bf Krull monoids.}
Let  $H$  and  $D$  be monoids. A
monoid homomorphism $\varphi \colon H \to D$   is called
\begin{itemize}
\smallskip
\item a  {\it divisor homomorphism} if $\varphi(a)\mid\varphi(b)$ implies that $a \t b$  for all $a,b \in H$.

\smallskip
\item  {\it cofinal} \ if for every $a \in D$ there exists some $u
      \in H$ such that $a \t \varphi(u)$.

\smallskip
\item  a {\it divisor theory} (for $H$) if $D = \mathcal F (\mathcal P)$
for some set $\mathcal P$, $\varphi$ is a divisor homomorphism, and for every
$a  \in \mathcal{F}(\mathcal P)$, there exists a finite nonempty
subset $X \subset H$ satisfying $a = \gcd \bigl(
\varphi(X) \bigr)$.
\end{itemize}
We call
$\mathcal{C}(\varphi)=\mathsf q (D)/ \mathsf q (\varphi(H))$ the
class group of $\varphi $, use additive notation for this group, and for
$a \in \mathsf q(D)$, we denote by \ $[a]  = a
\,\mathsf q(\varphi(H)) \in \mathsf q (D)/ \mathsf q (\varphi(H))$ \
the class containing \ $a$.
Clearly $D/H = \{[a]\, \colon a \in D \} \subset \mathcal C (\varphi)$ is a submonoid with quotient group $\mathcal C (\varphi)$.
The homomorphism  $\varphi$ is cofinal if
and only if $\mathcal{C}(\varphi) = D/H$ and, by definition, every divisor theory is cofinal.
Let $\varphi \colon H \to D = \mathcal F (\mathcal P)$
be a divisor homomorphism. Then $\varphi(H)= \{a \in D
\, \colon [a]=[1]\}$ and $G_{\mathcal P} = \{[p] = p \mathsf q (\varphi(H)) \, \colon p \in \mathcal P \} \subset
\mathcal{C}(\varphi)$
is called the \ {\it  set of classes containing prime divisors}. Moreover,
we have $\langle G_{\mathcal P} \rangle = \mathcal C (\varphi)$ and $[G_{\mathcal P}] =  \{[a]\, \colon a \in D \}$.

The monoid $H$ is called a {\it Krull monoid} if it satisfies one of
the following equivalent conditions{\rm \,:}

\begin{itemize}
\item[(a)]  $H$ is  completely integrally closed and satisfies the ACC on divisorial ideals.

\smallskip
\item[(b)] $H$ has a divisor theory.

\smallskip
\item[(c)] $H$ has a divisor homomorphism into a free abelian monoid.
\end{itemize}
If $H$ is a Krull monoid, then a divisor theory
is unique up to unique isomorphism, and the  class group associated to a divisor theory depends only on $H$. It is called the class group of $H$ and will be denoted by $\mathcal C (H)$. Moreover, a reduced Krull monoid $H$ with divisor theory $H \hookrightarrow \mathcal F(\mathcal P)$ is uniquely determined up to isomorphism by its \emph{characteristic} $(G, (m_g)_{g \in G})$ where $G$ is an abelian group together with an isomorphism $\Phi: G \rightarrow \mathcal C(H)$ and with family $(m_g)_{g\in G}$ of cardinal numbers $m_g=|\mathcal P \cap \Phi(g)|$ (see \cite[Theorem 2.5.4]{Ge-HK06a}, and the forthcoming Lemma \ref{4.4}).

 It is well-known that a domain $R$ is a Krull domain if and only if its multiplicative monoid $R^{\bullet}$ is a Krull
monoid, and we set the class group of $R$ to be $\mathcal C (R) = \mathcal C (R^{\bullet})$. Property (a) shows that a Noetherian domain is
Krull if and only if it is integrally closed. In addition, many well-studied classes of commutative monoids such as regular congruence monoids in Krull domains and Diophantine monoids  are Krull. The focus of the present paper is on Krull monoids stemming from Module Theory.

\medskip
\noindent
{\bf Monoids of modules.} Let $R$ be a (not necessarily commutative) ring and $\mathcal C$ a class of (right) $R$-modules. We say that $\mathcal C$ is {\it closed under finite direct sums, direct summands, and isomorphisms} provided the following holds: Whenever $M, M_1,$ and $M_2$ are $R$-modules with $M \cong M_1 \oplus
M_2$, we have $M \in \mathcal C$ if and only if $M_1, M_2 \in \mathcal C$. We say that $\mathcal C$ satisfies the Krull-Remak-Schmidt-Azumaya Theorem (KRSA for short) if the following holds:
\begin{enumerate}
\item[] If $k, l \in \mathbb N$ and $M_1, \ldots, M_k, N_1, \ldots, N_l$ are indecomposable modules in $\mathcal C$ with $M_1 \oplus \cdots \oplus M_k \cong N_1 \oplus \cdots \oplus N_l$, then $l=k$ and, after a possible reordering of terms, $M_i \cong N_i$ for all $i \in [1,k]$.
\end{enumerate}

Suppose that $\mathcal C$ is closed under finite direct sums, direct summands, and isomorphisms. For a module $M \in \mathcal C$, we denote by $[M]$ its isomorphism class, and by $\mathcal V (\mathcal C)$ the set of  isomorphism classes. (For our purposes here, we tacitly  assume that this is actually a set. For the classes of modules studied in Sections \ref{5} and \ref{6} this is indeed the case. For the involved set-theoretical problems in a more general context, see \cite[Section 2]{Fa12a}.) Then $\mathcal V (\mathcal C)$ is a commutative semigroup with operation $[M] + [N] = [M \oplus N]$ and
all information about direct-sum decompositions of  modules in $\mathcal C$ can be studied in terms of factorizations in the semigroup $\mathcal V (\mathcal C)$. By definition, $\mathcal C$ satisfies KRSA if and only if $\mathcal V (\mathcal C)$ is a free abelian monoid, which holds  if $\End_R(M)$ is local for each indecomposable $M$ in $\mathcal C$ (see \cite[Theorem 1.3]{Le-Wi12a}).

If the endomorphism ring $\End_R (M)$ is semilocal for all modules $M$  in $\mathcal C$, then $\mathcal V (\mathcal C)$ is a  Krull monoid (\cite[Theorem 3.4]{Fa02}).  There is an abundance of recent work which provides examples of rings and classes of modules over these rings for which all endomorphism rings are semilocal (see \cite{Fa04a}, \cite{Fa06a}, and \cite{Fa12a}.  For monoids of modules, a characterization of when the class group is a torsion group is given in \cite{Fa-HK03}).

\smallskip
Suppose  that $\mathcal V (\mathcal C)$ is a Krull monoid. Then to understand the structure of direct-sum decompositions of modules in $\mathcal C$ is to understand the arithmetic of the reduced Krull monoid $\mathcal V (\mathcal C)$. Since any reduced Krull monoid $H$ is uniquely determined by its class group and by the distribution of prime divisors (that is, the characteristic of $H$), one must study these parameters.

\smallskip
In the present paper we will focus on the following classes of modules over a commutative Noetherian local ring $S$, each closed under finite direct sums, direct summands, and isomorphisms. For a commutative Noetherian local ring $S$, we denote by
\begin{itemize}
\item $\mathcal M (S)$ the semigroup of isomorphism classes of finitely generated $S$-modules, by

\item $\mathcal T (S)$ the semigroup of isomorphism classes of finitely generated torsion-free $S$-modules, and by

\item $\mathfrak C (S)$ the semigroup of isomorphism classes of maximal Cohen-Macaulay (MCM) $S$-modules.
\end{itemize}
Note that in order to make $\mcC(S)$ a semigroup, we insist that $[0_S] \in \mcC(S)$, even though the zero module is not MCM. We say that a commutative Noetherian local ring $S$ has finite representation type if there are, up to isomorphism, only finitely many indecomposable MCM $S$-modules. Otherwise we say that $S$ has infinite representation type.

Throughout, let $(R, \mathfrak m)$ be a commutative Noetherian local ring with maximal ideal $\mathfrak m$, and let $(\widehat R, \widehat{\mathfrak m})$ denote its $\mathfrak m$-adic completion. Let $\mathcal V (R)$ and $\mathcal V(\widehat R)$ be any of the above three semigroups. If $M$ is an $R$-module such that $[M] \in \mathcal V (R)$, then $\widehat M \cong M \tensor_R \widehat R$ is an $\widehat R$-module with $[\widehat M] \in \mathcal V (\widehat R)$, and every such $\widehat R$-module is called {\it extended}.  Note that $R$ has finite representation type if and only if $\widehat R$ has finite representation type (see \cite[Chapter 10]{Le-Wi12a}), and that the dimension of $R$ is equal to the  dimension of $\widehat R$. The following crucial result shows that the monoid $\mathcal V(R)$ is Krull.

\medskip
\begin{lemma} \label{module_divisorhom}
Let $(R, \mathfrak m)$ be a commutative Noetherian local ring with maximal ideal $\mathfrak m$, and let $(\widehat R, \widehat{\mathfrak m})$ denote its $\mathfrak m$-adic completion.
\begin{enumerate}[(1)]
\item For each indecomposable finitely generated $\widehat R$-module $M$, $\End_{\widehat R} (M)$ is local, and therefore $\mcM(\widehat R)$, $\mcT(\widehat R)$, and $\mcC(\widehat R)$ are free abelian monoids.

\item  The embedding $\mcM(R) \hookrightarrow \mcM(\widehat R)$ is a divisor homomorphism.  It is cofinal if and only if every finitely generated $\widehat R$-module is a direct summand of an extended module.

\item The embeddings    $\mcT( R) \hookrightarrow \mcT(\widehat R)$ and $\mcC( R) \hookrightarrow \mcC(\widehat R)$ are divisor homomorphisms.
\end{enumerate}
In particular,  $\mcM(R), \mcT( R)$, and $\mcC( R)$ are reduced Krull monoids. Moreover,  the embeddings in (2) and (3)  are injective and map $R$-modules onto the submonoid of extended $\widehat R$-modules.
\end{lemma}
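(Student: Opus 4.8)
The plan is to prove Lemma \ref{module_divisorhom} by reducing the two nontrivial assertions --- that the completion functor induces a divisor homomorphism, and that the image is the submonoid of extended modules --- to well-documented facts about modules over Noetherian local rings and their completions. For part (1), the local endomorphism statement is the classical fact that a finitely generated module over a complete Noetherian local ring has a semilocal (indeed, by the Krull--Schmidt theorem for complete local rings, one with local endomorphism ring when indecomposable) endomorphism ring; this is exactly \cite[Corollary 1.10, Theorem 1.3]{Le-Wi12a} (or \cite[Chapter 1]{Le-Wi12a}). Once each indecomposable $\widehat R$-module has local endomorphism ring, KRSA holds for the relevant classes, so $\mathcal V(\widehat R)$ is free abelian on the set of isomorphism classes of indecomposables in the class; this gives the three free abelian monoids. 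Here one must check that $\mathcal T(\widehat R)$ and $\mathcal C(\widehat R)$ are closed under the operations so that being free abelian makes sense: a direct summand of a torsion-free (resp.\ MCM) module is again torsion-free (resp.\ MCM), which is routine over a Noetherian local ring, and the depth/dimension conditions defining MCM are additive over direct sums.

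For the divisor-homomorphism claims in (2) and (3), the key point is faithful flatness of $R \to \widehat R$. First, $\widehat{M} := M \otimes_R \widehat R$ is finitely generated over $\widehat R$, is torsion-free (resp.\ MCM) whenever $M$ is --- torsion-freeness is preserved because $\widehat R$ is flat and, for finitely generated modules over a Noetherian ring, faithfully flat base change preserves associated primes appropriately; MCM-ness is preserved because $\operatorname{depth}$ and Krull dimension behave well under completion (see \cite[Chapter 1]{Le-Wi12a}). Injectivity of $[M] \mapsto [\widehat M]$ on isomorphism classes, and more importantly the divisor-homomorphism property, follow from the statement that for finitely generated $R$-modules $M, N$ one has $\widehat M \cong \widehat M' \oplus \widehat N'$ as $\widehat R$-modules with $M \cong M' \oplus ?$ --- precisely, if $\widehat M \mid \widehat N$ in $\mathcal V(\widehat R)$ (i.e.\ $\widehat M$ is a direct summand of $\widehat N$), then $M \mid N$ in $\mathcal V(R)$. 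This is exactly the content of the theorem that ``$\widehat{(-)}$ reflects direct summands,'' which is proved via a descent argument: a splitting over $\widehat R$ is an idempotent in $\operatorname{Hom}_{\widehat R}(\widehat N, \widehat N) \cong \operatorname{Hom}_R(N,N) \otimes_R \widehat R$, and one uses that idempotents lift along the faithfully flat, adically complete extension together with the finiteness of $\operatorname{Hom}_R(N,N)$ to produce a splitting over $R$ (alternatively cite \cite[Chapter 1, ``Guralnick's theorem'' / Section on completions]{Le-Wi12a} directly). I would state this cleanly as: the map on isomorphism classes is injective and detects the divisibility relation. That the map is cofinal in the $\mathcal M$-case iff every f.g.\ $\widehat R$-module is a summand of an extended module is immediate from the definition of cofinal: cofinality says every element of $\mathcal M(\widehat R)$ divides some $[\widehat M]$, which unwinds to exactly that condition. (For $\mathcal T$ and $\mathcal C$ we do not claim cofinality, only the divisor-homomorphism property, so nothing extra is needed.)

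Having established that $\mathcal M(R) \hookrightarrow \mathcal M(\widehat R)$, $\mathcal T(R) \hookrightarrow \mathcal T(\widehat R)$, and $\mathcal C(R) \hookrightarrow \mathcal C(\widehat R)$ are divisor homomorphisms into free abelian monoids, the ``In particular'' follows directly from the characterization of Krull monoids recalled in the excerpt: condition (c) says that a monoid admitting a divisor homomorphism into a free abelian monoid is Krull, and since $\mathcal M(\widehat R)$, $\mathcal T(\widehat R)$, $\mathcal C(\widehat R)$ are free abelian these embeddings witness exactly that. Reducedness of $\mathcal M(R)$, $\mathcal T(R)$, $\mathcal C(R)$ is inherited: a submonoid of a reduced monoid is reduced, and a free abelian monoid is reduced, so a divisor homomorphism (in particular an injection) into it forces the source to be reduced --- equivalently, $[M] + [N] = [0]$ forces $M = N = 0$ since it does so after completion. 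Finally, the last sentence (injectivity and image $=$ extended modules) is just a restatement bundling what was proved along the way: injectivity was noted above, and by definition the image of $\mathcal V(R)$ under $M \mapsto \widehat M$ is precisely the set of isomorphism classes of extended $\widehat R$-modules, which by closure of the target classes under the operations is a submonoid.

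The main obstacle, and the one place a genuine theorem rather than a formal manipulation is needed, is the descent statement that completion reflects direct summands --- equivalently that $\mathcal V(R) \hookrightarrow \mathcal V(\widehat R)$ is a divisor \emph{homomorphism} rather than merely a monoid monomorphism. Everything else (local endomorphism rings over a complete local ring, flatness preserving torsion-freeness and MCM, the formal deductions about Krull monoids and reducedness) is standard. I expect to cite \cite[Chapter 1]{Le-Wi12a} for the reflection-of-summands result and spend the bulk of the written proof making the identification $\operatorname{Hom}_{\widehat R}(\widehat N,\widehat N) \cong \operatorname{End}_R(N) \otimes_R \widehat R$ and the idempotent-lifting argument precise, or simply invoke it as a known theorem and keep the proof short.
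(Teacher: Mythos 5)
Your overall architecture matches the paper's: part (1) is KRSA over the complete ring, the heart of the matter is the descent statement that $\widehat M \mid \widehat N$ implies $M \mid N$, and the ``in particular'' clause is the formal consequence via condition (c) in the definition of a Krull monoid together with the (trivial) reducedness of the module monoids. The paper simply cites Wiegand's theorem (\cite{Wi01}; see also \cite[Theorem 3.6]{Ba-Wi13a}) for the descent step, and your fallback option of citing it (or Guralnick's theorem from \cite[Chapter 1]{Le-Wi12a}) is exactly right. For (3) the paper is slightly slicker than your plan: rather than rechecking that the descent argument respects torsion-freeness and the MCM property, it observes that $\mcT(R)$ and $\mcC(R)$ are divisor-closed in $\mcM(R)$ (a direct summand of a torsion-free, resp.\ MCM, module is again such), so (3) is a formal consequence of (2).

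However, the self-contained argument you sketch for the descent step --- identify $\End_{\widehat R}(\widehat N)$ with $\End_R(N)\otimes_R\widehat R$ and ``lift'' the idempotent giving the splitting $\widehat N \cong \widehat M \oplus X$ to an idempotent of $\End_R(N)$ --- does not work as stated. Idempotents do not descend along $\End_R(N) \to \End_{\widehat R}(\widehat N)$; that failure is precisely why KRSA can fail over $R$ while holding over $\widehat R$. Concretely, if $N$ is indecomposable over $R$ but $\widehat N$ decomposes (as happens for the rings of Section \ref{5} with $\spl(R)\ge 1$), then $\End_{\widehat R}(\widehat N)$ has nontrivial idempotents while $\End_R(N)$ has none, so no general lifting principle can be at work. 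The descent nevertheless holds when the summand $\widehat M$ is \emph{extended}, but the proof is not an idempotent-lifting formality: the standard argument runs through Guralnick's approximation theorem (there is $e$ such that any homomorphism between the reductions modulo $\mathfrak m^{n+e}$ lifts to an honest homomorphism agreeing with it modulo $\mathfrak m^{n}$), applied to maps $\widehat M \to \widehat N \to \widehat M$ composing to the identity in order to produce maps $M \to N \to M$ whose composite is an automorphism of $M$. So either cite the reflection-of-summands theorem as the paper does, or replace the idempotent argument by the approximation argument; as written, the key step of your proposal is unsupported.
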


\begin{proof}

\smallskip
Property (1) holds by the Theorem of Krull-Remak-Schmidt-Azumaya (see \cite[Chapter 1]{Le-Wi12a}).

\smallskip
In \cite{Wi01},  Wiegand proved that the given embedding is a divisor homomorphism (see also \cite[Theorem 3.6]{Ba-Wi13a}). The characterization of cofinality follows from the definition and thus (2) holds.

Let $M, N$ be $R$-modules such that either $[M], [N] \in \mathcal V (R)$ where $\mathcal V(R)$ denotes either $\mathcal T(R)$ or $\mathfrak C(R)$ and suppose that $[\widehat M] $ divides $[\widehat N]$ in $\mathcal V (\widehat R)$. Then we have divisibility in $\mathcal M(\widehat R)$, and hence in $\mathcal M (R)$ by (2). Since $\mathcal V (R) \subset \mathcal M (R)$ is divisor-closed, it follows that $[M]$ divides $[N]$ in $\mathcal V (R)$, proving (3).

Together, (2) and (3) show that $\mcM(R), \mcT( R)$, and $\mcC( R)$ satisfy Property (c) in the definition of Krull monoids. Since each of these monoids is reduced,  the maps induced by $[M] \mapsto [\widehat M]$ are injective.
\end{proof}

\smallskip
Note that the embedding $\mcM(R) \hookrightarrow \mcM(\widehat R)$ is not necessarily cofinal, as is shown in \cite{Ha-Wi09} and \cite{Fr-SW-Wi08a}.
In Sections \ref{5} and \ref{6} we will study in detail the class group and the distribution of prime divisors of these Krull monoids, in the case of  one-dimensional and two-dimensional commutative Noetherian local rings.

\medskip
\noindent
{\bf Monoids of zero-sum sequences.} We now introduce  Krull monoids having a combinatorial flavor which are used  to model  arbitrary Krull monoids. Let $G$ be an additive abelian group and let $G_0 \subset G$ be a subset.
Following the tradition in Additive Group and Number Theory, we call the elements of $\mathcal F (G_0)$ {\it sequences} over $G_0$. Thus a sequence $S \in \mathcal F (G_0)$ will be written in the form
\[
S = g_1 \cdot \ldots \cdot g_l = \prod_{g \in G_0} g^{\mathsf v_g (S)}.
\]
We will use all notions (such as the length) as in general free abelian monoids. We set $-S = (-g_1) \cdot \ldots \cdot (-g_l)$,  and call $\sigma (S) = g_1 + \cdots + g_l \in G$ the {\it sum} of $S$. The monoid
\[
\mathcal B (G_0) = \{ S \in \mathcal F (G_0) \, \colon \sigma (S) = 0 \}
\]
is called the {\it monoid of zero-sum sequences} over $G_0$, and its elements are called {\it zero-sum sequences} over $G_0$.  Obviously, the inclusion $\mathcal B (G_0) \hookrightarrow \mathcal F (G_0)$ is a divisor homomorphism, and hence $\mathcal B (G_0)$ is a reduced Krull monoid by Property (c) in the definition of Krull monoids. By definition, the inclusion $\mathcal B (G_0) \hookrightarrow \mathcal F (G_0)$ is cofinal if and only if for every $g \in G_0$ there is an $S \in \mathcal B (G_0)$ with $g \t S$;  equivalently, there is no proper subset $G_0' \subsetneq G_0$ such that $\mathcal B (G_0') = \mathcal B (G_0)$. If $|G|\ne 2$, then $\mathcal C ( \mathcal B (G)) \cong G$, and every class contains precisely one prime divisor.

For every arithmetical invariant  $\ast(H)$, as  defined for a monoid
$H$ in Section \ref{3}, it is usual to  write $\ast(G_0)$ instead of $\ast(\mathcal B(G_0))$
(whenever the meaning is clear from the context). In particular, we set  $\mathcal A (G_0) = \mathcal
A (\mathcal B (G_0))$,  $\mathcal L (G_0) = \mathcal L (\mathcal B (G_0))$, $\mathsf c_{\mon} (G_0) = \mathsf c_{\mon} (\mathcal B (G_0))$, etc.

The study of sequences, subsequence sums, and zero-sums is a flourishing subfield of Additive Group and Number Theory (see, for example, \cite{Ga-Ge06b}, \cite{Ge-Ru09}, and \cite{Gr13a}). The {\it Davenport constant} $\mathsf D (G_0)$, defined as
\[
\mathsf D (G_0) = \sup \{ |U| \, \colon U \in \mathcal A (G_0) \} \in \mathbb N_0 \cup \{\infty\} \,,
\]
is among the most studied invariants in Additive Theory and will  play a crucial role in the computations of arithmetical invariants (see the discussion after Lemma \ref{4.4}). We will need the following two simple lemmas which we present here so as to not clutter the exposition of Section \ref{7}.

\smallskip
\begin{lemma} \label{4.2}
Suppose that the inclusion $\mathcal B (G_0) \hookrightarrow \mathcal F (G_0)$ is cofinal.
The following  are  equivalent.
\begin{enumerate}
\item[(a)] There exist nontrivial submonoids $H_1, H_2 \subset \mathcal B (G_0)$ such that $\mathcal B (G_0) = H_1 \times H_2$.

\item[(b)] There exist nonempty subsets $G_1, G_2 \subset G_0$  such that $G_0=G_1 \uplus G_2$ and $\mcB(G_0)=\mcB(G_1)\times \mcB(G_2)$.

\item[(c)] There exist nonempty subsets $G_1, G_2 \subset G_0$ such that $G_0=G_1 \uplus G_2$ and $\mcA(G_0) = \mcA(G_1) \uplus \mcA(G_2)$.
\end{enumerate}
\end{lemma}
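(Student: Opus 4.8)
The plan is to prove the cycle $(a)\Rightarrow(b)\Rightarrow(c)\Rightarrow(a)$, of which the only nontrivial implication is $(a)\Rightarrow(b)$; the other two are essentially bookkeeping. I would begin with the easy directions. For $(c)\Rightarrow(a)$, set $H_i=\mcB(G_i)=[\mcA(G_i)]$ for $i=1,2$. Since $\mcA(G_0)=\mcA(G_1)\uplus\mcA(G_2)$ and $\mcB(G_0)$ is atomic (it is a reduced Krull monoid, hence factorial over its atoms in the factorization monoid), every zero-sum sequence factors uniquely into atoms, each of which lies in exactly one of $\mcA(G_1)$, $\mcA(G_2)$; collecting the two types gives a well-defined decomposition $\mcB(G_0)=H_1\times H_2$, and both factors are nontrivial because each $G_i$ is nonempty (hence contains some atom, e.g. $g^{\ord(g)}$ if $g$ is torsion, or more carefully: cofinality of $\mcB(G_0)\hookrightarrow\mcF(G_0)$ guarantees every $g\in G_0$ lies in the support of some atom, and that atom must lie in $\mcA(G_i)$ where $g\in G_i$). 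For $(b)\Rightarrow(c)$: if $\mcB(G_0)=\mcB(G_1)\times\mcB(G_2)$ as an internal direct product, then $\mcA(\mcB(G_0))=\mcA(\mcB(G_1))\uplus\mcA(\mcB(G_2))$ because atoms of a direct product of reduced monoids are exactly the atoms of the two factors (an element of one factor cannot be written as a product involving a nonunit of the other), and an atom of $\mcB(G_1)$, being a sequence over $G_1$, cannot also be an atom of $\mcB(G_2)$ unless it is trivial; disjointness of the supports follows from $G_0=G_1\uplus G_2$.

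The substance is $(a)\Rightarrow(b)$. Given nontrivial submonoids $H_1,H_2\subset\mcB(G_0)$ with $\mcB(G_0)=H_1\times H_2$, I want to produce the partition of $G_0$. For each $g\in G_0$, cofinality gives an atom $U_g\in\mcA(G_0)$ with $g\in\supp(U_g)$. Write $U_g=V_gW_g$ with $V_g\in H_1$, $W_g\in H_2$; since $U_g$ is an atom, one of $V_g,W_g$ is trivial, so $U_g\in H_1$ or $U_g\in H_2$. I would then \emph{define} $G_i=\{g\in G_0 : $ every atom of $\mcB(G_0)$ whose support contains $g$ lies in $H_i\}$, and the crux is to show this is a well-defined partition — i.e., that an element $g$ cannot have one atom through it in $H_1$ and another in $H_2$. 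The key observation is: if $g\in\supp(U)$ with $U\in\mcA(G_0)\cap H_1$ and $g\in\supp(U')$ with $U'\in\mcA(G_0)\cap H_2$, consider the product $UU'\in\mcB(G_0)$; its unique factorization into atoms of $\mcB(G_0)$ is $UU'$ itself (length $2$), but since $\mcB(G_0)=H_1\times H_2$, the $H_1$-component of $UU'$ is $U$ and the $H_2$-component is $U'$, which is consistent — so this alone gives no contradiction, and I instead need to exploit that $g^k$-type or $(-g)$-containing sequences link the two.

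The better route is: fix $g\in G_0$ and suppose $g\in\supp(U)\cap\supp(U')$ with $U\in\mcA(G_0)\cap H_1$, $U'\in\mcA(G_0)\cap H_2$. By cofinality there is a zero-sum sequence $S$ with $(-g)\t S$; take $S\in\mcA(G_0)$, and $S$ lies in $H_1$ or $H_2$, say $S\in H_1$. Then in $\mcB(G_0)$ consider $(-g)^{v}U^{v}\cdots$ — more cleanly, observe that $g^{\,n}(-g)^{\,n}$ for suitable $n$... Actually the cleanest argument: show that for each $g$, the set $\mcA(G_0)_g:=\{U\in\mcA(G_0):g\in\supp U\}$ is entirely contained in $H_1$ or entirely in $H_2$, by noting that if $U,U'\in\mcA(G_0)_g$ then $g^{-1}U$ and $g^{-1}U'$ (sequences, not necessarily zero-sum) satisfy $(g^{-1}U)(g^{-1}U')g\cdot g$ is zero-sum, equivalently $U U' = (g\,g)\,T$ where $T=(g^{-1}U)(g^{-1}U')\in\mcF(G_0)$; but $gg$ need not be zero-sum. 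The genuinely correct device, and the one I expect to be the main obstacle to write cleanly, is to use that $\mcB(G_0)$ being a direct product forces a partition of the set of atoms, and then show atoms sharing a support element are in the same block — this last step uses a connectivity/irreducibility argument on the ``atom graph'' (atoms adjacent if their supports meet), together with the fact that cofinality makes this graph have the property that its connected components partition $G_0$; I would prove that each connected component $\mcA'\subset\mcA(G_0)$ generates a divisor-closed submonoid which must be a direct factor, hence is one of $H_1,H_2$ (if $\mcB(G_0)=H_1\times H_2$ with only two factors, the atom set has at most — actually exactly — the partition refining into these two). Setting $G_1=\bigcup_{U\in\mcA(G_0)\cap H_1}\supp(U)$ and $G_2=\bigcup_{U\in\mcA(G_0)\cap H_2}\supp(U)$ then gives $G_0=G_1\uplus G_2$ (cofinality gives the union is all of $G_0$; disjointness is the connectivity fact just argued), and $\mcB(G_i)=[\mcA(G_0)\cap H_i]=H_i$, completing $(a)\Rightarrow(b)$.
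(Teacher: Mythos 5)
Your (b)$\Rightarrow$(c) and (c)$\Rightarrow$(a) are essentially the paper's easy steps (the paper proves (c)$\Rightarrow$(b) and notes that (b)$\Rightarrow$(a) is clear), though one phrase is off: $\mcB(G_0)$ is atomic but in general not factorial, so ``every zero-sum sequence factors uniquely into atoms'' is false; what you actually need, and what the paper uses, is that $G_1\cap G_2=\emptyset$ forces the product of the $\mcA(G_1)$-atoms in any factorization of $B$ to equal $\prod_{g\in G_1}g^{\mathsf v_g(B)}$, which gives both existence and uniqueness of the splitting $B=B_1B_2$. The real problem is (a)$\Rightarrow$(b). The paper does not prove this implication at all --- it cites \cite[Proposition 2.5.6]{Ge-HK06a} --- whereas you attempt it directly and, as you yourself concede, never establish the crux: that two atoms whose supports share an element of $G_0$ must lie in the same direct factor, so that the components of your ``atom graph'' yield the partition.

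This is not merely a step left to the reader; under cofinality alone the claim, and indeed the implication (a)$\Rightarrow$(b) itself, fails. Let $G=\Z^{(2)}$ with basis $(a,b)$ and $G_0=\{a,-a,b,-b,a+b\}$. The inclusion $\mcB(G_0)\hookrightarrow\mcF(G_0)$ is cofinal, and $\mcA(G_0)=\{U_1,U_2,W\}$ with $U_1=a(-a)$, $U_2=b(-b)$, $W=(a+b)(-a)(-b)$; every zero-sum sequence is uniquely of the form $U_1^lU_2^mW^k$, so $\mcB(G_0)$ is free abelian on these three atoms and equals the nontrivial direct product $[\{U_1\}]\times[\{U_2,W\}]$, even though $U_1$ and $W$ share the support element $-a$ and no partition $G_0=G_1\uplus G_2$ as in (b) or (c) exists (any part containing $\supp(W)$ contains $-a$ and $-b$, hence must absorb $\supp(U_1)$ and $\supp(U_2)$ as well, leaving the other part empty). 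The point is that this embedding is not a divisor theory ($a$ is never a gcd of zero-sum sequences, since every zero-sum sequence containing $a$ also contains $-a$), and the cited Proposition 2.5.6 of \cite{Ge-HK06a} operates in the divisor-theory setting, which is also where the paper applies the lemma (cf.\ Proposition \ref{7.1}). So to close (a)$\Rightarrow$(b) you must either quote that reference, as the authors do, or give an argument that genuinely exploits the gcd property of a divisor theory; cofinality together with support-overlap connectivity cannot suffice.
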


\begin{proof}
Clearly (b) implies (a). The converse follows from \cite[Proposition 2.5.6]{Ge-HK06a}. The implication (b) implies (c) is obvious. We now show that (c) implies (b). Let $B \in \mathcal B (G_0)$. Since $\mathcal B (G_0)$ is a Krull monoid, it is atomic and hence $B = U_1 \cdot \ldots \cdot U_l$ with $U_1, \ldots, U_l \in \mathcal A (G_0)$. After renumbering (if necessary), we can find $k \in [0,l]$ such that $U_1, \ldots, U_k \in \mathcal A (G_1)$ and $U_{k+1}, \ldots, U_l \in \mathcal A (G_2)$. Thus $\mathcal B (G_0) = \mathcal B (G_1) \mathcal B (G_2)$. If $B \in \mathcal B (G_1) \cap \mathcal B (G_2)$, then $B$ is a product of atoms from $\mathcal A (G_1)$ and a product of atoms from $\mathcal A (G_2)$. Since their intersection is empty, both products are empty. Therefore $B = 1$ and hence $\mcB(G_0)=\mcB(G_1)\times \mcB(G_2)$.
\end{proof}

\smallskip
Lemma \ref{4.2}.(c)  shows that $\mathcal B (G^{\bullet})$ is not a direct product of submonoids.
 Suppose that $0 \in G_0$. Then $0 \in \mathcal B (G_0)$ is a prime element and $\mathcal B (G_0) = \mathcal B ( \{0\}) \times \mathcal B (G_0^{\bullet})$. But $\mathcal B ( \{0\}) = \mathcal F ( \{0\}) \cong (\mathbb N_0, +)$, and thus all the arithmetical invariants measuring the non-uniqueness of factorizations of $\mathcal B (G_0)$ and of $\mathcal B (G_0^{\bullet})$ coincide. Therefore we can assume that $0 \notin G_0$ whenever it is convenient.

\smallskip
\begin{lemma} \label{4.3}
Let $G$ be an abelian group and let $G_0 \subset G$ be a subset such that $1 < \mathsf D (G_0) < \infty$.
\begin{enumerate}[(1)]
\item We have $\rho (G_0) \le \mathsf D (G_0)/2$, $k \le \rho_k (G_0) \le k \rho (G_0)$, and $\rho (G_0)^{-1} k \le \lambda_k (G_0) \le k$ for all $k \in \N$.

\smallskip
\item Suppose that $\rho_2 (G_0) = \mathsf D (G_0)$. Then $\rho (G_0) = \mathsf D (G_0)/2$, and for all $k \in \mathbb N$, we have
      \[
      \rho_{2k} (G_0) = k \mathsf D (G_0) \quad \text{and} \quad k \mathsf D (G_0) + 1 \le \rho_{2k+1} (G_0) \le k \mathsf D (G_0) + \frac{\mathsf D (G_0)}{2} \,.
      \]
      Moreover, if $j, l \in \mathbb N_0$ such that $l \mathsf D (G_0) + j \ge 1$, then
      \[
      2l + \frac{2j}{\mathsf D (G_0)} \le \lambda_{l \mathsf D (G_0)+j} (G_0) \le 2l + j \,.
      \]
\end{enumerate}
\end{lemma}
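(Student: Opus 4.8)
The plan is to deduce everything from three facts already at hand: the classical length–counting bound $\rho(G_0)\le \mathsf D(G_0)/2$, the sub/super-additivity chain $\lambda_{k+l}(H)\le\lambda_k(H)+\lambda_l(H)\le k+l\le\rho_k(H)+\rho_l(H)\le\rho_{k+l}(H)$, and the identity $\rho(H)=\sup\{\rho_k(H)/k:k\in\N\}$. First I would make the harmless reduction to the case $0\notin G_0$: since $0\in\mathcal B(G_0)$ is prime we have $\mathcal B(G_0)=\mathcal B(\{0\})\times\mathcal B(G_0^{\bullet})$, so all arithmetical invariants of $\mathcal B(G_0)$ and $\mathcal B(G_0^{\bullet})$ coincide, and because $\mathsf D(G_0)>1$ there is an atom of length $\ge 2$, forcing $\mathsf D(G_0)=\mathsf D(G_0^{\bullet})$. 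After this reduction every atom of $\mathcal B(G_0)$ has length in $[2,\mathsf D(G_0)]$.

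For part (1): given $B$ with $\max\mathsf L(B)=k$ and $\min\mathsf L(B)=m$, comparing the sequence length $|B|$ computed from the two factorizations gives $2k\le|B|\le m\,\mathsf D(G_0)$, hence $\rho(\mathsf L(B))=k/m\le\mathsf D(G_0)/2$ and so $\rho(G_0)\le\mathsf D(G_0)/2<\infty$. The inequality $k\le\rho_k(G_0)$ is just $k\in\mathcal U_k(G_0)$, and $\rho_k(G_0)\le k\rho(G_0)$ is $\rho_k(G_0)/k\le\rho(G_0)$. For the $\lambda$'s, $\lambda_k(G_0)\le k$ is the definition; and since $\lambda_k(G_0)=\min\mathcal U_k(G_0)\in\mathcal U_k(G_0)$ there is $B$ with $\{\lambda_k(G_0),k\}\subset\mathsf L(B)$, whence $k/\lambda_k(G_0)\le\rho(\mathsf L(B))\le\rho(G_0)$, i.e.\ $\lambda_k(G_0)\ge\rho(G_0)^{-1}k$.

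For part (2): from $\rho_2(G_0)=\mathsf D(G_0)$ and $\rho(G_0)=\sup\{\rho_k(G_0)/k\}$ we get $\rho(G_0)\ge\mathsf D(G_0)/2$, so by (1) $\rho(G_0)=\mathsf D(G_0)/2$. Since $\rho_2(G_0)<\infty$, the set $\mathcal U_2(G_0)$ is finite, so $\mathsf D(G_0)\in\mathcal U_2(G_0)$; pick $B_0$ with $\{2,\mathsf D(G_0)\}\subset\mathsf L(B_0)$. Then $\{2k,k\mathsf D(G_0)\}\subset\mathsf L(B_0^{\,k})$, which gives $\rho_{2k}(G_0)\ge k\mathsf D(G_0)$ and $\lambda_{l\mathsf D(G_0)}(G_0)\le 2l$; combined with $\rho_{2k}(G_0)\le 2k\rho(G_0)=k\mathsf D(G_0)$ from (1) this yields $\rho_{2k}(G_0)=k\mathsf D(G_0)$. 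For odd indices, superadditivity and $\rho_1(G_0)=1$ give $\rho_{2k+1}(G_0)\ge\rho_{2k}(G_0)+\rho_1(G_0)=k\mathsf D(G_0)+1$, while $\rho_{2k+1}(G_0)\le(2k+1)\rho(G_0)=k\mathsf D(G_0)+\mathsf D(G_0)/2$. Finally, $\lambda_{l\mathsf D(G_0)+j}(G_0)\ge\rho(G_0)^{-1}\bigl(l\mathsf D(G_0)+j\bigr)=2l+2j/\mathsf D(G_0)$ is immediate from (1), and $\lambda_{l\mathsf D(G_0)+j}(G_0)\le\lambda_{l\mathsf D(G_0)}(G_0)+\lambda_j(G_0)\le 2l+j$ uses subadditivity together with $\lambda_j(G_0)\le j$ and the bound $\lambda_{l\mathsf D(G_0)}(G_0)\le 2l$ just obtained, the cases $l=0$ or $j=0$ being trivial.

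I do not expect a genuine obstacle: the statement is essentially bookkeeping on top of the standard $\rho_k$/$\lambda_k$ inequalities. The only points needing a line of care are the attainment claims — that $\mathsf D(G_0)\in\mathcal U_2(G_0)$ and $\lambda_k(G_0)\in\mathcal U_k(G_0)$ — which follow from finiteness of these unions once $\mathsf D(G_0)<\infty$, and the reduction to $0\notin G_0$ so that atom lengths are bounded below by $2$ in the proof of $\rho(G_0)\le\mathsf D(G_0)/2$.
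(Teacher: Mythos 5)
Your proof is correct and follows essentially the same route as the paper's: part (1) rests on the counting bound $2\sup L\le |B|\le \mathsf D (G_0)\min L$ together with $\rho (G_0)=\sup\{\rho_k(G_0)/k\}$, and part (2) on the super/subadditivity of $\rho_k$ and $\lambda_k$ combined with the bounds from (1). The only cosmetic differences are that you realize $\rho_{2k}(G_0)\ge k\mathsf D (G_0)$ and $\lambda_{l\mathsf D (G_0)}(G_0)\le 2l$ via an explicit witness $B_0^{\,k}$ instead of quoting $\rho_k+\rho_l\le\rho_{k+l}$ abstractly, and that you spell out the reduction to $0\notin G_0$ and the attainment of $\sup\mathcal U_2(G_0)$, which the paper leaves implicit.
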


\begin{proof}
By definition, $\lambda_k (G_0) \le k \le \rho_k (G_0)$. Since $\rho (G_0) = \sup \{ \rho_k (G_0)/k \, \colon k \in \mathbb N \}$, it follows that $\rho_k (G_0) \le k \rho (G_0)$ and $k \le \rho (G_0) \lambda_k (G_0)$. Furthermore, $2 \rho_k (G_0) \le k \mathsf D (G_0)$ for all $k \in \mathbb N$ implies that $\rho (G_0) \le \mathsf D (G_0)/2$. This gives (1).

\smallskip
We now prove (2). Since $\rho_k (G_0) + \rho_l (G_0) \le \rho_{k+l} (G_0)$ for every $k, l \in \mathbb N$, (1) implies  that
\[
k \mathsf D (G_0) = k \rho_2 (G_0) \le \rho_{2k} (G_0) \le (2k) \frac{\mathsf D (G_0)}{2} = k \mathsf D (G_0) \,,
\]
and hence
\[
k \mathsf D (G_0) + 1 = \rho_{2k}(G_0) + \rho_1 (G_0) \le \rho_{2k+1} (G_0) \le (2k+1) \rho (G_0)  \le  k \mathsf D (G_0) + \frac{\mathsf D (G_0)}{2} \,.
      \]
Let $j, l \in \mathbb N_0$ such that $l \mathsf D (G_0) + j \ge 1$. For convenience,   set $\rho_0 (G_0) = \lambda_0 (G_0) = 0$. Since
\[
2l = \frac{2}{\mathsf D (G_0)} l \mathsf D (G_0) \le \lambda_{l \mathsf D (G_0)} (G_0) \quad \text{and} \quad \rho_{2l} (G_0) = l \mathsf D (G_0) \,,
\]
it follows that $\lambda_{l \mathsf D (G_0)}(G_0) = 2l$, and hence
\[
\begin{aligned}
2l + \frac{2j}{\mathsf D (G_0)} & = \frac{2}{\mathsf D (G_0)} \big( l \mathsf D (G_0) + j \big) = \rho (G_0)^{-1} \big( l \mathsf D (G_0) + j \big) \\
 & \le \lambda_{l \mathsf D (G_0)+j} (G_0) \le \lambda_{l \mathsf D (G_0)}(G_0) + \lambda_j (G_0) \le 2l + j \,. \qedhere
\end{aligned}
\]
\end{proof}

\medskip
\noindent
{\bf Transfer homomorphisms.}
Transfer homomorphisms are a central tool in Factorization Theory. In order to study a given monoid $H$, one constructs a transfer homomorphism $\theta \colon H \to B$ to a simpler monoid $B$, studies factorizations in $B$, and then lifts arithmetical results from $B$ to $H$. In the case of Krull monoids, transfer homomorphisms allow one to study nearly all of the arithmetical invariants introduced in Section \ref{3} in an associated monoid of zero-sum sequences. We now gather the basic tools necessary for this approach.

\newpage
A monoid homomorphism \ $\theta \colon H \to B$ is called a \ {\it
transfer homomorphism} \ if it has the following properties:
\begin{enumerate}
\item[]
\begin{enumerate}
\item[{\bf (T\,1)\,}] $B = \theta(H) B^\times$ \ and \ $\theta
^{-1} (B^\times) = H^\times$.

\smallskip

\item[{\bf (T\,2)\,}] If $u \in H$, \ $b,\,c \in B$ \ and \ $\theta
(u) = bc$, then there exist \ $v,\,w \in H$ \ such that \ $u = vw$,
\ $\theta (v) \simeq b$ \ and \ $\theta (w) \simeq c$.
\end{enumerate}\end{enumerate}

The next result provides the link between the arithmetic of Krull monoids and Additive Group and Number Theory. This interplay is highlighted in the survey \cite{Ge09a}.

\medskip
\begin{lemma} \label{4.4}
Let $H$ be a Krull monoid, $\varphi \colon H \to D = \mathcal F (\mathcal P)$
a cofinal divisor homomorphism, $G = \mathcal C (\varphi)$ its class
group, and $G_{\mathcal P} \subset G$ the set of classes containing prime
divisors. Let $\widetilde{\boldsymbol \beta} \colon D \to \mathcal F
(G_{\mathcal P})$ denote the unique homomorphism defined by
$\widetilde{\boldsymbol \beta} (p) = [p]$ for all $p \in \mathcal P$.
\begin{enumerate}[(1)]
\item The inclusion $\mathcal B (G_{\mathcal P}) \hookrightarrow \mathcal F (G_{\mathcal P})$ is cofinal, and the homomorphism $\boldsymbol \beta = \widetilde{\boldsymbol \beta} \circ \varphi \colon H \to \mathcal B
      (G_{\mathcal P})$ is a transfer homomorphism.

\item For all  $a \in H$, $\mathsf L_H (a) = \mathsf L_{\mathcal B (G_{\mathcal P})} ( \boldsymbol \beta (a) )$. In particular,  $\mathcal L (H) = \mathcal L (G_{\mathcal P})$, $\Delta (H) = \Delta (G_{\mathcal P})$, $\mathcal U_k (H) = \mathcal U_k (G_{\mathcal P})$, $\rho_k (H) = \rho_k (G_{\mathcal P})$, and $\lambda_k (H) = \lambda_k (G_{\mathcal P})$ for each $k \in \mathbb N$.

\item Suppose that $H$ is not factorial. Then $\mathsf c (H) = \mathsf c (G_{\mathcal P})$, $\mathsf c_{\adj} (H) = \mathsf c_{\adj} (G_{\mathcal P})$, $\mathsf c_{\mon} (H) = \mathsf c_{\mon} (G_{\mathcal P})$, $\Delta^* (H) = \Delta^* (G_{\mathcal P})$, and $\omega (H) \le \mathsf D (G_{\mathcal P})$.
\end{enumerate}
\end{lemma}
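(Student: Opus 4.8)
The plan is to establish part (1) first, since parts (2) and (3) follow by general transfer-homomorphism machinery once we know $\boldsymbol\beta$ is a transfer homomorphism and once we identify the various arithmetical invariants of $H$ with those of $\mathcal B(G_{\mathcal P})$. For (1), I would first check that the inclusion $\mathcal B(G_{\mathcal P}) \hookrightarrow \mathcal F(G_{\mathcal P})$ is cofinal: since $\varphi$ is cofinal, $\mathcal C(\varphi) = D/H$, so every class $[a]$ with $a \in D$ lies in $[G_{\mathcal P}]$; in particular, for each $g \in G_{\mathcal P}$ the class $-g$ also lies in $[G_{\mathcal P}]$, and one produces a zero-sum sequence over $G_{\mathcal P}$ containing a prescribed $g$, which is exactly the cofinality criterion recorded just before Lemma \ref{4.2}. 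Next, to see that $\boldsymbol\beta = \widetilde{\boldsymbol\beta} \circ \varphi$ maps $H$ into $\mathcal B(G_{\mathcal P})$: for $a \in H$ we have $[\varphi(a)] = [1]$ by the description $\varphi(H) = \{a \in D : [a] = [1]\}$, and $[1]$ corresponds to the zero element of $G$, so $\sigma(\boldsymbol\beta(a)) = 0$. Conversely one must check surjectivity onto $\mathcal B(G_{\mathcal P})$ up to units, which amounts to: given $S = [p_1]\cdots[p_l] \in \mathcal B(G_{\mathcal P})$ with $p_i \in \mathcal P$, the element $p_1 \cdots p_l \in D$ has trivial class and hence lies in $\varphi(H)$ — here I would use that $\varphi$ is a divisor homomorphism (so $\varphi(H)$ is saturated in $D$) together with $\varphi(H) = \{a \in D : [a]=[1]\}$. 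This handles (T\,1) since $\mathcal B(G_{\mathcal P})$ is reduced and $H$ is reduced (the class group description forces $\boldsymbol\beta^{-1}(\{1\}) = H^\times$).

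The substantive point of (1) is verifying (T\,2): if $a \in H$ and $\boldsymbol\beta(a) = BC$ in $\mathcal B(G_{\mathcal P})$, I need $a = bc$ in $H$ with $\boldsymbol\beta(b) = B$, $\boldsymbol\beta(c) = C$. The idea is to work inside the free monoid $D = \mathcal F(\mathcal P)$: write $\varphi(a) = p_1 \cdots p_n$ with $p_i \in \mathcal P$, so that $\boldsymbol\beta(a) = [p_1]\cdots[p_n]$. The factorization $BC$ of $\boldsymbol\beta(a)$ in $\mathcal B(G_{\mathcal P})$ corresponds, after matching up the letters $[p_i]$, to a partition of the index set $[1,n]$ into two blocks; set $b' = \prod_{i \in \text{block 1}} p_i$ and $c' = \prod_{i \in \text{block 2}} p_i$ in $D$. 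Then $b'c' = \varphi(a)$, and $\widetilde{\boldsymbol\beta}(b') = B$, $\widetilde{\boldsymbol\beta}(c') = C$; in particular $[b'] = [c'] = [1]$ in $G$ (as $B, C$ are zero-sum), so $b', c' \in \varphi(H)$. Because $\varphi$ is a divisor homomorphism, $b' \t \varphi(a)$ in $D$ with $b' \in \varphi(H)$ forces $b' = \varphi(b)$ for some $b \t a$ in $H$; write $a = bc$ and one checks $\varphi(c) = c'$ using cancellation in $D$. Then $\boldsymbol\beta(b) = \widetilde{\boldsymbol\beta}(b') = B$ and likewise for $c$. This is the step I expect to be the main obstacle — one has to be careful that the letter-matching between $\boldsymbol\beta(a) = BC$ in $\mathcal B(G_{\mathcal P})$ and the factorization $\varphi(a) = p_1 \cdots p_n$ in $\mathcal F(\mathcal P)$ is done correctly (several $p_i$ may map to the same class $g$, so one selects an appropriate sub-multiset), and that $b' \in \varphi(H)$ really does pull back to a divisor of $a$ in $H$, which uses the saturation property of divisor homomorphisms.

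For parts (2) and (3), once $\boldsymbol\beta$ is known to be a transfer homomorphism, the standard results on transfer homomorphisms (see \cite[Section 3.2]{Ge-HK06a}) give immediately that $\mathsf L_H(a) = \mathsf L_{\mathcal B(G_{\mathcal P})}(\boldsymbol\beta(a))$ for all $a \in H$, and hence $\mathcal L(H) = \mathcal L(G_{\mathcal P})$; the equalities $\Delta(H) = \Delta(G_{\mathcal P})$, $\mathcal U_k(H) = \mathcal U_k(G_{\mathcal P})$, $\rho_k(H) = \rho_k(G_{\mathcal P})$, $\lambda_k(H) = \lambda_k(G_{\mathcal P})$ are then purely formal consequences of the definitions of these invariants in terms of sets of lengths. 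For (3), transfer homomorphisms are compatible with distances between factorizations (one checks that $\boldsymbol\beta$ induces a surjection $\mathsf Z_H(a) \to \mathsf Z_{\mathcal B(G_{\mathcal P})}(\boldsymbol\beta(a))$ that does not increase distances and that factorizations can be lifted chain-by-chain), which yields $\mathsf c(H) = \mathsf c(G_{\mathcal P})$, $\mathsf c_{\adj}(H) = \mathsf c_{\adj}(G_{\mathcal P})$, and $\mathsf c_{\mon}(H) = \mathsf c_{\mon}(G_{\mathcal P})$; the hypothesis that $H$ is not factorial is needed to rule out the degenerate case where these degrees vanish on one side but not the other. The identity $\Delta^*(H) = \Delta^*(G_{\mathcal P})$ follows because divisor-closed submonoids of $H$ correspond under $\boldsymbol\beta$ to divisor-closed submonoids of $\mathcal B(G_{\mathcal P})$ of the form $\mathcal B(G_1)$ for $G_1 \subset G_{\mathcal P}$, and the inequality $\omega(H) \le \mathsf D(G_{\mathcal P})$ comes from bounding $\omega(H,u)$ by the length of the longest atom in $\mathcal B(G_{\mathcal P})$ via the transfer property. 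I would cite \cite[Section 3.2]{Ge-HK06a} for the general transfer-homomorphism lemmas and only spell out the verification of (T\,1) and (T\,2) in detail.
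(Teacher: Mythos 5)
Your proposal is correct, and it is essentially the argument the paper delegates to the literature: the paper's own ``proof'' of Lemma \ref{4.4} consists of citing \cite[Section 3.4]{Ge-HK06a} (and \cite{Ge-Gr-Sc-Sc10} for the monotone catenary degree), and your verification of cofinality, {\bf (T\,1)}, and {\bf (T\,2)} in part (1) is precisely the standard construction of the block homomorphism carried out in that reference, with the key steps (the letter-matching in $\mathcal F(\mathcal P)$, the use of $\varphi(H)=\{a\in D\,\colon\,[a]=[1]\}$, and the saturation property to pull $b'$ back to a divisor of $a$) correctly identified. The only place where your sketch of (3) is lighter than what the citation actually provides is the equality (rather than just an inequality) for the catenary degrees: ``chains can be lifted'' gives $\mathsf c(G_{\mathcal P})\le \mathsf c(H)$ and the reverse bound requires controlling the catenary degree in the fibres of $\boldsymbol\beta$ (one shows it is at most $2$), which is where the hypothesis that $H$ is not factorial genuinely enters; the paper itself flags that (3) is the subtle part and that the analogous statement fails for the tame degree, so this is worth making explicit if you write the proof out in full.
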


\begin{proof}
See \cite[Section 3.4]{Ge-HK06a} for details pertaining to most of the invariants. For the statements on the monotone catenary degree see \cite{Ge-Gr-Sc-Sc10}. Roughly speaking, all of the statements in (2) are straightforward, but the statements in (3) are more subtle. Note that a  statement corresponding to (3) does not hold true for the tame degree (see \cite{Ga-Ge-Sc14a}).
\end{proof}

\medskip
In summary, if the monoid of modules $\mathcal V (R)$  is Krull with class group $G$ and set $G_{\mathcal P}$ of classes containing prime divisors, then the arithmetic of direct-sum decompositions can be studied in the monoid $\mathcal B (G_{\mathcal P})$ of zero-sum sequences over $G_{\mathcal P}$.
In particular, if $H = \mathcal M (R)$ and $D = \mathcal M (\widehat R)$ as in Lemma \ref{module_divisorhom} and all notation is as in Lemma \ref{4.4}, then
\[
\mathsf D (G_{\mathcal P}) = \sup \{ l \, \colon \, \widehat M \cong N_1 \oplus \cdots \oplus N_l \ \text{such that} \ [M] \in \mathcal  A (H) \  \text{ and  $[N_i] \in \mathcal A (D)$ for each $i \in [1,l]$} \} \,.
\]

\smallskip
\section{Monoids of modules: Class groups and distribution of prime divisors I} \label{5}

\noindent
Throughout this section we use the following setup:
\begin{itemize}
\item[\bf{(S)}] {\it $(R,\mathfrak m)$  denotes a one-dimensional analytically unramified commutative Noetherian local ring with unique maximal ideal $\mathfrak m$, $k = R/\mathfrak m$ its residue  field,   $\widehat{R}$ its $\m$-adic completion, and $\spl(R)=|\spec(\widehat R)|-|\spec(R)|$ the splitting number of $R$.}
\end{itemize}

In this section we investigate the characteristic of the Krull monoids $\mathcal M (R)$ and $\mathcal T (R)$ for certain one-dimensional local rings. This study is based on deep module-theoretic work achieved over the past several decades. We gather together module-theoretic information and proceed using a recent construction  (see \ref{diophantinemonoid}) to obtain results on the class group and on the set $G_{\mathcal P}$ of classes containing prime divisors. The literature does not yet contain  a systematic treatment along these lines. Indeed, early results (see Theorem \ref{5.2} below) indicated only the existence of  extremal  sets  $G_{\mathcal P}$ which imply either trivial direct-sum decompositions or  that all arithmetical invariants describing the direct-sum decompositions are  infinite. In either case there was no need for further arithmetical study. Here we reveal that finite and well-structured sets $G_{\mathcal P}$ occur in abundance. Thus, as we will see in Section \ref{7},  the arithmetical behavior of direct-sum decompositions is well-structured.

\smallskip
 We first gather  basic ring and module-theoretic properties. By definition,  $\widehat R$ and  $R$ are both reduced and the integral closure of $R$ is a finitely generated $R$-module. Moreover, we have $\mathfrak C (R) = \mathcal T (R)$. Let  $M$ be a finitely generated $R$-module. If ${\mathfrak p}$ is a minimal prime ideal of $R$,  then $R_{\mathfrak p}$ is a field,  $M_{\mathfrak p}$ is a finite-dimensional  $R_{\mathfrak p}$-vector space, and we set $\rank_{\mathfrak p}(M)= \dim_{R_{\mathfrak p}}(M_{\mathfrak p})$. If $\mathfrak p_1, \ldots, \mathfrak p_s$ are the minimal prime ideals of $R$, then $\rank (M) = (r_1, \ldots, r_s)$ where $r_i = \rank_{\mathfrak p_i} (M)$ for all $i \in [1,s]$. The module $M$ is said to have constant rank if $r_1= \cdots = r_s$.

We start with a  beautiful result of Levy and Odenthall, which gives us a tool to determine which finitely generated $\widehat R$-modules are extended from $R$-modules.

\begin{proposition}\cite[Theorem 6.2]{Le-Od96a}\label{LevyOdenthall}
  Let $M$ be a finitely generated torsion-free $\widehat R$-module. Then $M$ is extended  if and only if $\rank_{\mathfrak p}(M)=\rank_{\mathfrak q}(M)$ whenever $\mathfrak p$ and $\mathfrak q$ are minimal prime ideals of $\widehat{R}$  with $\mathfrak p \cap R= \mathfrak q \cap R$. In particular, if $R$ is a domain, then $M$ is extended  if and only if its rank is constant.
\end{proposition}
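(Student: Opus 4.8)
The plan is to prove the two implications separately: the forward direction is a routine base-change computation, while the converse rests on the conductor-square description of torsion-free modules over one-dimensional reduced rings.

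\emph{Forward direction.} Suppose $M \cong N \otimes_R \widehat R$ for a finitely generated $R$-module $N$. Since $R \to \widehat R$ is faithfully flat and $\dim \widehat R = \dim R = 1$, going-down shows that each minimal prime $\mathfrak p$ of $\widehat R$ contracts to a minimal prime $\mathfrak p \cap R$ of $R$; as $R$ and $\widehat R$ are reduced, $R_{\mathfrak p \cap R}$ and $\widehat R_{\mathfrak p}$ are fields and $\widehat R_{\mathfrak p}$ is a field extension of $R_{\mathfrak p \cap R}$. Localizing the isomorphism at $\mathfrak p$ gives $M_{\mathfrak p} \cong N_{\mathfrak p \cap R} \otimes_{R_{\mathfrak p \cap R}} \widehat R_{\mathfrak p}$, so $\rank_{\mathfrak p}(M) = \dim_{R_{\mathfrak p \cap R}} N_{\mathfrak p \cap R} = \rank_{\mathfrak p \cap R}(N)$ depends only on $\mathfrak p \cap R$. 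Hence $\rank_{\mathfrak p}(M) = \rank_{\mathfrak q}(M)$ whenever $\mathfrak p \cap R = \mathfrak q \cap R$.

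\emph{Converse: the setup.} Let $\overline R$ be the integral closure of $R$ in its total ring of fractions $K$; by the standing hypothesis (S) it is a finitely generated $R$-module, and since $R$ is local, $\overline R = \prod_{j=1}^s \overline{R_j}$ is a finite product of semilocal Dedekind domains, hence of PIDs, indexed by the minimal primes $\mathfrak p_1, \dots, \mathfrak p_s$ of $R$. Because $\widehat R$ is reduced, $\widehat{\overline R} := \overline R \otimes_R \widehat R$ is the integral closure of $\widehat R$ and equals $\prod_{i=1}^t V_i$, a finite product of complete discrete valuation rings indexed by the minimal primes $\widehat{\mathfrak p}_1, \dots, \widehat{\mathfrak p}_t$ of $\widehat R$, where the factor $V_i$ lies over the $\overline R$-factor $\overline{R_j}$ with $\widehat{\mathfrak p}_i \cap R = \mathfrak p_j$. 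The conductor $\mathfrak{c} = (R :_{\overline R} \overline R)$ contains a power of $\mathfrak m$, since $\overline R/R$ is finitely generated with support $\{\mathfrak m\}$ and hence of finite length; thus $R/\mathfrak{c}$ and $\overline R/\mathfrak{c}$ are already $\mathfrak m$-adically complete, and because forming the conductor commutes with the flat base change $R \to \widehat R$, we get $\widehat{\mathfrak{c}} := (\widehat R :_{\widehat{\overline R}} \widehat{\overline R}) = \mathfrak{c}\widehat R$ together with natural isomorphisms $R/\mathfrak{c} \xrightarrow{\sim} \widehat R/\widehat{\mathfrak{c}}$ and $\overline R/\mathfrak{c} \xrightarrow{\sim} \widehat{\overline R}/\widehat{\mathfrak{c}}$ compatible with inclusions. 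In particular the conductor squares of $R$ and of $\widehat R$ share the same Artinian bottom row.

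\emph{Converse: patching and conclusion.} I would then invoke the conductor-square (Milnor-type) description of torsion-free modules over a one-dimensional reduced Noetherian ring $A$ with module-finite normalization $\overline A$ and conductor $\mathfrak{c}_A$: a finitely generated torsion-free $A$-module $N$ amounts to a pair $(P, L)$, where $P = \overline A N$ is a finitely generated projective $\overline A$-module and $L = N/\mathfrak{c}_A P \subseteq P/\mathfrak{c}_A P$ is an $A/\mathfrak{c}_A$-submodule generating $P/\mathfrak{c}_A P$ over $\overline A/\mathfrak{c}_A$; conversely $N$ is the pullback of $P \to P/\mathfrak{c}_A P \hookleftarrow L$. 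This assignment is functorial and, being built from pullbacks and $\mathfrak{c}_A$, commutes with the exact functor $-\otimes_A \widehat A$ under the identifications above: $-\otimes_A \widehat A$ fixes the finite-length module $L$ and carries $P$ to the $\widehat{\overline A}$-projective $P \otimes_{\overline A}\widehat{\overline A}$. Writing the datum of $M$ as $(P_M, L_M)$ with $P_M$ projective over $\widehat{\overline R} = \prod_i V_i$, we have $P_M \cong \prod_i V_i^{\,n_i}$ with $n_i = \rank_{\widehat{\mathfrak p}_i}(M)$, and $L_M$ is automatically defined over $R/\mathfrak{c} = \widehat R/\widehat{\mathfrak{c}}$. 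Hence $M$ is extended from $R$ if and only if $(P_M, L_M)$ is the image of a datum over $R$, which — as $L_M$ already lives over the common bottom ring — holds if and only if $P_M$ is extended from a projective $\overline R$-module. A projective $\overline R$-module is $\prod_j \overline{R_j}^{\,m_j}$, and base-changes to $\prod_i V_i^{\,m_{j(i)}}$ with $j(i)$ given by $\widehat{\mathfrak p}_i \cap R = \mathfrak p_{j(i)}$; so $P_M$ is extended if and only if $n_i$ depends only on $\widehat{\mathfrak p}_i \cap R$, which is exactly the stated rank condition. When $R$ is a domain, $s = 1$ and $\overline R$ is a PID, so the condition reads ``$\rank M$ is constant'', giving the final assertion.

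\emph{Main obstacle.} The delicate point is the torsion-free conductor-square patching and its compatibility with $\mathfrak m$-adic completion: torsion-free modules, unlike projective ones, do not form the literal pullback category of the Milnor square, so one must use the refined version valid in Krull dimension one, checking that the normalization module $\overline A N$ is indeed projective (automatic here, $\overline A$ being a product of Dedekind domains) and that passing to completions transports the bottom datum correctly via the isomorphisms $R/\mathfrak{c} \cong \widehat R/\widehat{\mathfrak{c}}$ and $\overline R/\mathfrak{c} \cong \widehat{\overline R}/\widehat{\mathfrak{c}}$. Everything else is bookkeeping with minimal primes, conductors, and ranks.
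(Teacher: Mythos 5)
The paper offers no proof of this proposition---it is imported verbatim from Levy and Odenthal---so there is no in-paper argument to measure yours against; judged on its own terms, your proof is correct. The forward direction is the expected base-change computation. For the converse, your conductor-square (Artinian-pair) descent does work, and the points you single out as delicate are exactly where the standing hypotheses enter: analytic unramifiedness is what makes $\overline R$ module-finite and makes $\overline R \otimes_R \widehat R$ reduced and normal, hence equal to the integral closure of $\widehat R$ and a product of complete DVRs indexed by the minimal primes of $\widehat R$, while the finite length of $\overline R/\mathfrak c$ pins down the bottom row of the square under completion. A more economical route, and the one closer to the cited source, avoids the normalization entirely: one checks that $R \cong \widehat R \times_{\widehat K} K$, where $K$ is the total quotient ring of $R$ and $\widehat K = K \otimes_R \widehat R$, so that a finitely generated torsion-free $R$-module is equivalent to a patching datum $(M, V, \theta)$ with $V$ a module over the finite product of fields $K$ (one factor per minimal prime of $R$) and $\theta$ an isomorphism over $\widehat K$ (one factor per minimal prime of $\widehat R$); whether a given $M$ descends then reduces at once to whether the ranks of $\widehat K \otimes_{\widehat R} M$ over the factors of $\widehat K$ are constant on each fibre of the contraction map on minimal primes, which is the stated condition. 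What your approach buys is the explicit pair $(P_M, L_M)$ over the normalization, essentially the data underlying Construction \ref{diophantinemonoid}; what it costs is the extra verifications that normalization, conductor and completion interact as claimed---all true in the setting of {\bf (S)}, but each a lemma in its own right, and you would do well to cite or prove the identification $\overline R \otimes_R \widehat R \cong \overline{\widehat R}$ explicitly rather than asserting it.
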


\smallskip
We start our discussion with a result which completely determines the characteristic of the Krull monoid $\mathcal M (R)$. The arithmetic of this monoid is studied in Proposition \ref{7.2}.2.

\begin{theorem} \cite[Theorem 6.3]{H-K-K-W07} \label{5.2}
Let $G$ denote the class group of $\mathcal M (R)$ and let $G_{\mathcal P} \subset G$ denote the set of classes containing prime divisors.
\begin{enumerate}[(1)]
\item If $R$ is not Dedekind-like, then $G$ is free abelian of rank $\spl (R)$ and  each  class contains $|k|\aleph_0$ prime divisors.
\item If $R$ is a DVR, then $G=0$.
\item If $R$ is Dedekind-like but not a DVR, then either
\begin{enumerate}
\item $\spl(R) =0$ and $G=0$, or
\item $\spl(R) =1$, $G$ is infinite cyclic with $G = \langle e \rangle$ and $G_{\mathcal P}=\{-e,0,e\}$. Each of the classes $e$ and $-e$ contain  $\aleph_0$ prime divisors and the class $0$ contains  $|k|\aleph_0$ prime divisors.
\end{enumerate}
\end{enumerate}
\end{theorem}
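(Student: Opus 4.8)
The plan is to extract both $G$ and $G_{\mathcal P}$ from the divisor homomorphism $\varphi\colon \mathcal M(R)\hookrightarrow \mathcal M(\widehat R)$ of Lemma \ref{module_divisorhom}. Since $\mathcal M(\widehat R)=\mathcal F(\mathcal P)$ is free abelian on the set $\mathcal P$ of isomorphism classes of indecomposable finitely generated $\widehat R$-modules, and $\varphi(\mathcal M(R))$ is exactly the submonoid of classes of extended modules, determining the characteristic of $\mathcal M(R)$ amounts to describing which $\widehat R$-modules are extended (this pins down $\mathsf q(\varphi(\mathcal M(R)))\subseteq \mathsf q(\mathcal M(\widehat R))$, hence $G$) together with counting, for each class $g\in G$, the $p\in\mathcal P$ with $[p]=g$. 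First I would record which modules are extended: every finite-length $\widehat R$-module is extended, being already a module over $\widehat R/\widehat{\mathfrak m}^{\,n}=R/\mathfrak m^{n}$ and complete; for finitely generated torsion-free modules Proposition \ref{LevyOdenthall} supplies the rank criterion; and a short reduction (or a citation to [Le-Od96a], [H-K-K-W07]) handles the general finitely generated module. The upshot is that $M$ is extended exactly when its rank vector $\mathbf r(M)=(\rank_{\mathfrak p}(M))_{\mathfrak p\in\minspec\widehat R}$ is constant on each block $\{\mathfrak p\in\minspec\widehat R : \mathfrak p\cap R=\mathfrak q\}$, with $\mathfrak q$ running over $\minspec R$.

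Next I would convert this into a class-group computation. Writing $m=|\minspec\widehat R|$, $n=|\minspec R|$ and letting $L\subseteq\Z^{m}$ be the subgroup of block-constant vectors, we have $\Z^{m}/L\cong \Z^{\,m-n}=\Z^{\spl(R)}$, and the composite $\bar{\mathbf r}\colon \mathsf q(\mathcal M(\widehat R))\xrightarrow{\mathbf r}\Z^{m}\twoheadrightarrow\Z^{m}/L$ vanishes on exactly the subgroup generated by extended modules, so $\ker\bar{\mathbf r}\supseteq \mathsf q(\varphi(\mathcal M(R)))$. The reverse inclusion — and with it the identification $G\cong\operatorname{im}\bar{\mathbf r}$ — requires that every rank vector modulo $L$ be realized by a difference of extended modules; when $R$ is not Dedekind-like this follows from R. Wiegand's realization theorems, which produce indecomposable $\widehat R$-modules with prescribed rank vectors, and yields $G\cong\Z^{\spl(R)}$, proving (1). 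For the prime count in (1), the lower bound $|k|\aleph_0$ in each class comes from the $k$-parametrized families of indecomposables in those constructions (whose rank vectors can be shifted into any prescribed class), and the matching upper bound from counting indecomposable $\widehat R$-modules; both are cited from the module-theoretic literature. One should also check that $\varphi$, or a harmless modification of it, is a divisor theory for $\mathcal M(R)$, so that $\operatorname{im}\bar{\mathbf r}$ is genuinely the class group of $\mathcal M(R)$ and not a proper overgroup.

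The remaining cases are the degenerate ones. If $R$ is a DVR then it is a principal ideal domain, finitely generated modules decompose uniquely, $\mathcal M(R)$ is free abelian and $G=0$; this is (2). If $R$ is Dedekind-like but not a DVR, then by the classification of complete Dedekind-like rings $\spl(R)\in\{0,1\}$ and every indecomposable $\widehat R$-module has rank at most $1$ at each minimal prime of $\widehat R$. When $\spl(R)=0$ the block condition is vacuous, so every finitely generated $\widehat R$-module is extended, $\varphi$ is an isomorphism, and $G=0$, which is (3a). When $\spl(R)=1$ exactly one block has two elements and $\Z^{m}/L\cong\Z$; the argument above gives $G=\langle e\rangle$ infinite cyclic, where $e$ is the class of an indecomposable with rank $1$ at one of the two split minimal primes and rank $0$ elsewhere, its ``mirror'' having class $-e$. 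Since the rank bound forces every indecomposable to have block-constant rank except for such $(1,0)$- and $(0,1)$-types, we get $G_{\mathcal P}=\{-e,0,e\}$; the prime count then says the classes $\pm e$ consist of the (countably many, rigid, parameter-free) indecomposables of non-constant rank, while the class $0$ absorbs all extended indecomposables, including the $|k|\aleph_0$-sized family of finite-length string modules. This gives (3b).

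The main obstacle is not the monoid-theoretic bookkeeping — reducing the characteristic to the homomorphism $\bar{\mathbf r}$ and reading off $G$ and $G_{\mathcal P}$ is routine — but the module-theoretic input, which I would import wholesale: R. Wiegand's realization of all rank vectors by indecomposable $\widehat R$-modules in the non-Dedekind-like case, the exact cardinalities $|k|\aleph_0$ of the classes (and, in (3b), the distinction between $\aleph_0$ and $|k|\aleph_0$), and the structure theory over Dedekind-like rings bounding ranks by $1$. Pinning down these cardinalities — both the abundance lower bounds and the counting upper bounds — is where the real work sits.
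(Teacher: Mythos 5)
The paper gives no proof of this statement at all: Theorem \ref{5.2} is quoted verbatim from \cite[Theorem 6.3]{H-K-K-W07} and used as a black box, so there is no in-paper argument to compare yours against. On its own terms your sketch is a faithful reconstruction of how the result is established in the cited source: reduce the characteristic of $\mathcal M(R)$ to the divisor homomorphism $\mathcal M(R)\hookrightarrow\mathcal M(\widehat R)$ of Lemma \ref{module_divisorhom}, identify the extended modules by the rank criterion of Levy--Odenthal (the general finitely generated version, not just the torsion-free case of Proposition \ref{LevyOdenthall}), and read off $G\cong\Z^{(\spl(R))}$ from the block-constancy condition. Two small calibrations. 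First, the identification of $\mathsf q(\varphi(\mathcal M(R)))$ with $\ker\bar{\mathbf r}$ and the surjectivity of $\bar{\mathbf r}$ do not need Wiegand's realization theorems: padding by the cyclic modules $\widehat R/\mathfrak q$ (whose rank vectors are the standard basis vectors) already does both, so $G\cong\Z^{(\spl(R))}$ holds uniformly, Dedekind-like or not. Where the ``big indecomposable modules'' realization results of \cite{H-K-K-W07} genuinely enter is in showing that, when $R$ is not Dedekind-like, \emph{every} class contains $|k|\aleph_0$ indecomposables --- i.e.\ in determining $G_{\mathcal P}$ and the multiplicities, which is the hard content --- while the Klingler--Levy structure theory for Dedekind-like rings is what collapses $G_{\mathcal P}$ to $\{-e,0,e\}$ in case (3b). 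Second, your flag about $\varphi$ being only a divisor homomorphism rather than a divisor theory is a real issue that must be resolved (in case (3b) the embedding is a divisor theory by the criterion of Proposition \ref{7.1}.3 once one knows the classes $\pm e$ contain at least two primes each; in the non-Dedekind-like case the abundance of primes in every class settles it). With those inputs imported as you propose, the skeleton is correct.
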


\smallskip
Thus, for the rest of this section,  we focus our attention on  $\mcT(R)$. To determine if the divisor homomorphism $\mcT(R) \hookrightarrow \mcT(\widehat R)$ is a divisor theory, we will require additional information. For now, we easily show that it is always cofinal.

\begin{proposition}\label{cofinal}
The embedding $\mcT(R)\hookrightarrow \mcT(\widehat R)$ is a cofinal divisor homomorphism.
\end{proposition}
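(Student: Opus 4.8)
We already know from Lemma~\ref{module_divisorhom}.(3) that $\mcT(R) \hookrightarrow \mcT(\widehat R)$ is a divisor homomorphism, so the only thing to establish is cofinality. By the definition of cofinal, we must show that for every $[N] \in \mcT(\widehat R)$ there is some $[M] \in \mcT(R)$ with $[N]$ dividing $[\widehat M]$ in $\mcT(\widehat R)$; equivalently, every finitely generated torsion-free $\widehat R$-module $N$ is a direct summand of an extended module $\widehat M$.

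The plan is to produce such an $M$ explicitly by a rank-balancing construction. Given a finitely generated torsion-free $\widehat R$-module $N$, consider its rank vector over the minimal primes $\mathfrak q_1, \ldots, \mathfrak q_t$ of $\widehat R$. The obstruction to $N$ itself being extended, by Proposition~\ref{LevyOdenthall}, is precisely that $\rank_{\mathfrak q}(N)$ may differ from $\rank_{\mathfrak q'}(N)$ when $\mathfrak q \cap R = \mathfrak q' \cap R$. To repair this, I would form a finite direct sum $N' = \bigoplus N_i$ where the $N_i$ range over a suitable collection of (localizations/twists of) $N$ obtained by permuting the minimal primes lying over a common prime of $R$, chosen so that each fibre of $\spec(\widehat R) \to \spec(R)$ becomes constant; more simply, since $\widehat R$ is reduced with finitely many minimal primes, one can take $N' = \bigoplus_{\mathfrak p} \widehat R/\mathfrak p$ summed appropriately together with copies of $N$ to level out all the ranks. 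Then $N \oplus N'$ has constant rank on each fibre, so it is extended, say $N \oplus N' \cong \widehat M$ for some finitely generated torsion-free $R$-module $M$ (torsion-freeness of $\widehat M$ forces $M$ torsion-free, using that $R \hookrightarrow \widehat R$ is faithfully flat). This exhibits $N$ as a direct summand of the extended module $\widehat M$, giving cofinality. Alternatively, and perhaps more cleanly, one invokes the known module-theoretic fact (in the style of Levy--Odenthall and the Wiegands) that over a one-dimensional analytically unramified ring every finitely generated torsion-free $\widehat R$-module is a direct summand of an extended one, and simply cites it.

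The main obstacle is getting the rank-balancing module $N'$ to itself be torsion-free and to make the combined rank vector genuinely constant on every fibre of $\spec(\widehat R)\to\spec(R)$ simultaneously; this is where the hypothesis that $R$ is analytically unramified (so that $\widehat R$ is reduced and the minimal primes of $\widehat R$ are well-behaved over those of $R$) does the real work. Once the rank vector is arranged to satisfy the Levy--Odenthall criterion, the conclusion that $N$ embeds as a summand of an extended module is immediate, and combined with Lemma~\ref{module_divisorhom}.(3) this finishes the proof. I expect the write-up to be short, leaning on Proposition~\ref{LevyOdenthall} for the extendability criterion and on standard faithfully-flat descent for the torsion-freeness of $M$.
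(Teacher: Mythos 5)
Your proposal is correct and follows essentially the same route as the paper: the published proof takes $r=\max\{r_1,\ldots,r_t\}$ and adds $L=\bigoplus_i (\widehat R/\mathfrak q_i)^{r-r_i}$ to make the rank constant, then applies Proposition~\ref{LevyOdenthall} to conclude the sum is extended, which is precisely your ``more simply'' variant (the cyclic modules $\widehat R/\mathfrak q_i$ are automatically torsion-free since $\widehat R$ is reduced, so the obstacle you worry about does not arise). The vaguer first idea about permuting minimal primes is unnecessary and can be dropped.
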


\begin{proof}
By Lemma \ref{module_divisorhom} the embedding is a divisor homomorphism. If $M$ is a finitely generated torsion-free $\widehat R$-module, we can consider its rank, $\rank(M)=(r_1,  \ldots , r_t)$, where $t$ is the number of minimal primes of $\widehat R$. If $r_1=\cdots=r_t$, then by Proposition \ref{LevyOdenthall} $M$ is extended, say $M=\widehat N$ for some finitely generated torsion-free $R$-module $N$ and the result is trivial. If the rank of $M$ is not constant, set $r=\max\{r_1, \ldots, r_t \}$ and consider the $\widehat R$-module $L=\left(\widehat R/\mathfrak q_1\right)^{r-r_1}\oplus \cdots \oplus \left(\widehat R/\mathfrak q_t\right)^{r-r_t}$, where $\mathfrak q_1,  \ldots , \mathfrak q_t$ denote the minimal primes of $\widehat R$. Then $\rank(N \oplus L)=(r, \ldots ,r)$ is constant and hence $N \oplus L$ is extended, say $N \oplus L\cong \widehat P$ for some finitely generated torsion-free $R$-module $P$. Clearly $M$ is isomorphic to a direct summand of $\widehat P$ and the result follows.
\end{proof}

  Since $\mcT(\widehat R)$ is free abelian, we can identify it with the free abelian monoid $\N_0^{(\mathcal P)}$ where $\mathcal P$ is an index set for the isomorphism classes of indecomposable finitely generated torsion-free $\widehat R$-modules.  We then use Proposition \ref{LevyOdenthall} to give a detailed description of $\mcT(R)$. The following construction has been used numerous times (see, for example,  \cite{Ba-Lu11a}, \cite{Ba-Sa12a}, and \cite{F-H-K-W06}).

\begin{construction}\label{diophantinemonoid}~

\begin{itemize}
\item Let $\mathfrak p_1,\ldots, \mathfrak p_s$ be the distinct minimal prime ideals of $R$.
      For each $i \in [1,  s]$, let $\mathfrak q_{i,1},\ldots,\mathfrak q_{i,t_i}$ be the minimal primes of $\widehat{R}$ lying over $\mathfrak p_i$. Note that $\spl(R)=\sum_{i=1}^s(t_i-1)$.

\item Let $\mathcal P$ be  the set of isomorphism classes of indecomposable finitely generated torsion-free $\widehat R$-modules.

\item Let $\mathsf A(R)$ be the $\spl(R) \times |\mathcal P|$ matrix whose column indexed by the isomorphism class $[M]\in \mathcal P$ is
\[
\begin{bmatrix} r_{1,1}-r_{1,2} & \cdots & r_{1,1}-r_{1,t_1} & \cdots &
r_{s,1}-r_{s,2} & \cdots & r_{s,1}-r_{s,t_s}
\end{bmatrix}^T
\]
where $r_{i,j}=\rank_{\mathfrak q_{i,j}}(M)$.
\end{itemize}

Then $\mcT(R) \cong \ker(\mathsf A(R))\cap \mathbb N^{(\mathcal P)}\subset \N_0^{(\mathcal P)}$ is a Diophantine monoid.

\end{construction}

If one has a complete description of how the minimal prime ideals of $\widehat R$ lie over the minimal prime ideals of $R$ together with the ranks of all indecomposable finitely generated torsion-free $\widehat R$-modules, then Construction \ref{diophantinemonoid} completely describes the monoid $\mcT(R)$. In certain cases (eg. Section \ref{DimensionOne_FRT}) we are able to obtain all of this information. Other times we know only some of the ranks that occur for indecomposable $\widehat R$-modules and thus have only a partial description for $\mcT(R)$. However, as was shown in \cite{Ba-Sa12a}, the ranks of indecomposable cyclic $\widehat R$-modules gives enough information about the columns of $\mathsf A (R)$ to prove that $\mcT(R) \hookrightarrow \mcT(\widehat R)$ is nearly always a divisor theory. First we recall that if $\mathfrak q_1, \ldots , \mathfrak q_t$ are the minimal primes of $\widehat R$, and $E \subset [1, t]$. Then
\[
\rank\left(\frac{\widehat R}{\bigcap_{i \in E} \mathfrak q_i}\right)= (r_1,\dots,r_t),
\text{ where } r_i = \begin{cases} 1, \quad i \in E \\ 0, \quad i
\not\in E.\end{cases}
\]
Thus \emph{every} nontrivial $t$-tuple of zeros and ones can be
realized as the rank of a nonzero (necessarily indecomposable) cyclic $\widehat R$-module.
Thus we have the following:

\begin{construction} \label{C:constructionT}
Let all notation be as in Construction \ref{diophantinemonoid}. After renumbering if necessary, there is $p \in [0,s]$ such that  $t_1, \ldots , t_p \geq 2$  and  such that $t_i=1$ for each $i \in [p+1,s]$. Then $\spl(R)=\sum_{j=1}^p t_j - p$. For each $i \in [1, p]$, let $A_i$ be the set of $(t_i-1)\times 1$ column
vectors all of whose entries are either $0$ or $1$, and let $B_i$ be
the set of $(t_i-1)\times 1$ column vectors all of whose entries are
either $0$ or $-1$.

We now define  $\mcT$ to be
the $\spl(R)\times \prod_{i=1}^p(2^{t_i}-1)$ matrix, each of whose columns has the form
\begin{equation*}\label{E:columnT}
\left[\begin{array}{c} T_1\\ \hline \vdots\\ \hline
T_p\end{array}\right], \qquad \text{where $T_i\in A_i\cup B_i$ for each $i\in [1, p]$.}
\end{equation*}
\end{construction}

With the notation as in Constructions \ref{diophantinemonoid} and \ref{C:constructionT}, we give a realization result which shows that the matrix $\mathcal T$ occurs as a submatrix of $\mathsf A (R)$.

\begin{proposition} \cite[Proposition 3.7]{Ba-Sa12a}\label{trivialranks}
For each column $\boldsymbol{\alpha}$ of
$\mcT$, there exist nonnegative integers $r_{i,j}$ and an
indecomposable torsion-free
$\widehat{R}$-module~$M_{\boldsymbol{\alpha}}$ of rank
\[
(r_{1,1}, \ldots, r_{1,t_1}, \ldots, r_{p,1}, \ldots, r_{p,t_p},
r_{p+1,1}, \ldots, r_{s,1})
\]
such that
\begin{equation*}\label{E:alpha}
\boldsymbol{\alpha}=\begin{bmatrix} r_{1,1}-r_{1,2} & \cdots &
r_{1,1}-r_{1,t_1} & \cdots & r_{p,1}-r_{p,2} & \cdots &
r_{p,1}-r_{p,t_p}
\end{bmatrix}^T.
\end{equation*}
\end{proposition}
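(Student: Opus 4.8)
The plan is to show that for every column $\boldsymbol{\alpha}$ of $\mcT$, we can build an indecomposable torsion-free $\widehat R$-module whose rank tuple produces $\boldsymbol{\alpha}$ as the corresponding column of $\mathsf A(R)$. Recall that a column of $\mcT$ consists of blocks $T_1, \ldots, T_p$ where each $T_i \in A_i \cup B_i$; that is, within the $i$-th block all nonzero entries have the same sign. I would first dispose of the sign issue by reducing, block by block, to the case where each $T_i \in A_i$, i.e.\ all entries are $0$ or $1$: if $T_i \in B_i$, one replaces it by $-T_i \in A_i$, which corresponds to swapping the roles of $\mathfrak q_{i,1}$ with the other primes over $\mathfrak p_i$ (equivalently, adjusting which rank we subtract from). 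So it suffices to realize a column whose blocks lie in the $A_i$.

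Next I would produce the module. For each $i \in [1,p]$ pick the subset $E_i \subset [1,t_i]$ so that the $(t_i-1)$-tuple $(r_{i,1}-r_{i,2}, \ldots, r_{i,1}-r_{i,t_i})$ equals the prescribed block $T_i$; concretely, put $1 \in E_i$, and for $j \ge 2$ put $j \in E_i$ exactly when the $(j-1)$-th entry of $T_i$ is $0$ (so that $r_{i,1} - r_{i,j} = 1 - 1 = 0$) and $j \notin E_i$ when that entry is $1$ (so that $r_{i,1} - r_{i,j} = 1 - 0 = 1$). Now form the cyclic $\widehat R$-module $N = \widehat R / \bigcap \mathfrak q$, where the intersection runs over all minimal primes $\mathfrak q$ of $\widehat R$ that lie in $\bigcup_{i=1}^p E_i$ together with all minimal primes over $\mathfrak p_{p+1}, \ldots, \mathfrak p_s$. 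By the rank formula recalled in the excerpt just before Construction \ref{C:constructionT}, $N$ is a nonzero cyclic module, hence indecomposable (a cyclic module over a commutative Noetherian local ring is indecomposable), and its rank at $\mathfrak q_{i,j}$ is $1$ if $j \in E_i$ and $0$ otherwise, with rank $1$ at every minimal prime over $\mathfrak p_i$ for $i > p$. By construction these ranks yield exactly the block $T_i$ in positions corresponding to $\mathfrak p_i$, $i \le p$, and the rows of $\mathsf A(R)$ indexed by $i > p$ are vacuous (there $t_i = 1$, so there are no such rows). Therefore the column of $\mathsf A(R)$ indexed by $[M_{\boldsymbol\alpha}] = [N]$ is precisely $\boldsymbol\alpha$, as required.

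Since this is essentially the argument already used in \cite[Proposition 3.7]{Ba-Sa12a}, the writeup can be brief; I would structure it as (i) sign reduction, (ii) explicit choice of the subsets $E_i$, (iii) verification of ranks via the cyclic-module rank formula, (iv) indecomposability. The one genuine subtlety — and the step I would expect a careful reader to scrutinize — is the bookkeeping in step (ii): getting the correspondence between "entry is $0$/$1$" and "$j \in E_i$ or not" exactly right, and confirming that the normalization $r_{i,1} = 1$ (forced by $1 \in E_i$) is compatible with every $T_i \in A_i$ simultaneously across all blocks. This is purely combinatorial and carries no real obstacle, but it is where an off-by-one or sign slip would hide. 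Everything else — indecomposability of nonzero cyclic modules, the rank formula for $\widehat R / \bigcap_{i \in E} \mathfrak q_i$ — is quoted directly from the material preceding the statement.
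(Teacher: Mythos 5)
Your overall route---realizing every column of $\mcT$ by a cyclic module $\widehat R/\bigcap_{\mathfrak q\in E}\mathfrak q$---is exactly the argument the paper has in mind: the proposition itself is only cited from \cite[Proposition 3.7]{Ba-Sa12a}, but the rank formula for cyclic modules recalled immediately before Construction \ref{C:constructionT} is there precisely to support it. The genuine problem is your step (i). A column of $\mcT$ may mix signs across blocks: $T_1$ can lie in $A_1$ while $T_2$ lies in $B_2\setminus A_2$. You cannot negate an individual block by ``swapping the roles of $\mathfrak q_{i,1}$ with the other primes over $\mathfrak p_i$'': the matrix $\mathsf A(R)$ is defined with respect to a fixed enumeration of the minimal primes, the proposition asks you to produce the specific vector $\boldsymbol{\alpha}$, and in any case negating a block is not induced by permuting $(r_{i,1},\ldots,r_{i,t_i})$ --- for $t_i=3$ no permutation of a $0/1$-tuple turns the difference vector $(1,0)$ into $(-1,0)$. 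Since your step (ii) then forces $1\in E_i$, hence $r_{i,1}=1$, for \emph{every} block, your explicit construction only produces columns all of whose blocks lie in $A_i$; columns containing a $-1$ entry are not covered. This is precisely the incompatibility you flagged yourself: the normalization $r_{i,1}=1$ is not compatible with $T_i\in B_i\setminus A_i$.

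The repair is immediate and stays entirely within your construction: drop the sign reduction and choose $E_i$ blockwise according to the sign of the block. If $T_i\in A_i$, put $1\in E_i$ and $j\in E_i$ exactly when the $(j-1)$st entry of $T_i$ is $0$, so that $r_{i,1}-r_{i,j}=1-r_{i,j}$ reproduces $T_i$. If $T_i\in B_i$ has a nonzero entry, put $1\notin E_i$ and $j\in E_i$ exactly when the $(j-1)$st entry is $-1$, so that $r_{i,1}-r_{i,j}=-r_{i,j}$ reproduces $T_i$. In all cases the resulting set $E$ of minimal primes is nonempty, so the cyclic module $\widehat R/\bigcap_{\mathfrak q\in E}\mathfrak q$ is nonzero, torsion-free, and indecomposable, and the rest of your verification (the vacuous rows for $i>p$, the rank computation) goes through unchanged.
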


In particular,  the matrix $\mathsf A(R)$ nearly always satisfies the hypotheses of the following lemma.

\begin{lemma}\cite[Lemma 4.1]{Ba-Sa12a}\label{DivisorTheory}
Fix an integer $q\ge 1$, and let $I_q$ denote the $q\times q$
identity matrix. Let $\mathcal P$ be an index set, and let $\mcD$ be a
$q\times |\mathcal P|$ integer matrix whose columns are indexed by
$\mathcal P$. Assume $\mcD=\begin{bmatrix} D_1 \,|\, D_2
\end{bmatrix}$, where $D_1$ is the $q\times (2q+2)$ integer matrix
\[
\left[\begin{array}{c|c|cc} & & 1 & -1\\ I_q & -I_q & \vdots
&\phantom{-}\vdots\\ & & 1 & -1 \end{array}\right],
\]
and $D_2$ is an arbitrary integer matrix with $q$ rows (and possibly
infinitely many columns). Let $H=\ker(\mcD)\cap \N_0^{(\mathcal P)}$.
\begin{enumerate}[(1)]
\item \label{surjection} The map $\mcD\colon \Z^{(\mathcal P)} \to \Z^{(q)}$ is surjective.
\item \label{divtheory} The natural inclusion $H \hookrightarrow \N_0^{(\mathcal P)}$ is a divisor theory.
\item \label{differences} $\ker(\mcD)=\mathsf q (H)$.
\item \label{classgroup} $\Cl(H) \cong \Z^{(q)}$, and this isomorphism maps the set of classes containing prime divisors onto the set of distinct columns of  $\mcD$.
\end{enumerate}
In particular, we observe the following: Given a fixed column $\alpha$ of $\mcD$, the cardinality of $\{ \beta \, \colon \, \beta \ \text{is a column of}$  $\mcD \ \text{ and} \  \beta = \alpha \}$ is equal to the cardinality of prime divisors in the class corresponding to $\alpha$. Therefore, the characteristic of the Krull monoid $H$ is completely given by the matrix $\mcD$.
\end{lemma}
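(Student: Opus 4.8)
The plan is to verify conditions (1)–(4) directly, using the block structure $\mcD = [D_1 \mid D_2]$, and to obtain the final remark as a bookkeeping consequence of (4). First I would prove (1): since $D_1$ contains the block $[I_q \mid -I_q]$, the standard basis vectors $e_1, \ldots, e_q \in \Z^{(q)}$ already lie in the image of $\mcD$ (each $e_j$ is the image of the unit vector supported on the column of $D_1$ equal to the $j$-th column of $I_q$), so $\mcD$ is surjective. From surjectivity I would deduce (3): the inclusion $\ker(\mcD) \cap \N_0^{(\mathcal P)} = H \hookrightarrow \N_0^{(\mathcal P)}$ is a divisor homomorphism (it is the restriction of a group homomorphism, so $\varphi(a) \mid \varphi(b)$ in $\N_0^{(\mathcal P)}$ means $b - a \in \N_0^{(\mathcal P)}$, and since $a, b \in \ker \mcD$ also $b - a \in \ker \mcD$, hence $b - a \in H$, i.e. $a \mid b$), so by Property (c) $H$ is a reduced Krull monoid; then $\mathsf q(H) \subseteq \ker(\mcD)$ is clear, and surjectivity of $\mcD$ on $\Z^{(\mathcal P)}$ together with the fact that $H$ generates $\ker(\mcD)$ as a group gives equality. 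The group-generation claim is where the $[I_q \mid -I_q]$ block and the two extra columns $(1,\ldots,1)^T, (-1,\ldots,-1)^T$ pull their weight: given any $v \in \ker(\mcD)$, one adds a large multiple of an element of $H$ supported on columns of $D_1$ (and on the two special columns, to keep the sum in the kernel) to make all coordinates nonnegative, exhibiting $v$ as a difference of two elements of $H$.

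Next I would prove (2), that $H \hookrightarrow \N_0^{(\mathcal P)}$ is a \emph{divisor theory}: it is already a divisor homomorphism into a free abelian monoid, so by (c) it suffices to show that every $p \in \mathcal P$ is a gcd of a finite nonempty subset of the image of $H$. This is the one genuinely computational point, and it is exactly what the special shape of $D_1$ is designed for. The idea: for a column index $p$, one produces two elements $a, b \in H$ whose supports meet only in $p$ (with $\mathsf v_p(a) = \mathsf v_p(b) = 1$), so that $\gcd(a,b) = p$ in $\N_0^{(\mathcal P)}$. Concretely, a generator of $H$ supported on $\{p\} \cup (\text{two columns of } D_1)$ and another supported on $\{p\} \cup (\text{two different columns of } D_1)$ can be built because any single column vector in $\Z^{(q)}$ can be written as a $\N_0$-combination of the $2q$ columns of $[I_q \mid -I_q]$ and then corrected to lie in the kernel using the $\pm(1,\ldots,1)^T$ columns; having at least $2q+2$ columns in $D_1$ is precisely what guarantees enough room to do this twice with disjoint auxiliary supports. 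I expect this to be the main obstacle — not because it is deep, but because one must be careful that the auxiliary elements are genuine members of $H$ (all coordinates in $\N_0$, sum in the kernel) and that the two constructions can be arranged to share only the column $p$.

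Finally (4) follows formally: for a Krull monoid with divisor theory $\varphi \colon H \hookrightarrow \mathcal F(\mathcal P) = \N_0^{(\mathcal P)}$, the class group is $\mathsf q(\N_0^{(\mathcal P)}) / \mathsf q(\varphi(H)) = \Z^{(\mathcal P)} / \ker(\mcD) \cong \operatorname{im}(\mcD) = \Z^{(q)}$ by (1) and (3), and under this identification the class $[p] = [e_p]$ of the prime divisor $p \in \mathcal P$ is sent to $\mcD(e_p)$, the $p$-th column of $\mcD$; thus $G_{\mathcal P}$ maps onto the set of distinct columns of $\mcD$, and the number of prime divisors in the class corresponding to a column $\alpha$ equals $|\{p \in \mathcal P : \mcD(e_p) = \alpha\}|$, the multiplicity of $\alpha$ among the columns. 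This is exactly the asserted ``in particular'' statement, and since the characteristic $(G, (m_g)_{g \in G})$ of a reduced Krull monoid is determined by the class group together with these multiplicities (see \cite[Theorem 2.5.4]{Ge-HK06a}), the matrix $\mcD$ determines $H$ up to isomorphism.
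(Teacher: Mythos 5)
The paper does not actually prove this lemma: it is imported verbatim from \cite[Lemma 4.1]{Ba-Sa12a}, so there is no internal argument to compare your proposal against, and it must be judged on its own terms. Your overall architecture is the natural one, and parts (1), (3), (4) and the final ``in particular'' go through essentially as you describe: surjectivity comes from the $[\,I_q \mid -I_q\,]$ block, $\mathsf q (H) = \ker (\mcD)$ follows by clearing the negative coordinates of a kernel vector against suitable elements of $H$, and (4) is a formal consequence of (1)--(3) together with the identification $[e_p] \mapsto \mcD (e_p)$. (One small slip in (3): the negative coordinates of $v \in \ker (\mcD)$ may sit at indices belonging to $D_2$, so the element of $H$ you add cannot be ``supported on columns of $D_1$''; for each such index $p'$ you need an element of $H$ whose support contains $p'$, which is exactly what the construction in (2) supplies.)

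The genuine gap is in (2), the step you yourself flag as the crux. Your recipe --- write $-\alpha$ as an $\N_0$-combination of the $2q$ columns of $[\,I_q \mid -I_q\,]$, ``correct'' with the columns $\pm(1,\ldots,1)^T$, and do this twice with disjoint auxiliary supports --- fails as stated. Take $q=2$ and $\alpha = e_1 - e_2$: every $\N_0$-representation of $-\alpha = (-1,1)^T$ over $\{\pm e_1, \pm e_2\}$ has the form $a e_1 + (a+1)(-e_1) + (d+1)e_2 + d(-e_2)$ and hence must use both $-e_1$ and $e_2$ with positive coefficient, so no two such representations have disjoint supports; the columns $\pm(1,\ldots,1)^T$ are not a correction term but an essential ingredient of one of the two witnesses. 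The repair is to partition the columns of $D_1$ into the two disjoint sets $E^+ = \{e_1, \ldots, e_q, -(1,\ldots,1)^T\}$ and $E^- = \{-e_1, \ldots, -e_q, (1,\ldots,1)^T\}$ and to observe that \emph{each} of them $\N_0$-generates all of $\Z^{(q)}$: for $v \in \Z^{(q)}$ and $d = \max\{0, -v_1, \ldots, -v_q\}$ one has $v = \sum_{i}(v_i+d)e_i + d\cdot(-(1,\ldots,1)^T)$, and symmetrically for $E^-$. Representing $-\alpha$ once over $E^+$ and once over $E^-$ produces two elements of $H$, each with $p$-coordinate $1$ plus a part supported on the corresponding disjoint set of $D_1$-columns; a short check (needed only when $p$ itself indexes a column of $D_1$, in which case that column receives coefficient $0$ in the relevant representation) shows that both elements have $p$-coordinate exactly $1$ and that their supports meet only in $p$, whence their gcd in $\N_0^{(\mathcal P)}$ is the basis element at $p$. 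With this substituted for your sketch the proof is complete, and it makes visible why the lemma insists on all $2q+2$ columns of $D_1$ rather than on $[\,I_q \mid -I_q\,]$ alone.
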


 Based on the previous results, one easily obtains the following theorem which provides the framework for our study of the characteristic of $\mcT(R)$.

\begin{theorem}\label{1dimensional-summary}~

\begin{enumerate}[(1)]
\item If $\spl (R) =0$, then $\mcT(R)\cong \mcT(\widehat R)$ is free abelian.

\item If $\spl (R) \geq 2$ then the embedding $\mcT(R) \hookrightarrow \mcT(\widehat R)$  is a divisor theory. Moreover,
\begin{enumerate}

\item $\mcT(R)\cong \ker(\mathsf A (R)) \cap \N_0^{(\mathcal P)}$,
\item $\Cl(\mcT(R)) \cong \Z^{(\spl (R))}$, and this isomorphism maps the set of classes containing prime divisors onto the set of distinct columns of $\mathsf A (R)$.
\end{enumerate}
\end{enumerate}
\end{theorem}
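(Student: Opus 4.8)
The plan is to treat the two cases of the theorem separately, reducing each to a result already established in the excerpt. For (1), suppose $\spl(R)=0$. Since $0=\spl(R)=\sum_{i=1}^{s}(t_i-1)$, we have $t_i=1$ for every $i$, so distinct minimal primes $\mathfrak p,\mathfrak q$ of $\widehat R$ never satisfy $\mathfrak p\cap R=\mathfrak q\cap R$; hence the rank condition in Levy--Odenthall's criterion (Proposition \ref{LevyOdenthall}) is vacuous and \emph{every} finitely generated torsion-free $\widehat R$-module is extended. By the last sentence of Lemma \ref{module_divisorhom} the embedding $\mcT(R)\hookrightarrow\mcT(\widehat R)$ is injective with image the submonoid of extended modules, so it is an isomorphism onto $\mcT(\widehat R)$, which is free abelian by Lemma \ref{module_divisorhom}(1). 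This gives (1).

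For (2), suppose $\spl(R)\ge 2$ and set $q=\spl(R)$. I would apply Lemma \ref{DivisorTheory} to $\mcD=\mathsf A(R)$ after verifying its structural hypothesis, namely that (up to reordering the index set $\mathcal P$) $\mathsf A(R)=[\,D_1\mid D_2\,]$ with $D_1=[\,I_q\mid -I_q\mid \mathbf 1\mid -\mathbf 1\,]$, where $\mathbf 1$ denotes an all-ones column. Write $q=\sum_{j=1}^{p}(t_j-1)$ as in Construction \ref{C:constructionT}, noting $p\ge 1$ because $q\ge 2$, and index the $q$ rows by pairs $(i,j)$ with $1\le i\le p$, $1\le j\le t_i-1$. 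Then each column of $D_1$ occurs among the columns of the matrix $\mcT$ of Construction \ref{C:constructionT}: the unit vector $e_{(i,j)}$ is obtained by taking $T_i\in A_i$ with a single $1$ in slot $j$ and $T_{i'}=0$ for $i'\ne i$; the vector $-e_{(i,j)}$ likewise with $T_i\in B_i$; the all-ones column by taking every $T_i$ the all-ones vector in $A_i$; and the all-$(-1)$s column by taking every $T_i$ the all-$(-1)$s vector in $B_i$. These $2q+2$ vectors are pairwise distinct since $q\ge 2$, so Proposition \ref{trivialranks} realizes each of them as a column of $\mathsf A(R)$; designating one such column for each, with all remaining columns forming $D_2$, puts $\mathsf A(R)$ in the required form.

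It then remains to read off the conclusions from Lemma \ref{DivisorTheory}. Part (2) of that lemma gives that $\ker(\mathsf A(R))\cap\N_0^{(\mathcal P)}\hookrightarrow\N_0^{(\mathcal P)}$ is a divisor theory; since $\mcT(R)\cong\ker(\mathsf A(R))\cap\N_0^{(\mathcal P)}$ by Construction \ref{diophantinemonoid} and $\mcT(\widehat R)\cong\N_0^{(\mathcal P)}$, this shows $\mcT(R)\hookrightarrow\mcT(\widehat R)$ is a divisor theory and proves (2)(a). Part (4) of the lemma then yields $\Cl(\mcT(R))\cong\Z^{(q)}=\Z^{(\spl(R))}$, with the isomorphism carrying the classes containing prime divisors onto the distinct columns of $\mathsf A(R)$, which is (2)(b). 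The only step beyond bookkeeping is the identification of the block $D_1$ inside $\mathsf A(R)$: one must check that its $2q+2$ columns are genuinely distinct --- this is precisely where $\spl(R)\ge 2$ is used, and it is what fails when $\spl(R)=1$ (compare Theorem \ref{5.2}(3)(b)) --- and that Proposition \ref{trivialranks} supplies them as actual columns of $\mathsf A(R)$, so that the column multiplicities feeding the class-group statement are correctly accounted for.
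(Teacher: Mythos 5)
Your proof is correct and follows exactly the route the paper intends: the paper derives Theorem \ref{1dimensional-summary} by combining Construction \ref{diophantinemonoid}, Construction \ref{C:constructionT}, Proposition \ref{trivialranks}, and Lemma \ref{DivisorTheory} (with Proposition \ref{LevyOdenthall} and Lemma \ref{module_divisorhom} handling the $\spl(R)=0$ case), and you have merely made explicit the verification that the $2q+2$ columns of the block $D_1$ are pairwise distinct and realized when $\spl(R)\ge 2$.
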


Suppose that $\spl (R)=1$. The embedding $\mcT(R) \hookrightarrow \mcT(\widehat R)$ is a divisor theory if and only if
the defining matrix $\mathsf A (R)$ contains at least two positive and at least two negative entries (see  Proposition \ref{7.1}.2).

In many cases, computing the ranks of indecomposable $\widehat R$-modules and hence the columns of the defining matrix $\mathsf A (R)$ is difficult. However, an additional hypotheses on $R$ implies that the set of classes containing prime divisors satisfies $G_{\mathcal P} = -G_{\mathcal P}$, a crucial property for all arithmetical investigations (see Proposition \ref{7.2} and the subsequent remarks).

\begin{corollary}\label{oppositeranks}
Suppose in addition that  $\widehat R\cong S/(f)$ where $(S,\n)$ is a hypersurface i.e., a regular Noetherian local ring of dimension two and where $0\not=f \in \n$. If $G$ is the class group of  $\mcT(R) \hookrightarrow \mcT(\widehat R)$  and $G_{\mathcal P}$ is the set of  classes containing prime divisors, then $G_{\mathcal P}=-G_{\mathcal P}$.
\end{corollary}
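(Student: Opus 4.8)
The plan is to exploit the symmetry of hypersurface rings under the standard duality of maximal Cohen-Macaulay modules. Since $\widehat R \cong S/(f)$ with $S$ regular local of dimension two, $\widehat R$ is a one-dimensional Gorenstein ring, and for a Gorenstein local ring the $\widehat R$-dual $M^* = \Hom_{\widehat R}(M, \widehat R)$ gives an exact duality on the category of MCM (equivalently, here, finitely generated torsion-free) $\widehat R$-modules. Concretely, I would first record that $M \mapsto M^*$ sends finitely generated torsion-free $\widehat R$-modules to finitely generated torsion-free $\widehat R$-modules, that $M^{**} \cong M$, and that it sends indecomposables to indecomposables (an idempotent-splitting argument on $\End_{\widehat R}(M) \cong \End_{\widehat R}(M^*)^{\mathrm{op}}$, or simply because duality is additive and an equivalence, hence preserves the property of having no nontrivial direct-sum decomposition). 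Therefore the involution $[M] \mapsto [M^*]$ is a well-defined bijection of the index set $\mathcal P$ of Construction \ref{diophantinemonoid}, and it extends to a monoid automorphism of $\mcT(\widehat R) \cong \N_0^{(\mathcal P)}$.

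Next I would compute the effect of duality on ranks. For a minimal prime $\mathfrak q$ of $\widehat R$, localizing gives $\bigl(M^*\bigr)_{\mathfrak q} \cong \Hom_{\widehat R_{\mathfrak q}}(M_{\mathfrak q}, \widehat R_{\mathfrak q})$, and since $\widehat R_{\mathfrak q}$ is a field (as $\widehat R$ is reduced), this is just the vector-space dual, so $\rank_{\mathfrak q}(M^*) = \rank_{\mathfrak q}(M)$ for every minimal prime $\mathfrak q$. Hence, looking at the column of $\mathsf A(R)$ indexed by $[M]$, whose entries are the differences $r_{i,1} - r_{i,j}$ of ranks at minimal primes of $\widehat R$ over a common minimal prime of $R$, the column indexed by $[M^*]$ has \emph{exactly the same} entries. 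Wait --- that would give $G_{\mathcal P} = G_{\mathcal P}$, not $G_{\mathcal P} = -G_{\mathcal P}$, so the $\widehat R$-dual is not the right duality; the point is that one must use the \emph{$\Omega$-dual} or, better, remember that over the hypersurface $S/(f)$ there is the matrix-factorization duality $M \mapsto \mathrm{coker}(\psi)$ when $M = \mathrm{coker}(\varphi)$ and $(\varphi,\psi)$ is a matrix factorization of $f$, equivalently $M \mapsto \Omega M$, the (first) syzygy. I would instead use $D(M) = \Hom_{\widehat R}(\Omega_{\widehat R}^{1} M, \widehat R)$ or the Auslander--Reiten/Auslander transpose combined with $\widehat R$-duality; over a hypersurface $\Omega^2 M \cong M$ for MCM modules, and the relevant rank-reversing duality is $M \mapsto (\Omega M)^*$, whose localization at a minimal prime $\mathfrak q$ satisfies $\mathrm{coker}$ of the transposed presentation matrix, giving $\rank_{\mathfrak q}((\Omega M)^*) = \rank_{\mathfrak q}(\widehat R^{\,n}_{\mathfrak q}) - \rank_{\mathfrak q}(M_{\mathfrak q})$ for a presentation with $n$ generators, so the differences $r_{i,1} - r_{i,j}$ flip sign.

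So the key steps, in order: (i) identify the correct sign-reversing self-duality $\delta$ on the category of finitely generated torsion-free (= MCM) $\widehat R$-modules available because $\widehat R$ is a hypersurface --- I would take the composite of the Auslander transpose with $\widehat R$-duality, i.e. $\delta(M) = \mathrm{Tr}(M)^{*}$ up to free summands, noting $\delta^2 \simeq \mathrm{id}$ on MCM modules modulo free summands and that $\delta$ preserves indecomposability and the absence of free summands; (ii) check on localizations at each minimal prime $\mathfrak q$ of $\widehat R$ that $\rank_{\mathfrak q}(\delta M) = c - \rank_{\mathfrak q}(M)$ for a constant $c$ (arising from the number of generators / the rank of the free term in a minimal presentation, which is the same at all $\mathfrak q$ because $\widehat R$ is equidimensional and the presentation is minimal), so that the difference $\rank_{\mathfrak q}(M) - \rank_{\mathfrak q'}(M)$ becomes its negative under $\delta$; (iii) conclude that $\delta$ induces a permutation of $\mathcal P$ taking the column of $\mathsf A(R)$ at $[M]$ to its negative, hence taking each class in $G_{\mathcal P}$ to its negative (using Theorem \ref{1dimensional-summary} and Lemma \ref{DivisorTheory}, which say the classes containing prime divisors are precisely the columns of $\mathsf A(R)$), and therefore $G_{\mathcal P} = -G_{\mathcal P}$; when $\spl(R) \le 1$ one checks the statement directly from Theorem \ref{1dimensional-summary}(1) and the remark after it (a single column $a$ and its negatives / the trivial group).

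The main obstacle I anticipate is pinning down \emph{precisely which} self-duality on MCM $\widehat R$-modules is both (a) rank-sign-reversing at the minimal primes and (b) a genuine bijection on isomorphism classes of indecomposables (not merely on the stable category). The naive $\widehat R$-dual is rank-\emph{preserving}, so it is useless here; the Auslander transpose lives naturally in the stable category and is only an involution modulo projectives, so I must be careful that adding/removing free summands does not disturb the ranks in an uncontrolled way --- but since $\widehat R$ has positive depth and $\mathcal P$ parametrizes indecomposables \emph{without} free direct summands only up to the single class $[\widehat R]$ whose column in $\mathsf A(R)$ is zero (a fixed point of negation), this is a bookkeeping issue rather than a real obstruction. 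An alternative, possibly cleaner route for step (ii), which I would pursue if the transpose computation gets unwieldy, is to use matrix factorizations directly: write $M = \mathrm{coker}(\varphi : \widehat R^{n} \to \widehat R^{n})$ from a reduced matrix factorization $\varphi \psi = \psi \varphi = f \cdot I_n$ of $f$ over $S/(f)$, let $\delta(M) = \mathrm{coker}(\psi)$, and observe that after localizing at a minimal prime $\mathfrak q$ (where $f$ becomes a nonzerodivisor in $S_{\mathfrak q}$, a two-dimensional regular local ring, so the localized matrix factorization is still reduced) one gets $\rank_{\mathfrak q}(M) + \rank_{\mathfrak q}(\delta M) = n$, which is exactly the sign-reversal needed.
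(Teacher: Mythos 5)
Your proposal is correct in substance, but it takes a different (more self-contained) route than the paper: the paper's proof is a one-line reduction to \cite[Proposition 6.2]{Ba-Sa12a}, which asserts precisely that for each indecomposable $\widehat R$-module of rank $(r_1,\ldots,r_t)$ there is an indecomposable one of rank $(m-r_1,\ldots,m-r_t)$, after which the sign-flip of the columns of $\mathsf A(R)$ is immediate exactly as in your step (iii). What you have written is essentially a proof of that cited proposition via matrix factorizations: writing $M=\mathrm{coker}(\varphi)$ for a reduced matrix factorization $(\varphi,\psi)$ of $f$ of size $n$ and passing to $\Omega M\cong\mathrm{coker}(\psi)$, the short exact sequence $0\to\Omega M\to\widehat R^{\,n}\to M\to 0$ localized at a minimal prime $\mathfrak q$ (where $\widehat R_{\mathfrak q}$ is a field, since $\widehat R$ is reduced) gives $\rank_{\mathfrak q}(M)+\rank_{\mathfrak q}(\Omega M)=n$ with $n$ independent of $\mathfrak q$, and Eisenbud's equivalence between reduced matrix factorizations and non-free MCM modules guarantees $\mathrm{coker}(\psi)$ is again indecomposable; the free module contributes the zero column, which is its own negative. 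This buys a proof that does not outsource the key module-theoretic input, at the cost of invoking the matrix-factorization machinery. One caveat: your candidate duality $\delta(M)=\mathrm{Tr}(M)^{*}$ in step (i) does not work --- for an MCM module over this one-dimensional Gorenstein ring one has $\mathrm{Tr}(M)\cong\mathrm{coker}(\varphi^{T})\cong M^{*}$, hence $\mathrm{Tr}(M)^{*}\cong M^{**}\cong M$, so this functor is rank-preserving, not rank-reversing. The correct choices are the ones you also name, $M\mapsto\Omega M$ or $M\mapsto(\Omega M)^{*}$ (equivalently $\mathrm{coker}(\varphi)\mapsto\mathrm{coker}(\psi)$), and since your decisive computation is carried out for exactly that functor, the slip does not affect the validity of the argument.
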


\begin{proof}
With the hypotheses given, we can apply \cite[Proposition 6.2]{Ba-Sa12a} to see that if $M$ is any indecomposable $\widehat R$-module with rank $(r_1, \ldots , r_t)$, then there is an indecomposable $\widehat R$-module $N$ with rank $(m-r_1, m-r_2, \ldots , m-r_t)$ for some $m\geq \max\{r_1, \ldots , r_t\}$. Using Construction \ref{diophantinemonoid} we see that if $\alpha=\begin{bmatrix} a_1 & \cdots & a_q \end{bmatrix}$ is the column of $\mathsf A (R)$ indexed by $M$, then $-\alpha$ is the column indexed by $N$. Therefore, since $G_{\mathcal P}$ corresponds to the distinct columns of $\mathsf A (R)$, $G_{\mathcal P}=-G_{\mathcal P}$.
\end{proof}

\begin{remark}\label{R:isomorphicG_0}
Although the system of equations developed in Construction \ref{diophantinemonoid} is somehow natural, it is not the only system of equations which can be used to define $\mcT(R)$. Indeed, the matrix $\mathsf A (R)$ can be adjusted by performing any set of elementary row operations. If $J$ is an elementary matrix corresponding to such a set of row operations, then $\mcT(R) \cong \ker(\mathsf A (R)) \cap \N_0^{(\mathcal P)} \cong \ker(J \mathsf A (R)) \cap \N_0^{(\mathcal P)}$. Moreover, this isomorphism gives rise to an automorphism of $\Cl(\mcT(R))$ mapping the set of  classes containing prime divisors to another set of classes containing prime divisors. Example \ref{E:isomorphicG_0} illustrates the usefulness of considering an alternate defining matrix for $\mcT(R)$.
\end{remark}

\quad

\subsection{Finite Representation Type}\label{DimensionOne_FRT} \label{4a}
{\it Throughout this subsection, let $R$ be as in Setup {\bf (S)}, and  suppose in addition that $R$ has finite representation type.}

Decades with of work going back to a paper of Green and Reiner \cite{Gr-Re78a} and including papers by R.~and S.~Wiegand, \c{C}imen, Arnuvut, Luckas, and Baeth (see \cite{Wi-Wi94a}, \cite{Ci98a}, \cite{Ar-Lu-Wi07}, and \cite{Ba07b}) culminated in a paper by Baeth and Luckas \cite{Ba-Lu09a} which classified precisely the tuples that can occur as the ranks of indecomposable torsion-free $R$-modules. We note that since $R$ has finite representation type, both $R$ and $\widehat R$ have at most three minimal primes (see \cite[Theorem 0.5]{Ci-Wi-Wi92}).

\begin{proposition}\cite[Main Theorem 1.2]{Ba-Lu09a}~

\begin{enumerate}[(1)]
\item If $\widehat R$ is a domain, then every indecomposable finitely generated torsion-free $\widehat R$-module has rank $1$, $2$ or $3$.

\item If $\widehat R$ has exactly two minimal prime ideals, then every indecomposable finitely generated torsion-free $\widehat R$-module has rank $(0,1),(1,0),(1,1),(1,2),(2,1) \textrm{ or } (2,2).$

\item If $\widehat R$ has exactly two minimal prime ideals, then every indecomposable finitely generated torsion-free $\widehat R$-module has rank $(0,0,1),(0,1,0),(1,0,0),(0,1,1),(1,0,1),(1,1,0)$, $(1,1,1)$ or $(2,1,1).$
\end{enumerate}
\end{proposition}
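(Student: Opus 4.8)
The plan is to analyze the indecomposable finitely generated torsion-free $\widehat{R}$-modules by descending to the normalization of $\widehat{R}$ and invoking the representation theory of an associated finite Artin pair; this is the route taken, in various guises, in the chain of work from Green--Reiner \cite{Gr-Re78a} through \cite{Wi-Wi94a} leading up to the classification being stated. First I would record the basic setup: $\widehat{R}$ is a complete local ring which is reduced (since $R$ is analytically unramified), one-dimensional, Cohen--Macaulay, and of finite representation type, so by \cite[Theorem 0.5]{Ci-Wi-Wi92} its set of minimal primes $\{\mathfrak{p}_1, \ldots, \mathfrak{p}_s\}$ satisfies $s \le 3$. The three cases of the proposition are exactly $s = 1$ (so $\widehat{R}$ is a domain), $s = 2$, and $s = 3$, and a rank is the $s$-tuple $\bigl( \dim_{\widehat{R}_{\mathfrak{p}_1}} M_{\mathfrak{p}_1}, \ldots, \dim_{\widehat{R}_{\mathfrak{p}_s}} M_{\mathfrak{p}_s} \bigr)$, so the assertion is a uniform bound on the entries of these tuples together with a precise list of which ones occur.

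Next I would set up the normalization data. Write $\overline{R}$ for the integral closure of $\widehat{R}$ in its total quotient ring $K$; since $\widehat{R}$ is complete and reduced, $\overline{R} = \prod_{j=1}^{t} V_j$ is a finite product of complete discrete valuation rings, each $V_j$ dominating a unique minimal prime $\mathfrak{p}_{i(j)}$ of $\widehat{R}$, and let $\mathfrak{c} = (\widehat{R} :_{\widehat{R}} \overline{R})$ denote the conductor. For a finitely generated torsion-free $\widehat{R}$-module $M$, the $\overline{R}$-submodule $\overline{M} := \overline{R}M$ of $M \otimes_{\widehat{R}} K$ is finitely generated torsion-free over $\overline{R}$, hence $\overline{M} \cong \bigoplus_{j} V_j^{\,a_j}$; the rank of $M$ is then $r_i = \sum_{i(j) = i} a_j$, and since $\mathfrak{c}\overline{M} = \mathfrak{c}M \subseteq M$ the module $M$ is recovered as the pullback of $\overline{M}$ and the $\widehat{R}/\mathfrak{c}$-module $M/\mathfrak{c}\overline{M}$ over $\overline{M}/\mathfrak{c}\overline{M}$. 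Consequently the indecomposable torsion-free $\widehat{R}$-modules --- and in particular the multiplicity vectors $(a_j)$, hence all ranks --- are governed by the representations of the Artin pair $A := \widehat{R}/\mathfrak{c} \hookrightarrow B := \overline{R}/\mathfrak{c}$. The decisive input is that finite representation type for $\widehat{R}$ forces the Drozd--Roiter / Green--Reiner conditions: $\overline{R}$ is generated by at most three elements as an $\widehat{R}$-module, and $(\widehat{\mathfrak{m}}\overline{R} + \widehat{R})/\widehat{R}$ is cyclic over $\widehat{R}$. These two conditions pin the pair $A \hookrightarrow B$ down to a short finite list.

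I would then carry out the (finite) classification. The Drozd--Roiter restrictions leave only finitely many pairs $A \hookrightarrow B$ of finite representation type; equivalently, the complete one-dimensional rings $\widehat{R}$ of finite representation type form a short explicit list --- in the domain case the simple (ADE) curve singularities $k[[x,y]]/(f)$, the prototype being $A_n : f = x^2 - y^{n+1}$, together with their analogues over other ground fields and a correspondingly short list of non-domain rings, the three-branch ones among them being essentially the plane curves of type $D_{2k}$. For each ring on the list I would determine its indecomposable torsion-free modules explicitly --- via matrix factorizations in the hypersurface cases and via the pullback description of the previous step otherwise --- extract their rank tuples, and collect the answers; sorted according to $s$, this should reproduce precisely the lists (1), (2), (3). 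The reverse direction, that each tuple on these lists is actually realized, drops out of the same computation, and for the low-rank tuples can also be obtained from Construction~\ref{diophantinemonoid} together with Proposition~\ref{trivialranks}.

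The bulk of the work, and the main obstacle, is this last step: the ring-by-ring determination of indecomposable torsion-free modules and their ranks. Within it, the substantive half is the upper bound --- that no ranks outside the stated list occur --- which requires using the Drozd--Roiter inequalities sharply, since an indecomposable with some $a_j \ge 3$, or with a rank vector too widely spread across the $V_j$, would force $A \hookrightarrow B$ (equivalently, the associated finite-dimensional algebra) to have infinite representation type, contradicting finite representation type of $\widehat{R}$. A secondary but genuine technical point is the bookkeeping of the map $j \mapsto i(j)$, describing how the branches of $\overline{R}$ distribute over the minimal primes of $\widehat{R}$: it is precisely this that turns a list of $\overline{R}$-multiplicity vectors into a list of $\widehat{R}$-rank tuples, and it is responsible for the asymmetry visible in the $s = 3$ case (why, in the chosen ordering of the minimal primes, $(2,1,1)$ occurs while the other permutations of it do not).
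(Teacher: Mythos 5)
The paper does not prove this statement at all: it is quoted verbatim from \cite[Main Theorem 1.2]{Ba-Lu09a}, so the only ``proof'' here is a citation. (Note also that item (3) of the statement contains a typo --- it should read ``exactly three minimal prime ideals'' --- which you correctly silently repair by treating the three items as the cases $s=1,2,3$.) Your framework --- pass to the normalization $\overline{R}$, form the conductor square, reduce to representations of the Artinian pair $\widehat R/\mathfrak c \hookrightarrow \overline{R}/\mathfrak c$, and invoke the Drozd--Roiter conditions forced by finite representation type --- is exactly the framework of the cited line of work (Green--Reiner, Wiegand--Wiegand, \c{C}imen, Arnavut--Luckas--Wiegand, Baeth--Luckas), so the setup is sound.

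However, there is a genuine gap in the middle of your argument. You claim that the Drozd--Roiter restrictions ``leave only finitely many pairs $A \hookrightarrow B$ of finite representation type,'' equivalently that the complete one-dimensional rings of finite representation type ``form a short explicit list'' (essentially the ADE curve singularities), and you then propose to finish by enumerating rings and computing their indecomposables one at a time. This is false: the Drozd--Roiter conditions ($\overline{R}$ generated by at most $3$ elements as an $\widehat R$-module, and $(\widehat{\mathfrak m}\overline{R}+\widehat R)/\widehat R$ cyclic) characterize finite representation type but are satisfied by infinitely many non-isomorphic rings --- including non-Gorenstein rings such as $k[[t^3,t^4,t^5]]$ and rings over arbitrary (not necessarily algebraically closed) residue fields --- so there is no finite list to enumerate, and the ADE classification only covers plane curves over suitable fields. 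Consequently the ``ring-by-ring determination'' that you yourself identify as the bulk of the work cannot even be set up as described. The actual proof in \cite{Ba-Lu09a} works uniformly over all rings satisfying the Drozd--Roiter conditions: it bounds the multiplicities $a_j$ and rules out the forbidden rank tuples by explicit matrix-reduction (change-of-basis) arguments over the Artinian pair, showing that any module whose rank vector falls outside the stated list admits a nontrivial direct-sum decomposition. That uniform reduction argument is the essential content of the theorem, and it is missing from your proposal.
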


Note the lack of symmetry in case (3): With a  predetermined order on the minimal prime ideals of $\widehat R$, there is an indecomposable module of rank $(2,1,1)$, but not of rank $(1,2,1)$ or $(1,1,2)$. As is stated in \cite[Remark 5.2]{Ba-Lu09a}, even for a fixed number of minimal primes, not each of these tuples will occur as the rank of an indecomposable module for each ring. However, since when applying Construction \ref{diophantinemonoid} we cannot distinguish between an indecomposable of rank $(2,1)$ and one of rank $(1,0)$, and since all nontrivial tuples of zeros and ones occur as ranks of indecomposable cyclic modules, we have \cite[Proposition 3.3]{Ba-Lu11a}:

\bigskip
\begin{enumerate}[(1)]
\item If $\spl(R)=1$, then $\mathsf A (R)=\begin{bmatrix}1 & \cdots & 1 & -1 & \cdots & -1 & 0 & \cdots & 0\end{bmatrix}$.
\item If $\spl(R)=2$, then $\mathsf A (R) =\begin{bmatrix}0 & -1 & 1 & -1 & 1 & 0 & 0 & 1 & \cdots \\ -1 & 0 & 1 & -1 & 0 & 1 & 0 & 1 & \cdots\end{bmatrix}$.
\end{enumerate}
\bigskip

When $\spl(R)=1$, we are guaranteed at least one entry for each of $1$, $-1$, and $0$, coming from the ranks of indecomposable cyclic $\widehat R$-modules. If we have at most one $1$ or at most one $-1$ in the defining matrix $\mathsf A (R)$, then it must be the case that $R$ is a domain, $\widehat R$ has exactly two minimal primes $\mathfrak p$ and $\mathfrak q$, and up to isomorphism either $\widehat R/\mathfrak p$ is the only indecomposable torsion-free $\widehat R$-module of rank $(r,s)$ with $r-s=1$ or the $\widehat R/\mathfrak q$ is the only indecomposable torsion-free $\widehat R$-module of rank $(r,s)$ with $r-s=-1$. If this is the situation, we say that $R$ satisfies condition ($\dagger$). In case $\spl(R)=2$, we are guaranteed that each column listed appears at least once as a column of $\mathsf A (R)$.

 We then have the following  refinement of Theorem \ref{1dimensional-summary} when $R$ has finite representation type. The arithmetic of this monoid is studied in Proposition \ref{7.2}.2, Theorem \ref{7.4}, and Corollary \ref{7.10}.

\begin{theorem}\cite[Proposition 3.3]{Ba-Lu11a}\label{DivisorTheory2}~

\begin{enumerate}[(1)]

\item If $\spl(R)=1$ and $R$ satisfies condition ($\dagger$) then  $\mcT(R) \hookrightarrow \mcT(\widehat R)$ is not a divisor theory but $\mcT(R)$ is free abelian.
\item If $\spl(R)=1$ and $R$ does not satisfy condition ($\dagger$), then $\mcT(R) \hookrightarrow \mcT(\widehat R)$ is a divisor theory with infinite cyclic class group  $G = \langle e \rangle$, and $G_{\mathcal P}=\{-e,0,e\}$.

\item If $\spl(R)=2$, then $\mcT(R) \hookrightarrow \mcT(\widehat R)$ is a divisor theory and $\Cl(\mcT(R))\cong \mathbb Z^{(2)}$. Moreover, this isomorphism maps the set of classes containing prime divisors onto $\left\{\begin{bmatrix}1 \\ 1\end{bmatrix}, \begin{bmatrix}-1 \\ -1\end{bmatrix}, \begin{bmatrix}1 \\ 0\end{bmatrix}, \begin{bmatrix}-1 \\ 0\end{bmatrix}, \begin{bmatrix}0 \\ 1\end{bmatrix}, \begin{bmatrix}0 \\ -1\end{bmatrix} \right\}$.
\end{enumerate}
\end{theorem}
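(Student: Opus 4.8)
The plan is to handle the three cases with the tools already assembled. Construction~\ref{diophantinemonoid} identifies $\mcT(R)$ with $\ker(\mathsf A(R))\cap\N_0^{(\mathcal P)}$, where finite representation type makes the index set $\mathcal P$ finite; the Baeth--Luckas rank classification, together with the fact that indecomposable cyclic $\widehat R$-modules realize every nontrivial $0$--$1$ rank tuple, then pins down exactly which integer vectors occur as columns of $\mathsf A(R)$. Two preliminary reductions I would record first. If $\spl(R)=1$, then every column of $\mathsf A(R)$ lies in $\{-1,0,1\}$, and each of $-1,0,1$ does occur, coming from the cyclic modules $\widehat R/\mathfrak q$, $\widehat R/\mathfrak p$, $\widehat R$ of ranks $(0,1)$, $(1,0)$, $(1,1)$. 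If $\spl(R)=2$, then since finite representation type forces $\widehat R$ to have at most three minimal primes and $\spl(R)=\sum_i(t_i-1)$, the ring $R$ must be a domain and $\widehat R$ must have exactly three minimal primes; hence the columns of $\mathsf A(R)$ are the vectors $(r_1-r_2,\,r_1-r_3)$ as $(r_1,r_2,r_3)$ ranges over the eight admissible rank tuples of Baeth--Luckas.

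For part~(2), failing ($\dagger$) means precisely that $\mathsf A(R)$ contains at least two columns equal to $[\,1\,]$ and at least two equal to $[\,{-1}\,]$; hence, after reordering, $\mathsf A(R)=[\,D_1\mid D_2\,]$ with $D_1=[\,1\ {-1}\ 1\ {-1}\,]$, which is exactly the hypothesis of Lemma~\ref{DivisorTheory} with $q=1$. That lemma gives at once that $\mcT(R)\hookrightarrow\mcT(\widehat R)$ is a divisor theory, that $\Cl(\mcT(R))\cong\Z$, and that $G_{\mathcal P}$ corresponds to the set of distinct columns of $\mathsf A(R)$; by the first reduction this set is $\{1,-1,0\}$, so with $e$ the class of the column $1$ one obtains $G_{\mathcal P}=\{-e,0,e\}$. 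For part~(3), Theorem~\ref{1dimensional-summary}(2) already supplies (as $\spl(R)=2\ge2$) that the embedding is a divisor theory, that $\Cl(\mcT(R))\cong\Z^{(2)}$, and that $G_{\mathcal P}$ corresponds to the distinct columns of $\mathsf A(R)$, so it only remains to compute those columns via the second reduction. The six vectors $\pm(1,0)$, $\pm(0,1)$, $\pm(1,1)$ arise from the ranks $(1,0,1)$, $(0,1,0)$, $(1,1,0)$, $(0,0,1)$, $(1,0,0)$, $(0,1,1)$, all realized by cyclic modules, while the Baeth--Luckas list shows no further column can occur; this yields the asserted $G_{\mathcal P}$ (the trivial class $0$, represented by constant-rank indecomposables such as $\widehat R$ itself, being the only further class containing a prime divisor).

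Part~(1) is of a different flavour, since there $\mcT(R)$ is free abelian and Lemma~\ref{DivisorTheory} does not apply. Under ($\dagger$) the ring $R$ is a domain and $\widehat R$ has exactly two minimal primes; after possibly swapping them --- which replaces $\mathsf A(R)$ by $-\mathsf A(R)$, cf.\ Remark~\ref{R:isomorphicG_0} --- I may assume $\mathsf A(R)$ is a single row with a unique entry $+1$ (in a column $p_0$), finitely many entries $-1$ (in columns $q_1,\dots,q_m$ with $m\ge1$), and all remaining entries $0$ (in columns $z_1,\dots,z_n$). The single defining equation is then $a_{p_0}=\sum_k a_{q_k}$, and one checks directly that $\mcT(R)=\ker(\mathsf A(R))\cap\N_0^{(\mathcal P)}$ is free abelian on the atoms $e_{p_0}+e_{q_k}$ $(k\in[1,m])$ and $e_{z_j}$ $(j\in[1,n])$, every element having the forced expansion $\sum_k a_{q_k}(e_{p_0}+e_{q_k})+\sum_j a_{z_j}e_{z_j}$. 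That the embedding is not a divisor theory is the criterion recorded immediately after Theorem~\ref{1dimensional-summary} (for $\spl(R)=1$, being a divisor theory is equivalent to $\mathsf A(R)$ having at least two positive and at least two negative entries, which ($\dagger$) denies); alternatively, a divisor theory of the factorial monoid $\mcT(R)$ would have trivial class group, forcing $\mcT(R)=\{x\in\mcT(\widehat R):[x]=0\}=\mcT(\widehat R)$, in contradiction with $\spl(R)>0$ --- for instance $[\widehat R/\mathfrak p]\in\mcT(\widehat R)\setminus\mcT(R)$, since $\widehat R/\mathfrak p$ has non-constant rank and so is not extended by Proposition~\ref{LevyOdenthall}.

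I expect the main obstacle to be the column bookkeeping underlying parts~(2) and~(3): establishing, from the Baeth--Luckas classification with its asymmetry in the three-minimal-prime case (a rank-$(2,1,1)$ module occurs, but rank $(1,2,1)$ or $(1,1,2)$ need not), that no column outside the listed ones appears, while confirming via the cyclic-module construction that each listed column does appear. Everything downstream is formal: parts~(2) and~(3) reduce to a single invocation of Lemma~\ref{DivisorTheory} (resp.\ Theorem~\ref{1dimensional-summary}) once $\mathsf A(R)$ is in block form, and part~(1) is an elementary kernel computation together with the divisor-theory criterion.
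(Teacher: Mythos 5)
Your argument is correct and follows essentially the route the paper intends: the paper states this result only as a citation of \cite[Proposition 3.3]{Ba-Lu11a}, but the surrounding text assembles exactly the ingredients you use --- Construction \ref{diophantinemonoid}, the Baeth--Luckas rank classification combined with the cyclic-module realizations to pin down the columns of $\mathsf A(R)$, Lemma \ref{DivisorTheory} (equivalently Theorem \ref{1dimensional-summary}) for parts (2) and (3), and the ``at least two positive and two negative entries'' criterion together with the elementary kernel computation for part (1). The one point of friction is your parenthetical in part (3): your column bookkeeping correctly produces the zero column (from the constant-rank module $\widehat R$ of rank $(1,1,1)$), so by Lemma \ref{DivisorTheory} the trivial class does contain prime divisors even though the theorem's displayed six-element list omits $\begin{bmatrix}0 \\ 0\end{bmatrix}$ (compare part (2), where $0\in G_{\mathcal P}$ is listed); you are right to record this rather than suppress it.
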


\quad

\subsection{Infinite Representation Type}\label{DimensionOne_IRT} \label{4b}
{\it Throughout this subsection, let $R$ be as in Setup {\bf (S)}, and  suppose in addition that $R$ has infinite representation type.}

Unfortunately, in this case,  there is no known complete list of the tuples that can occur as ranks of indecomposable finitely generated torsion-free $R$-modules. Thus we cannot give a full description of $\mcT(R)$ using Construction \ref{diophantinemonoid}. However, with the additional assumption that $\widehat R/{\mathfrak q}$ has infinite representation type for some minimal prime ideal ${\mathfrak q}$ of $\widehat R$, we can produce a wide variety of interesting ranks and can provide a partial description of $\mcT(R)$. This information is enough to show that, very much unlike the finite representation type case of Section \ref{DimensionOne_FRT}, all of the arithmetical invariants we study are infinite.

\begin{proposition}\cite[Theorem 3.4.1]{Sa10a}\label{lotsofranks}
Let $S$ be a one-dimensional analytically unramified commutative Noetherian local ring with residue field $K$, and with $t$ minimal prime ideals  $\mathfrak q_1$, \ldots, $\mathfrak q_t$ such that
$S/\mathfrak q_1$ has infinite representation type. Let $(r_1,\ldots, r_t)$ be a nonzero $t$-tuple
of nonnegative integers with $r_i\le 2r_1$ for all $i\in
[2, t]$.
\begin{enumerate}[(1)]
\item There exists an indecomposable torsion-free $S$-module of rank $(r_1,\ldots, r_t)$.
\item If the residue field $K$ is infinite, then the set of isomorphism classes of indecomposable torsion-free
$S$-modules of rank $(r_1,\ldots, r_t)$ has cardinality $|K|$.
\end{enumerate}
\end{proposition}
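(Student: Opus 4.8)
The plan is to deduce this from the standard conductor-square (Milnor) description of finitely generated torsion-free $S$-modules, feeding into the patching the wealth of indecomposable modules provided by the infinite representation type of $S/\mathfrak q_1$. Since $S$ is analytically unramified, its normalization $\overline S$ is a module-finite $S$-algebra (as recorded in the discussion following Setup \textbf{(S)}), and $\overline S = \overline{S_1} \times \cdots \times \overline{S_t}$, where $\overline{S_i}$ is the normalization of the domain $S_i := S/\mathfrak q_i$ and, being semilocal Dedekind, is a principal ideal domain. Let $\mathfrak c = (S : \overline S)$ be the conductor, so that $A := S/\mathfrak c$ and $\overline A := \overline S/\mathfrak c$ are Artinian and the square relating $S, \overline S, A, \overline A$ is cartesian. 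Then every finitely generated torsion-free $S$-module $M$ with $\rank_{\mathfrak q_i}(M) = r_i$ for all $i$ is obtained, in the standard way, as the pullback over a single finite-length $A$-module of free $\overline{S_i}$-modules of ranks $r_i$ along $A$-linear maps out of their reductions modulo $\mathfrak c$; conversely, every such patching datum defines a torsion-free $S$-module of rank $(r_1, \dots, r_t)$. This reduces the problem to producing a suitable patching datum.

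If $r_1 = 0$ the hypotheses force $(r_1, \dots, r_t) = 0$, so we may assume $r_1 \ge 1$. Because $S_1$ has infinite representation type, the Drozd--Roiter/Green--Reiner analysis supplies indecomposable torsion-free $S_1$-modules in abundance; I would select an indecomposable $B$ of rank $r_1$ whose reduction $B/\mathfrak c B$, viewed as an $\overline A$-module, has, in general position, enough ``room'' to glue a free rank-$r_i$ piece from the $i$-th component for each $i \ge 2$. This is exactly where the inequality $r_i \le 2 r_1$ enters: it bounds the width of the admissible gluing data that a rank-$r_1$ module along $\mathfrak q_1$ makes available (reflecting the structure of $\overline A$ over $A$). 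Amalgamating $B$ with free rank-$r_i$ modules over the remaining $\overline{S_i}$ through such a datum produces an $S$-module $M$ which is torsion-free of rank $(r_1, \dots, r_t)$ by construction.

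The main obstacle is the indecomposability of $M$, which I would establish by computing $\End_S(M)$ through the pullback: an endomorphism of $M$ amounts to a compatible triple consisting of an $S_1$-endomorphism of $B$, $\overline{S_i}$-endomorphisms of the free pieces, and an $A$-endomorphism of the common quotient. One arranges $B$ (from the infinite family) so that $\End_{S_1}(B)$ is local, and chooses the gluing maps so that the only idempotents in the resulting fibre product of endomorphism rings are $0$ and $1$; passing to the completion $\widehat S$ (which preserves infinite type and the rank tuple and can only make a nontrivial decomposition easier to detect), together with a Nakayama argument modulo $\mathfrak c$, should keep the bookkeeping manageable. Controlling how endomorphisms mix the ``semisimple'' behaviour on the free components with the local behaviour on the $B$-component is the delicate point and is the crux of the argument.

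For part (2), when $K$ is infinite one has at one's disposal a family of admissible patching data of cardinality $|K|$ — either because $S_1$ itself carries a $|K|$-family of pairwise non-isomorphic indecomposables of rank $r_1$ (a brick family), or simply because the gluing maps can be varied over a $K$-parametrized set on which the relevant automorphism groups act with $|K|$ orbits. Feeding these in yields $|K|$ modules of rank $(r_1, \dots, r_t)$, and I would show they are pairwise non-isomorphic by recovering the orbit of the patching datum from $M$ — for instance from the torsion-free $S_1$-module obtained by reducing $M$ modulo $\mathfrak q_1$ and killing torsion, together with its position relative to the conductor. The matching upper bound of $|K|$ is then automatic, since each such $M$ is determined by finitely many parameters in $K$.
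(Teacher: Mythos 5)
First, note that the paper itself offers no proof of Proposition \ref{lotsofranks}: it is imported verbatim from \cite[Theorem 3.4.1]{Sa10a}, so there is no in-paper argument to measure your sketch against. Measured against what a proof of the statement actually requires, your proposal correctly identifies the standard framework --- the conductor square $S \to \overline S$, $A=S/\mathfrak c \to \overline A=\overline S/\mathfrak c$, the fact that $\overline S M$ is a product of free modules over the semilocal PIDs $\overline{S_i}$, and the recovery of $M$ as the preimage of an $A$-submodule of $\overline S M/\mathfrak c\,\overline S M$ --- and this is indeed the setting in which Saccon (following Green--Reiner, \c{C}imen, and Hassler--Karr--Klingler--Wiegand) works. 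But the two steps that constitute the entire content of the theorem are precisely the ones you defer: (i) exhibiting a concrete gluing datum, and (ii) proving that the fibre product of endomorphism rings it produces has no idempotents other than $0$ and $1$. Writing that the datum can be chosen ``in general position'' and that the bookkeeping ``should'' be manageable is not an argument here, and in fact a pure genericity argument cannot work: for $i\ge 2$ the component $\End_{\overline{S_i}}(\overline{S_i}^{\,r_i})$ is a full matrix ring, and killing its idempotents forces the $A$-submodule $V$ to sit in a very special position relative to the interface supplied by the first component. That interface is what the failure of the Drozd--Roiter conditions for $S/\mathfrak q_1$ provides (concretely, a $2r_1$-dimensional gluing space built from companion/nilpotent blocks), and this is the only place the hypothesis $r_i\le 2r_1$ can enter. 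Your sketch gestures at this (``bounds the width of the admissible gluing data'') but never uses the inequality, which is a reliable sign that the argument as written would prove either too much or nothing.

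Two further gaps: your starting point --- ``select an indecomposable $B$ of rank $r_1$'' over $S/\mathfrak q_1$ --- is itself the $t=1$ case of part (1) and is a nontrivial theorem about rings of infinite CM type, so it must be either proved or cited, not treated as freely available; and for part (2) the claim that one can recover the orbit of the patching datum from $M$ (to separate the $|K|$ candidates up to isomorphism) is exactly the kind of statement that needs the explicit matrix form of the construction, since two visibly different gluing data can easily yield isomorphic pullbacks under the action of $\operatorname{Aut}_{\overline S}(\overline S M)$. In short: right framework, but the crux --- the explicit construction and the endomorphism-ring computation --- is missing.
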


By Proposition \ref{lotsofranks} the conditions of Lemma \ref{DivisorTheory} are satisfied. Therefore the map $\mcT(R) \hookrightarrow \mcT(\widehat R)$  is a divisor theory and the class group  $\Cl(\mcT(R))$ is free abelian of rank $\spl (R)$.
Our main result of this subsection is a refinement of Theorem \ref{1dimensional-summary}. Its arithmetical consequences are given in Proposition \ref{7.2}.1, strongly improving the arithmetical characterizations given in \cite{Ba-Sa12a}.

\begin{theorem}\label{integers}
Suppose that  $\spl (R)\geq 1$ and that there is at least one minimal prime ideal $\mathfrak q$ of $\widehat{R}$
such that $\widehat{R}/\mathfrak q$ has infinite representation type. Then $\Cl(\mcT(R))$ is  free abelian  of rank $\spl (R)$ and the set  of classes containing prime divisors contains an infinite cyclic subgroup.
\end{theorem}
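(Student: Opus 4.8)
The plan is to read off the statement about the class group from the discussion immediately preceding the theorem, and then to prove the statement about $G_{\mathcal P}$ by exhibiting one explicit infinite cyclic subgroup of $\Cl(\mcT(R))$, all of whose elements contain a prime divisor. Recall from Construction \ref{diophantinemonoid} that $\mcT(R) \cong \ker(\mathsf A(R)) \cap \N_0^{(\mathcal P)}$, where the column of $\mathsf A(R)$ indexed by an indecomposable finitely generated torsion-free $\widehat R$-module $M$ records the differences $\rank_{\mathfrak q_{i,1}}(M) - \rank_{\mathfrak q_{i,j}}(M)$, with $\mathfrak p_i$ ranging over the minimal primes of $R$ that split in $\widehat R$ and $j \in [2, t_i]$. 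Since $R$ is analytically unramified, $\widehat R$ is reduced, so Proposition \ref{lotsofranks} applies with $S = \widehat R$ and with the distinguished prime $\mathfrak q$ (the minimal prime with $\widehat R/\mathfrak q$ of infinite representation type) in the role of $\mathfrak q_1$: every nonzero tuple $(r_1,\dots,r_t) \in \N_0^{t}$ whose entry $r_1$ at $\mathfrak q$ satisfies $r_k \le 2 r_1$ for all $k$ is the rank tuple of some indecomposable finitely generated torsion-free $\widehat R$-module. As noted before the statement, this supplies enough columns of $\mathsf A(R)$ for Lemma \ref{DivisorTheory} to apply, whence $\mcT(R) \hookrightarrow \mcT(\widehat R)$ is a divisor theory, $\Cl(\mcT(R)) \cong \Z^{(\spl(R))}$ is free abelian of rank $\spl(R)$, and under a fixed such isomorphism $\Phi$ the set $G_{\mathcal P}$ corresponds to the set of distinct columns of $\mathsf A(R)$. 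This settles the first assertion and reduces the second to showing that for $\mathbf 1 = (1,\dots,1) \in \Z^{(\spl(R))}$ --- which is nonzero, as $\spl(R) \ge 1$, hence of infinite order --- every integer multiple $n\mathbf 1$ occurs as a column of $\mathsf A(R)$; then $\langle \Phi^{-1}(\mathbf 1) \rangle \subseteq G_{\mathcal P}$ is the subgroup we want.

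To realize $n\mathbf 1$ as a column I would simply choose a rank tuple with all prescribed differences equal to $n$ and meeting the constraint of Proposition \ref{lotsofranks}. For $n = 0$ the module $\widehat R$ itself works, being indecomposable (it is local) of constant rank. For $n \ge 1$ I would take the tuple assigning rank $2n$ to $\mathfrak q_{i,1}$ for every split $\mathfrak p_i$ (and rank $2n$ to the unique minimal prime of $\widehat R$ over every unsplit $\mathfrak p_i$) and rank $n$ to $\mathfrak q_{i,j}$ for every split $\mathfrak p_i$ and every $j \in [2, t_i]$: it is nonzero, its maximum entry is $2n$, its entry at $\mathfrak q$ is $n$ or $2n$ and hence at least $\tfrac12 \cdot 2n$, so Proposition \ref{lotsofranks} yields an indecomposable module with this rank tuple, and its column equals the constant vector $n\mathbf 1$ since $2n - n = n$. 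For $n = -m$ with $m \ge 1$ I would argue symmetrically, assigning $m$ to each $\mathfrak q_{i,1}$ (and to each minimal prime over an unsplit $\mathfrak p_i$) and $2m$ to each $\mathfrak q_{i,j}$ with $j \ge 2$; the constraint holds as before and each column entry is $m - 2m = -m$. This gives $\{ n\mathbf 1 : n \in \Z \} \subseteq \Phi(G_{\mathcal P})$ and finishes the argument.

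The one point that needs care is the inequality $r_k \le 2 r_1$ in Proposition \ref{lotsofranks}: since $\mathfrak q$ need not be one of the reference primes $\mathfrak q_{i,1}$ used to build $\mathsf A(R)$, the naive choices (such as rank $1$ at the $\mathfrak q_{i,1}$ and $0$ elsewhere) are not admissible, because $\mathfrak q$ could then carry rank $0$; the fix is to rescale so that $\mathfrak q$ always has at least half of the largest rank while all prescribed differences stay constant, which is exactly what the factors of $2$ above do. I expect this bookkeeping to be the only real obstacle to a careful write-up. I would also remark that the same rescaling, applied to an arbitrary target column rather than a constant one, shows that in fact $G_{\mathcal P} = \Cl(\mcT(R))$; but for the present theorem, and for its application in Proposition \ref{7.2}.1, the infinite cyclic subgroup suffices.
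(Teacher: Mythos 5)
Your proposal is correct and takes essentially the same route as the paper: both obtain the class-group statement from Proposition \ref{lotsofranks} together with Lemma \ref{DivisorTheory}, and both prove the second assertion by using Proposition \ref{lotsofranks} to realize a full line of columns of $\mathsf A (R)$, which then sits inside $G_{\mathcal P}$ by Construction \ref{diophantinemonoid}. The only difference is in the choice of line --- the paper realizes the columns $\{x\,e_1 \colon x \in \Z\}$ supported on a single split prime after a case distinction on where the distinguished prime $\mathfrak q$ lies, whereas you realize the constant columns $\{n\mathbf{1} \colon n \in \Z\}$ by a uniform rescaling that makes the constraint $r_i \le 2r_{\mathfrak q}$ automatic --- and both choices work.
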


\begin{proof}
Let $\mathfrak p_1,  \ldots , \mathfrak p_s$ denote the minimal primes of $R$ and, for each $i \in [1, s]$, let $\mathfrak q_{i,1}, \ldots , \mathfrak q_{i,t_i}$ denote the set of minimal primes of $\widehat R$ lying over $\mathfrak p_i$. Without loss of generality, assume that $\widehat R/\mathfrak q_{1,1}$ has infinite representation type. If $t_1=1$, then without loss of generality, $t_2>1$. From Proposition \ref{lotsofranks} there is, for each pair $(r,s)$ of nonnegative integers (not both zero), an indecomposable $\widehat R$-module $M$ with $\rank_{\mathfrak q_{2,1}}(M)=r$, $\rank_{\mathfrak q_{2,2}}(M)=s$, and $\rank_{\mathfrak q_{i,j}}(M)=0$ for all $(i,j)\not\in\{(1,1),(2,1),(2,2)\}$. Now suppose that $t_1>1$. Then we have, from Proposition \ref{lotsofranks}, for each pair $(r,s)$ of nonnegative integers (not both zero) satisfying $r-s \geq -s$, an indecomposable $\widehat R$-module $M$ with $\rank_{\mathfrak q_{1,1}}(M)=r$, $\rank_{\mathfrak q_{1,2}}(M)=s$, and $\rank_{\mathfrak q_{i,j}}(M)=0$ for all $(i,j)\not\in\{(1,1),(1,2)\}$. In either case, using Construction \ref{diophantinemonoid} we see that the set $\left\{\begin{bmatrix}x & 0 & \cdots & 0\end{bmatrix}^T\,:\, x \in \mathbb Z\right\}$ occurs as a set of columns for $\mathsf A (R)$ and hence occurs as a subset of the set of classes containing prime divisors.
\end{proof}

\subsection{Divisor-closed submonoids of $\mcT(R)$}\label{DimensionOne_unknown} \label{4c}

Suppose that $R$ has infinite representation type but, in contrast to  Theorem \ref{integers},  suppose  that $\widehat R/{\mathfrak q}$ has finite representation type for each minimal prime ${\mathfrak q}$ of $\widehat R$. Then there is no known classification of all ranks of indecomposable finitely generated torsion-free $R$-modules. Specific rings have been studied in the literature, but even in these settings, a complete solution has been unattainable. We now give such an example which we will return to in Section \ref{5.4}.

\begin{example} \label{unknownranks}
Let $K$ be an algebraically closed field of characteristic zero. Consider the  ring $S=K[[x,y]]/(x^4-xy^7)$ which has exactly two minimal primes $xS$ and $(x^3-y^7)S$. Detailed constructions in \cite{Ka-Wi11a} and \cite{Sa10a} show that $S$ has indecomposable modules of ranks $(m,m)$, $(m+1,m)$, and $(m+2,m)$ for each positive integer $m$. Moreover, \cite[Proposition 6.2]{Ba-Sa12a} guarantees indecomposable modules of ranks $(s-(m+1),s-m)$ and $(t-(m+2),t-m)$ where $s\geq m+1$ and $t\geq m+2$ are positive integers. Determining what other tuples occur as ranks of indecomposable torsion-free $S$-modules appears to be quite difficult.
\end{example}

Thus, since studying $\mathcal T (R)$ as a whole is out of reach at the present state of knowledge, we pick finitely many $R$-modules $M_1,  \ldots , M_n$, and study the direct-sum relations among them.
In more technical terms,
instead of studying the full Krull monoid $\mathcal T (R)$, we focus on divisor-closed submonoids. Suppose that $H$ is a Krull monoid and
$H \hookrightarrow \mathcal F ({\mathcal P})$ a cofinal divisor homomorphism. If $H' \subset H$ is a divisor-closed submonoid, then $H' \hookrightarrow H \hookrightarrow \mathcal F ({\mathcal P})$ is a divisor homomorphism. For each of  the arithmetical invariants $\ast( \cdot)$ introduced in Section \ref{3}, we have $\ast(H') \le \ast(H')$ resp. $\ast(H') \subset \ast(H)$; for example we have $\mathsf c (H') \le \mathsf c (H)$, $\mathcal L (H') \subset \mathcal L (H)$, and so on.
Moreover, if $H'$ is the smallest divisor-closed submonoid containing finitely many elements $a_1, \ldots, a_k \in H$, it is also the smallest divisor-closed submonoid containing $a_1 \cdot \ldots \cdot a_k$.

For the rest of Section \ref{5}, we study  divisor-closed submonoids of $\mathcal T (R)$ generated by a single $R$-module $M$, regardless of whether $R$ has  finite or infinite representation type. We denote this monoid by $\add (M)$. Before discussing specific examples in Section \ref{5.4}, we carefully recall the consequences of our main Construction \ref{diophantinemonoid} for such submonoids.

\begin{construction}\label{add(M)}
Let $R$ and $\widehat R$ be as in Construction \ref{diophantinemonoid}. Let $M$ be a finitely generated torsion-free $R$-module. Then $\add(M)$ consists of all isomorphism classes $[N] \in \mcT(R)$ such that $N$ is isomorphic to a direct summand of $M^{(n)}$ for some finite positive integer $n$.

Write $\widehat M=L_1^{(n_1)}\oplus \cdots \oplus L_k^{(n_k)}$ where the $L_i$ are pairwise nonisomorphic indecomposable finitely generated torsion-free $\widehat R$-modules and the $n_i$ are positive integers. If $[N] \in \add(M)$, then $[\widehat N] \in \add (\widehat M)$ and thus, since direct-sum decomposition is essentially unique over $\widehat R$, $\widehat N\cong L_1^{(a_1)}\oplus \cdots \oplus L_k^{(a_k)}$ with each $a_i$ a nonnegative integer at most $n_i$. Thus there is a divisor homomorphism $\Psi: \add(M) \rightarrow \N_0^{(k)}$ given by $[N] \mapsto (a_1, \ldots , a_k)$. We  identify $\add(M)$ with the saturated submonoid $\Gamma(M)=\Psi(\add(M))$ of $\N_0^{(k)}$.

Moreover, if $\mathsf A (M)$ is the $\spl(R) \times k$ integer-valued matrix for which the $l$th column is the transpose of the row-vector $\begin{bmatrix} r_{1,1}-r_{1,2} & \cdots & r_{1,1}-r_{1,t_1} & \cdots & r_{s,1}-r_{s,2} & \cdots & r_{s,1}-r_{s,t_s} \end{bmatrix}$ where $r_{i,j}=\rank_{\mathfrak q_{i,j}}(V_l)$, then $\add(M) \cong \Gamma(M)=\ker(\mathsf A (M))\cap \N_0^{(k)}$.
\end{construction}

We now state a corollary of  Theorem \ref{1dimensional-summary} for $\add(M)$.

\begin{corollary}\label{add(M)-summary}
Let $M$ be a finitely generated torsion-free $R$-module as in Construction \ref{add(M)}.
\begin{enumerate}[(1)]
\item If $\spl (R)=0$, then $\add(M)\cong \add(\widehat M)$ is free abelian.
\item If $\spl (R) \geq 1$ and $\mathsf A (M)$ satisfies the conditions of Lemma \ref{DivisorTheory}, then the inclusion $\Gamma(M)\subset \N_0^{(k)}$ is a divisor theory. Moreover,
\begin{enumerate}
\item $\add(M)\cong \ker(\mathsf A (M)) \cap \N_0^{(k)}$,
\item $\Cl(\add(M)) \cong \Z^{(\spl (R))}$, and this isomorphism maps the set of classes containing prime divisors onto the set of distinct columns of $\mathsf A (M)$.
\end{enumerate}
\end{enumerate}
\end{corollary}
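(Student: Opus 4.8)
The plan is to read the statement off directly from Construction~\ref{add(M)}, which already presents $\add(M)$ as the saturated submonoid $\Gamma(M)=\ker(\mathsf A(M))\cap\N_0^{(k)}$ of $\N_0^{(k)}$, and then to feed this matrix into Lemma~\ref{DivisorTheory} in exactly the way Theorem~\ref{1dimensional-summary} is obtained from it for $\mcT(R)$. The preliminary observation is that $\add(M)$ is a divisor-closed submonoid of $\mcT(R)$: if $[N]\in\add(M)$ and $[N']$ divides $[N]$ in $\mcT(R)$, then $N'$ is a direct summand of $N$, hence of a power of $M$, so $[N']\in\add(M)$. This justifies the restricted divisor homomorphism used in Construction~\ref{add(M)} and guarantees that the invariants of $\add(M)$ are inherited from those of $\mcT(R)$.

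For part~(1), recall from Construction~\ref{diophantinemonoid} that $\spl(R)=\sum_{i=1}^s(t_i-1)$, so $\spl(R)=0$ forces $t_i=1$ for every $i$; the matrix $\mathsf A(M)$ then has no rows, whence $\ker(\mathsf A(M))=\Z^{(k)}$ and $\Gamma(M)=\N_0^{(k)}$ is free abelian. Equivalently, the rank condition of Proposition~\ref{LevyOdenthall} is vacuous in this case, so every finitely generated torsion-free $\widehat R$-module is extended; hence $[N]\mapsto[\widehat N]$ is injective on $\add(M)$ with image $\add(\widehat M)$, which is free abelian by Lemma~\ref{module_divisorhom}(1). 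Either way, $\add(M)\cong\add(\widehat M)$.

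For part~(2), assume $\spl(R)\ge 1$ and that $\mathsf A(M)$ satisfies the hypotheses of Lemma~\ref{DivisorTheory} with $q=\spl(R)$, that is, after a permutation of columns $\mathsf A(M)=\begin{bmatrix}D_1\mid D_2\end{bmatrix}$ with $D_1$ the standard $q\times(2q+2)$ block. Since $\add(M)\cong\Gamma(M)=\ker(\mathsf A(M))\cap\N_0^{(k)}$ by Construction~\ref{add(M)}, parts~(2) and~(4) of Lemma~\ref{DivisorTheory} give at once that $\Gamma(M)\hookrightarrow\N_0^{(k)}$ is a divisor theory, that $\Cl(\Gamma(M))\cong\Z^{(q)}=\Z^{(\spl(R))}$, and that this isomorphism carries the set of classes containing prime divisors onto the set of distinct columns of $\mathsf A(M)$. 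Transporting along $\add(M)\cong\Gamma(M)$ yields (a) and (b).

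The one point needing a word of care --- and the only plausible obstacle, though a mild one --- is the compatibility of the two descriptions of $\add(M)$: the class group and the distribution of prime divisors computed for $\Gamma(M)$ inside $\N_0^{(k)}$ must coincide with the intrinsic invariants of $\add(M)$ regarded as an abstract Krull monoid. This follows from the uniqueness of the divisor theory of a Krull monoid (used already in Lemma~\ref{module_divisorhom} and in Section~\ref{4}): once $\Gamma(M)\hookrightarrow\N_0^{(k)}$ is known to be a divisor theory, its class group and its set of classes containing prime divisors depend only on $\add(M)$, so the statement is unambiguous.
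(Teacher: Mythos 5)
Your argument is correct and is exactly the route the paper intends: the corollary is stated without proof precisely because Construction \ref{add(M)} already identifies $\add(M)$ with $\Gamma(M)=\ker(\mathsf A(M))\cap\N_0^{(k)}$, and parts (2)--(4) of Lemma \ref{DivisorTheory} (together with the trivial no-rows observation when $\spl(R)=0$) then give everything, just as in Theorem \ref{1dimensional-summary}. Your added remarks on divisor-closedness and on the uniqueness of the divisor theory are sound and only make explicit what the paper leaves implicit.
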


Before considering explicit examples, we give a realization result (see also \cite[Chapter 1]{Le-Wi12a}).

\medskip
\begin{proposition} \label{realization}
Let $H$ be a reduced Krull monoid with free abelian class group $G$ of rank $q$ and let $G_{\mathcal P} \subset G$ denote the set of classes containing prime divisors. Suppose that $G_{\mathcal P}$ is finite and that $G$ has a basis $(e_1, \ldots, e_q)$ such that $G_0 = \{e_0=e_1+ \cdots +e_q, e_1, \ldots, e_q, -e_0, \ldots, -e_q\} \subset G_{\mathcal P}$.  Then there exists an analytically unramified commutative Noetherian local domain $S$ and a finitely generated torsion-free $S$-module $M$ such that $\add(M) \cong H$.
\end{proposition}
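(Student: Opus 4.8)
The plan is to realize $H$ first as the kernel of an explicit integer matrix via Lemma~\ref{DivisorTheory}, and then to produce a ring and a module whose defining matrix from Construction~\ref{add(M)} is exactly that matrix. Identify $G$ with $\Z^{(q)}$ through the basis $(e_1,\dots,e_q)$, so that $e_i$ becomes the $i$-th standard vector, $e_0$ becomes the all-ones vector $(1,\dots,1)$, and each class in $G_{\mathcal P}$ becomes a column vector in $\Z^q$. Let $\mathcal P'$ denote the (finite) set of prime divisors of $H$ and let $m\colon G_{\mathcal P}\to\N$ record, for each class, the number of prime divisors it contains.

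First I would assemble the matrix. Because $G_0\subset G_{\mathcal P}$, all $2q+2$ columns $e_1,\dots,e_q,-e_1,\dots,-e_q,e_0,-e_0$ of the block $D_1=[\,I_q\mid -I_q\mid \mathbf 1\mid -\mathbf 1\,]$ of Lemma~\ref{DivisorTheory} lie in $G_{\mathcal P}$. Form the integer matrix $\mathsf A=[\,D_1\mid D_2\,]$ with $q$ rows and $|\mathcal P'|$ columns by adjoining to $D_1$, for each $g\in G_{\mathcal P}$, enough further copies of the column $g$ so that $g$ occurs exactly $m(g)$ times altogether. By Lemma~\ref{DivisorTheory}, the monoid $H_0:=\ker(\mathsf A)\cap\N_0^{(\mathcal P')}$ is a reduced Krull monoid whose class group is isomorphic to $\Z^{(q)}\cong G$ and in which the class $g$ contains exactly $m(g)$ prime divisors. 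Hence $H_0$ has the same characteristic as $H$, so $H_0\cong H$ by \cite[Theorem 2.5.4]{Ge-HK06a}. It now suffices to exhibit a one-dimensional analytically unramified Noetherian local domain $R$ and a finitely generated torsion-free $R$-module $M$ with defining matrix $\mathsf A(M)=\mathsf A$.

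Next I would construct the ring and the module. Choose an infinite field $K$ of characteristic zero and pairwise non-associate irreducible power series $f_1,\dots,f_{q+1}\in K[[x,y]]$ with $f_1$ chosen so that the plane curve singularity $K[[x,y]]/(f_1)$ is non-simple, hence of infinite representation type. Then $T=K[[x,y]]/(f_1\cdots f_{q+1})$ is a reduced complete local ring of dimension one, with exactly $q+1$ minimal primes $\mathfrak q_1,\dots,\mathfrak q_{q+1}$ and with $T/\mathfrak q_1$ of infinite representation type; by the existence results on Noetherian local domains with prescribed completion (Lech, Heitmann), $T\cong\widehat R$ for some Noetherian local domain $R$. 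Thus $R$ is one-dimensional, analytically unramified (as $\widehat R=T$ is reduced), has infinite residue field, and $\spl(R)=(q+1)-1=q$. For each column $\boldsymbol\alpha=(\alpha_1,\dots,\alpha_q)^T$ of $\mathsf A$ pick an integer $r>\max_j|\alpha_j|$, with these integers pairwise distinct over all columns, and use Proposition~\ref{lotsofranks}(1) applied to $\widehat R=T$ (whose hypotheses hold) to get an indecomposable torsion-free $\widehat R$-module of rank $(r,\,r-\alpha_1,\dots,r-\alpha_q)$. This produces pairwise non-isomorphic indecomposables $L_1,\dots,L_k$ (where $k=|\mathcal P'|$) whose rank-difference vectors are, in order, the columns of $\mathsf A$. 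A direct computation shows that positive integers $n_1,\dots,n_k$ make $\bigoplus_l L_l^{(n_l)}$ of constant rank exactly when $\mathsf A\cdot(n_1,\dots,n_k)^T=0$; and since $G_0=-G_0$ generates $G$, each column $g$ of $\mathsf A$ equals $-\sum_h d_h h$ for suitable $d_h\in\N_0$ with $h$ ranging over the columns of $D_1$, so $\ker(\mathsf A)$ contains a strictly positive integer vector, which we take as $(n_1,\dots,n_k)$. Then $\widehat M:=\bigoplus_l L_l^{(n_l)}$ is finitely generated, torsion-free, and of constant rank, hence extended by Proposition~\ref{LevyOdenthall}: $\widehat M\cong M\otimes_R\widehat R$ for some finitely generated $R$-module $M$, and $M$ is torsion-free because $\widehat M$ is. By Construction~\ref{add(M)} and Corollary~\ref{add(M)-summary}, $\add(M)\cong\ker(\mathsf A(M))\cap\N_0^{(k)}$ with $\mathsf A(M)=\mathsf A$, whence $\add(M)\cong H_0\cong H$.

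I expect the hard part to be the module-theoretic realization rather than the bookkeeping: one must produce a ground domain $R$ whose completion has exactly the prescribed collection of minimal primes (this rests on the Lech--Heitmann existence theorems) and, more essentially, one must know that indecomposable torsion-free $\widehat R$-modules of all the required ranks actually occur, which is precisely the content of the deep rank-realization result Proposition~\ref{lotsofranks}. By contrast, the balancing step---choosing the multiplicities $n_l$ inside $\ker(\mathsf A)$ so as to make $\widehat M$ of constant rank and thereby apply Levy--Odenthal---is routine. One small point deserving separate attention is the degenerate case $q=1$, where $e_0=e_1$ and $-e_0=-e_1$, so that housing $D_1$ inside $\mathsf A$ uses two copies each of the classes $e_1$ and $-e_1$; this is harmless once $G_0\subset G_{\mathcal P}$ is read with the appropriate multiplicities, or once $q=1$ is handled directly.
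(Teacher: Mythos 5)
Your proof is correct and follows the same overall strategy as the paper's: realize the required rank tuples over the completion via Proposition \ref{lotsofranks}, arrange constant rank so that Proposition \ref{LevyOdenthall} descends the module to the base ring, and read off $\add(M)$ from Construction \ref{add(M)}. The differences are in the execution, and they are substantive. The paper's proof posits the ring $S$ rather than building it, constructs exactly \emph{one} indecomposable $N_{\mathbf{s}}$ per class $\mathbf{s}\in\Phi(G_{\mathcal P})$, and pads to constant rank with cyclic modules $\widehat S/Q_i$; its stated conclusion is that $\add(M)$ has class group $\Z^{(q)}$ with set of classes containing prime divisors equal to $\Phi(G_{\mathcal P})$, i.e.\ it matches the pair $(G,G_{\mathcal P})$ but does not address the multiplicities $(m_g)$ (and the padding summands contribute additional prime divisors to the classes $e_0,-e_1,\dots,-e_q$). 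Your version --- $m(g)$ pairwise non-isomorphic indecomposables in each class $g$, separated by distinct leading ranks, with the multiplicities balanced by a strictly positive vector of $\ker(\mathsf A)$ instead of extra summands --- matches the full characteristic, which is what one actually needs to conclude $\add(M)\cong H$ via \cite[Theorem 2.5.4]{Ge-HK06a}. A second improvement: your shifted ranks $(r,r-\alpha_1,\dots,r-\alpha_q)$ with $r>\max_j|\alpha_j|$ stay nonnegative, whereas the recipe $r_0=s_1+\cdots+s_q$, $r_i=\sum_{j\ne i}s_j$ yields negative entries for any class with a negative coordinate (e.g.\ $-e_0$) and cannot be fed into Proposition \ref{lotsofranks} literally.

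Two small caveats on your side. First, your argument (like the statement itself) tacitly assumes each class contains only finitely many prime divisors; this is forced, since $\add(M)$ embeds into $\N_0^{(k)}$ with $k$ finite and so can never be isomorphic to a Krull monoid with infinitely many prime divisors. Second, the $q=1$ degeneration you flag is real but harmless: one does not need Lemma \ref{DivisorTheory} at all to identify $H$ with $\ker(\mathsf A)\cap\N_0^{(\mathcal P')}$, since for the divisor theory $H\hookrightarrow\mathcal F(\mathcal P)$ one has $H\cong\{a\in\mathcal F(\mathcal P)\,\colon\,[a]=0\}$ directly, and likewise $\add(M)\cong\ker(\mathsf A(M))\cap\N_0^{(k)}$ comes from Construction \ref{add(M)} alone; the divisor-theory refinement of Corollary \ref{add(M)-summary} is not needed for the isomorphism.
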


\begin{proof}
Let $\Phi: G \rightarrow \mathbb Z^{(q)}$ denote the isomorphism   which maps $(e_1, \ldots, e_q)$ onto the standard basis of $\mathbb Z^{(q)}$. Let $S$ be an analytically unramified Noetherian local domain with completion $\widehat S$ having $q+1$ minimal primes $Q_0, \ldots , Q_q$ such that $\widehat S/Q_0$ has infinite representation type. For $\mathbf{s}=\begin{bmatrix} s_1 & \cdots & s_q\end{bmatrix} \in \Phi (G_{\mathcal P})$,  set $r_0=s_1+\cdots +s_q$ and $r_i=\sum_{j\not=i}s_j$ for each $i \in [1, q]$. By Theorem \ref{lotsofranks} there exists an indecomposable finitely generated torsion-free $\widehat S$-module $N_{\mathbf{s}}$ such that $\rank(N_{\mathbf{s}})=(r_0, \ldots , r_q)$. Set $N=\bigoplus_{\mathbf{s} \in \Phi (G_{\mathcal P})}N_{\mathbf{s}}$ and write $\rank(N)=(a_0, \ldots , a_q)$. Set $a=\max\{a_0, \ldots , a_q\}$ and $L=\bigoplus_{i=0}^q\left(\widehat S/Q_i\right)^{(a-a_i)}$. Then $N\oplus L$ is a finitely generated torsion-free $\widehat S$-module with constant rank and is thus extended from a finitely generated torsion-free $S$-module $M$. By Construction \ref{add(M)} and Corollary \ref{add(M)-summary} we see that $\add(M)$ has class group isomorphic to $\mathbb Z^{(q)}$ and this isomorphism maps the set of prime divisors onto the elements of the set $\Phi (G_{\mathcal P})$.
\end{proof}

\quad

\subsection{Examples} \label{5.4} \label{4d}

In this section we provide the constructions of naturally occurring monoids $\add(M)$ where $M$ is a finitely generated torsion-free $R$-module. In particular, we construct specific modules $M$, whose completion $\widehat M$ is often a direct sum of indecomposable cyclic $\widehat R$-modules and we determine the class group $G$ of $\add (M)$ and the set of classes $G_{\mathcal P} \subset G$ containing prime divisors. Note that the Krull monoids $\mathcal M (R)$ of \emph{all} finitely generated $R$-modules and $\mathcal T (R)$ of \emph{all} finitely generated torsion-free $R$-modules have class groups $G' \supset G$ and a set $G_{\mathcal P}'$ of classes containing prime divisors such that $G_{\mathcal P}' \supset G_{\mathcal P}$. Since $\add(M)$ is a divisor-closed submonoid of both $\mathcal M(R)$ and of $\mathcal T(R)$, a study of the arithmetic of $\add(M)$ provides a partial description of $\mathcal M(R)$ and $\mathcal T(R)$. Moreover, the values of arithmetical invariants of $\add(M)$ give lower bounds on the same arithmetical invariants of $\mathcal M(R)$ and $\mathcal T(R)$.

In each of the following examples we construct an $\widehat R$-module $L = L_1^{n_1} \oplus \cdots \oplus L_k^{n_k}$ of constant rank, where $L_1, \ldots, L_k$ are pairwise non-isomorphic indecomposable $\widehat R$-modules. Then, by Corollary \ref{add(M)-summary} with $\widehat M  \cong L$ for some $R$-module $M$,
\[
\add (M) \cong \ker \big( \mathsf A (M) \big) \cap \N_0^{(k)} \subset  \N_0^{(k)} \cong \add (L) \,.
\]
In particular, we do so in such a way that the natural map $\add (M) \hookrightarrow \add (L)$ is a divisor theory with class group isomorphic to $\Z^{(\spl (R))}$ and where the set of classes containing prime divisors maps onto the distinct columns of $\mathsf A (M)$.

\begin{example} \label{4.19}
We now construct a monoid of modules whose arithmetic will be studied in Proposition \ref{7.12}. Let $S$ be as in Example \ref{unknownranks}. Then there are indecomposable torsion-free $ S$-modules $M_1$, $M_{-1}$, $M_2$, $M_{-2}$, $N_1$, $N_{-1}$, $N_2$, and $N_{-2}$ with ranks (respectively) $(2,1)$, $(1,2)$, $(3,1)$, $(1,3)$, $(3,2)$, $(3,2)$, $(2,3)$, $(4,2)$, and $(2,4)$. Set $L$ to be the direct sum of these eight indecomposable $S$-modules. By Lech's Theorem \cite{Le86a}, there exists a Noetherian local  domain $(R,\m)$ with $\m$-adic completion $\widehat R\cong S$. Since $L$ has constant rank, $L$ is extended from some $R$-module $M$, and   $\add(M)\hookrightarrow \add(L)\cong \N_0^{(8)}$ is a divisor theory with infinite cyclic class group $G$ and with  $G_{\mathcal P}=\{-2e,-e,e,2e\}$ where $G = \langle e \rangle$.
\end{example}

\medskip
\begin{example}\label{E:isomorphicG_0}
We now provide an example that illustrates the convenience of choosing an alternate defining matrix for $\add(M)$, as is described in Remark \ref{R:isomorphicG_0}.
Its arithmetic is given in Theorem \ref{7.4}.
Suppose that $R$ has two minimal prime ideals $\mathfrak p_1, \mathfrak p_2$ and that $\widehat R$  has five minimal prime ideals $\mathfrak q_{(1,1)}$, $\mathfrak q_{(1,2)}$, $\mathfrak q_{(1,3)}$, $\mathfrak q_{(2,1)}$, and $\mathfrak q_{(2,2)}$,  with $\mathfrak q_{(i,j)}$ lying over $\mathfrak p_i$ for each $i \in [1,2]$ and for each $j$. Set
\begin{eqnarray*} L & = & \frac{\widehat R}{\mathfrak q_{(1,1)} \cap \mathfrak q_{(1,3)} \cap \mathfrak q_{(2,2)}} \oplus \frac{\widehat R}{\mathfrak q_{(1,1)} \cap \mathfrak q_{(1,2)} \cap \mathfrak q_{(2,1)}} \oplus  \frac{\widehat R}{\mathfrak q_{(1,2)} \cap \mathfrak q_{(2,1)}}  \oplus \\ & & \frac{\widehat R}{\mathfrak q_{(1,1)}} \oplus \frac{\widehat R}{\mathfrak q_{(1,3)} \cap \mathfrak q_{(2,2)}}\oplus \frac{\widehat R}{\mathfrak q_{(1,2)} \cap \mathfrak q_{(1,3)} \cap \mathfrak q_{(2,1)}\cap \mathfrak q_{(2,2)}}.
\end{eqnarray*}
 Since $L$ has constant rank $3$, there is an $R$-module $M$ such that $\widehat M \cong L$. Then $\add(M) \cong \ker(\mathsf A (M))\cap \N_0^{(6)}$ where $$\mathsf A (M)=\begin{bmatrix}1 & 0 & -1 & 1 & 0 & -1\\ 0 & 1 & 0 & 1 & -1 & -1\\ -1 & 1 & 1 & 0 & -1& 0\end{bmatrix}\sim \begin{bmatrix} 1 & 0 & -1 & 1 & 0 & -1\\ -1 & 1 & 1 & 0 & -1 & 0\\ 0 & 0 & 0 & 0 & 0 & 0\end{bmatrix}\sim\begin{bmatrix} 1 & 0 & -1 & 1 & 0 & -1\\ 0 & 1 & 0 & 1 & -1 & -1\\ 0 & 0 & 0 & 0 & 0 & 0\end{bmatrix}=J\mathsf A (M).$$ Thus $\add(M) \cong \ker(J\mathsf A (M))\cap \N_0^{(6)}\cong \ker \begin{bmatrix} 1 & 0 & -1 & 1 & 0 & -1\\ 0 & 1 & 0 & 1 & -1 & -1\end{bmatrix} \cap \N_0^{(6)}$. Since the matrix $\mathsf A (M)$ has rank two, the representation of $\add(M)$ as a Diophantine matrix defined by two equations more clearly describes this monoid. Moreover, since the map from $\mathbb Z^{(6)}$ to $\mathbb Z^{(2)}$ is surjective (the map $\mathsf A (M): \mathbb Z^{(6)} \rightarrow \mathbb Z^{(3)}$ is not surjective), we immediately see that $\Cl(\add(M))\cong \mathbb Z^{(2)}$,  and this isomorphism maps the set of  classes  containing prime divisors onto $$\left\{\begin{bmatrix}1 \\ 0\end{bmatrix}, \begin{bmatrix}0 \\ 1\end{bmatrix}, \begin{bmatrix}-1 \\ 0\end{bmatrix}, \begin{bmatrix}0 \\ -1\end{bmatrix}, \begin{bmatrix}1 \\ 1\end{bmatrix}, \begin{bmatrix}-1 \\ -1\end{bmatrix}\right\}.$$
\end{example}

\medskip
\begin{example} \label{cube-and-its-negative}
 We now consider a monoid $\add(M)$ which  generalizes  the monoid $\mcT(R)$ when $R$ has finite representation type,
and its arithmetic is studied in Theorem \ref{7.8} and Corollary \ref{7.10}. Suppose that  $\widehat R$ has $q+1$ minimal primes $\mathfrak q_1, \ldots , \mathfrak q_{q+1}$, and set
 \[
 L=\bigoplus_{\emptyset\not=I \subset [1,  q+1]} \frac{\widehat R}{\cap _{i \in I}\mathfrak q_i} \,.
 \] From the symmetry of the set of ranks of the indecomposable cyclic $\widehat R$-modules $\frac{\widehat R}{\cap _{i \in I}\mathfrak q_i}$ we immediately see that $L$ has constant rank $(2^q, \ldots , 2^q)$ and is therefore extended from some $R$-module $M$. Then $\add(M)\cong \ker(\mathsf A (M))\cap \N_0^{(q)}$ where $\mathsf A (M)$ is an $q \times 2^{q+1}-1$ integer-valued matrix with columns $\begin{bmatrix} \epsilon_1  & \cdots & \epsilon_q\end{bmatrix}^T$ where either $\epsilon_i \in \{0,1\}$ for all $i \in [1, q]$ or $\epsilon_i \in \{0,-1\}$ for all $i \in [1,q]$.

Since the columns of $\mathsf A (M)$ contain a basis for $\mathbb Z^{(q)}$, $\add(M) \hookrightarrow \add(L)\cong \N_0^{(q)}$ is a divisor theory with class group $\Cl(\add(M))\cong \mathbb Z^{(q)}$, and this isomorphism maps the set of  classes containing prime divisors onto \begin{equation*}\left(\left\{\begin{bmatrix} \epsilon_1  & \cdots & \epsilon_q\end{bmatrix}^T \,:\, \epsilon_i \in \{0,1\}\right\}\cup \left\{\begin{bmatrix} \epsilon_1  & \cdots & \epsilon_q\end{bmatrix}^T \,:\, \epsilon_i \in \{0,-1\}\right\}\right)\backslash \left\{\begin{bmatrix} 0 &  \cdots & 0\end{bmatrix}\right\}.\end{equation*}
\end{example}

\medskip
\begin{example} \label{bigcube}
In this example we construct a monoid $\add(M)$ which generalizes the monoid of Example \ref{cube-and-its-negative} by including all vectors having entries in $\{-1,0,1\}$ in the set $G_{\mathcal P}$. This larger set of classes containing prime divisors adds much complexity to the arithmetic. Suppose that $R$ has $q$ minimal primes and that $\widehat R$  has $2q$ minimal primes $$\mathfrak q_{(1,1)}, \mathfrak q_{(1,2)}, \mathfrak q_{(2,1)}, \ldots , \mathfrak q_{(q,2)}$$ where $\mathfrak q_{(i,j)}\cap R=\mathfrak q_{(i',j')}\cap R$ if and only if $i=i'$. As in the previous example, let $$L=\bigoplus_{\emptyset\not=I \subset \{(1,1), \ldots , (q,2)\}} \frac{\widehat R}{\cap _{(i,j) \in I}\mathfrak q_{(i,j)}}.$$ From the symmetry of the set of ranks of the indecomposable cyclic $\widehat R$-modules $\frac{\widehat R}{\cap _{\{i,j\} \in I}\mathfrak q_{i,j}}$ we immediately see that $L$ has constant rank $(2^{2q-1}, \ldots , 2^{2q-1})$ and is therefore extended from some $R$-module $M$. Then $\add(M)\cong \ker(\mathsf A (M))\cap \N_0^{(q)}$ where $\mathsf A (M)$ is an $q \times 2^{2q}-1$ integer-valued matrix with columns of the form $\begin{bmatrix}r_{(1,1)}-r_{(1,2)} & r_{(2,1)}-r_{(2,2)} & \cdots & r_{(q,1)}-r_{(q,2)}\end{bmatrix}^T$ where $(r_{(1,1)}, r_{(1,2)}, \ldots , r_{(q,2)})$ is the rank of one of the $2^{2q}-1$ indecomposable cyclic $\widehat R$-modules --- that is, any one of the $q$-tuples of $1$s and $0$s (not all $0$). In other words, the columns of $\mathsf A (M)$ are exactly the $3^{q}$ columns $\begin{bmatrix} \epsilon_1 &  \cdots & \epsilon_q\end{bmatrix}^T$ where $\epsilon_i \in \{-1,0,1\}$ for all $i \in [1,q]$, repeated with some multiplicity. For example, the column of all zeros occurs for each of the indecomposable cyclic $\widehat R$-modules $\frac{\widehat R}{\cap _{(i,j) \in I}\mathfrak q_{(i,j)}}$ where $(i,1) \in I$ if and only if $(i,2)\in I$.

Since the columns of $\mathsf A (M)$ contain a basis for $\mathbb Z^{(q)}$, $\add(M) \hookrightarrow \add(L)\cong \N_0^{(2^{2q}-1)}$ is a divisor theory whose class group $G \cong \Z^{(q)}$, and this isomorphism maps the set of  classes containing prime divisors onto  \begin{equation*}\label{2q}\left\{\begin{bmatrix} \epsilon_1  & \cdots & \epsilon_q\end{bmatrix}^T \,:\, \epsilon_i \in \{-1,0,1\}\right\}.\end{equation*}
\end{example}

\medskip

\begin{example} \label{splitexample-1}
In this example we consider $\add(M)$ when the completion of $M$ is isomorphic to a direct sum of some (but not all) of the indecomposable cyclic $\widehat R$-modules. In this case, the example is constructed in such a way that $\mcB(G_{\mathcal P})$ is a direct product of non-trivial submonoids (see Lemma \ref{4.2}). Suppose that $R$ has $q$ minimal primes and that  $\widehat R$  has $3q$ minimal primes $$\left\{\mathfrak q_{(i,j)} \,:\, i \in [1,  q], j\in [1,3]\right\}$$ where $\mathfrak q_{(i,j)}\cap R=\mathfrak q_{(i',j')}\cap R$ if and only if $i=i'$. Let $L$ be the $\widehat R$-module $$\bigoplus_{i=1}^q\left(\widehat R/\mathfrak q_{(i,1)}\oplus \widehat R/\mathfrak q_{(i,2)} \oplus \widehat R/\mathfrak q_{(i,3)}\oplus \widehat R/\left(\mathfrak q_{(i,1)}\cap \mathfrak q_{(i,2)}\right)\oplus \widehat R/\left(\mathfrak q_{(i,1)}\cap \mathfrak q_{(i,3)}\right) \oplus \widehat R/\left(\mathfrak q_{(i,2)}\cap \mathfrak q_{(i,3)}\right) \right).$$ We see immediately that $L$ has constant rank $(3,  \ldots , 3)$ and thus $L$ is extended from some $R$-module $M$. Then $\add(M)\cong \ker(\mathsf A (M))\cap \N_0^{(2q)}$ where $\mathsf A (M)$ is an $2q \times 6q$ integer-valued matrix with columns
 \[
 \left\{e_{2k-1}, e_{2k}, e_{2k-1}+ e_{2k}, -e_{2k-1}, -e_{2k},  -e_{2k-1}- e_{2k} \,:\,  k \in [1, q] \right\}
 \]
  where $(e_1, \ldots, e_{2q})$ denotes the  canonical basis  of $\mathbb Z^{(2q)}$.

For $k \in [1,q]$, we set $G_k = \{e_{2k-1}, e_{2k}, e_{2k-1}+e_{2k}, -e_{2k-1}, -e_{2k},- e_{2k-1}-e_{2k} \}$. Then $G_{\mathcal P} = \uplus_{k \in [1,q]} G_k$ is the set of classes containing prime divisors and $\mathcal B (G_{\mathcal P}) = \mathcal B (G_1) \times \cdots \times \mathcal B (G_q)$. From Proposition \ref{7.1} we will see that $\mathcal B (G_k) \hookrightarrow \mathcal F (G_k)$ is a divisor theory, whence $\mathcal B (G_{\mathcal P}) \hookrightarrow \mathcal F (G_{\mathcal P})$ and $\add (M) \hookrightarrow \add (L)$ are divisor theories. The arithmetic of this monoid is studied in Proposition \ref{7.12} and Corollary \ref{7.15}.

\end{example}

\begin{example} \label{splitexample-2}
As in Example \ref{splitexample-1}, suppose that  $R$ has $q$ minimal primes and suppose that the completion $\widehat R$ of $R$ has $3q$ minimal primes $$\left\{\mathfrak q_{(i,j)} \,:\, i \in [1,  q], j\in [1,3]\right\}$$ where $\mathfrak q_{(i,j)}\cap R=\mathfrak q_{(i',j')}\cap R$ if and only if $i=i'$. Further suppose that $\widehat R=S/(f)$ where $(S,\mathfrak n)$ is a regular Noetherian local ring of dimension two and where $0 \ne f\in \n$ and that $\widehat R/\mathfrak q_{(i,j)}$ has infinite representation type for all pairs $(i,j)$. By Proposition \ref{lotsofranks}, for each $k\in [1,q]$ there are indecomposable finitely generated torsion-free $\widehat R$-modules $M_k$ and $N_k$ of ranks $(r_{1,1}, \ldots , r_{q,3})$ and $(s_{1,1}, \ldots , s_{q,3})$ where $$r_{i,j}=\begin{cases}0, \quad i\not=k \\ 2, \quad i=k, j\in [1,2] \\ 0, \quad i=k, j=3  \end{cases}\text{ and } s_{i,j}=\begin{cases}0, \quad i\not=k \\ 3, \quad i=k, j=1 \\ 2, \quad i=k, j=2 \\ 0, \quad i=k, j=3  \end{cases}.$$ Moreover, by Corollary \ref{oppositeranks}, for each $k\in [1,q]$ there are constant $t_k\geq 2$ and $t_k'\geq 3$ and indecomposable finitely generated torsion-free $\widehat R$-modules $M_k'$ and $N_k'$ having ranks $(r'_{1,1}, \ldots , r'_{q,3})$ and $(s'_{1,1}, \ldots , s'_{q,3})$ where $$r'_{i,j}=\begin{cases}t_k, \quad i\not=k \\ t_k-2, \quad i=k, j\in [1,2] \\ t_k, \quad i=k, j=3  \end{cases}\text{ and } s'_{i,j}=\begin{cases}t_k', \quad i\not=k \\ t_k'-3, \quad i=k, j=1 \\ t_k'-2, \quad i=k, j=2 \\ t_k', \quad i=k, j=3  \end{cases}.$$ Let $$L=\widehat R\oplus \left(\bigoplus_{k=1}^q \left(M_k\oplus N_k\oplus M_k'\oplus N_k'\right)\oplus\widehat R/(\mathfrak q_{(i,1)}\cap \mathfrak q_{(i,3)}) \oplus \widehat R/(\mathfrak q_{(i,1)}\cap \mathfrak q_{(i,2)})\oplus \widehat R/\mathfrak q_{(i,2)}\oplus \widehat R/\mathfrak q_{(i,3)}\right).$$ Since $L$ has constant rank, $L$ is extended  from an $R$-module $M$.
Then $\add(M)\cong \ker(\mathsf A (M))\cap \N_0^{(2q)}$ where $\mathsf A (M)$ is an $2q \times (8q+1)$ integer-valued matrix with columns $0$ and
 \[
 \left\{e_{2k-1}, e_{2k}, 2e_{2k}, e_{2k-1}+2e_{2k}, -e_{2k-1}, -e_{2k}, -2e_{2k}, -e_{2k-1}-2e_{2k} \,:\,  k \in [1, q] \right\}
 \]
  where $(e_1, \ldots, e_{2q})$ denotes the  canonical basis  of $\mathbb Z^{(2q)}$.

For $k \in [1,q]$, we set $G_k = \{e_{2k-1}, e_{2k}, 2e_{2k}, e_{2k-1}+2e_{2k}, -e_{2k-1}, -e_{2k}, -2e_{2k}, -e_{2k-1}-2e_{2k} \}$. Then $G_{\mathcal P} = \uplus_{k \in [1,q]} G_k$ is the set of classes containing prime divisors and $\mathcal B (G_{\mathcal P}) = \mathcal B (G_1) \times \cdots \times \mathcal B (G_q)$. From Proposition \ref{7.1} we will see that $\mathcal B (G_k) \hookrightarrow \mathcal F (G_k)$ is a divisor theory, whence $\mathcal B (G_{\mathcal P}) \hookrightarrow \mathcal F (G_{\mathcal P})$ and $\add (M) \hookrightarrow \add (N)$ are divisor theories. The arithmetic of this monoid is studied in Proposition \ref{7.13} and Corollary \ref{7.15}.
\end{example}

\medskip
\begin{example}\label{FRTmonoid_generalization}
In our final example we  construct a tuple $(G, G_{\mathcal P})$ which generalizes the monoid $\mcT(R)$ when $R$ has finite representation type (see Theorem \ref{DivisorTheory2}). The arithmetic of such Krull monoids is studied in Theorem \ref{7.4} and Corollary \ref{7.10}. Suppose that $\widehat R$ has $q+1$ minimal primes $\mathfrak q_1,  \ldots , \mathfrak q_{q+1}$, and set
 \[
 L=\bigoplus_{j=1}^{q+1}\left(\left(\widehat R/\cap_{i\not= j}\mathfrak q_i\right) \oplus \left(\widehat R/\mathfrak q_j\right)\right) \,.
 \]
 Note that $L$ has constant rank $(q,  \ldots , q)$ and is hence extended from some $R$-module $M$. Then $\add(M)\cong \ker(\mathsf A (M))\cap \N_0^{(q)}$ where $\mathsf A (M)$ is an $q \times 2q$ integer-valued matrix with columns
 \[e_1,  \ldots, e_q, e_0=e_1+e_2+\cdots +e_q, -e_1,  \ldots , -e_q,-e_0 \,.
 \]
By Proposition \ref{7.1}, $\add(M) \hookrightarrow \add(L)\cong \N_0^{(2q)}$ is a divisor theory with class group $\Cl(\add(M))\cong \mathbb Z^{(q)}$, and this isomorphism maps the set of  classes containing prime divisors onto \begin{equation*}\{e_0=e_1+\cdots +e_q,e_1, \ldots , e_q,  -e_0, \ldots , -e_q\}.\end{equation*}
\end{example}

\bigskip
\section{Monoids of modules: Class groups and distribution of prime divisors II} \label{6}
\bigskip

In this section we investigate the characteristic of the Krull monoids $\mathcal T (R)$ and $\mathfrak C (R)$ for two-dimensional Noetherian local Krull domains (see Theorem \ref{dimension2classgroup}).  We will show that, apart from a well-described exceptional case, their class groups are both isomorphic to the factor group $\mathcal C (\widehat R)/\iota (\mathcal C (R))$, where $\iota \colon \mathcal C (R) \to \mathcal C (\widehat R)$ is the natural homomorphism between the class groups of $R$ and $\widehat R$. In a well-studied special case where  $R$ is factorial and $\widehat R$ is a hypersurface with finite representation type, this factor group is a finite cyclic group (see Theorem \ref{2dimensional-classgroups}). This is in strong contrast to the results on one-dimensional rings in the previous section where all class groups are torsion-free.

Let $S$ be a Krull domain and let $\mathcal I_v^* (S)$ denote the monoid of nonzero divisorial ideals. Then $\varphi \colon S \to \mathcal I_v^* (S)$, defined by $a \mapsto aS$, is a divisor theory. In this section we view $\mathcal C (S)$ as the class group of this specific divisor theory.
First we give a classical result (see \cite[Chapter VII, Section 4.7]{Bo88}).

\medskip
\begin{lemma}\label{basicclassgroupfacts} Let $S$ be a Noetherian Krull domain. One can associate to each finitely generated $S$-module $M$ a class $\mathbf{c}  (M) \in \mathcal C (S)$ in such a way that
\begin{enumerate}[(1)]
\item  If $0 \to M' \to M \to M'' \to 0$ is an exact sequence of finitely generated $S$-modules, then $\mathbf{c} (M) = \mathbf{c} (M')+\mathbf{c} (M'')$.

\item  If $I$ is a fractional ideal of $S$ and $I_v$ the divisorial ideal generated by $I$, then $\mathbf{c} (I) = \mathbf{c} (I_v)$.
\end{enumerate}
\end{lemma}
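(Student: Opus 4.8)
The statement to be proved is the classical fact (Lemma~\ref{basicclassgroupfacts}) that one can associate to each finitely generated module $M$ over a Noetherian Krull domain $S$ a class $\mathbf{c}(M) \in \mathcal{C}(S)$ which is additive on short exact sequences and agrees, on a fractional ideal $I$, with the class of its divisorial closure $I_v$. My plan is to construct $\mathbf{c}$ explicitly via localization at height-one primes, reducing the whole question to the well-understood one-dimensional local (in fact DVR) case, and then to verify additivity and the ideal-normalization property on that local model.

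First I would recall that for a Krull domain $S$ the class group $\mathcal{C}(S) = \mathsf{q}(\mathcal{I}_v^*(S))$ is canonically isomorphic to the cokernel of the map $S^{\bullet} \to \bigoplus_{\mathfrak{p}} \mathbb{Z}$, where $\mathfrak{p}$ ranges over the height-one primes of $S$ and the $\mathfrak{p}$-component of $a \in S^{\bullet}$ is the valuation $\mathsf{v}_{\mathfrak{p}}(a)$; equivalently $\mathcal{I}_v^*(S) \cong \bigoplus_{\mathfrak{p}} \mathbb{N}_0$ as a free abelian monoid on the height-one primes, with $aS \mapsto (\mathsf{v}_{\mathfrak{p}}(a))_{\mathfrak{p}}$, and the class group is the quotient by the image. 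Given a finitely generated $S$-module $M$, for each height-one prime $\mathfrak{p}$ the localization $S_{\mathfrak{p}}$ is a DVR, so $M_{\mathfrak{p}}$ is a finitely generated module over a DVR and decomposes as $S_{\mathfrak{p}}^{r_{\mathfrak{p}}} \oplus (\text{torsion})$, where the torsion part is a finite direct sum of modules $S_{\mathfrak{p}}/\mathfrak{p}^{e}S_{\mathfrak{p}}$. I would then define a "length-type" integer $\ell_{\mathfrak{p}}(M)$ as the length of the torsion submodule of $M_{\mathfrak{p}}$ (this is finite since $M$ is finitely generated and equals $0$ for all but finitely many $\mathfrak{p}$, because $M$ has a free submodule of full rank with torsion cokernel supported on finitely many height-one primes). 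The assignment $M \mapsto (\ell_{\mathfrak{p}}(M))_{\mathfrak{p}} \in \bigoplus_{\mathfrak{p}} \mathbb{Z}$, followed by the projection to $\mathcal{C}(S)$, gives the class $\mathbf{c}(M)$.

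Next I would verify the two required properties. For additivity on a short exact sequence $0 \to M' \to M \to M'' \to 0$: localization at $\mathfrak{p}$ is exact, giving $0 \to M'_{\mathfrak{p}} \to M_{\mathfrak{p}} \to M''_{\mathfrak{p}} \to 0$; passing to the long exact sequence of torsion and reduced-rank parts over the DVR $S_{\mathfrak{p}}$, or simply using additivity of length on the torsion filtration together with additivity of generic rank, one gets $\ell_{\mathfrak{p}}(M) = \ell_{\mathfrak{p}}(M') + \ell_{\mathfrak{p}}(M'')$ for every $\mathfrak{p}$ (the rank contributions cancel in the class group because a free module $S^n$ has $\mathbf{c}(S^n)=0$). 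Summing over $\mathfrak{p}$ and projecting gives $\mathbf{c}(M) = \mathbf{c}(M') + \mathbf{c}(M'')$. For the ideal-normalization property: a nonzero fractional ideal $I$ and its divisorial closure $I_v$ have the same localization at every height-one prime, since $(I_v)_{\mathfrak{p}} = (I_{\mathfrak{p}})_v = I_{\mathfrak{p}}$ over the DVR $S_{\mathfrak{p}}$ (every nonzero ideal of a DVR is divisorial); hence $\ell_{\mathfrak{p}}(S/I) = \ell_{\mathfrak{p}}(S/I_v)$ for all $\mathfrak{p}$, and from the exact sequence $0 \to I \to S \to S/I \to 0$ (and similarly for $I_v$) together with additivity one obtains $\mathbf{c}(I) = -\mathbf{c}(S/I) = -\mathbf{c}(S/I_v) = \mathbf{c}(I_v)$; a direct check also shows $\mathbf{c}(I_v)$ is exactly the image of the divisor $\operatorname{div}(I_v) = (\mathsf{v}_{\mathfrak{p}}(I))_{\mathfrak{p}}$ under $\mathcal{I}_v^*(S) \to \mathcal{C}(S)$, which is consistent.

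The main obstacle I anticipate is not any single deep step but rather the bookkeeping needed to make $\mathbf{c}$ well-defined: one must be careful that $\ell_{\mathfrak{p}}(M)$ is finite for every $\mathfrak{p}$ and vanishes for almost all $\mathfrak{p}$ (so that the tuple lands in the direct sum $\bigoplus_{\mathfrak{p}}\mathbb{Z}$ and not just the product), and that the construction is independent of the auxiliary choices (e.g.\ of a maximal free submodule). This finiteness is where the Noetherian hypothesis and the Krull property genuinely enter: $M$ has a submodule $F \cong S^{r}$ of the same generic rank with $M/F$ supported in codimension $\geq 1$ and finitely generated, hence annihilated by some nonzero $a \in S$, so $\mathsf{v}_{\mathfrak{p}}$ of that annihilator controls where $\ell_{\mathfrak{p}}$ can be nonzero; this set is finite because $aS$ is a product of finitely many height-one primes in a Krull domain. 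Once this is in place, everything else reduces, prime by prime, to elementary facts about modules over a DVR, and additivity and ideal-normalization follow formally. Since this is the classical content of \cite[Chapter~VII, \S4.7]{Bo88}, I would present the argument in this reduced form and cite Bourbaki for the details that are purely routine.
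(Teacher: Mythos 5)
The paper offers no argument of its own here: it simply cites Bourbaki, \emph{Commutative Algebra}, Chapter~VII, \S 4.7, so your proposal is an attempt to reconstruct that classical construction. Unfortunately the construction you propose has a genuine gap: the invariant $\ell_{\mathfrak p}(M) = \operatorname{length}$ of the torsion submodule of $M_{\mathfrak p}$ is \emph{not} additive on short exact sequences, even after passing to the class group. Already over a DVR the sequence $0 \to V \xrightarrow{\,a\,} V \to V/aV \to 0$ gives $\ell(V)=0$ but $\ell(V/aV)=\mathsf v(a)>0$, so your asserted identity $\ell_{\mathfrak p}(M)=\ell_{\mathfrak p}(M')+\ell_{\mathfrak p}(M'')$ is false; the map $t(M)\to t(M'')$ induced on torsion submodules need not be surjective. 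Globally the defect need not be a principal divisor: for a nonprincipal height-one prime $\mathfrak p$ of $S$ (which exists whenever $\mathcal C(S)\neq 0$, the only nontrivial case), the sequence $0 \to \mathfrak p \to S \to S/\mathfrak p \to 0$ gives $\mathbf c(\mathfrak p)=\mathbf c(S)=0$ under your definition (both modules are torsion-free) while $\mathbf c(S/\mathfrak p)=[\mathfrak p]\neq 0$, so (1) fails. The same defect shows up as an internal inconsistency in your verification of (2): your definition forces $\mathbf c(I_v)=0$ for every divisorial ideal, yet you also assert that $\mathbf c(I_v)$ equals the image of $\operatorname{div}(I_v)$ in $\mathcal C(S)$; these cannot both hold, and the intended use of the lemma in the paper (Theorem~6.4, where $\psi$ restricted to indecomposables must be an epimorphism onto $\mathcal C(\widehat R)/\iota(\mathcal C(R))$) requires the latter normalization.

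The missing ingredient is the contribution of the torsion-free part of $M$, which your recipe discards entirely. The classical construction sets $\mathbf c(M) = \mathbf c\bigl(M/t(M)\bigr) - \bigl[\chi(t(M))\bigr]$, where $\chi(T)=\sum_{\operatorname{ht}\mathfrak p = 1}\ell_{\mathfrak p}(T)\,\mathfrak p$ is the content of a torsion module (this \emph{is} additive on exact sequences of torsion modules), and where the class of a finitely generated torsion-free module $N$ of rank $r$ is defined as the class of the divisorial fractional ideal $\bigl(\bigwedge^r N\bigr)^{**}$ (equivalently, one defines $\mathbf c$ on a filtration with quotients $S/\mathfrak q_i$ by $-\sum_{\operatorname{ht}\mathfrak q_i=1}[\mathfrak q_i]$ and checks independence of the filtration by dévissage). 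With that definition, additivity requires the multiplicativity of the reflexive hull of top exterior powers in short exact sequences — the actual content of Bourbaki's \S 4.7 — and (2) follows because $I$ and $I_v$ agree at every height-one prime, as you correctly observe. Your localization-at-height-one-primes framework and your finiteness bookkeeping are fine; it is the definition of $\mathbf c$ itself that has to be repaired before the two properties can be checked.
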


Note that if $S$ is any Noetherian domain, every ideal of $S$ is obviously an indecomposable finitely generated torsion-free $S$-module. If, in addition, the ring has  dimension two, then we have the following stronger result.

\begin{lemma}\label{2dimensional-extended}
Let $S$ be a Noetherian local Krull domain of  dimension two.
\begin{enumerate}[(1)]
\item \cite[Lemma 1.1 and Theorem 3.6]{Ev-Gr85a} Every divisorial ideal of $S$ is an indecomposable MCM $S$-module.
\item \cite[Proposition 3]{Ro-We-Wi99} In addition, assume that the $\m$-adic completion $\widehat S$ of $S$ is a Krull domain. Then a finitely generated torsion-free $\widehat S$-module $N$ is extended from an $S$-module if and only if $\mathbf{c}( N)$ is in the image of the natural homomorphism $\iota: \Cl(S) \rightarrow \Cl(\widehat S)$.
\end{enumerate}
\end{lemma}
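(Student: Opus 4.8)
The plan here is standard commutative algebra. Let $I$ be a nonzero divisorial (equivalently, reflexive) ideal of $S$. It is indecomposable because it is torsion-free of rank one: a nontrivial direct-sum decomposition, extended to the field of fractions, would force one summand to have rank zero and hence to vanish. For the maximal Cohen--Macaulay property I would use that a Noetherian Krull domain is normal, and that a finitely generated reflexive module over a Noetherian normal domain satisfies Serre's condition $(S_2)$ --- equivalently, $I = \bigcap_{\operatorname{ht}\mathfrak p = 1} I_{\mathfrak p}$ is an intersection of discrete valuation submodules of the field of fractions. Applying $(S_2)$ at the maximal ideal of the two-dimensional local ring $S$ gives $\operatorname{depth}_S I \ge \min\{2, \dim S\} = \dim S$, so $I$ is MCM. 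This is the content of \cite[Lemma 1.1 and Theorem 3.6]{Ev-Gr85a}.

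\textbf{Part (2), the easy implication.} The first step is to record the compatibility $\mathbf{c}(\widehat M) = \iota(\mathbf{c}(M))$ for every finitely generated $S$-module $M$. This follows formally from Lemma \ref{basicclassgroupfacts} and the flatness of $S \hookrightarrow \widehat S$: using a prime filtration and additivity of $\mathbf{c}$ one reduces to $M = S/\mathfrak p$, where $\mathbf{c}(S/\mathfrak p) = 0$ unless $\operatorname{ht}\mathfrak p = 1$ and $\mathbf{c}(S/\mathfrak p) = -[\mathfrak p]$ when $\operatorname{ht}\mathfrak p = 1$; since $\mathfrak p\widehat S$ is then a divisorial ideal of $\widehat S$ and completion is exact, the identical computation over $\widehat S$ gives $\mathbf{c}(\widehat{S/\mathfrak p}) = \iota(\mathbf{c}(S/\mathfrak p))$. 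Granting this, an extended module $N \cong \widehat M$ satisfies $\mathbf{c}(N) = \iota(\mathbf{c}(M)) \in \operatorname{im}\iota$.

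\textbf{Part (2), the converse --- reduction to reflexive modules.} Now let $N$ be finitely generated torsion-free over $\widehat S$ with $\mathbf{c}(N) \in \operatorname{im}\iota$. I would pass to the reflexive-hull sequence $0 \to N \to N^{**} \to T \to 0$. Since $N^{**}/N$ is supported only at the maximal ideal of the two-dimensional local ring $\widehat S$, the module $T$ has finite length, so $\mathbf{c}(T) = 0$ and $\mathbf{c}(N^{**}) = \mathbf{c}(N) \in \operatorname{im}\iota$. I then claim it suffices to prove that $N^{**}$ is extended. Indeed, if $N^{**} \cong \widehat P$ with $P$ finitely generated torsion-free over $S$, then $T$ is canonically an $S$-module with $\widehat T = T$, and flat base change for $\Hom$ of the finitely presented module $P$ gives $\Hom_{\widehat S}(\widehat P, T) \cong \Hom_S(P,T) \otimes_S \widehat S \cong \Hom_S(P, T)$ (the last module has finite length, hence is already $\widehat{\mathfrak m}$-adically complete). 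So the canonical surjection $N^{**} = \widehat P \twoheadrightarrow T$ descends to a surjection $\phi \colon P \to T$, and completing the exact sequence $0 \to \ker\phi \to P \to T \to 0$ identifies $N$ with $\widehat{\ker\phi}$; since $\ker\phi \subseteq P$ is finitely generated torsion-free, $N$ is extended.

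\textbf{The main obstacle.} What remains --- and what I expect to carry the real difficulty --- is the reflexive case: a finitely generated reflexive $\widehat S$-module of rank $r$ whose class lies in $\operatorname{im}\iota$ is extended. This is the substance of \cite[Proposition 3]{Ro-We-Wi99}. Rank one is painless: such a module is a divisorial ideal $J$ of $\widehat S$ with $[J] = \iota([I]) = [\widehat I]$ for some divisorial ideal $I$ of $S$, and divisorial ideals of equal class are isomorphic, so $J \cong \widehat I$ is extended. For $r \ge 2$ one cannot in general split off free summands, since indecomposable reflexive (equivalently MCM) modules of rank $\ge 2$ do occur over two-dimensional normal local domains; I would therefore follow \cite{Ro-We-Wi99} and route the argument through a cancellation/$K$-theoretic statement to the effect that over the complete ring $\widehat S$ the invariant $(\rank, \mathbf{c})$ governs a reflexive module tightly enough that the classes realized by the extended divisorial ideals exhaust $\operatorname{im}\iota$. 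I expect the bookkeeping in this reflexive step, rather than the reductions above, to be where the work concentrates.
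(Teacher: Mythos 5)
The paper offers no proof of this lemma: both parts are taken verbatim from the literature, with the citations \cite{Ev-Gr85a} and \cite{Ro-We-Wi99} built into the statement itself. Your sketch is therefore more detailed than what the paper provides, and the parts you actually carry out are sound: the rank-one/\,$(S_2)$ argument for (1) is exactly the standard route; the compatibility $\mathbf{c}(\widehat M)=\iota(\mathbf{c}(M))$ via prime filtrations is correct (the only point worth making explicit is that for $\operatorname{ht}\mathfrak p=1$ the extension $\mathfrak p\widehat S$ is again divisorial by flatness, with $[\mathfrak p\widehat S]=\iota([\mathfrak p])$); and the reduction from torsion-free to reflexive modules via $0\to N\to N^{**}\to T\to 0$, together with the descent of the finite-length quotient $T$ using $\Hom_{\widehat S}(\widehat P,T)\cong\Hom_S(P,T)$, is a legitimate and correctly executed dévissage. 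The one substantive step you do not supply --- that a reflexive $\widehat S$-module of rank $\ge 2$ with class in $\operatorname{im}\iota$ is extended --- is precisely the content of \cite[Proposition 3]{Ro-We-Wi99}, and since the paper itself delegates the entire lemma to that reference, you are no further from a complete argument than the authors are. If you wanted to close that gap yourself, the standard device is a Bourbaki sequence $0\to \widehat S^{\,r-1}\to N\to J\to 0$ realizing $\mathbf{c}(N)$ as the class of a rank-one module $J$, followed by an analysis of which such extensions descend; but as a report on the paper's own proof, your proposal matches it.
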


We now give a result on abstract Krull monoids which encapsulates the structure of the monoids of modules described in Theorem \ref{dimension2classgroup}.

\begin{lemma}\label{2.5.1}
Let $D = \mathcal F (\mathcal P)$ be a free abelian monoid, $G$ an additive abelian group, $\psi: D \rightarrow G$ a homomorphism, and $H=\psi^{-1}(0) \subset D$.
\begin{enumerate}[(1)]
\item If $H \subset D$ is cofinal, then the inclusion $H \hookrightarrow D$ is a divisor homomorphism and $\overline{\psi}: D/H \rightarrow \psi (D) \subset G$ given by $\overline{\psi}\left([a] \right)=\psi(a)$  is an isomorphism.

\item  The inclusion $H \hookrightarrow D$ is a divisor theory if and only if $\langle \psi(\mathcal P)\rangle=[\psi(\mathcal P\backslash\{q\})]$ for every $q\in \mathcal P$. If this is the case, then $\overline{\psi}:D/H\rightarrow \psi(D)$ is an isomorphism and, for every $g \in \psi(D)$, the set $\mathcal P\cap \psi^{-1}(g)$ is the set of prime divisors in the class $\overline{\psi}^{-1}(g)$.

\item If the restriction $\psi \mid_{\mathcal P} \colon \mathcal P \to G$ of $\psi$ to $\mathcal P$ is an epimorphism, then $H \hookrightarrow D$ is cofinal. Moreover, it is a divisor theory apart from the following exception:
    \[
    G = \{0, g \} \quad \text{and} \quad |\mathcal P \cap \psi^{-1} (g)| = 1 \,.
    \]
    If $H \hookrightarrow D$ is not a divisor theory, then $H$ is factorial.
\end{enumerate}
\end{lemma}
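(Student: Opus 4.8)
The plan is to handle the three parts in turn, using throughout that $D = \mathcal F (\mathcal P)$ is reduced with unique factorization and that $\psi$ extends uniquely to a group homomorphism $\mathsf q (D) \to G$ sending $ab^{-1}$ to $\psi(a) - \psi(b)$.

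\textbf{Part (1).} First I would observe that $H = \psi^{-1}(0) \hookrightarrow D$ is a divisor homomorphism with no hypothesis needed: if $a, b \in H$ and $b = ac$ in $D$, then $\psi(c) = \psi(b) - \psi(a) = 0$, so $c \in H$. The map $\overline\psi \colon D/H \to \psi(D)$, $[a] \mapsto \psi(a)$, is plainly a well-defined surjective homomorphism, since $[a] = [b]$ in $D/H$ means $ab^{-1} \in \mathsf q (H)$, which forces $\psi(a) = \psi(b)$. Cofinality is used only for injectivity: I would show that cofinality yields $\mathsf q (H) = \{xy^{-1} : x,y \in D,\ \psi(x) = \psi(y)\}$. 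Indeed, given $y \in D$, cofinality produces $u \in H$ with $u = yv$ for some $v \in D$, so $\psi(v) = -\psi(y)$; then whenever $\psi(x) = \psi(y)$ we get $xv \in H$ and $xy^{-1} = (xv)(yv)^{-1} \in \mathsf q (H)$. Hence $\overline\psi([a]) = \overline\psi([b])$ implies $ab^{-1} \in \mathsf q(H)$, i.e.\ $[a] = [b]$.

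\textbf{Part (2).} The key translation is the standard criterion for divisor theories into a free monoid: since every element of $D$ has finite support, $H \hookrightarrow D$ is a divisor theory if and only if for each $p \in \mathcal P$ there is $a_0 \in H$ with $\mathsf v_p(a_0) = 1$ and, for every $p' \in \supp(a_0) \setminus \{p\}$, an element of $H$ divisible by $p$ but not by $p'$ (then that finite family has $\gcd$ equal to $p$). In terms of $\psi$: such an $a_0$ exists iff $-\psi(p) \in [\psi(\mathcal P \setminus \{p\})]$, and for $p' \ne p$ an element of $H$ divisible by $p$ but not by $p'$ exists iff $-\psi(p) \in [\psi(\mathcal P \setminus \{p'\})]$. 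For the implication ``$\Rightarrow$'', fix $q \in \mathcal P$: from a $\gcd$-witness $X_q$ for $q$ pick $a \in X_q$ with $\mathsf v_q(a) = 1$ and read off $-\psi(q) \in [\psi(\mathcal P \setminus \{q\})]$; from a $\gcd$-witness $X_p$ for $p \ne q$ pick $a \in X_p$ with $\mathsf v_q(a) = 0$ and $\mathsf v_p(a) = e \ge 1$, so $-e\psi(p) \in [\psi(\mathcal P\setminus\{p,q\})]$, and adding $e-1$ copies of $\psi(p)$ gives $-\psi(p) \in [\psi(\mathcal P \setminus \{q\})]$. Thus $[\psi(\mathcal P \setminus \{q\})]$ contains $\psi(p)$ and $-\psi(p)$ for all $p \ne q$ together with $-\psi(q)$; being closed under negation it is a subgroup, and it equals $\langle \psi(\mathcal P)\rangle$. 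The implication ``$\Leftarrow$'' is the easy direction: the stated identity (applied with $q = p$, then with $q = p'$) directly supplies $a_0$ and the elements $a_{p'}$. The final clause follows from Part (1) together with $\overline\psi([p]) = \psi(p)$ and the fact that, for a divisor theory, the prime divisors lying in a class $c$ are exactly $\{p \in \mathcal P : [p] = c\}$.

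\textbf{Part (3).} Assume $\psi|_{\mathcal P}$ is onto $G$; then $\langle\psi(\mathcal P)\rangle = \psi(D) = G$. Cofinality is immediate: for $p \in \mathcal P$ choose $p' \in \mathcal P$ with $\psi(p') = -\psi(p)$, so $pp' \in H$ is divisible by $p$, and products handle general elements. By Part (2), $H \hookrightarrow D$ is a divisor theory iff $[\psi(\mathcal P \setminus \{q\})] = G$ for every $q$; and $\psi(\mathcal P\setminus\{q\}) = G$ — so the condition holds — unless $q$ is the unique preimage of $g := \psi(q)$, in which case $\psi(\mathcal P\setminus\{q\}) = G \setminus \{g\}$. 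A short case check shows $[G \setminus \{g\}] = G$ whenever $g = 0$ or $|G| \ge 3$ (for $|G| \ge 3$ pick $h \in G \setminus \{0, g\}$ and write $g = h + (g - h)$ with both summands in $G\setminus\{g\}$), which isolates exactly the stated exception $G = \{0, g\}$ with $|\mathcal P \cap \psi^{-1}(g)| = 1$. In that case $2g = 0$, so writing $\mathcal P_0 = \mathcal P \cap \psi^{-1}(0)$ and $\mathcal P \cap \psi^{-1}(g) = \{q\}$ one computes $H = \{q^{2k}b : k \in \N_0,\ b \in \mathcal F(\mathcal P_0)\} \cong \mathcal F(\mathcal P_0) \times \langle q^2\rangle$, which is free abelian; hence $H$ is factorial.

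I expect the forward direction of Part (2) to be the main obstacle: converting the bare existence of finite $\gcd$-witnesses in $H$ into the algebraic identity $\langle\psi(\mathcal P)\rangle = [\psi(\mathcal P \setminus \{q\})]$ — in particular pushing $-\psi(p)$ (rather than merely $-e\psi(p)$) into the submonoid and then recognizing that the submonoid is actually a group. Everything else is bookkeeping with supports and valuations, plus standard facts about divisor theories and divisor homomorphisms.
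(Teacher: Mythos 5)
Your proof is correct. For part (3) you take essentially the same route as the paper: cofinality by pairing against a prime divisor in the opposite class, then a case distinction on $|G|$ to verify the criterion from part (2) (the $|G|>2$ case via $\psi(q)=g_1+g_2$ with $g_1,g_2\notin\{0,\psi(q)\}$ is exactly the paper's argument), and the exceptional case $G=\{0,g\}$ with $|\mathcal P\cap\psi^{-1}(g)|=1$ disposed of by exhibiting the free basis $(\mathcal P\setminus\{q\})\cup\{q^2\}$ of $H$. The real difference is in parts (1) and (2), which the paper does not prove at all but simply cites from \cite[Proposition 2.5.1]{Ge-HK06a}; you supply complete arguments. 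Your treatment of (2) --- reducing the divisor-theory condition to the requirement that each $p\in\mathcal P$ be a gcd of a finite subset of $H$, translating that into membership statements for $-\psi(p)$ in the monoids $[\psi(\mathcal P\setminus\{q\})]$, and then observing that a submonoid of a group containing the negatives of all its generators is a subgroup --- is sound and recovers the content of the cited proposition. The only step taken on faith, that it suffices to realize primes as gcds because $\gcd(X)\gcd(Y)=\gcd(\{xy\,\colon x\in X,\ y\in Y\})$ in a free abelian monoid, is standard and harmless; everything else, including the use of cofinality solely for the injectivity of $\overline{\psi}$ in (1), checks out.
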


\begin{proof}
For the proofs of (1) and (2), see \cite[Proposition 2.5.1]{Ge-HK06a}.
We now consider the proof of (3).

Let $a \in D$. Since $\psi \mid_{\mathcal P} \colon \mathcal P \to G$ is an epimorphism, there exists $p \in \mathcal P \subset D$ such that $\psi (p) = - \psi (a)$. Therefore $a p \in H$ and the inclusion $H\hookrightarrow D$ is cofinal. In order to show that $H \hookrightarrow D$ is a divisor theory we distinguish three cases. First suppose that $|G| = 1$. Then $|D/H| = 1$, and hence $H = D$. Next suppose that $|G| > 2$. By (2) we must verify that
\[
\psi (q) \in  [\psi(\mathcal P\backslash\{q\})] \quad \text{for every} \ q \in \mathcal P \,.
\]
Let $q \in \mathcal P$. Since $|G|> 2$, there exist $g_1, g_2 \in G \setminus \{0, \psi (q)\}$ with $\psi (q) = g_1+g_2$. Since the restriction $\psi \mid_{\mathcal P} \colon \mathcal P \to G$ is an epimorphism, there exist $p_1, p_2 \in \mathcal P \setminus \{q\}$ with $\psi (p_i) = g_i$ for $i \in [1,2]$. Therefore
\[
\psi (q) = g = g_1+g_2 = \psi (p_1)+\psi (p_2) \in [\psi(\mathcal P\backslash\{q\})] \,.
\]

Finally, suppose that $|G| = 2$. Then $H \hookrightarrow D$ is a divisor theory if and only if $ \langle \psi(\mathcal P)\rangle=[\psi(\mathcal P\backslash\{q\})]$ for every $q\in \mathcal P$ if and only if there exist distinct $q_1,q_2 \in \mathcal P$ such that $\psi (q_i) \ne 0$ for $i\in [1,2]$.
Clearly, if $q \in \mathcal P$ is the unique element of $\mathcal P$ with $\psi (q) \ne 0$, then $H$ is free abelian with basis $\mathcal P \setminus \{q\} \cup \{q^2\}$.
\end{proof}

 We  are now able to determine both the class group and the set of classes containing prime divisors for the monoids $\mcT(R)$ and $\mcC(R)$. This generalizes and refines the results of \cite{Ba09a}. Since each divisorial ideal over a two-dimensional local ring is MCM and thus finitely generated and torsion-free,  Theorem \ref{dimension2classgroup} can be stated in parallel both for $\mcT(R)$ and $\mcC(R)$.

\medskip
\begin{theorem}\label{dimension2classgroup}
Let $(R,\m)$  be a Noetherian local Krull domain of dimension two whose  $\m$-adic completion $\widehat R$ is also a Krull domain.
Let $\mathcal V(R)$ (respectively $\mathcal V(\widehat R)$) denote either $\mcT(R)$ (resp. $\mcT(\widehat R)$) or $\mcC(R)$ (resp. $\mcC(\widehat R)$), and let $\iota \colon \mathcal C (R) \to \mathcal C ( \widehat R)$ be the natural map.
\begin{enumerate}[(1)]
\item The embedding $\mathcal V(R)\hookrightarrow \mathcal V(\widehat R)$  is a cofinal divisor homomorphism. The class group of this divisor homomorphism is isomorphic to $G = \mathcal C (\widehat R)/\iota (\mathcal C (R))$ and every class contains a prime divisor. Moreover the embedding is a divisor theory except if $\widehat R$ satisfies the following condition:
\begin{itemize}
\item[{\bf (E)}] $|G| = 2$ and, up to isomorphism, there is precisely one non-extended indecomposable $\widehat R$-module $M$ with $[M] \in \mathcal V (\widehat R)$.
\end{itemize}
In particular,  $\mathcal V (R)$ satisfies KRSA if and only if either $|G|=1$ or $\widehat R$ satisfies {\bf (E)}.

\item
Suppose that the embeddings $\mathcal T (R)\hookrightarrow \mathcal T (\widehat R)$ and $\mathfrak C (R)\hookrightarrow \mathfrak C (\widehat R)$ are both divisor theories. Then their class groups are isomorphic. If $(G, (m_g)_{g\in G})$ is the characteristic of $\mcT(R)$ and $(G, (n_g)_{g\in G})$ is the characteristic of $\mcC(R)$, then $m_g\geq n_g$ for all $g \in G$. Moreover, $\sum_{g \in G} m_g$ infinite.
\end{enumerate}
\end{theorem}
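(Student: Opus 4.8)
The plan is to deduce Part (1) from the abstract Lemma \ref{2.5.1}(3) and then read off Part (2) from the resulting description. Write $D = \mathcal V(\widehat R)$, which is free abelian by Lemma \ref{module_divisorhom}(1); say $D = \mathcal F(\mathcal P)$, where $\mathcal P$ is the set of isomorphism classes of indecomposable finitely generated torsion-free (resp.\ MCM) $\widehat R$-modules. Since the class function $\mathbf c$ of Lemma \ref{basicclassgroupfacts} is additive on direct sums, it induces a monoid homomorphism $\mathcal V(\widehat R) \to \mathcal C(\widehat R)$; composing with the canonical epimorphism $\mathcal C(\widehat R) \twoheadrightarrow G := \mathcal C(\widehat R)/\iota(\mathcal C(R))$ produces a homomorphism $\psi \colon D \to G$. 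Using that the embedding of Lemma \ref{module_divisorhom} identifies $\mathcal V(R)$ with the submonoid of extended $\widehat R$-modules, together with the extension criterion of Lemma \ref{2dimensional-extended}(2) (and noting that, as completion preserves depth and dimension, an extended $\widehat R$-module is MCM precisely when it is extended from an MCM $R$-module, so the criterion applies to $\mathfrak C$ as well), one obtains the crucial identification $\mathcal V(R) = H := \psi^{-1}(0) \subset D$.

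To apply Lemma \ref{2.5.1}(3) I must verify that $\psi\mid_{\mathcal P}\colon \mathcal P \to G$ is an epimorphism. Given $g \in G$, lift it to a class in $\mathcal C(\widehat R)$; by the definition of $\mathcal C(\widehat R)$ as the class group of the divisor theory $\widehat R \to \mathcal I_v^*(\widehat R)$ and by Lemma \ref{basicclassgroupfacts}(2), that class equals $\mathbf c(I)$ for some divisorial ideal $I$ of $\widehat R$; by Lemma \ref{2dimensional-extended}(1) such an $I$ is an indecomposable MCM --- hence in particular finitely generated torsion-free --- $\widehat R$-module, so $[I] \in \mathcal P$ in either setting and $\psi([I]) = g$. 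Now Lemma \ref{2.5.1}(3) gives at once that $\mathcal V(R) = H \hookrightarrow D = \mathcal V(\widehat R)$ is a cofinal divisor homomorphism with class group $D/H \cong \psi(D) = G$, that every class contains a prime divisor (the set of classes containing prime divisors corresponds to $\psi(\mathcal P) = G$), and that the embedding is a divisor theory apart from the single exception $G = \{0,g\}$ with $|\mathcal P \cap \psi^{-1}(g)| = 1$. Since $\mathcal P \cap \psi^{-1}(g)$ is exactly the set of isomorphism classes of non-extended indecomposable modules in $\mathcal V(\widehat R)$, this exception is precisely condition \textbf{(E)}, and in that case $H$ is factorial. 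The ``in particular'' assertion then follows: $\mathcal V(R)$ satisfies KRSA iff it is free abelian; if $|G| = 1$ then $H = D$ is free abelian, in case \textbf{(E)} the monoid $H$ is factorial and, being reduced, free abelian, and conversely, if $|G| > 1$ and \textbf{(E)} fails, the embedding is a divisor theory with non-trivial class group $G$, so $\mathcal V(R)$ is not free abelian.

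For Part (2), assume that $\mathcal T(R) \hookrightarrow \mathcal T(\widehat R)$ and $\mathfrak C(R) \hookrightarrow \mathfrak C(\widehat R)$ are both divisor theories. By Part (1) each has class group $G = \mathcal C(\widehat R)/\iota(\mathcal C(R))$, so the two class groups are isomorphic; moreover, since both identifications are induced by restricting the same map $\mathbf c$, the multiplicity $m_g$ (resp.\ $n_g$) counts the isomorphism classes of indecomposable finitely generated torsion-free (resp.\ MCM) $\widehat R$-modules $M$ with $\mathbf c(M)$ lying in the coset $g$. As every indecomposable MCM module is an indecomposable finitely generated torsion-free module, the set counted by $n_g$ is a subset of the one counted by $m_g$, whence $m_g \ge n_g$ for all $g \in G$. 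Finally, $\sum_{g \in G} m_g = |\mathcal P|$ is the total number of isomorphism classes of indecomposable finitely generated torsion-free $\widehat R$-modules, and this is infinite: the powers $\widehat{\mathfrak m}^n$ ($n \in \N$) are nonzero ideals of the domain $\widehat R$, hence indecomposable finitely generated torsion-free modules, and they are pairwise non-isomorphic because their minimal numbers of generators $\dim_k \widehat{\mathfrak m}^n/\widehat{\mathfrak m}^{n+1}$ are unbounded, agreeing for $n \gg 0$ with a polynomial in $n$ of degree $\dim \widehat R - 1 = 1$ with positive leading coefficient.

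I expect the main obstacle to be the identification $\mathcal V(R) = \psi^{-1}(0)$ and the surjectivity of $\psi\mid_{\mathcal P}$: both rest entirely on the module-theoretic inputs of Lemma \ref{2dimensional-extended} --- that divisorial ideals of a two-dimensional local Krull domain are indecomposable MCM modules, and that a finitely generated torsion-free $\widehat R$-module is extended exactly when its class lies in the image of $\iota$. Once these are in place, the remainder of Part (1) is a formal consequence of Lemma \ref{2.5.1}, and Part (2) reduces to bookkeeping together with the standard fact that a two-dimensional Noetherian local domain admits infinitely many indecomposable finitely generated torsion-free modules.
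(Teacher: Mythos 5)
Your proof is correct and follows essentially the same route as the paper: the homomorphism $\psi$ induced by $\mathbf c$, the identification $\mathcal V(R)=\psi^{-1}(0)$ via Lemma \ref{2dimensional-extended}, surjectivity of $\psi\mid_{\mathcal P}$ from the fact that divisorial ideals are indecomposable MCM modules, and then Lemma \ref{2.5.1}(3). The only divergence is at the very end, where the paper cites Bass for the existence of infinitely many indecomposable finitely generated torsion-free $\widehat R$-modules while you give a self-contained argument via the powers $\widehat{\mathfrak m}^n$ and their minimal numbers of generators; this is a valid (and pleasantly elementary) substitute.
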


\begin{proof}
We set $D = \mathcal F (\mathcal P) =\mathcal V(\widehat R)$.
By (1) of Lemma \ref{basicclassgroupfacts} there is a homomorphism
\[
\psi \colon \mathcal V(\widehat R)  \rightarrow  \mathcal C(\widehat R)/\iota (\mathcal C(R))=G \quad \text{ given by} \quad [M]  \mapsto  \mathbf{c} (M) + \iota(\mathcal C(R)) \,.
\]
 By (1) of Lemma \ref{2dimensional-extended}, every divisorial ideal of $\widehat R$ is an indecomposable MCM $\widehat R$-module. That is, the  class of each divisorial ideal of $\widehat R$ in $\mathcal C ( \widehat R)$ is the image of some $[M] \in \mathcal V (\widehat R)$, where $M$ is an indecomposable MCM $\widehat R$-module. In other words, $\psi$ restricted to $\mathcal P = \mathcal A (D)$ is an epimorphism and $\psi(\mathcal A(D))=\mathcal C(\widehat R)/ \iota (\mathcal C(R))$.

By Lemma \ref{2.5.1}, the inclusion  $H=\psi^{-1}(0) \subset D$ is a cofinal divisor homomorphism. By (2) of Lemma \ref{2dimensional-extended}, $H$ is the image of the embedding $\mathcal V(R) \hookrightarrow \mathcal V(\widehat R)$, and thus the embedding $\mathcal V(R) \hookrightarrow \mathcal V(\widehat R)$ is a divisor theory if and only if the inclusion $H \hookrightarrow D$ is a divisor theory. By Lemma \ref{2.5.1} this always holds apart from the described Exception {\bf (E)}. A Krull monoid  is factorial if and only if its class group is trivial. Thus, if $\mathcal V(R) \hookrightarrow \mathcal V(\widehat R)$ is a divisor theory then KRSA holds for $\mathcal V(R)$ if and only if $|G|=0$. If $\mathcal V(R) \hookrightarrow \mathcal V(\widehat R)$ is not a divisor theory, then the inclusion $H\hookrightarrow D$ is not a divisor theory. By Lemma \ref{2.5.1}, $H$ is factorial, whence $\mathcal V(R)$ is factorial.

Since each MCM $R$-module is finitely generated and torsion-free, it is clear that $m_g \ge n_g$ for each $g \in G$. From \cite{Ba62a} we know that there infinitely many nonisomorphic indecomposable finitely generated torsion-free $\widehat R$-modules, and therefore $\sum_{g \in G} m_g$ infinite.
\end{proof}

\smallskip
Let $R$ be as in the above theorem and assume in addition that
\begin{itemize}
\item $R$  contains a field and $k = R/\m$ is algebraically closed with characteristic zero.
\item $\widehat R$ is a hypersurface i.e., $\widehat R$ is isomorphic to a three-dimensional regular Noetherian local ring modulo a regular element.
\item   $R$ has finite representation type.

\end{itemize}
Such rings were classified in  \cite{Bu-Gr-Sc87} and \cite{Kn87a} and are given, up to isomorphism, in Table \ref{hypers}. Note that since $\widehat R$ has finite representation type and each divisorial ideal of $\widehat R$ is and indecomposable MCM $\widehat R$-module, $\mathcal C ( \widehat R)$ and hence $\mathfrak C (R)$ is  finite.

\medskip
\begin{center}
\begin{longtable}{lclr}\caption{Two-dimensional Rings with Finite Representation Type}\label{hypers}\\
 \endhead \endfoot ($A_n$) & \hspace{.25in} &
$k[[x,y,z]]/(x^2+y^2+z^{n+1})$ & $n \geq 1$
\\ ($D_n$) & & $k[[x,y,z]]/(x^2z+y^2+z^{n-1})$ & $n \geq 4$ \\
($E_6$) & & $k[[x,y,z]]/(x^3+y^2+z^4)$ & \\ ($E_7$) & &
$k[[x,y,z]]/(x^3+xz^3+y^2)$ & \\ ($E_8$) & &
$k[[x,y,z]]/(x^2+y^3+z^5)$ &
\end{longtable}
\end{center}

An amazing theorem of Heitmann \cite{He93a} gives the existence of a local factorial domain whose completion is a ring as in Table 1.  In this situation we can determine the characteristic of $\mathfrak C (R)$.

\smallskip
\begin{theorem}\label{2dimensional-classgroups}
Let $(R,\m)$ be a Noetherian local  factorial domain with $\m$-adic completion $\widehat R$ isomorphic to a ring in Table \ref{hypers}.
\begin{enumerate}[(1)]
\item If $\widehat R$ is a ring of type ($A_n$), then $\mathcal C(\mcC(R))$ is cyclic of order $n+1$ and each class contains exactly one prime divisor.

\smallskip
\item Suppose $\widehat R$ is a ring of type ($D_n$).
\begin{enumerate}
\item If $n$ is even, then $\mathcal C(\mcC(R)) \cong C_2 \oplus C_2$. The trivial class  contains $\frac{n}{2}$ prime divisors. Two non-trivial classes  each contain a single prime divisor and their sum contains $\frac{n-2}{2}$ prime divisors.
\item If $n$ is odd, then $\mathcal C(\mcC(R))$ is cyclic of order four. The classes of order four each contain a single prime. The remaining classes  each contain $\frac{n-1}{2}$ prime divisors.
\end{enumerate}

\smallskip
\item If $\widehat R$ is a ring of type ($E_6$), then $\mathcal C(\mcC(R))$ is cyclic of order three. The trivial class  contains three prime divisors, while each remaining class contains two prime divisors.

\smallskip
\item If $\widehat R$ is a ring of type ($E_7$), then $\mathcal C(\mcC(R))$ is cyclic of order two. The trivial class  contains five prime divisors and the non-trivial class  contains three prime divisors.

\smallskip
\item If $\widehat R$ is a ring of type ($E_8$), then $\mathcal C(\mcC(R))$ is trivial, with the trivial class  containing all nine prime divisors.
\end{enumerate}
\end{theorem}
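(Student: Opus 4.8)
The plan is to deduce everything from Theorem \ref{dimension2classgroup} together with the McKay correspondence for two-dimensional ADE singularities. Since $R$ is factorial we have $\mathcal C(R)=0$, so $\iota(\mathcal C(R))=0$ and Theorem \ref{dimension2classgroup} applies with $G=\mathcal C(\widehat R)$: the embedding $\mcC(R)\hookrightarrow\mcC(\widehat R)$ is a cofinal divisor homomorphism whose class group is $\mathcal C(\widehat R)$, every class contains a prime divisor, and --- combining Lemma \ref{2.5.1}\,(2) with Lemma \ref{2dimensional-extended} --- the number of prime divisors in a class $g\in\mathcal C(\widehat R)$ is the number of isomorphism classes of indecomposable MCM $\widehat R$-modules $M$ with $\mathbf c(M)=g$. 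Moreover the embedding is a divisor theory unless Exception {\bf (E)} holds, which requires $|\mathcal C(\widehat R)|=2$ and so is only in question for $(A_1)$ and $(E_7)$. Thus the theorem reduces to two computations, carried out separately for each of the five families of Table \ref{hypers}: the group $\mathcal C(\widehat R)$, and the distribution of the (finitely many) indecomposable MCM $\widehat R$-modules among its classes.

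For this I would invoke the classical structure theory of the rings in Table \ref{hypers}: by the classification in \cite{Bu-Gr-Sc87} and \cite{Kn87a}, each such completed ring is the complete invariant ring $\widehat R\cong k[[u,v]]^G$ of a finite subgroup $G\subset\mathrm{SL}_2(k)$ --- the binary cyclic group of order $n+1$ for $(A_n)$, the binary dihedral group of order $4(n-2)$ for $(D_n)$, and the binary tetrahedral, octahedral and icosahedral groups for $(E_6)$, $(E_7)$, $(E_8)$. Since $k$ is algebraically closed of characteristic zero and $G$ has no pseudo-reflection, $\mathcal C(\widehat R)\cong\Hom(G,k^{\times})\cong G^{\mathrm{ab}}$, which is $C_{n+1}$; $C_2\oplus C_2$ for even $n$ and $C_4$ for odd $n$; $C_3$; $C_2$; and the trivial group, respectively --- and finite, since $\widehat R$ has finite representation type. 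By the McKay correspondence (see, e.g., \cite[Chapters 5 and 6]{Le-Wi12a}), $\rho\mapsto M_\rho=(k[[u,v]]\otimes_k V_\rho)^G$ is a bijection from the irreducible representations of $G$ onto the indecomposable MCM $\widehat R$-modules, with the trivial representation giving $\widehat R$; and passing to top reflexive exterior powers identifies the divisor class of $M_\rho$ with that of the rank-one module attached to $\det\rho$, i.e.\ $\mathbf c(M_\rho)=\det\rho$ under $\mathcal C(\widehat R)=\Hom(G,k^{\times})$. Hence the number of prime divisors in a class $g$ equals $\#\{\rho\in\operatorname{Irr}(G):\det\rho=g\}$.

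It then remains to read these counts off the character tables of the binary polyhedral groups. For $(A_n)$ every irreducible representation is one-dimensional, so they are exactly the $n+1$ characters and each class has one prime divisor. For $(D_n)$ the binary dihedral group of order $4(n-2)$ has four one-dimensional representations --- realizing all of $\Hom(G,k^{\times})$ --- and $n-3$ two-dimensional ones, whose determinants a short computation with the character table distributes between the trivial character and a single order-two character; putting this together with the one-dimensional representations and with $\widehat R$ reproduces the tallies in (2), with the parity of $n$ deciding whether $G^{\mathrm{ab}}$ is $C_2\oplus C_2$ or $C_4$. For $(E_6)$ the binary tetrahedral group has three one-dimensional, three two-dimensional (their determinants exhausting $C_3$) and one three-dimensional representation (trivial determinant, being pulled back from $A_4\subset\mathrm{SO}_3$), so the trivial class receives three prime divisors and each other class two. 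For $(E_7)$ the binary octahedral group has two one-dimensional, three two-dimensional, two three-dimensional and one four-dimensional representation; using that the nontrivial character is pulled back from the sign of $S_4$, a character-table computation gives exactly five representations with trivial determinant and three with the nontrivial one --- in particular the nontrivial class contains three prime divisors, so Exception {\bf (E)} fails and $\mcC(R)\hookrightarrow\mcC(\widehat R)$ is a genuine divisor theory. For $(E_8)$ the binary icosahedral group is perfect, all determinants are trivial, and all nine indecomposable MCM modules lie in the unique class. (In the remaining order-two case $(A_1)$ one checks instead that Exception {\bf (E)} does hold --- the unique non-free indecomposable MCM $\widehat R$-module being non-extended --- so there $\mcC(R)$ is factorial and the $C_2$ in the statement is the class group of the divisor homomorphism.) Heitmann's theorem \cite{He93a} guarantees that each completion arises from an actual factorial $R$, and the whole computation refines the class-group results of \cite{Ba09a}.

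I expect the main obstacle to be this last step: correctly determining the determinants of the higher-dimensional irreducible representations of the binary dihedral, tetrahedral and octahedral groups, and in particular getting the parity bookkeeping in the $(D_n)$ case right so that the four classes of $C_2\oplus C_2$ (resp.\ $C_4$) receive exactly the asserted numbers of prime divisors. Everything preceding that --- reducing $\mcC(R)$ to the pair $(\mathcal C(\widehat R),\ \text{class distribution})$ via Theorem \ref{dimension2classgroup}, and that in turn to $\operatorname{Irr}(G)$ weighted by $\det$ via the McKay correspondence --- is formal once those two inputs are granted.
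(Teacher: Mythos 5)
Your proposal is correct, and the reduction step is the same as the paper's: since $R$ is factorial, Theorem \ref{dimension2classgroup} identifies the class group with $\mathcal C(\widehat R)$ and identifies the number of prime divisors in a class $g$ with the number of indecomposable MCM $\widehat R$-modules $M$ having $\mathbf c(M)=g$. Where you diverge is in the two computations that remain. The paper quotes Brieskorn \cite{Br68a} for the groups $\mathcal C(\widehat R)$ and then computes the class of each indecomposable by propagating the additivity of $\mathbf c$ along the Auslander--Reiten sequences of $\widehat R$, following \cite[Theorem 4.3]{Ba09a}; you instead realize $\widehat R\cong k[[u,v]]^G$ for the corresponding binary polyhedral group, use $\mathcal C(\widehat R)\cong\Hom(G,k^\times)\cong G^{\mathrm{ab}}$, and use the McKay correspondence together with the identity $\mathbf c(M_\rho)=\det\rho$ to convert the whole problem into counting irreducible representations by determinant. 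These are two faces of the same structure (the AR quiver of $\widehat R$ is the McKay quiver of $G$), but your route yields a single uniform formula and, as your tallies for $(D_n)$, $(E_6)$, $(E_7)$ show, the character-table bookkeeping does come out exactly as in the statement; the price is importing the representation-theoretic input rather than staying inside the MCM literature. One point where you are actually more careful than the paper: in the $(A_1)$ case $|G|=2$ and the unique non-free indecomposable is non-extended, so Exception {\bf (E)} of Theorem \ref{dimension2classgroup} holds, the embedding is not a divisor theory, and $\mcC(R)$ is factorial --- so the $C_2$ asserted in part (1) for $n=1$ must be read as the class group of the divisor homomorphism rather than of the Krull monoid $\mcC(R)$, exactly as you flag; the paper's proof passes over this silently. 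Your verification that {\bf (E)} fails for $(E_7)$ (three modules in the nontrivial class) is likewise a needed check that the paper leaves implicit.
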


\begin{proof}
The class groups $\mathcal C(\widehat R)$ for $\widehat R$ a ring listed in Table \ref{hypers} were given in \cite{Br68a}. Since $R$ is factorial, $\mathcal C(R)=0$ and by Theorem \ref{dimension2classgroup}, $\mathcal C(\mcC(R))\cong \mathcal C(\widehat R)$. Following the proof of \cite[Theorem 4.3]{Ba09a} one can compute the class of each indecomposable $\widehat R$-module in $\mathcal C(\widehat R)$ by using the Auslander-Reiten sequence for $\widehat R$. The result follows by considering the map $\overline{\psi}$ defined in the proof of Theorem \ref{dimension2classgroup}.
\end{proof}

The above theorem completely determines the characteristic of the monoid $\mcC(R)$. With this information, in addition to being able to completely describe the arithmetic of $\mcC(R)$ as we do in Theorem \ref{7.9}, we can easily enumerate the atoms of $\mcC(R)$ (the nonisomorphic indecomposable MCM modules). We now illustrate this ability with an example. If $\boldsymbol \beta \colon \mcC(R) \rightarrow \mathcal B(G_{\mathcal P})$ is the transfer homomorphism of Lemma \ref{4.4}, then $\mathcal A (\mcC(R) ) = \boldsymbol \beta^{-1}(\mathcal A(\mathcal B(G_{\mathcal P}))$. Suppose that  $\widehat R$ is a ring of type ($D_n$) with $n$ even. Then $\widehat R$ has exactly $n+1$ nonisomorphic indecomposable MCM modules. If $C_2\oplus C_2=\{0,e_1, e_2, e_1+e_2\}$, then
\[
\mathcal A(C_2\oplus C_2) = \{0, e_1^2, e_2^2, (e_1+e_2)^2, e_1e_2(e_1+e_2) \}\,,
\]
and hence $R$ has exactly
\[
|\mathcal A (\mcC(R))| = \frac{n}{2}+1\cdot 1+1\cdot 1+\frac{n-2}{2}\cdot \frac{n-2}{2}+1\cdot1\cdot \frac{n-2}{2}=\frac{n^2+8}{4}
\]
nonisomorphic indecomposable MCM modules.

\smallskip
We conclude this section by noting that a two-dimensional local Krull domain $(R,\m)$ having completion isomorphic to a ring in Table \ref{hypers} may not be factorial. However,  Theorem \ref{dimension2classgroup} implies that $\mathcal C(\mcC(R))$ is a factor group of a  group given in Theorem \ref{2dimensional-classgroups}. In particular, $\mathcal C ( \mathfrak C (R))$ is a finite cyclic group such that every class contains a prime divisor, and thus the arithmetic of $\mathfrak C (R)$ is described in Theorem \ref{7.9}.

\bigskip
\section{The arithmetic of monoids of modules} \label{7}
\bigskip

In this section we study the arithmetic of the Krull monoids  that have been discussed in Sections \ref{5} and \ref{6}. Thus, using the transfer properties presented in Section \ref{3}, we describe the arithmetic of direct-sum decompositions of modules. Suppose that  $H$ is a Krull monoid having a divisor homomorphism $\varphi \colon H \to \mathcal F (\mathcal P)$ and let $G_{\mathcal P} \subset \mathcal C (\varphi)$ be the set of classes containing prime
divisors. The first subsection deals with quite general sets $G_{\mathcal P}$ and provides results on the finiteness or non-finiteness of various arithmetical parameters. The second subsection studies three specific sets $G_{\mathcal P}$, provides explicit results on arithmetical parameters, and establishes a characterization result (Theorems \ref{7.4}, \ref{7.8}, \ref{7.9}, and Corollary \ref{7.10}) . The third subsection completely determines the system of sets of lengths in case of small subsets $G_{\mathcal P}$. It shows that small subsets in torsion groups and in torsion-free groups can have the same systems of sets of lengths, and it reveals natural limits for arithmetical characterization results (Corollary \ref{7.15}).

\

\subsection{General sets $G_{\mathcal P}$ of  classes containing prime divisors.} \label{6a}
In this subsection we consider the algebraic and arithmetic structure of Krull monoids with respect to $ G_{\mathcal P}$. We will often assume that  $G_{\mathcal P} = - G_{\mathcal P}$, a property which has a strong influence on the arithmetic of $H$. Recall that $G_{\mathcal P} = - G_{\mathcal P}$ holds in many of the (finite and infinite representation type) module-theoretic contexts described in Sections \ref{5} and \ref{6}.
More generally, all configurations $(G, G_{\mathcal P})$ occur for certain monoids of modules (see Proposition \ref{realization}, \cite{He-Pr10a}, and \cite[Chapter 1]{Le-Wi12a}) and, by Claborn's Realization Theorem, all configurations $(G, G_{\mathcal P})$ occur for Dedekind domains (see  \cite[Theorem 3.7.8]{Ge-HK06a}). In addition, every abelian group can be realized as the class group of   a Dedekind domain which is a quadratic extension of a principal ideal domain, and in this case we have $G_{\mathcal P} = - G_{\mathcal P}$ (see \cite{Le72a}).

\medskip
\begin{proposition} \label{7.1}
Let $H$ be a Krull monoid, $\varphi \colon H \to  \mathcal F ({\mathcal P} )$
a  divisor homomorphism with class group $G = \mathcal C (\varphi)$, and let $G_{\mathcal P} \subset G$ denote the set of classes containing prime
divisors.
\begin{enumerate}[(1)]
\item If $G_{\mathcal P}$ is finite, then $\mathcal A (G_{\mathcal P})$ is finite and hence $\mathsf D (G_{\mathcal P}) < \infty$. If $G$ has finite total rank, then  $G_{\mathcal P}$ is finite if and only if $\mathcal A (G_{\mathcal P})$ is finite if and only if $\mathsf D (G_{\mathcal P}) < \infty$.

\smallskip
\item If $G_{\mathcal P} = -G_{\mathcal P}$, then $[G_{\mathcal P}] = G$.  Moreover, the map $\varphi \colon H \to D$ and the inclusion $\mathcal B (G_{\mathcal P}) \hookrightarrow \mathcal F (G_{\mathcal P})$ are both cofinal.

\smallskip
\item Suppose that $G$ is infinite cyclic, say $G = \langle e \rangle$, and that $\{-e, e \} \subset G_{\mathcal P}$. Then $\mathcal B (G_{\mathcal P}) \hookrightarrow \mathcal F (G_{\mathcal P})$ is a divisor theory if and only if there exist $k, l \in \N_{\ge 2}$ such that $-ke, le \in G_{\mathcal P}$.

\smallskip
\item Let $r, \alpha \in \mathbb N$ with $r+\alpha>2$. Let  $(e_1, \ldots, e_r) \in G_{\mathcal P}^r$ be independent and let $e_0 \in G_{\mathcal P}$ such that  $\alpha e_0 = e_1+ \cdots + e_r $, $\{-e_0, \ldots, -e_r\} \subset G_{\mathcal P}$,  and  $\langle  e_0, \ldots, e_r \rangle = G$.
    \begin{enumerate}
    \item The map $\varphi \colon H \to \mathcal F ({\mathcal P})$ and the inclusion $\mathcal B (G_{\mathcal P}) \hookrightarrow \mathcal F (G_{\mathcal P})$ are divisor theories with class group isomorphic to $G$.

    \smallskip
    \item If $0 \notin G_{\mathcal P}$, then $\mathcal B (G_{\mathcal P})$ is not a direct product of nontrivial submonoids.
    \end{enumerate}
\end{enumerate}
\end{proposition}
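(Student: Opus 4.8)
The plan is to handle the four parts in turn, the main tool being the divisor-theory criterion of Lemma \ref{2.5.1}(1),(2): for $G_0 \subset G$ with $\langle G_0 \rangle = G$, the inclusion $\mathcal B(G_0) \hookrightarrow \mathcal F(G_0)$ is a divisor theory if and only if $[G_0 \setminus \{g\}] = G$ for every $g \in G_0$, and when this holds its class group is $[G_0]$; moreover, applied with $G_0 = G_{\mathcal P}$, this same condition forces $\varphi \colon H \to \mathcal F(\mathcal P)$ to be a divisor theory as well, since deleting a prime from $\mathcal P$ removes at most its class from $G_{\mathcal P}$. For (1): if $G_{\mathcal P}$ is finite then $\mathcal B(G_{\mathcal P})$ is a saturated submonoid of $\mathcal F(G_{\mathcal P}) \cong \N_0^{|G_{\mathcal P}|}$, so Dickson's Lemma gives that $\mathcal A(G_{\mathcal P})$ is finite and hence $\mathsf D(G_{\mathcal P}) < \infty$; the remaining equivalences under the finite-total-rank hypothesis I would quote from \cite{Ge-HK06a}, the point being that an infinite subset of a finite-total-rank group always produces zero-sum atoms of unbounded length. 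For (2): $G_{\mathcal P} = -G_{\mathcal P}$ makes $[G_{\mathcal P}]$ closed under inverses, so $[G_{\mathcal P}] = \langle G_{\mathcal P}\rangle = \mathcal C(\varphi) = G$; since $D/H = [G_{\mathcal P}]$ this gives $D/H = \mathcal C(\varphi)$, i.e. $\varphi$ is cofinal, and for $g \in G_{\mathcal P}$ the zero-sum sequence $g(-g)$ is divisible by $g$, which by the characterization of cofinality recalled earlier makes $\mathcal B(G_{\mathcal P}) \hookrightarrow \mathcal F(G_{\mathcal P})$ cofinal.

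For (3) I would identify $G = \langle e\rangle$ with $\Z$ and $G_{\mathcal P}$ with a subset $S \subset \Z$ containing $\{-1,1\}$, and apply the criterion: $\mathcal B(G_{\mathcal P}) \hookrightarrow \mathcal F(G_{\mathcal P})$ is a divisor theory iff $[S \setminus \{m\}] = \Z$ for all $m \in S$. For $m \notin \{1,-1\}$ this is automatic because $\{1,-1\} \subset S \setminus \{m\}$; the only real conditions are $[S \setminus \{1\}] = \Z$, which holds iff $S \setminus \{1\}$ still contains some integer $\ge 2$, and its mirror for $m = -1$. This is exactly the requirement that $S$ contain some $l \ge 2$ and some $-k$ with $k \ge 2$. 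Alternatively one can argue by hand: given such $k,l$, for each $m > 0$ the zero-sum sequences $(me)\,(-e)^m$ and $(me)\,e^{kb-m}(-ke)^b$ with $b = \lceil m/k \rceil$ have greatest common divisor $(me)^1$, and symmetrically for $m < 0$, so every singleton is a gcd of zero-sum sequences; conversely, if $G_{\mathcal P}$ contains no positive multiple $\ge 2e$, then every zero-sum sequence divisible by $-e$ is also divisible by $e$, so $-e$ cannot be separated from $e$ and the inclusion fails to be a divisor theory.

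For (4)(a), I would first note $[G_{\mathcal P}] \supseteq [\{e_0,-e_0,\ldots,e_r,-e_r\}] = \langle e_0,\ldots,e_r\rangle = G$, so $\varphi$ and $\mathcal B(G_{\mathcal P}) \hookrightarrow \mathcal F(G_{\mathcal P})$ are cofinal, and by the criterion it remains to check $[G_{\mathcal P}\setminus\{q\}] = G$ for every $q \in G_{\mathcal P}$. If $q$ is not among $e_0,-e_0,\ldots,e_r,-e_r$ this is immediate since that entire set survives in $G_{\mathcal P}\setminus\{q\}$; if $q$ is one of the $\pm e_i$, I would rewrite $q$ as a monoid word in the surviving generators using $\alpha e_0 = e_1+\cdots+e_r$ and the inverses $-e_0,\ldots,-e_r$ — for instance $e_0 = (e_1+\cdots+e_r)+(\alpha-1)(-e_0)$, and $e_i = \alpha e_0 + \sum_{j\in[1,r]\setminus\{i\}}(-e_j)$ for $i \in [1,r]$, with the mirror expressions for the $-e_i$ — so that again $[G_{\mathcal P}\setminus\{q\}] \supseteq \langle e_0,\ldots,e_r\rangle = G$. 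The class group of the resulting divisor theories is then $[G_{\mathcal P}] = G$.

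For (4)(b), assuming $0 \notin G_{\mathcal P}$, suppose toward a contradiction that $\mathcal B(G_{\mathcal P})$ is a nontrivial direct product. By (a) the inclusion is cofinal, so Lemma \ref{4.2} yields a partition $G_{\mathcal P} = G_1 \uplus G_2$ into nonempty subsets with every atom of $\mathcal B(G_{\mathcal P})$ supported in $G_1$ or in $G_2$. I would then argue: (i) each $e_i(-e_i)$, $i \in [0,r]$, is an atom, so $e_i$ and $-e_i$ lie in the same block; (ii) taking (say) $-e_0 \in G_1$, in the atom factorization of the zero-sum sequence $(-e_0)^\alpha e_1\cdots e_r$ no $G_2$-atom can meet $-e_0$, so the $G_2$-atoms in that factorization multiply to a squarefree zero-sum sequence $\prod_{i\in Q}e_i$ with $Q \subseteq [1,r]$, whence $\sum_{i\in Q}e_i = 0$ and the independence of $(e_1,\ldots,e_r)$ forces $Q = \emptyset$; thus $e_1,\ldots,e_r \in G_1$ and, with (i), all of $e_0,-e_0,\ldots,e_r,-e_r \in G_1$; (iii) for any $g \in G_2$, write $g = \sum_{i=0}^r n_i e_i$ (possible as $\langle e_0,\ldots,e_r\rangle = G$) and form the zero-sum sequence $B = g\prod_{n_i>0}(-e_i)^{n_i}\prod_{n_i<0}e_i^{-n_i}$ over $G_{\mathcal P}$; since $g \ne 0$ and $\mathsf v_g(B) = 1$, the atom of $B$ containing $g$ is not a power of $g$ and so also contains some $\pm e_i \in G_1$, contradicting that this atom is supported in a single block. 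Hence $G_2 = \emptyset$, a contradiction. I expect step (ii) to be the main obstacle: one has to exclude a ``mixed'' factorization of $(-e_0)^\alpha e_1\cdots e_r$ using only the independence of $(e_1,\ldots,e_r)$ together with the fact that the single group element $-e_0$ cannot lie in both blocks. The finite-total-rank direction of (1) and the word-rewriting in (4)(a) are the other spots needing a little care, but both are routine.
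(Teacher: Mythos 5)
Your proposal is correct and follows essentially the same route as the paper: (1) and (2) are handled identically (citation plus a direct cofinality check), (3) reduces to the deletion criterion $[G_{\mathcal P}\setminus\{g\}]=\langle G_{\mathcal P}\rangle$ of Lemma \ref{2.5.1} (the paper instead verifies the gcd condition directly, a computation you also supply), and (4) uses the same rewriting of $\pm e_i$ in the surviving generators and the same Lemma \ref{4.2} partition argument. In (4)(b) you are in fact slightly more careful than the paper: you factor $(-e_0)^{\alpha}e_1\cdot\ldots\cdot e_r$ into atoms rather than assuming it is one, and you use $\mathsf v_g(B)=1$ together with $g\ne 0$ to force the atom containing $g$ to meet $\{\pm e_0,\ldots,\pm e_r\}$, which closes two small gaps left implicit in the published argument.
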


\begin{proof}
(1) follows from \cite[Theorem 3.4.2]{Ge-HK06a}.

\smallskip
If $G_{\mathcal P} = -G_{\mathcal P}$, then $[G_{\mathcal P}] = \langle G_{\mathcal P} \rangle = G$. By Lemma \ref{4.4}, (2) follows once we verify that $\varphi$ is cofinal. If $p \in {\mathcal P}$, then there is a $q \in {\mathcal P}$ with $q \in - [p ]$, whence there is an $a \in H$ with $\varphi (a) = pq$, and so $\varphi$ is cofinal.

\smallskip
If $\{-ke \, \colon \, k \in \mathbb N\} \cap G_{\mathcal P} = \{-e\}$ or $\{ke \, \colon \, k \in \mathbb N\} \cap G_{\mathcal P} = \{e\}$, then $\mathcal B (G_{\mathcal P})$ is factorial. Since $\mathcal F (G_{\mathcal P}) \ne  B (G_{\mathcal P})$, the inclusion $\mathcal B (G_{\mathcal P}) \hookrightarrow  F (G_{\mathcal P})$ is not a divisor theory. Conversely, suppose that there exist $k, l \in \N_{\ge 2}$ such that $-ke, le \in  G_{\mathcal P}$. Let $m \in \N$. If $me \in G_{\mathcal P}$, then $m e = \gcd \big( (me)(-e)^m, (me)^k(-ke)^m \big)$, and if $-me \in G_{\mathcal P}$, then $-me = \gcd \big( (-me)e^m, (-me)^l (le)^m \big)$. Thus every element of $G_{\mathcal P}$ is  a greatest common divisor of a finite set of elements from $\mathcal B (G_{\mathcal P})$ and hence $\mathcal B (G_{\mathcal P}) \hookrightarrow  F (G_{\mathcal P})$ is  a divisor theory.

\smallskip
We now suppose that $G$ and $G_{\mathcal P}$ are as in (4). To prove (a) it is sufficient to show that $\mathcal B (G_{\mathcal P}) \hookrightarrow \mathcal F (G_{\mathcal P})$ is a divisor theory. By \cite[Proposition 2.5.6]{Ge-HK06a} we need only verify that $\langle G_{\mathcal P} \rangle = [G_{\mathcal P} \setminus \{g\}]$ for every $g \in G_{\mathcal P}$. Let $g \in G$. We will show that $[G_{\mathcal P} \setminus \{g\}] = [G_{\mathcal P}] = [e_0, \ldots, e_r, -e_0, \ldots, -e_r]$. If $g \notin \{e_0, \ldots, e_r, -e_0, \ldots, -e_r\}$, the assertion is clear. By symmetry it suffices to consider the case where $g \in \{e_0, \ldots, e_r\}$. If $g = e_i$ for some $i \in [1,r]$, then $e_i = \alpha e_0 + (-e_1) +   \cdots + (-e_{i-1}) + (-e_{i+1}) +  \cdots +  (-e_r) \in [ \{ \pm e_{\nu} \, \colon \nu \in [0,r] \setminus \{i\} ]$, and hence $[G_{\mathcal P} \setminus \{g\}] \supset [ \{ \pm e_{\nu} \, \colon \nu \in [0,r] \setminus \{i\} \}] = [ \pm e_0, \ldots , \pm e_r] = G$.
If $g = e_0$, then $e_0 = e_1 + \cdots + e_r + (\alpha-1)(-e_0) \in [G_{\mathcal P} \setminus \{e_0\}]$, and hence $[G_{\mathcal P} \setminus \{g\}] = [G_{\mathcal P}] = G$.

To prove (b) we use Lemma \ref{4.2} and suppose that $0 \notin G_{\mathcal P}$ with $G_{\mathcal P} = G_1 \uplus G_2$ such that $\mathcal A (G_{\mathcal P}) = \mathcal A (G_1) \uplus \mathcal A (G_2)$. We must show that either $G_1$ or $G_2$ is empty.
Suppose that $V = e_1 \cdot \ldots \cdot e_r (-e_0)^{\alpha} \in \mathcal A (G_1)$. Since $(-e_0)e_0, \ldots , (-e_r)e_r \in \mathcal A (G_{\mathcal P})$, it follows that $\{\pm e_0, \ldots, \pm e_r \} \subset G_1$. Let $g \in G_{\mathcal P}$. Since $G_{\mathcal P} \subset G = [\pm e_0, \ldots, \pm e_r]$, there exists $U \in \mathcal A (G_{\mathcal P})$ such that $g \in \supp (U) \subset \{g, \pm e_0, \ldots, \pm e_r \}$, and hence $g \in G_1$. Thus $G_1 = G_{\mathcal P}$ and $G_2 = \emptyset$.
\end{proof}

 For our characterization results, we need to recall the concept of an absolutely irreducible element, a classical notion in algebraic number theory. An element $u$ in an atomic monoid $H$ is called {\it absolutely irreducible} if $u \in \mathcal A (H)$ and $|\mathsf Z (u^n)| = 1$ for all $n \in \mathbb N$; equivalently, the divisor-closed submonoid of $H$ generated by $u$ is factorial.  Suppose that $H \hookrightarrow \mathcal F (\mathcal P)$ is a divisor theory with class group $G$ and that  $u = p_1^{k_1} \cdot \ldots \cdot p_m^{k_m}$ where $m, k_1, \ldots, k_m \in \N$ and where $p_1, \ldots, p_m \in \mathcal P$  are pairwise distinct. Then $u$ is absolutely irreducible.
if and only if $(k_1, \ldots, k_m)$ is a minimal element of the set
\[
\Gamma = \{(s_1, \ldots, s_m) \in \N^l_0 \, \colon \, p^{s_1}_1
\cdot\ldots\cdot p^{s_m}_m \in H\} \setminus \{\boldsymbol{0}\}
\]
with respect to the usual product ordering, and the torsion-free rank of $\langle
[p_1], \ldots, [p_m]\rangle$ in $G$ is $m-1$ (see \cite[Proposition 7.1.4]{Ge-HK06a}). In particular, if $[p_1] \in G$ has finite order, then $p_1^{\ord([p_1])}$ is absolutely irreducible, and if $[p_1] \in G$ has infinite order, then $p_1q_1$ is absolutely irreducible for all $q_1 \in \mathcal P \cap (-[p_1])$.

\medskip
\begin{proposition} \label{7.2}
Let $H$ be a Krull monoid, $\varphi \colon H \to \mathcal F ({\mathcal P})$ a cofinal divisor homomorphism with   class group $G$, and let $G_{\mathcal P} \subset G$ denote the set of classes containing prime divisors.
\begin{enumerate}[(1)]
\item Suppose that  $G_{\mathcal P}$ is infinite.
      \begin{enumerate}
      \item If $G_{\mathcal P}$ has an infinite subset $G_0$ such that   $G_0  \cup  (-G_0) \subset G_{\mathcal P}$ and $\langle G_0 \rangle$ has finite total rank, then  $\mathcal U_k (H)$ is infinite for each $k \ge 2$. Moreover, $\mathsf D (G_{\mathcal P}) = \rho_k (H) =   \omega (H) = \mathsf t (H) = \infty$.

      \smallskip
      \item If there exists $e \in G$ such that both $G_{\mathcal P} \cap \{ke \, \colon \, k \in \mathbb N\}$ and $G_{\mathcal P} \cap \{-ke \, \colon \, k \in \mathbb N\}$ are infinite, then $\Delta (H)$ is infinite and $\mathsf c (H) = \mathsf c_{\mon} (H) = \infty$.

      \smallskip
      \item     If $G_{\mathcal P}$ contains an infinite group, then every finite subset $L \subset \mathbb N_{\ge 2}$ occurs as a set of lengths.
      \end{enumerate}

\smallskip
\item Suppose that  $G_{\mathcal P}$ is finite  and that $H$ is not factorial.
      \begin{enumerate}
      \item The set $\Delta (H)$ is finite and there is a constant $M_1 \in \mathbb N$ such that every set of lengths is an {\rm AAMP}
            with difference $d \in \Delta^* (H)$ and bound $M_1$.

      \item  There is a constant $M_2 \in \mathbb N$ such that, for every $k \ge 2$, the set $\mathcal U_k (H)$ is an {\rm AAMP} with period $\{0, \min \Delta (H)\}$ and bound $M_2$.

      \smallskip
      \item $\mathsf c (H) \le \omega (H)  \le \mathsf t (H) \le 1 + \frac{\mathsf D (G_{\mathcal P})(\mathsf D (G_{\mathcal P})-1)}{2}$ \ and
      \[
       \mathsf c_{\mon} (H) < \frac{|G_{\mathcal P}^{\bullet}|+2}{2} \big( (2|G_{\mathcal P}^{\bullet}|+2)(|G_{\mathcal P}^{\bullet}|+2)(\mathsf D
      (G_{\mathcal P})+1) \big)^{|G_{\mathcal P}^{\bullet}|+1}  \,.
      \]

      \smallskip
      \item Suppose that $G_{\mathcal P} = -G_{\mathcal P}$. Then $\omega (H) = \mathsf D (G_{\mathcal P})$, $\rho (H) = \mathsf D (G_{\mathcal P})/2$, $\rho_{2k} (H) = k \mathsf D (G_{\mathcal P}) < \infty$,  and $\lambda_{k \mathsf D (G_{\mathcal P})+j} (H) = 2k+j$ for all $k \in \mathbb N$ and $j \in [0,1]$. If $G$ is torsion-free, then $\mathsf D (G_{\mathcal P})$ is the maximal number $s$ of absolutely irreducible atoms $u_1, \ldots, u_s$ such that $2 \in \mathsf L (u_1 \cdot \ldots \cdot u_s)$.

      \smallskip
      \item If, in particular, $G_{\mathcal P}=-G_{\mathcal P}$ and $\mathsf D (G_{\mathcal P})=2$, then $\mathcal U_k (H) = \{k\}$ for all $k \in \mathbb N$ and $\mathsf c_{\mon}(H) = \mathsf c (H) =  \omega (H) = \mathsf t (H) = 2$.
      \end{enumerate}
\end{enumerate}
\end{proposition}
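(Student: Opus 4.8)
Everything is transported to the monoid of zero‑sum sequences over $G_{\mathcal P}$. Since $\varphi$ is a cofinal divisor homomorphism, Lemma~\ref{4.4} yields a transfer homomorphism $\boldsymbol\beta\colon H\to\mathcal B(G_{\mathcal P})$, so that $\mathcal L(H)=\mathcal L(G_{\mathcal P})$ (hence also $\Delta(H)=\Delta(G_{\mathcal P})$, $\mathcal U_k(H)=\mathcal U_k(G_{\mathcal P})$, $\rho_k(H)=\rho_k(G_{\mathcal P})$, $\lambda_k(H)=\lambda_k(G_{\mathcal P})$ for all $k$) and, once $H$ is known not to be factorial, $\mathsf c(H)=\mathsf c(G_{\mathcal P})$, $\mathsf c_{\mon}(H)=\mathsf c_{\mon}(G_{\mathcal P})$, and $\omega(H)\le\mathsf D(G_{\mathcal P})$. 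In part (1) each of the three hypotheses already produces a nontrivial zero‑sum sequence, so $H$ is non‑factorial there; in part (2), $H$ is assumed non‑factorial and $G_{\mathcal P}$ finite forces $2\le\mathsf D(G_{\mathcal P})<\infty$ by Proposition~\ref{7.1}(1).

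\textbf{Part (1).} In (a), the hypothesis gives an infinite \emph{symmetric} set $\widetilde G_0:=G_0\cup(-G_0)\subseteq G_{\mathcal P}$ inside a subgroup of finite total rank; by the characterization of finiteness of the Davenport constant (\cite[Theorem~3.4.2]{Ge-HK06a}), $\mathcal B(\widetilde G_0)$, and hence $\mathcal B(G_{\mathcal P})$, has atoms $U$ of arbitrarily large length with $\supp(U)\subseteq\widetilde G_0$. For such $U$ the reversed sequence $-U$ is again an atom, while $U(-U)$ also factors as the product of the $|U|$ atoms $g\cdot(-g)$ (over $g\in\supp(U)$, with multiplicity); thus $\{2,|U|\}\subseteq\mathsf L_{\mathcal B(G_{\mathcal P})}(U(-U))$, so $\mathcal U_2(H)=\mathcal U_2(G_{\mathcal P})$ is infinite, and then $\mathcal U_k(H)$ is infinite for every $k\ge 2$ because $\mathcal U_2+\mathcal U_{k-2}\subseteq\mathcal U_k$. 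Consequently $\rho_k(H)=\infty$, hence $\rho(H)=\infty$, hence $\mathsf D(G_{\mathcal P})=\infty$, and Lemma~\ref{2.2}(4) forces $\omega(H)=\mathsf t(H)=\infty$. In (b), $e$ necessarily has infinite order, and for each prescribed $d\in\N$ one builds an element of $\mathcal B(G_{\mathcal P})$ supported on two positive and two negative multiples of $e$ whose set of lengths has a gap of size $d$ (a standard numerical construction); thus $\Delta(H)=\Delta(G_{\mathcal P})$ is infinite, whence $\mathsf c(H)\ge 2+\sup\Delta(H)=\infty$ and $\mathsf c_{\mon}(H)=\infty$. In (c), an infinite subgroup $G'\subseteq G_{\mathcal P}$ gives a divisor‑closed submonoid $\mathcal B(G')\subseteq\mathcal B(G_{\mathcal P})$ whose class group $G'$ is infinite with every class containing a prime divisor; Kainrath's theorem (see \cite[\S 7.4]{Ge-HK06a}) then shows every finite $L\subseteq\N_{\ge 2}$ lies in $\mathcal L(\mathcal B(G'))\subseteq\mathcal L(\mathcal B(G_{\mathcal P}))=\mathcal L(H)$.

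\textbf{Part (2).} Items (a) and (b) are, read off $\mathcal B(G_{\mathcal P})$ through the transfer identities, the Structure Theorem for Sets of Lengths and the Structure Theorem for Unions of Sets of Lengths of a Krull monoid with finite Davenport constant (\cite{Ge-HK06a}); in particular $\Delta(H)=\Delta(G_{\mathcal P})$ is finite. In (c): $\mathsf c(H)\le\omega(H)\le\mathsf t(H)$ is Lemma~\ref{2.2}(4); $\mathsf t(H)\le 1+\tfrac12\mathsf D(G_{\mathcal P})(\mathsf D(G_{\mathcal P})-1)$ is the standard tame‑degree estimate for Krull monoids (\cite{Ge-HK06a, Ge-Ka10a}); and the bound on $\mathsf c_{\mon}(H)$ is the explicit estimate of \cite{Ge-Gr-Sc-Sc10} together with $\mathsf c_{\mon}(H)=\mathsf c_{\mon}(G_{\mathcal P})$ (the degenerate case $\mathsf D(G_{\mathcal P})=2$ being covered by (e)). For (d), put $\mathsf D:=\mathsf D(G_{\mathcal P})$ and take an atom $U$ of $\mathcal B(G_{\mathcal P})$ with $|U|=\mathsf D$; since $G_{\mathcal P}=-G_{\mathcal P}$, both $U$ and $-U$ are atoms and $U(-U)=\prod_i\bigl(g_i\cdot(-g_i)\bigr)$ gives $\mathsf D\in\mathcal U_2(G_{\mathcal P})$, while $\rho_2(G_{\mathcal P})\le 2\rho(G_{\mathcal P})\le\mathsf D$ by Lemma~\ref{4.3}(1); hence $\rho_2(G_{\mathcal P})=\mathsf D$, and Lemma~\ref{4.3}(2) delivers $\rho(H)=\mathsf D/2$, $\rho_{2k}(H)=k\mathsf D$, and — using that $\lambda_k$ is an integer and $2/\mathsf D>0$ — $\lambda_{k\mathsf D+j}(H)=2k+j$ for $j\in\{0,1\}$. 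For the equality $\omega(H)=\mathsf D$ one also needs $\omega(H)\ge\mathsf D$: lift $U$ to an atom $u$ of $H=\varphi(H)$, take the multiple $a=u\bar u$ where $\bar u$ lifts $-U$, which has the length‑$\mathsf D$ factorization into the atoms lifting the $g_i\cdot(-g_i)$, and check that $u$ divides no proper subproduct of it. When $G$ is torsion‑free, each $g_i$ has infinite order, so the atoms lifting $g_i\cdot(-g_i)$ are absolutely irreducible (by the characterization recalled before the Proposition), $2$ lies in the set of lengths of their product, and conversely any relation $u_1\cdots u_s=v_1v_2$ with all $u_i$ absolutely irreducible forces $2s\le|v_1|+|v_2|\le 2\mathsf D$ in $\mathcal F(\mathcal P)$, so $s\le\mathsf D$. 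Finally (e) is the case $\mathsf D=2$: part (d) gives $\rho_k(H)=\lambda_k(H)=k$, so $\mathcal U_k(H)=\{k\}$, and $\omega(H)=2$; then $2\le\mathsf c(H)\le\omega(H)=2$ by Lemma~\ref{2.2}(3),(4), and $2=\omega(H)\le\mathsf t(H)\le 1+\tfrac12\cdot 2\cdot 1=2$ by (c); moreover $H$ is half‑factorial, so $\mathsf c_{\adj}(H)=0$ and $\mathsf c_{\mon}(H)=\mathsf c_{\eq}(H)$, which a short direct argument (equal‑length refactorizations are linked by ``$2$‑swaps'') pins to $2$.

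\textbf{Main obstacle.} The transfer reduction, the appeal to Kainrath's theorem, and the structure and bound theorems are routine. The delicate points are: in (1)(a), making sure the unbounded atoms can be chosen supported in the symmetric set $\widetilde G_0$, so that $-U$ is admissible; and — the genuine obstacle — the two lower bounds in (2)(d), namely $\omega(H)\ge\mathsf D(G_{\mathcal P})$ and the fact that $\mathsf D(G_{\mathcal P})$ equals the maximal number of absolutely irreducible atoms whose product has $2$ in its set of lengths. Both require moving from the zero‑sum model back into $H\subseteq\mathcal F(\mathcal P)$, choosing prime divisors realizing the required classes without unwanted coincidences, and verifying a ``no short subproduct'' property; torsion‑freeness of $G$ is precisely what makes the absolutely‑irreducible count equal $\mathsf D(G_{\mathcal P})$, and it is also what the subsequent characterization results of the section rest on.
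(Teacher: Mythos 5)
Your proposal follows essentially the same route as the paper's proof: reduce everything to $\mathcal B(G_{\mathcal P})$ via Lemma \ref{4.4}; quote Kainrath's realization theorem, the structure theorems for sets of lengths and their unions, and the known tame-degree and monotone-catenary bounds for (1c) and (2a)--(2c); use Lemma \ref{4.3} for the $\rho$- and $\lambda$-statements in (2d) (you are in fact more careful than the paper in first verifying the hypothesis $\rho_2(G_{\mathcal P})=\mathsf D(G_{\mathcal P})$ via $U(-U)$); and obtain $\omega(H)\ge \mathsf D(G_{\mathcal P})$ and the absolutely-irreducible count from the same $V\mid U_1\cdot\ldots\cdot U_l$ and $u_i=p_iq_i$ constructions (the paper runs the $\omega$-argument inside $\mathcal B(G_{\mathcal P})$, you lift it to $H$ --- immaterial). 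The one soft spot is (1b): the paper invokes \cite[Theorem 4.2]{Ge-Gr-Sc-Sc10}, whereas you assert that ``a standard numerical construction'' on two positive and two negative multiples of $e$ produces arbitrarily large gaps in sets of lengths; that claim is true but it is genuinely the content of the cited theorem and is not carried out in your sketch, so as written this item is asserted rather than proved. Note also that the explicit upper bound on $\mathsf c_{\mon}(H)$ in (2c) is taken from \cite[Theorem 3.4]{Ge-Yu13a}, not from \cite{Ge-Gr-Sc-Sc10}.
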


\begin{proof}
Throughout the proof we implicitly assume the results of Lemma \ref{2.2} and of Lemma \ref{4.4}. In particular,  we have $\rho (H) \le \omega (H)$ and $\mathsf c (H) \le \omega (H) \le \mathsf D (G_{\mathcal P})$.

\smallskip
For (1), suppose that $G_{\mathcal P}$ is infinite. We first prove (a). Theorem 3.4.2 in \cite{Ge-HK06a} implies that $\mathcal A (G_0)$ and $\mathsf D (G_0)$ are infinite. Thus, for every $k \in \mathbb N$, there is $U_k \in \mathcal A (G_0)$ with $|U_k| \ge k$ and hence $\mathsf L \big( U_k (-U_k) \big) \supset \{2, |U_k| \}$. This implies that $\mathcal U_2 (G_0)$ is infinite and thus $\mathcal U_k (G_0)$ is infinite for all $k \ge 2$. Therefore  $\rho_k (H) = \rho_k (G_{\mathcal P}) = \infty$ for all $k \ge 2$ and, since $\rho (H) \le \omega (H) \le \mathsf t (H)$, each of these invariants is infinite.

(b) follows from \cite[Theorem 4.2]{Ge-Gr-Sc-Sc10}.

(c) is a realization result is due to Kainrath. See \cite{Ka99a} or \cite[Theorem 7.4.1]{Ge-HK06a}.

\smallskip
Now, in order to prove  (2), we suppose that $G_{\mathcal P}$ is finite and that $H$ is not factorial. Then $\mathsf D (G_{\mathcal P})>1$ and $2 \le \mathsf c (H) \le \omega (H) \le \mathsf t (H)$. By Proposition \ref{7.1}, $\mathcal B (G_{\mathcal P})$ is finitely generated and $\mathsf D (G_{\mathcal P}) < \infty$. The respective upper bounds given in (c) for $\mathsf c_{\mon} (H)$ and $\mathsf t (H)$ can be found in \cite[Theorem 3.4]{Ge-Yu13a} and \cite[Theorem 3.4.10]{Ge-HK06a}.

We now consider (a). Since $2+\sup \Delta (H) \le \mathsf c (H) < \infty$, (c) implies that $\Delta (H)$ is finite. Since $\mathcal B (G_{\mathcal P})$ is finitely generated, the assertion on the structure of sets of lengths follows from \cite[Theorem 4.4.11]{Ge-HK06a}.

Since $\mathsf t (H) < \infty$ and $\Delta (H)$ is finite,   \cite[Theorems 3.5 and 4.2]{Ga-Ge09b} imply the assertion in (b) on the structure of the unions of sets of lengths.

 In order to prove (d), we suppose  that $G_{\mathcal P} = -G_{\mathcal P}$. The statements about $\rho_{2k} (H)$, $\rho (H)$, and $\lambda_{k \mathsf D (G_{\mathcal P})+j}(H)$ follow from Lemma \ref{4.3}, and it remains to show that $\omega (H) = \mathsf D (G_{\mathcal P})$. We have $\omega (H) \le \mathsf D (G_{\mathcal P}) < \infty$. If $\mathsf D (G_{\mathcal P})=2$, then $\omega (H) = \mathsf D (G_{\mathcal P})$. Suppose that $\mathsf D (G_{\mathcal P}) \ge 3$. If  $V = g_1 \cdot \ldots \cdot g_l \in \mathcal A (G_{\mathcal P})$ with $|V| = l = \mathsf D (G_{\mathcal P})$ and $U_i = (-g_i)g_i$ for all $i \in [1,l]$, then $V \t U_1 \cdot \ldots \cdot U_l$ but yet $V$ divides no proper subproduct of $U_1 \cdot \ldots \cdot U_l$. Thus $\mathsf D (G_{\mathcal P}) \le \omega (G_{\mathcal P}) \le \omega (H)$.

Let $t$ denote the maximal number of absolutely irreducible atoms with the required property. Since $\rho (H) = \mathsf D (G_{\mathcal P})/2$, it follows that $t \le \mathsf D (G_{\mathcal P})$.  Let $V= g_1 \cdot \ldots \cdot g_l \in \mathcal A (G_{\mathcal P})$ with $|V| = l = \mathsf D (G_{\mathcal P})$. For $i \in [1, l]$ choose an element $p_i \in {\mathcal P} \cap g_i$ and an element $q_i \in {\mathcal P} \cap (-g_i)$. Since $G$ is torsion-free, the element $u_i = p_iq_i \in H$ is absolutely irreducible for each $i \in [1,l]$ and, by construction, we have $2 \in \mathsf L (u_1 \cdot \ldots \cdot u_l)$.

The statement in (e) follows immediately from (c) and (d).
\end{proof}

\smallskip
Let all notation be as in Proposition \ref{7.2}. We note that if $G_{\mathcal P}$ is infinite but without a subset $G_0$ as in (1a), then none of the conclusions of (1a) need hold. A careful analysis of the case where $G$ is an infinite cyclic groups is handled in \cite{Ge-Gr-Sc-Sc10}. We also note that the description of the structure of sets of lengths given in  (2a) is best possible (see \cite{Sc09a}).

\smallskip
By Lemma \ref{4.4}, many arithmetical phenomena of a Krull monoid $H$ are determined by the tuple $(G, G_{\mathcal P})$. We now provide a first result indicating that conversely arithmetical phenomena give us back information on the class group. Indeed, our next corollary  characterizes arithmetically whether the class group of a Krull monoid is torsion-free or not. To do so we must study the arithmetical behavior of elements similar to absolutely irreducible elements. Note that such a result cannot be accomplished via sets of lengths alone (see Propositions \ref{7.12} and \ref{7.13} and (1c) of Proposition \ref{7.2}; in fact, there is an open conjecture  that every abelian group is the class group of a half-factorial Krull monoid \cite{Ge-Go03}).

\medskip
\begin{proposition} \label{7.3}
Let $H$ be a Krull monoid with class group $G$. Then $G$ has an element of infinite order if and only if there exists an irreducible element $u \in H$ having the following two arithmetical properties.
\begin{enumerate}[(a)]
\item Whenever there are $v \in H \setminus H^{\times}$ and $m \in \N$ with $v \t u^m$, then $u \t v^n$ for some $n \in \N$.

\item There exist $l \ge 2$ and $a_1, \ldots, a_l \in H$ such that $u \t a_1 \cdot \ldots \cdot a_l$ but yet $u \nmid a_{\nu}^{-1} (a_1 \cdot \ldots \cdot a_l)^N$ for each $\nu \in [1, l]$ and for every $N \in \mathbb N$.
\end{enumerate}
\end{proposition}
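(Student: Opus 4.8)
The plan is to fix a divisor theory $\varphi \colon H \hookrightarrow \mathcal F (\mathcal P)$ (after replacing $H$ by $H_{\red}$, which changes neither the class group nor, replacing $u$ by $uH^{\times}$, the validity of (a) and (b)), to identify $H$ with $\varphi (H) \subseteq \mathcal F (\mathcal P)$, and to translate everything into support conditions. Since $\varphi$ is a divisor homomorphism, for non-units $v,u \in H$ one has $v \mid u^m$ for some $m \in \N$ if and only if $\supp (v) \subseteq \supp (u)$, and $u \mid v^n$ for some $n \in \N$ if and only if $\supp (u) \subseteq \supp (v)$; likewise, for $w \in H$, $u \nmid w^N$ for all $N \in \N$ if and only if $\supp (u) \not\subseteq \supp (w)$. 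Hence (a) says precisely that $\supp (u)$ is minimal, with respect to inclusion, in the set $\mathcal S = \{\,\supp (w) \colon w \in H \setminus H^{\times}\,\}$, while (b) says there are $l \ge 2$ and $a_1, \dots, a_l \in H$ with $u \mid a_1 \cdots a_l$ and $\supp (u) \not\subseteq \bigcup_{i \ne \nu} \supp (a_i)$ for each $\nu \in [1,l]$.

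For the implication ``$\Leftarrow$'' I would argue by contraposition. Assume $G$ is a torsion group, so every prime class $[p]$ has finite order. If $u$ satisfies (a), then for $p_1 \in \supp (u)$ the element $p_1^{\ord ([p_1])}$ is a non-unit of $H$ of support $\{p_1\} \subseteq \supp (u)$, so (a) forces $\supp (u) \subseteq \{p_1\}$, i.e. $u = p_1^{k_1}$. But then (b) fails: if $u \mid a_1 \cdots a_l$ with $l \ge 2$, then $p_1 \in \supp (a_j)$ for some $j$, and choosing $\nu \ne j$ gives $p_1 \in \bigcup_{i \ne \nu} \supp (a_i)$, whence $u \mid \bigl(\prod_{i \ne \nu} a_i\bigr)^N$ for all large $N$. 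So no $u$ satisfies both (a) and (b), which is the contrapositive of ``$\Leftarrow$''.

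For ``$\Rightarrow$'' suppose $G$ has an element of infinite order. Since $\langle G_{\mathcal P}\rangle = G$ is not torsion, some $p_1 \in \mathcal P$ has $[p_1]$ of infinite order; call $p \in \mathcal P$ \emph{torsion} if $[p]$ has finite order. The crucial step is to show that $\mathcal S$ contains some $S$ with $|S| \ge 2$. If not, every element of $\mathcal S$ is a singleton $\{q\}$; since $q^{\ord([q])} \in H$ such a $q$ is torsion, so every non-unit of $H$ has a torsion prime in its support, and therefore the non-torsion prime classes admit no nontrivial relation $\sum c_p[p]=0$ with $c_p \in \N_0$ (such a relation would give a non-unit of $H$ supported only on non-torsion primes). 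Then for any $w \in H$, writing $0 = [w] = \sum_p \mathsf v_p(w)[p]$ and separating torsion from non-torsion primes, $\sum_{p \text{ non-torsion}} \mathsf v_p(w)[p]$ equals a torsion element of $G$, hence is killed by some positive integer, which by the previous sentence forces $\mathsf v_p(w)=0$ for all non-torsion $p$. Thus $H \subseteq \mathcal F(\mathcal P \setminus \{p_1\})$, contradicting the defining property of a divisor theory, which yields a nonempty finite $X \subseteq H$ with $\gcd(\varphi(X)) = p_1$ and hence some $h \in X$ with $p_1 \in \supp(h)$. This proves the claim; now pick $S \in \mathcal S$ minimal with $|S| \ge 2$ and take an atom $u$ of $H$ with $\supp(u) = S$ (any atom dividing a non-unit of support $S$ has support $S$, by minimality). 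Then (a) holds.

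It remains to verify (b) for $u = \prod_{p \in S} p^{k_p}$, $S = \{p_1,\dots,p_m\}$, $m \ge 2$. Using the gcd property of the divisor theory I would produce $a_1 \in H$ with $p_1 \mid a_1$ and $p_2 \nmid a_1$, and for each $i \in [2,m]$ an element $b_i \in H$ with $p_i \mid b_i$ and $p_1 \nmid b_i$; set $a_2 = (b_2\cdots b_m)^K$ and replace $a_1$ by $a_1^{k_1}$, with $K$ large. Then $p_1,\dots,p_m$ all lie in $\supp(a_1)\cup\supp(a_2)$ with sufficiently large multiplicities, so $u \mid a_1 a_2$; and $p_2 \notin \supp(a_1)$ gives $u \nmid a_1^N$ while $p_1 \notin \supp(a_2)$ gives $u \nmid a_2^N$ for all $N$, so (b) holds with $l = 2$. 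The step I expect to be the main obstacle is the crucial claim in the ``$\Rightarrow$'' direction — that a non-torsion class group of a divisor theory forces a support of size at least two among supports of non-units; the rest is routine support bookkeeping together with the gcd property of divisor theories.
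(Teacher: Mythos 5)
Your framework (translating (a) and (b) into support conditions inside a divisor theory) is exactly the paper's, your torsion direction is correct, and your verification of (b) via the gcd property matches the paper's (which, like you, reads condition (b) as $u \nmid \bigl(a_{\nu}^{-1}(a_1\cdots a_l)\bigr)^N$; the literal reading $a_{\nu}^{-1}(a_1\cdots a_l)^N$ would make (b) vacuous for $N\ge 2$). The gap is in the passage from your ``crucial claim'' to the choice of $S$. Property (a) is equivalent to $\supp(u)$ being minimal in $\mathcal S=\{\supp(w)\colon w\in H\setminus H^{\times}\}$ with respect to inclusion \emph{among all supports, singletons included}. Knowing that $\mathcal S$ contains some set of cardinality $\ge 2$ only lets you pick $S$ minimal among the members of $\mathcal S$ of cardinality $\ge 2$, and such an $S$ may properly contain a singleton $\{q\}\in\mathcal S$ --- precisely when $[q]$ has finite order, since then $q^{\ord([q])}\in H$. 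In that case $v=q^{\ord([q])}$ divides $u^m$ but $u\nmid v^n$, so (a) fails for any $u$ with $\supp(u)=S$; moreover no atom need have support $S$ (an atom dividing a non-unit of support $S$ can have support $\{q\}$), so your parenthetical justification also breaks. This really happens: with class group $\Z\oplus C_2$ and two distinct prime divisors $q_1,q_2$ in the class $(0,1)$, the set $\{q_1,q_2\}=\supp(q_1q_2)$ lies in $\mathcal S$ and is trivially minimal among supports of cardinality $\ge 2$, yet $\{q_1\},\{q_2\}\in\mathcal S$ because $q_i^2\in H$.

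The repair is to control the finite-order primes \emph{before} minimizing, which is what the paper does. Pick $p\in\mathcal P$ with $[p]$ of infinite order (it exists since $\langle G_{\mathcal P}\rangle=G$ is not a torsion group) and an atom $u'$ with $p\mid u'$; write $u'=p_1\cdots p_n\,q_1\cdots q_r$ with the $[p_i]$ of infinite order and the $[q_j]$ of finite order dividing $N$. Then $(q_1\cdots q_r)^N\in H$, hence $(p_1\cdots p_n)^N\in H$ is a non-unit supported entirely on primes with infinite-order classes. Now take an atom $u$ dividing a power of $(p_1\cdots p_n)^N$ with minimal support: since every atom whose support is contained in $\{p_1,\ldots,p_n\}$ also divides such a power, this support is minimal in all of $\mathcal S$, and it has cardinality $\ge 2$ because no single infinite-order prime power lies in $H$. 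Your contradiction argument for the crucial claim is correct but proves strictly less than what is needed; replacing it by this torsion-stripping step closes the gap, after which your construction for (b) goes through unchanged.
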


\begin{proof}
We may assume that $H$ is reduced. Consider a divisor theory $H \hookrightarrow \mathcal F (\mathcal P)$ and denote by $G_{\mathcal P} \subset G$ the set of classes containing prime divisors.

First suppose that $G$ is a torsion group and let $u \in \mathcal A (H)$ have Property (a). Then $u = p_1^{k_1} \cdot \ldots \cdot p_m^{k_m}$ for some $m, k_1, \ldots, k_m \in \N$ and pairwise distinct elements $p_1, \ldots, p_m \in \mathcal P$. Then (a) implies that $k=1$ and hence $u$ is absolutely irreducible. Thus Property (b) cannot hold for any $l \ge 2$.

Conversely, suppose that $G$ is not a torsion group. Since $[G_{\mathcal P}] = G$ there exists a $p \in \mathcal P$ such that $[p] \in G$ has infinite order, and there is an element $u' \in \mathcal A (H)$ with $p \t u'$.
Suppose that $u' = p_1 \cdot \ldots \cdot p_n \cdot q_1 \cdot \ldots \cdot q_r$, where $p=p_1, p_2, \ldots, p_n, q_1, \ldots, q_r \in \mathcal P$, $[p_1], \ldots, [p_n]$ have infinite order, and $[q_1], \ldots, [q_r]$ have finite order each of which divides some integer $N$. Then $(q_1 \cdot \ldots \cdot q_r)^N \in H$, whence $(p_1 \cdot \ldots \cdot p_n)^N \in H$. After a possible reordering there is an
 atom $u = p_1^{k_1} \cdot \ldots \cdot p_m^{k_m} \in \mathcal A (H)$ dividing a power of $(p_1 \cdot \ldots \cdot p_n)^N$
such that there is no atom $v \in \mathcal A (H)$ with $\supp_{\mathcal P} (v) \subsetneq \{p_1, \ldots, p_m\}$. Thus $u$ satisfies Property (a). Since $H \hookrightarrow \mathcal F (\mathcal P)$ is a divisor theory, there exist $b_1, \ldots, b_s \in H$ such that $p_1^{k_1} \cdot \ldots \cdot p_{m-1}^{k_{m-1}} = \gcd (b_1, \ldots, b_s)$. Hence there is an $i \in [1,s]$, say $i=1$, such that $p_m \nmid b_1$. Similarly, there are $c_1, \ldots, c_t \in H$ such that $p_m^{k_m} = \gcd (c_1, \ldots, c_t)$. Without loss of generality, there exists  $i \in [1, m-1]$ such that $p_i \nmid c_1$. Therefore $u \mid b_1c_1$, but yet $u \nmid b_1^N$ and $u \nmid c_1^N$ for any $N \in \mathbb N$, and so Property (b) is satisfied.
\end{proof}

Propositions \ref{7.1}, \ref{7.2}, and \ref{7.3} provide abstract finiteness and non-finiteness results. To obtain more precise information on the arithmetical invariants, we require specific information on $G_{\mathcal P}$. In the next subsection we will use such specific information  to give more concrete results.

\ \\

\subsection{Specific  sets $G_{\mathcal P}$ of classes containing prime divisors  and  arithmetical characterizations} \label{6b}

We now provide an in-depth study of the arithmetic of three classes of  Krull monoids studied in Sections \ref{5} and \ref{6}.
Theorem \ref{7.4} describes the arithmetic of the monoids discussed in
 Examples \ref{DivisorTheory2}, \ref{E:isomorphicG_0},   \ref{FRTmonoid_generalization} and in Theorem \ref{DivisorTheory2}.
Its arithmetic is  simple enough that we can  more or less give a complete description.

\medskip
\begin{theorem} \label{7.4}
Let $H$ be a Krull monoid with class group $G$ and suppose that
\[
G_{\mathcal P} = \{e_0,   \ldots, e_r, -e_0, \ldots, -e_r\} \subset G
\]
is the set of classes containing prime divisors, where $r, \alpha \in \N$ with $r+\alpha > 2$ and $(e_1, \ldots, e_r)$ is an independent family of elements each having infinite order such that $e_1+\cdots+e_r = \alpha e_0$. Then:
\begin{enumerate}[(1)]
\item $\mathcal A (G_{\mathcal P}) = \{V, -V, U_{\nu} \, \colon \nu \in [0,r] \}$, where $V = (-e_0)^{\alpha} e_1 \cdot \ldots \cdot e_r$ and $U_{\nu} = (-e_{\nu})e_{\nu}$ for all $\nu \in [0,r]$. In particular, $\mathsf D (G_{\mathcal P}) = r+\alpha$.

\smallskip
\item Suppose that
      \[
      S = \prod_{i=0}^r e_i^{k_i} (-e_i)^{l_i} \in \mathcal F (G_{\mathcal P}) \,,
      \]
      where $k_0,l_0, \ldots, k_r,l_r \in \mathbb N_0$. Then $S \in \mathcal B (G_{\mathcal P})$ if and only if $l_i = \alpha^{-1}(k_0-l_0)+k_i$ for all $i \in [1,r]$. If $S \in \mathcal B (G_{\mathcal P})$ with $k_0 \ge l_0$ and $k^* = \min \{k_1, \ldots, k_r \}$, then
      \[
      \mathsf Z (S) = \Big\{ V^{\nu} (-V)^{\alpha^{-1}(k_0-l_0)+\nu} U_0^{l_0- \alpha \nu} \prod_{i=1}^r U_i^{k_i-\nu} \, \colon \nu \in [0, \min \{\alpha^{-1} l_0, k^*\}] \, \Big\}
      \]
      and
      \[
      \mathsf L (S) = \{\alpha^{-1}( k_0-l_0) + l_0 + k_1 + \cdots + k_r -(r+\alpha-2)\nu \, \colon \nu \in [0, \min \{\alpha^{-1}l_0, k^*\}] \, \} \,.
      \]

\smallskip
\item The system of sets of lengths of $H$ can be described as follows.

\smallskip
\begin{enumerate}[(a)]
\item $\Delta (H) = \{r + \alpha - 2 \}$.
\smallskip
\item $\rho (H) = \mathsf D (G_{\mathcal P})/2$.
\smallskip
\item For each $k \in \mathbb N$, the set $\mathcal U_k (H)$ is an  arithmetical progression with difference $r+\alpha-2$.

\item For each $k \in \N$ and each $j \in [0,1]$, $\rho_{2k+j}(H) = k \mathsf D (G_{\mathcal P}) + j$.

\smallskip
\item For each $l \in \mathbb N_0$, $\lambda_{l \mathsf D (G_{\mathcal P}) +j}(H) =  2l+j$ whenever $j \in [0, \mathsf D (G_{\mathcal P}) -1]$ and $l \mathsf D (G_{\mathcal P}) +j \ge 1$.
\smallskip
\item Finally,
\[
\mathcal L (H) = \Big\{ m + \{2k^* + (r+\alpha-2)\lambda \, \colon \lambda \in [0, k^*] \} \, \colon m, k^* \in \mathbb N_0 \Big\} \,.
\]
\end{enumerate}
\smallskip
\item $\mathsf c (H) = \mathsf c_{\mon} (H)  =  \omega (H) = \mathsf t (H) = \mathsf D (G_{\mathcal P}) = r+\alpha$.
\end{enumerate}
\end{theorem}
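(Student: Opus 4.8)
The plan is to prove all five quantities equal $r+\alpha$ by squeezing them between $\omega(H)$ and $\mathsf D(G_{\mathcal P})$, reading the remaining pieces off parts~(1)--(3) and the transfer homomorphism $\boldsymbol\beta\colon H\to\mathcal B(G_{\mathcal P})$ of Lemma~\ref{4.4}. Since $G_{\mathcal P}=-G_{\mathcal P}$ is finite and $H$ is not factorial (its class group $G=\langle G_{\mathcal P}\rangle$ is nontrivial, equivalently $\mathsf D(G_{\mathcal P})=r+\alpha>1$), Proposition~\ref{7.2}(2)(d) gives $\omega(H)=\mathsf D(G_{\mathcal P})$, which equals $r+\alpha$ by part~(1). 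By Lemma~\ref{2.2}(4) we have $\mathsf c(H)\le\omega(H)\le\mathsf t(H)$, so $\mathsf c(H)\le r+\alpha\le\mathsf t(H)$; and since $\Delta(H)=\{r+\alpha-2\}\neq\emptyset$ by part~(3)(a), the monoid $H$ is not half-factorial, so $2+\sup\Delta(H)\le\mathsf c(H)$ forces $\mathsf c(H)=r+\alpha$. It therefore remains to establish $\mathsf c_{\mon}(H)=r+\alpha$ and $\mathsf t(H)\le r+\alpha$.

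For the monotone catenary degree I would invoke Lemma~\ref{4.4}(3) to reduce to $\mathsf c_{\mon}(H)=\mathsf c_{\mon}(G_{\mathcal P})=\sup\{\mathsf c_{\eq}(G_{\mathcal P}),\mathsf c_{\adj}(G_{\mathcal P})\}$ and then compute both invariants from the explicit data of part~(2) (up to the symmetry $V\leftrightarrow-V$, part~(2) describes every $S\in\mathcal B(G_{\mathcal P})$). Since each such $S$ has at most one factorization of any given length, $\mathsf c_{\eq}(G_{\mathcal P})=0$. For $\mathsf c_{\adj}$, the consecutive factorizations $z_\nu,z_{\nu+1}$ from part~(2) satisfy $\gcd(z_\nu,z_{\nu+1})^{-1}z_\nu=U_0^{\alpha}U_1\cdots U_r$ and $\gcd(z_\nu,z_{\nu+1})^{-1}z_{\nu+1}=V(-V)$, so $\mathsf d(z_\nu,z_{\nu+1})=\max\{r+\alpha,2\}=r+\alpha$; as the lengths in $\mathsf L(S)$ form an arithmetical progression of difference $r+\alpha-2$, adjacent lengths correspond to consecutive $\nu$, whence $\mathsf c_{\adj}(G_{\mathcal P})=r+\alpha$ (attained e.g. at $S=V(-V)$). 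Thus $\mathsf c_{\mon}(H)=r+\alpha$.

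The delicate point, which I expect to be the main obstacle precisely because it does not follow by transfer (Lemma~\ref{4.4}(3) omits $\mathsf t$), is the bound $\mathsf t(H)\le r+\alpha$, and I would argue locally. Fix an atom $u\in H$, so $\boldsymbol\beta(u)\in\{V,-V,U_0,\dots,U_r\}$; given a multiple $a$ of $u$ and a factorization $a=v_1\cdots v_n$ not containing $u$, choose $\Omega\subseteq[1,n]$ with $u\mid b:=\prod_{j\in\Omega}v_j$ and $|\Omega|\le\omega(H,u)$, noting $\omega(H,u)=2$ when $|u|=2$ (take one $v_j$ containing each prime of $u$) and $\omega(H,u)\le\omega(H)=r+\alpha$ in general. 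Then $\boldsymbol\beta(b)$ is a product of at most $|\Omega|$ of the atoms $V,-V,U_\nu$ and is a multiple of $\boldsymbol\beta(u)$; the key claim is that $\boldsymbol\beta(b)\boldsymbol\beta(u)^{-1}\in\mathcal B(G_{\mathcal P})$ then admits a factorization of length at most $r+\alpha-1$. Indeed, either some factor of $\boldsymbol\beta(b)$ already equals $\boldsymbol\beta(u)$ and one simply deletes it, leaving at most $|\Omega|-1\le r+\alpha-1$ atoms, or $\boldsymbol\beta(b)$ contains a pair whose product is $V(-V)=U_0^{\alpha}U_1\cdots U_r$, from which the needed atom is extracted; a short case analysis over $\boldsymbol\beta(u)$ and over whether a $V$- or $-V$-atom occurs in $\boldsymbol\beta(b)$ shows the resulting length is always $\le r+\alpha-1$. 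Lifting this factorization to $H$ by property~(T\,2) yields a refactorization $b=u\,u_2\cdots u_\ell$ with $\ell\le r+\alpha$, while $m:=|\Omega|\le r+\alpha$, so $\mathsf t(H,u)\le r+\alpha$ for every atom $u$, hence $\mathsf t(H)\le r+\alpha$.

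Together with $\mathsf t(H)\ge\omega(H)=r+\alpha$ and the equalities of the first two paragraphs, this gives $\mathsf c(H)=\mathsf c_{\mon}(H)=\omega(H)=\mathsf t(H)=\mathsf D(G_{\mathcal P})=r+\alpha$. The only genuinely technical step is the local analysis of $\mathsf t(H,u)$: organizing the case distinction (by $\boldsymbol\beta(u)$ and by the shape of $\boldsymbol\beta(b)$) and verifying in each case that the excess $\boldsymbol\beta(b)\boldsymbol\beta(u)^{-1}$ can be rewritten with at most $r+\alpha-1$ atoms; everything else is immediate from the quoted results and the explicit formulas of parts~(1)--(3).
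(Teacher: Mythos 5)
Your argument for part~(4) is essentially sound and runs parallel to the paper's own: the paper likewise sandwiches everything between $2+\max\Delta(G_{\mathcal P})\le\mathsf c(H)$ and $\omega(H)\le\mathsf D(G_{\mathcal P})$, shows $\mathsf c_{\eq}(G_{\mathcal P})=0$ and $\mathsf c_{\adj}(G_{\mathcal P})\le r+\alpha$ (it uses Lemma~\ref{3.1} with $\text{\rm Min}(A_{r+\alpha-2})=\{U_0^{\alpha}U_1\cdots U_r\}$ where you compute the distance between consecutive $z_\nu$ directly --- both work), and finishes with exactly the local analysis you sketch for $\mathsf t(H)\le\mathsf D(G_{\mathcal P})$: reduce to showing that $u^{-1}b$ has a factorization of length at most $r+\alpha-1$, split on whether $\boldsymbol\beta(u)$ is some $U_\nu$ (where $\omega(H,u)\le 2$) or $\pm V$ (where either some $\boldsymbol\beta(v_j)=\boldsymbol\beta(u)$ and one deletes it, or all $\boldsymbol\beta(v_j)$ have length $2$ and $u^{-1}b$ is itself an atom). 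Your case analysis, once written out, closes in the same way the paper's does.

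The genuine gap is that the statement you were asked to prove is the whole of Theorem~\ref{7.4}, and your proposal proves only part~(4): you explicitly ``read the remaining pieces off parts~(1)--(3)'' and call everything else ``immediate from the quoted results and the explicit formulas of parts~(1)--(3).'' Those parts are not immediate; they are the bulk of the paper's proof and the foundation of your own argument. Part~(1) requires showing that any atom other than the $U_\nu$ has support $\{e_1,\ldots,e_r,-e_0\}$ (up to sign) with all multiplicities forced to be $1$ and $\alpha$, which uses the independence of $(e_1,\ldots,e_r)$ and the relation $\alpha e_0=e_1+\cdots+e_r$ in an essential way (e.g.\ via $[e_1,\ldots,e_r]\cap[-e_1,\ldots,-e_r]=\{0\}$). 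Part~(2) requires deriving the membership criterion $l_i=\alpha^{-1}(k_0-l_0)+k_i$ and the explicit parametrization of $\mathsf Z(S)$ by $\nu\in[0,\min\{\alpha^{-1}l_0,k^*\}]$, on which your computations of $\mathsf c_{\eq}$ and $\mathsf c_{\adj}$ depend. Part~(3) contains several claims that need separate arguments beyond Lemma~\ref{4.3} --- notably the upper bound $\rho_{2k+1}(H)\le k\mathsf D(G_{\mathcal P})+1$, the exact description of $\mathcal L(H)$ (which splits into the cases $l_0\ge\alpha k^*$ and $l_0<\alpha k^*$), and the lower bound $\lambda_{l\mathsf D(G_{\mathcal P})+j}(H)\ge 2l+j$ for $j\in[2,\mathsf D(G_{\mathcal P})-1]$, which the paper obtains from a congruence argument modulo $\mathsf D(G_{\mathcal P})$. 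Without proofs of (1)--(3) the proposal is a proof of one quarter of the theorem, conditional on the other three quarters.
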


\begin{proof}
By Lemma \ref{4.4}, all assertions on lengths of factorizations and on catenary degrees can
be proved working in $\mathcal B (G_{\mathcal P})$ instead of $H$.

\smallskip
Obviously, $\{ U_{\nu} \, \colon \nu \in [0,r] \} \subset \mathcal A (G_{\mathcal P})$ and to prove (1) it remains to verify that if $W \in \mathcal A (G_{\mathcal P})$ with $W\not=U_{\nu}$, then $W=V$. Note that $e_0 \in \langle e_1, \ldots, e_r \rangle$ but that $\langle e_0 \rangle \cap \langle e_i \, \colon i \in I \rangle = \{0\}$ for any proper subset $I \subsetneq [1, r]$. Thus, if
\[
W = \prod_{i \in I} e_i^{k_i} (-e_0)^{k_0} \in \mathcal A (G_{\mathcal P}) \setminus \{ U_{\nu} \, \colon \nu \in [0,r] \} \,,
\]
where $\emptyset \ne I \subset [1,r]$ and $k_0, k_i \in \mathbb N$ for all $i \in I$, then $k_0e_0 = \sum_{i \in I} k_i e_i \in \langle e_i \, \colon i \in I \rangle$ and hence $I = [1, r]$.
Assume to the contrary that there is $i \in [1, r]$ such that $k_i > 1$. Since $V \in \mathcal B (G_{\mathcal P})$ and $W \in \mathcal A (G_{\mathcal P})$, it follows that $k_0 \in [1, \alpha-1]$. Then
\[
0 \ne (k_1-1)e_1 + \cdots + (k_r-1)e_r = (k_0- \alpha)e_0 \in [e_1, \ldots, e_r] \cap [-e_1 , \ldots , -e_r] = \{0\} \,,
\]
a contradiction. Thus $k_1 = \cdots = k_r = 1$ and we obtain that $k_0 = \alpha $, whence $W = V \in \mathcal A (G_{\mathcal P})$.

\smallskip
To prove (2), suppose that $S \in \mathcal B (G_{\mathcal P})$ and  that $l_0 \ge k_0$. Then
\[
S' = (-e_0)^{l_0-k_0} \prod_{i=1}^r e_i^{k_i} (-e_i)^{l_i} \in \mathcal B (G_{\mathcal P}) \,,
\]
whence $l_0 - k_0 = \alpha m_0 \in \alpha \N_0$, $S'' = \prod_{i=1}^r e_i^{k_i-m_0} (-e_i)^{l_i} \in \mathcal B (G_{\mathcal P})$, and
\[
l_i = k_i - m_0 = k_i - \frac{l_0-k_0}{\alpha} = \frac{k_0 - l_0}{\alpha} + k_i \quad \text{for each} \quad i \in [1,r] \,.
\]
The same holds true if $l_0 \le k_0$. Conversely,  if $l_1, \ldots, l_r$ satisfy the asserted equations, then obviously $\sigma(S)=0$.

Suppose that $S \in \mathcal B (G_{\mathcal P})$ and that $k_0 \ge l_0$. Then
\[
\begin{aligned}
S & =  e_0^{k_0} (-e_0)^{l_0} \prod_{i=1}^r e_i^{k_i}(-e_i)^{k_i+ \alpha^{-1}(k_0-l_0)} \\
 & = \big( (-e_0)e_0\big)^{l_0} \big( {e_0}^{\alpha}(-e_1) \cdot \ldots \cdot (-e_r) \big)^{\alpha^{-1}(k_0-l_0)} \prod_{i=1}^r \big( (-e_i)e_i \big)^{k_i} \\
 & = \big( (-e_0)e_0\big)^{l_0- \alpha \nu} \big( {e_0}^{\alpha}(-e_1) \cdot \ldots \cdot (-e_r) \big)^{\alpha^{-1}(k_0-l_0) + \nu} \big( (-e_0)^{\alpha}e_1 \cdot \ldots \cdot e_r \big)^{\nu} \prod_{i=1}^r \big( (-e_i)e_i \big)^{k_i-\nu} \\
  & = U_0^{l_0 - \alpha \nu} (-V)^{\alpha^{-1}(k_0-l_0)+\nu} V^{\nu} \prod_{i=1}^r U_i^{k_i-\nu}
\end{aligned}
\]
for each $\nu \in [0, \min \{\alpha^{-1}l_0, k^*\}]$. Therefore $\mathsf Z (S)$ and hence $\mathsf L (S)$ have the given forms.

\smallskip
We now consider the statements of (3). The assertion on $\Delta (G_{\mathcal P})$ follows immediately from (2). Since $\Delta (G_{\mathcal P}) = \{r+\alpha-2\}$, all sets $\mathcal U_k (G_{\mathcal P})$ are arithmetical progressions with difference $r+\alpha-2$.  The assertion on each $\rho_{2k}(G_{\mathcal P})$ and each $\rho (G_{\mathcal P})$ follow from Proposition \ref{7.2}.

In order to determine
$\mathcal L (G_{\mathcal P})$, let $S \in \mathcal B (G_{\mathcal P})$ be given with all parameters as in (2). First suppose that $l_0 \ge \alpha k^*$. Then
\[
\begin{aligned}
\mathsf L (S) & = (\alpha^{-1}(k_0-l_0)+l_0+k_1+\cdots +k_r)-(r+\alpha)k^* + \{(r+\alpha)k^*-(r+\alpha-2)\nu \, \colon \nu \in [0, k^*]\} \\
 & = (\alpha^{-1}(k_0-l_0)+l_0+k_1+\cdots +k_r)-(r+\alpha)k^* + \{2k^*+(r+\alpha-2)\lambda \, \colon \lambda \in [0, k^*]\} \,.
\end{aligned}
\]
Thus $\mathsf L (S)$ has the form
\[
\mathsf L (S) = m + \{2k^*+(r+\alpha-2)\lambda \, \colon \lambda \in [0, k^*]\}
\]
 for some $m, k^* \in \mathbb N_0$. Conversely, for every choice of $m, k^* \in \mathbb N_0$, there is an $S \in \mathcal B (G_{\mathcal P})$ such that $\mathsf L (S)$ has the given form.

Now suppose that $l_0 \le \alpha k^*-1$ and set $m_0 = \lfloor \frac{l_0}{\alpha} \rfloor$. Then
\[
\begin{aligned}
\mathsf L (S) & = (\alpha^{-1}(k_0-l_0)+l_0+k_1+\cdots +k_r)-(r+\alpha)m_0 + \{(r+\alpha)m_0 -(r+\alpha-2)\nu \, \colon \nu \in [0, m_0]\} \\
 & = (\alpha^{-1}(k_0-l_0)+l_0+k_1+\cdots +k_r)-(r+\alpha)m_0 + \{2m_0 +(r+\alpha-2)\lambda \, \colon \lambda \in [0, m_0]\} \,,
\end{aligned}
\]
and hence $\mathsf L (S)$ has the form
\[
\mathsf L (S) = m + \{2m_0+(r+\alpha-2)\lambda \, \colon \lambda \in [0, m_0]\}
\]
for some $m \in \mathbb N$ and $m_0 \in \mathbb N_0$.

Next we  verify that, for every $k \in \mathbb N$, $\rho_{2k+1}(G_{\mathcal P}) \le k \mathsf D (G_{\mathcal P}) + 1$. By \cite[Proposition 1.4.2]{Ge-HK06a}, for all $k \in \N$,  $\rho_k (G_{\mathcal P}) = \sup \{ \sup L \, \colon L \in \mathcal L (G_{\mathcal P}), k = \min L \}$.
Thus we may choose  $k \in \mathbb N$ and $L \in \mathcal L (G_{\mathcal P})$ with $\min L = 2k+1$. Then, there exist $l, m, k^* \in \mathbb N_0$ such that
\[
L = m + \{2k^*+(r+\alpha-2)\lambda \, \colon \lambda \in [0, k^*]\}
\]
with $m=2l+1$ and $2k+1 = \min L = 2(k^*+l)+1$. Now
\[
\max L = m + (r+\alpha)k^* = 2l+1+(r+\alpha)(k-l) = (r+\alpha)k+1-(r+\alpha-2)l \le k \mathsf D (G_{\mathcal P})+1 \,,
\]
and thus $\rho_{2k+1}(G_{\mathcal P}) \le k \mathsf D (G_{\mathcal P})+1$.

It remains to verify the assertions on the $\lambda_{l \mathsf D (G_{\mathcal P})+j} (G_{\mathcal P})$. Let $l \in \mathbb N_0$ and $j \in [0, \mathsf D (G_{\mathcal P})-1]$.
Then Lemma  \ref{4.3} implies $\lambda_{l \mathsf D (G_p)+j} (G_{\mathcal P}) \le 2 l  + j$, and that equality holds if $j \in [0,1]$. It remains to verify that $\lambda_{l \mathsf D (G_p)+j} (G_{\mathcal P}) \ge 2 l  + j$ when $j \in [2,  \mathsf D (G_{\mathcal P})-1]$. Let  $L \in \mathcal L (G_{\mathcal P})$ with $ l \mathsf D (G_{\mathcal P})+j \in L$. Then there exist $m, k^* \in \mathbb N_0$ such that
\[
L = m + \{2k^*+(r+\alpha-2)\lambda \, \colon \lambda \in [0, k^*]\} = m + k^* \mathsf D (G_{\mathcal P}) - \{(\mathsf D (G_{\mathcal P})-2) \nu \, \colon \nu \in [0, k^*] \} \,.
\]
Suppose   $l \mathsf D (G_{\mathcal P})+j = \max L - \nu(\mathsf D (G_{\mathcal P})-2) = m + k^* \mathsf D (G_{\mathcal P}) - \nu(\mathsf D (G_{\mathcal P})-2)$ for some $\nu \in [0, k^*]$. Then $j \equiv m + 2\nu \mod \mathsf D (G_{\mathcal P})$ and hence $m+2\nu \ge j$. This implies
\[
(k^*-\nu)\mathsf D (G_{\mathcal P}) + j \le m + k^* \mathsf D (G_{\mathcal P}) - \nu ( \mathsf D (G_{\mathcal P}) -2) = l \mathsf D (G_{\mathcal P})+j \,,
\]
and hence $l \ge k^* - \nu$. Therefore we obtain
\[
\min L = l \mathsf D (G_{\mathcal P})+j - (k^*-\nu)(\mathsf D (G_{\mathcal P})-2)  =  (l-k^*+\nu) \mathsf D (G_{\mathcal P}) + j + 2(k^*-\nu) \ge 2l + j \,,
\]
and thus $\lambda_{l \mathsf D (G_p)+j} (G_{\mathcal P}) \ge 2 l  + j$.

\smallskip
Finally we consider the catenary degrees of $H$ and prove the statements given in (4). Using Proposition \ref{7.2} we infer
\[
\mathsf D (G_{\mathcal P}) = r+\alpha = 2+ \max \Delta (G_{\mathcal P}) \le \mathsf c (G_{\mathcal P}) = \mathsf c (H) \le \omega (H) \le \mathsf t (H) \,.
\]
Since $\mathsf c (G_{\mathcal P}) \le \mathsf c_{\mon} (G_{\mathcal P})$, it remains to show that $\mathsf c_{\mon} (G_{\mathcal P}) \le \mathsf D (G_{\mathcal P})$ and that $\mathsf t (H) \le \mathsf D (G_{\mathcal P})$.

We proceed in two steps. First we verify that  $\mathsf c_{\mon} (G_{\mathcal P}) = \max \{\mathsf c_{\eq} (G_{\mathcal P}), \mathsf c_{\adj} (G_{\mathcal P})\} \le r+\alpha $.
Since
\[
\mathcal A (\sim_{\mathcal B(G_{\mathcal P}), \eq}) = \{ (-V, -V), (V,V), (U_{\nu}, U_{\nu}), (-U_{\nu}, -U_{\nu}) \, \colon \nu \in [0, r]\} \,,
\]
it follows that $\mathsf c_{\eq} (G_{\mathcal P}) = 0$. If $A_{r+\alpha-2} = \{ x \in \mathsf Z (G_{\mathcal P}) \, \colon |x|-(r+\alpha - 2) \in \mathsf L ( \pi(x))\}$,
then $\text{\rm Min} (A_{r+\alpha-2}) = \{U_0^{\alpha} U_1 \cdot \ldots \cdot U_r\}$, and hence  $\mathsf c_{\adj}(G_{\mathcal P}) \le r+\alpha$ by Lemma \ref{3.1}.

In order to show that  $\mathsf t (H) \le \mathsf D (G_{\mathcal P})$,  we must verify the following assertion (see \cite[Theorem 3.6]{Ge-Ha08a}).

\begin{enumerate}[{\bf (A)}]
\item Let $j \in \mathbb N$ and $w, w_1, \ldots, w_j \in \mathcal A (H)$ such that $w$ divides the product $w_1 \cdot \ldots \cdot w_j$ but yet $w$ divides no proper subproduct of $w_1 \cdot \ldots \cdot w_j$. Then $\min \mathsf L (w^{-1} w_1 \cdot \ldots \cdot w_j) \le \mathsf D (G_{\mathcal P})-1$.
\end{enumerate}

\noindent {\it Proof of \,{\bf (A)}}.\, We use the transfer homomorphism $\boldsymbol \beta \colon H \to \mathcal B (G_{\mathcal P})$ as defined in Lemma \ref{4.4}. Set $W = \boldsymbol \beta (w)$ and $W_i = \boldsymbol \beta (w_i)$ for each $i \in [1,j]$. Then $j \le |W|$ and $W, W_1, \ldots, W_j \in \mathcal A (G_{\mathcal P})$. Clearly
\[
\min \mathsf L (w^{-1} w_1 \cdot \ldots \cdot w_j) \le \max \mathsf L (W^{-1} W_1 \cdot \ldots \cdot W_j) \le \frac{|W_1 \cdot \ldots \cdot W_j| - |W|}{2} \,.
\]
Thus, if $|W| = 2$, then
\[
\min \mathsf L (w^{-1} w_1 \cdot \ldots \cdot w_j) \le \frac{|W_1|+|W_2|-|W|}{2} \le \mathsf D (G_{\mathcal P})-1 \,.
\]
It remains to consider the case $W \in \{-V, V\}$, and by symmetry we may suppose that $W = V$.
If $|W_1|=\cdots=|W_j|=2$, then $j = |V|$ and $w^{-1}w_1 \cdot \ldots \cdot w_j \in \mathcal A (H)$. Suppose there is $\nu \in [1,j]$, say $\nu=1$, such that $\boldsymbol \beta (w_1) \in \{-V, V\}$. Since $w$ does not divide a subproduct of $w_1 \cdot \ldots \cdot w_j$ and $\gcd (V, -V) = 1$, it follows that $\boldsymbol \beta (w_1) = V$. Then $\mathsf L (w^{-1} w_1 \cdot \ldots \cdot w_j) = \mathsf L (W^{-1} W_1 \cdot \ldots \cdot W_j) = \mathsf L (W_2 \cdot \ldots \cdot W_j)$ and hence
\[
\min \mathsf L (w^{-1} w_1 \cdot \ldots \cdot w_j) \le j-1 \le |V|-1= \mathsf D (G_{\mathcal P})-1 \,. \qedhere
\]
\end{proof}

\smallskip
The next corollary again reveals  that certain arithmetical phenomena characterize certain algebraic properties of   the class group.

\medskip
\begin{corollary} \label{7.5}
 Let $H$ be a Krull monoid  as in Theorem \ref{7.4} with class group $G$ and set $G_{\mathcal P}$ of classes containing prime divisors. Then $r+1$ is the minimum of all $s \in \mathbb N$ having the following property{\rm \,:}
\begin{itemize}
\item[{\bf (P)}] There are  absolutely irreducible elements $w_1, \ldots, w_s \in \mathcal A (H)$ such that  $2, \mathsf D (G_{\mathcal P}) \in \mathsf L (w_1^{k_1} \cdot \ldots \cdot w_s^{k_s})$ for some $(k_1, \ldots, k_s) \in \mathbb N_0^s$.
\end{itemize}
\end{corollary}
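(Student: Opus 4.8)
The plan is to transfer the question to the monoid of zero-sum sequences $\mathcal B(G_{\mathcal P})$ and to settle it by a combinatorial computation there. Since $G_{\mathcal P}=-G_{\mathcal P}$, Proposition~\ref{7.1} shows that $\varphi\colon H\to\mathcal F(\mathcal P)$ is a divisor theory and that $\mathcal B(G_{\mathcal P})\hookrightarrow\mathcal F(G_{\mathcal P})$ is cofinal, so Lemma~\ref{4.4} supplies a transfer homomorphism $\boldsymbol\beta\colon H\to\mathcal B(G_{\mathcal P})$ which maps atoms to atoms and satisfies $\mathsf L_H(a)=\mathsf L_{\mathcal B(G_{\mathcal P})}(\boldsymbol\beta(a))$ for all $a\in H$. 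By Theorem~\ref{7.4}.(1) we have $\mathcal A(G_{\mathcal P})=\{V,-V,U_0,\ldots,U_r\}$ with $|V|=|{-V}|=\mathsf D(G_{\mathcal P})=r+\alpha$ and $|U_\nu|=2$; in particular every atom of $\mathcal B(G_{\mathcal P})$ has length at least $2$. Moreover $U_0^{\alpha}U_1\cdots U_r=V\cdot(-V)$, this is the only factorization of $V(-V)$ into atoms from $\{U_0,\ldots,U_r\}$, and $\mathsf L\bigl(V(-V)\bigr)=\{2,r+\alpha\}$. Here (P) is to be read with $k_1+\cdots+k_s=\mathsf D(G_{\mathcal P})$, i.e. $w_1^{k_1}\cdots w_s^{k_s}$ is a factorization of length $\mathsf D(G_{\mathcal P})$ into absolutely irreducible atoms which in addition admits a factorization of length $2$.

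For the inequality ``$\le r+1$'' I would write down an explicit witness. Pick primes $p_\nu\in\mathcal P$ with $[p_\nu]=e_\nu$ and $q_\nu\in\mathcal P$ with $[q_\nu]=-e_\nu$ for $\nu\in[1,r]$, and $p_0\in\mathcal P$ with $[p_0]=-e_0$ and $q_0\in\mathcal P$ with $[q_0]=e_0$; set $w_\nu=p_\nu q_\nu$ for $\nu\in[0,r]$. Each $w_\nu$ is an atom of $H$ with $\boldsymbol\beta(w_\nu)=U_\nu$, and it is absolutely irreducible: any element of $H$ dividing a power of $w_\nu$ is supported on $\{p_\nu,q_\nu\}$, and since $[p_\nu]$ has infinite order such an element is forced to be a power of $p_\nu q_\nu$, so the divisor-closed submonoid generated by $w_\nu$ is free. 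Now take $k_0=\alpha$ and $k_\nu=1$ for $\nu\in[1,r]$. Then $a=w_0^{\alpha}w_1\cdots w_r=(p_0^{\alpha}p_1\cdots p_r)(q_0^{\alpha}q_1\cdots q_r)$ is also the product of the two atoms $p_0^{\alpha}p_1\cdots p_r$ and $q_0^{\alpha}q_1\cdots q_r$ (equivalently $\boldsymbol\beta(a)=U_0^{\alpha}U_1\cdots U_r=V(-V)$, so $\mathsf L_H(a)=\{2,\mathsf D(G_{\mathcal P})\}$), while the displayed factorization $w_0^{\alpha}w_1\cdots w_r$ has length $\alpha+r=\mathsf D(G_{\mathcal P})$. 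Hence $s=r+1$ has property (P).

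For the inequality ``$\ge r+1$'', suppose $w_1,\ldots,w_s\in\mathcal A(H)$ are absolutely irreducible and $k_1,\ldots,k_s\in\N$ with $k_1+\cdots+k_s=\mathsf D(G_{\mathcal P})$ and $2\in\mathsf L(a)$, where $a=w_1^{k_1}\cdots w_s^{k_s}=bc$ with $b,c\in\mathcal A(H)$ (discarding any $w_i$ with exponent $0$). Applying $\boldsymbol\beta$ yields $\prod_i\boldsymbol\beta(w_i)^{k_i}=\boldsymbol\beta(b)\boldsymbol\beta(c)$ in $\mathcal B(G_{\mathcal P})$. Comparing lengths in $\mathcal F(G_{\mathcal P})$, the left-hand side has length $\sum_i k_i\,|\boldsymbol\beta(w_i)|\ge 2\sum_i k_i=2\mathsf D(G_{\mathcal P})$, while the right-hand side has length $|\boldsymbol\beta(b)|+|\boldsymbol\beta(c)|\le 2\mathsf D(G_{\mathcal P})$. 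Equality must hold everywhere, forcing $|\boldsymbol\beta(w_i)|=2$ for all $i$ (so $\boldsymbol\beta(w_i)\in\{U_0,\ldots,U_r\}$) and $|\boldsymbol\beta(b)|=|\boldsymbol\beta(c)|=\mathsf D(G_{\mathcal P})$ (so $\boldsymbol\beta(b),\boldsymbol\beta(c)\in\{V,-V\}$). A product of the atoms $U_\nu$ has $\mathsf v_g$ equal to $\mathsf v_{-g}$ for every $g\in G_{\mathcal P}$; hence so does $\boldsymbol\beta(b)\boldsymbol\beta(c)$, which excludes $V^{2}$ and $(-V)^{2}$. Therefore $\prod_i\boldsymbol\beta(w_i)^{k_i}=V(-V)=U_0^{\alpha}U_1\cdots U_r$, and since this is the unique factorization of $V(-V)$ into atoms from $\{U_0,\ldots,U_r\}$ we get $\sum\{k_i:\boldsymbol\beta(w_i)=U_j\}=1$ for $j\in[1,r]$ and $=\alpha$ for $j=0$. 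In particular each of $U_0,\ldots,U_r$ occurs as some $\boldsymbol\beta(w_i)$, so $\{\boldsymbol\beta(w_i):1\le i\le s\}=\{U_0,\ldots,U_r\}$ has $r+1$ elements, whence $s\ge r+1$.

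I expect the main obstacle to be the equality-forcing step in the lower bound together with the deduction $\prod_i\boldsymbol\beta(w_i)^{k_i}=V(-V)$; both rely decisively on the exact list $\mathcal A(G_{\mathcal P})=\{V,-V,U_0,\ldots,U_r\}$ and the identity $U_0^{\alpha}U_1\cdots U_r=V(-V)$ from Theorem~\ref{7.4}, so these should be in place before starting. I would also treat the degenerate ranges $r=1$ (which forces $\alpha\ge 2$) and $\alpha=1$ separately, since there several of the support and divisibility computations above either simplify or become borderline, and I would isolate as a small lemma the uniqueness of the factorization $U_0^{\alpha}U_1\cdots U_r$ of $V(-V)$, which is invoked twice.
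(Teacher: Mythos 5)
Your proof is correct and follows essentially the same route as the paper's: the same witness $w_0^{\alpha}w_1\cdots w_r$ with $\boldsymbol\beta(w_\nu)=U_\nu$ for the upper bound, and for the lower bound the same transfer to $\mathcal B(G_{\mathcal P})$ followed by the length count that forces every $\boldsymbol\beta(w_i)$ to have length two and the length-two factorization to equal $V(-V)=U_0^{\alpha}U_1\cdots U_r$, whence all of $U_0,\ldots,U_r$ occur among the $\boldsymbol\beta(w_i)$. Your explicit normalization $k_1+\cdots+k_s=\mathsf D(G_{\mathcal P})$ is precisely the reading the paper's argument uses implicitly (its step deducing $|W_1|=\cdots=|W_s|=2$ needs it, since otherwise two absolutely irreducible preimages of $V$ and $-V$ would already satisfy {\bf (P)} with $s=2$), so flagging it is a point in your favor rather than a deviation.
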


\begin{proof}
First we verify that $r+1$ satisfies property {\bf (P)}. For $i \in [0,s]$, let $p_i \in \mathcal P \cap e_i$ and $q_i \in \mathcal P \cap (-e_i)$ and set $w_i = p_iq_i$. Then $w_0, \ldots, w_s$ are  absolutely irreducible elements and, by Theorem \ref{7.4}, it follows that $2, \mathsf D (G_{\mathcal P}) \in \mathsf L (w_0^{\alpha} w_1 \cdot \ldots \cdot w_r)$.

Conversely, let $s \in \mathbb N$, and let $w_1, \ldots, w_s$ and $k_1, \ldots, k_s$ be as above. For $i \in [1,s]$, we set $W_i = \boldsymbol \beta (w_i)$. Since $\rho (G_{\mathcal P}) = \mathsf D (G_{\mathcal P})/2$ and $2, \mathsf D (G_{\mathcal P}) \in \mathsf L (W_1^{k_1} \cdot \ldots \cdot W_s^{k_s})$, it follows that $\sum_{i=1}^s k_i|W_i| = \mathsf D (G_{\mathcal P})$, $|W_1| = \cdots = |W_s| = 2$, $W_i = (-g_i)g_i$ for $i \in [1,s]$, and that $S = g_1^{k_1} \cdot \ldots \cdot g_s^{k_s} \in \mathcal A (G_{\mathcal P})$. Now Theorem \ref{7.4} implies  $S = (-e_0)^{\alpha}e_1 \cdot \ldots \cdot e_r$, whence $\{W_1, \ldots, W_s\} = \{(-e_0)e_0, \ldots, (-e_r)e_r\}$. Thus $|\{w_1, \ldots, w_s\}| \ge |\{W_1, \ldots, W_s\}| = r+1$ and so $r+1$ is minimal with property {\bf (P)}.
\end{proof}

\medskip
We now begin collecting information in order to study  the arithmetic of the Krull monoid presented in Example \ref{cube-and-its-negative}.
In spite of the simple geometric structure of $G_{\mathcal P}$ (the set consists of the  vertices of the unit cube and their negatives), the arithmetic of this Krull monoid is highly complex. We get only very limited information. Nevertheless, this will  be sufficient to give an arithmetical characterization.

\medskip
\begin{lemma} \label{7.6}
Let $G$ be an abelian group and let $(e_n)_{n \ge 1}$ be a family of independent elements each having infinite order. For $r \in \mathbb N$, set
\[
G_r^+ = \{ a_1 e_1 + \cdots + a_r e_r \, \colon a_1, \ldots, a_r \in [0,1]\}, \ G_r^- = - G_r^+, \quad \text{and} \quad G_r = G_r^+ \cup G_r^- \,.
\]
\begin{enumerate}[(1)]
\item Let $s \in  [2, r]$, $f_0 = e_1+ \cdots + e_s$, and $f_i = f_0-e_i$ for all $i \in [1,s]$. Then $(f_1, \ldots, f_s)$ is independent, $f_1+ \cdots + f_s = (s-1)f_0$, and $\Delta (\{f_0, \ldots, f_s, -f_0, \ldots, -f_s\}) = \{2s-3\}$.

\smallskip
\item Let $s \in  [3, r]$, $f_0 = e_1+ \cdots + e_s$, $f_i = f_0-e_i$ for each $i \in [1,s-1]$, and set $f_s' = -e_s$. Then $(f_1, \ldots, f_{s-1}, f_s')$ is independent, $f_1+ \cdots + f_{s-1}+f_s' = (s-2)f_0$, and \\ $\Delta (\{f_0, \ldots, f_{s-1}, f_s', -f_0, \ldots, -f_{s-1}, -f_s'\}) = \{2s-4\}$.

\smallskip
\item If $s \le [1, r-1]$, then $\mathsf D (G_r) \ge \mathsf D (G_s) + \mathsf D (G_{r-s}) - 1$.  In particular,   $\mathsf D (G_1) = 2$ and $\mathsf D (G_r) > \mathsf D (G_{r-1})$ for $r \ge 2$.
\end{enumerate}
\end{lemma}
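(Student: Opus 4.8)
The plan is to prove the three parts essentially independently, with parts (1) and (2) being direct computations using Theorem \ref{7.4}, and part (3) being a concatenation (gluing) argument.

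For part (1), first I would verify that $(f_1,\dots,f_s)$ is independent: since $(e_1,\dots,e_s)$ is independent and the change of basis $f_i = (e_1+\cdots+e_s) - e_i$ is given by the matrix $J_s - I_s$ (all-ones minus identity), which has determinant $(-1)^{s-1}(s-1) \ne 0$ for $s\ge 2$, the family $(f_1,\dots,f_s)$ is again independent and spans the same subgroup. A direct sum gives $f_1+\cdots+f_s = s f_0 - (e_1+\cdots+e_s) = s f_0 - f_0 = (s-1)f_0$. Thus the set $\{f_0,\dots,f_s,-f_0,\dots,-f_s\}$ is exactly of the form $G_{\mathcal P}$ in Theorem \ref{7.4} with $r = s$ and $\alpha = s-1$ (noting $r + \alpha = 2s - 1 > 2$ since $s \ge 2$). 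Theorem \ref{7.4}.(3)(a) then yields $\Delta = \{r + \alpha - 2\} = \{2s - 3\}$. One should double-check that the hypothesis "$(e_1,\dots,e_r)$ independent, each of infinite order" of Theorem \ref{7.4} is met here — it is, since $f_0$ already lies in $\langle f_1,\dots,f_s\rangle$ as $(s-1)f_0 = f_1+\cdots+f_s$, and all $f_i$ have infinite order.

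For part (2), the argument is parallel. I would check that $(f_1,\dots,f_{s-1},f_s')$ is independent: writing these in terms of $(e_1,\dots,e_s)$, the transition matrix is again triangular-ish with nonzero determinant for $s \ge 3$ (the relevant minor being that of $J_{s-1}-I_{s-1}$ on the first $s-1$ coordinates, together with the $-e_s$ row), so independence holds and the span is all of $\langle e_1,\dots,e_s\rangle$. Then $f_1 + \cdots + f_{s-1} + f_s' = (s-1)f_0 - (e_1+\cdots+e_{s-1}) - e_s = (s-1)f_0 - f_0 = (s-2)f_0$. Hence $\{f_0,\dots,f_{s-1},f_s',-f_0,\dots,-f_{s-1},-f_s'\}$ matches $G_{\mathcal P}$ of Theorem \ref{7.4} with $r = s-1$ and $\alpha = s-2$ (and $r+\alpha = 2s-3 > 2$ since $s \ge 3$), so $\Delta = \{2(s-1)+(s-2)-2\}$... wait, $r+\alpha-2 = (s-1)+(s-2)-2 = 2s-5$; but we want $2s-4$. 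I would therefore recheck the value of $\alpha$: since $f_1+\cdots+f_{s-1}+f_s' = (s-2)f_0$ and there are $r = s-1$ summands on the left with $\alpha = s-2$, we get $r + \alpha - 2 = (s-1) + (s-2) - 2 = 2s - 5$, which does not match. The likely resolution is that the correct reading gives $\mathsf D = r+\alpha = 2s-3$ and $\Delta = \{2s-5\}$, OR that the indexing in the statement intends $f_i = f_0 - e_i$ for $i \in [1,s-1]$ together with $f_s'$, giving $r = s$ effective generators — I would carefully re-derive $\alpha$ from $f_1 + \cdots + f_{s-1} + f_s'$ expressed via the $f$'s and trust Theorem \ref{7.4}.(3)(a) for the final $\Delta$-value, reconciling with the claimed $\{2s-4\}$; this bookkeeping is the one genuinely delicate point and I expect it to be the main obstacle.

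For part (3), I would argue by gluing zero-sum sequences. Let $U \in \mathcal A(G_s)$ with $|U| = \mathsf D(G_s)$ and $V \in \mathcal A(G_{r-s})$ with $|V| = \mathsf D(G_{r-s})$, where $G_s$ is built on $e_1,\dots,e_s$ and $G_{r-s}$ on $e_{s+1},\dots,e_r$, so these supports are disjoint inside $G_r$. Pick any atom in $U$ of the form coming from a nonzero vector $g \in G_s^+$ appearing in $U$ (if $U$ uses only one sign, $U$ itself is of bounded length; handle the structure so that both a "$+$" and a "$-$" element is available — e.g. an atom of maximal length in $\mathcal B(G_s)$ necessarily contains elements of both signs since $0\notin G_s$ and the sum is zero). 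The standard trick: if $g \cdot (\text{rest of } U) = U$ and $h \cdot (\text{rest of } V) = V$ with $g \in G_s$, $h \in G_{r-s}$ of the same "sign type," then replacing the pair $g, h$ appropriately — more precisely, using that $G_r^+ \supset G_s^+ + G_{r-s}^+$ so $g' := g + h \in G_r$ when both are in $G_r^+$ — one forms a single zero-sum sequence over $G_r$ of length $|U| + |V| - 1$, and one checks it is an atom by a minimality/support argument. This gives $\mathsf D(G_r) \ge \mathsf D(G_s) + \mathsf D(G_{r-s}) - 1$. Finally $\mathsf D(G_1) = 2$ is immediate ($G_1 = \{e_1, -e_1\}$, atoms are $(-e_1)e_1$ only, length $2$), and taking $s = 1$ in the inequality gives $\mathsf D(G_r) \ge \mathsf D(G_1) + \mathsf D(G_{r-1}) - 1 = \mathsf D(G_{r-1}) + 1 > \mathsf D(G_{r-1})$ for $r \ge 2$. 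The main technical care in (3) is ensuring the glued sequence is genuinely an atom and that the sign-matching can always be arranged; I would spell out that an atom of maximal length over $G_s$ (with $s \ge 1$) contains at least one element of $G_s^+{}^{\bullet}$ and at least one of $G_s^-{}^{\bullet}$, which is what makes the gluing legitimate.
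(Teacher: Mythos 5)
Your parts (1) and (3) follow the paper's proof. For (1) the paper likewise checks independence via the determinant of the matrix expressing $(f_1,\ldots,f_s)$ in terms of $(e_1,\ldots,e_s)$, records $f_1+\cdots+f_s=(s-1)f_0$, and invokes Theorem \ref{7.4} with $(r,\alpha)=(s,s-1)$. For (3) the paper uses exactly your gluing: after arranging $\langle G_r\rangle=\langle G_s\rangle\oplus\langle G_{r-s}\rangle$, it takes maximal-length atoms $U=a_1\cdot\ldots\cdot a_k\in\mathcal A(G_s)$ and $V=b_1\cdot\ldots\cdot b_l\in\mathcal A(G_{r-s})$ and forms $W=(a_1+b_1)\,a_2\cdot\ldots\cdot a_k\,b_2\cdot\ldots\cdot b_l$, asserting $W\in\mathcal A(G_r)$ without further comment; your additional care about choosing $a_1,b_1$ of matching sign (so that $a_1+b_1\in G_r$) and about verifying atomicity by projecting onto the two summands is a refinement of, not a departure from, the paper's argument.

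The one genuine gap is the unresolved bookkeeping in part (2), and it has a clean resolution that you circled but did not commit to. The independent family in (2) is $(f_1,\ldots,f_{s-1},f_s')$, which has $s$ members, not $s-1$: there are $s-1$ elements $f_i$ plus the extra element $f_s'=-e_s$. Matching with Theorem \ref{7.4} therefore requires taking the parameter $r$ of that theorem equal to $s$ and $\alpha=s-2$, since the relation $f_1+\cdots+f_{s-1}+f_s'=(s-2)f_0$ expresses the sum of all $s$ independent generators as $\alpha f_0$. This yields $\Delta=\{s+(s-2)-2\}=\{2s-4\}$ exactly as claimed, and the hypothesis $r+\alpha=2s-2>2$ is guaranteed by $s\ge 3$. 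Your alternative reading with $r=s-1$ simply miscounts the family, which is why you landed on $2s-5$. The paper's own proof of (2) obtains the sum relation by writing $f_s'=f_s-f_0$ and subtracting from the identity of part (1), and checks independence via $\det(B_s)=\pm\det(A_{s-1})\ne 0$; once the count is corrected, the rest of your argument for (2) goes through verbatim.
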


\begin{proof}
Since $(e_1, \ldots, e_s)$ is a basis, there is a matrix $A_s$ with $(f_1, \ldots, f_s) = (e_1, \ldots, e_s) A_s$. Since $\det (A_s) \ne 0$, it follows that $(f_1, \ldots, f_s)$ is independent. By definition, we have $f_1+ \cdots + f_s = (s-1)f_0$. The assertion on the set of distances then follows from Theorem \ref{7.4} and we have proved (1).

\smallskip
We now consider (2). Note that $f_s' = f_s-f_0$. Using (1) we infer that
\[
0 = (f_1 + \cdots + f_s) - (s-1)f_0 = (f_1+ \cdots + f_{s-1}) + (f_s-f_0) - (s-2)f_0
\]
and hence $f_1+ \cdots + f_{s-1}+f_s' = (s-2)f_0$. Since $(f_1, \ldots, f_{s-1}, - e_s) = (e_1, \ldots, e_s) B_s$ for some matrix $B_s$ with $\det (B_s) = (-1)^{2s} \det (A_{s-1}) \ne 0$, it follows that $(f_1, \ldots, f_{s-1}, f_s')$ is independent.
The assertion on the set of distances follows from Theorem \ref{7.4}.

\smallskip
It is clear that $\mathsf D(G_1)=2$ and that $\mathsf D(G_r)>\mathsf D(G_{r-1})$ whenever $r\geq 2$. To prove the remaining statements of (3), suppose that $s \in [1, r-1]$. After a change of notation, we may suppose that $G_{r-s} \subset \langle e_{s+1}, \ldots, e_r \rangle$ such that $\langle G_r \rangle = \langle G_s \rangle \oplus \langle G_{r-s} \rangle$. If $U = a_1 \cdot \ldots \cdot a_k \in \mathcal A (G_s)$ with $k = \mathsf D (G_s)$ and $V = b_1 \cdot \ldots \cdot b_l \in \mathcal A (G_{r-s})$ with $l = \mathsf D (G_{r-s})$, then
$W = (a_1+b_1) \cdot a_2 \cdot \ldots \cdot a_k  b_2 \cdot \ldots \cdot b_l \in \mathcal A (G_r)$, and hence $\mathsf D (G_r) \ge |W| = k+l-1 = \mathsf D (G_s) + \mathsf D (G_{r-s})-1$.
\end{proof}

In Theorem \ref{7.8} we restrict to class groups of rank $r \ge 3$ because when $r \le 2$ we are in the setting of Theorem \ref{7.4} where we have precise information about arithmetical invariants. For $r \in \mathbb N_0$, we denote by $\mathsf F_r$ the $r$th {\it Fibonacci number}. That is, $\mathsf F_0 = 0$, $\mathsf F_1 = 1$, and $\mathsf F_r = \mathsf F_{r-1}+\mathsf F_{r-2}$ for all $r \ge 2$.

\medskip
\begin{theorem} \label{7.8}
Let $H$ be a Krull monoid with free abelian class group $G$ of rank $r \ge 3$ and let $G_{\mathcal P} \subset G$ denote the set of classes containing prime divisors. Suppose that there is a basis $(e_1, \ldots, e_r)$ of $G$ such that $G_{\mathcal P}^{\bullet} = G_{\mathcal P}^+ \cup G_{\mathcal P}^-$, where
$G_{\mathcal P}^+ = \{ \epsilon_1 e_1 + \cdots + \epsilon_r e_r \, \colon \epsilon_1, \ldots, \epsilon_r \in [0,1]\}$ and  $G_{\mathcal P}^- = - G_{\mathcal P}^+$.

\begin{enumerate}[(1)]
\item $\mathsf F_{r+2} \le \mathsf D (G_{\mathcal P})$.

\smallskip
\item $\mathsf c (H) \le \omega (H) = \mathsf D (G_{\mathcal P})$, $\rho (H) = \mathsf D (G_{\mathcal P})/2$, and $\rho_{2k} (H) = k \mathsf D (G_{\mathcal P})$ for each $k \in \mathbb N$.

\smallskip
\item $[1, 2r-3]  \subset \Delta^* (H) \subset \Delta (H) \subset [1, \mathsf c (H)-2]$.
\end{enumerate}
\end{theorem}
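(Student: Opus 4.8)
The plan is to push every arithmetical assertion through the transfer homomorphism $\boldsymbol\beta\colon H\to\mathcal B(G_{\mathcal P})$ of Lemma \ref{4.4} and then feed the resulting zero-sum problems into Proposition \ref{7.2} and Lemma \ref{7.6}. Observe first that $G_{\mathcal P}^-=-G_{\mathcal P}^+$ forces $G_{\mathcal P}=-G_{\mathcal P}$, that $G_{\mathcal P}$ is finite (it lies inside the finite set of $\{0,1\}$- and $\{0,-1\}$-combinations of $e_1,\ldots,e_r$), and that, once (1) is known, $\mathsf D(G_{\mathcal P})\ge\mathsf F_{r+2}\ge\mathsf F_5=5>1$, so $H$ is neither factorial nor half-factorial and Proposition \ref{7.2}.2 is available. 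Part (2) is then \textbf{immediate}: Proposition \ref{7.2}.2(d) gives $\omega(H)=\mathsf D(G_{\mathcal P})$, $\rho(H)=\mathsf D(G_{\mathcal P})/2$ and $\rho_{2k}(H)=k\mathsf D(G_{\mathcal P})$ for all $k\in\mathbb N$, while $\mathsf c(H)\le\omega(H)$ is Lemma \ref{2.2}.4.

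For (3), the inclusion $\Delta^*(H)\subseteq\Delta(H)$ holds by definition, and $\Delta(H)\subseteq[1,\mathsf c(H)-2]$ because $H$ is not half-factorial, so that $\min\Delta(H)\ge 1$ and $2+\sup\Delta(H)\le\mathsf c(H)$ (Section \ref{3}). The substantive point is $[1,2r-3]\subseteq\Delta^*(H)$. By Lemma \ref{4.4}.3 we have $\Delta^*(H)=\Delta^*(G_{\mathcal P})$, and for every subset $G_0\subseteq G_{\mathcal P}$ the submonoid $\mathcal B(G_0)\subseteq\mathcal B(G_{\mathcal P})$ is divisor-closed, since a divisor (in $\mathcal B(G_{\mathcal P})$) of a zero-sum sequence supported on $G_0$ is again supported on $G_0$. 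Hence it suffices, for each $d\in[1,2r-3]$, to exhibit a subset $G_0\subseteq G_{\mathcal P}$ with $\Delta(\mathcal B(G_0))=\{d\}$, for then $d=\min\Delta(\mathcal B(G_0))\in\Delta^*(G_{\mathcal P})$. For an odd $d$ I would write $d=2s-3$ with $s\in[2,r]$ and take $G_0=\{f_0,\ldots,f_s,-f_0,\ldots,-f_s\}$ as in Lemma \ref{7.6}.1; for an even $d$, write $d=2s-4$ with $s\in[3,r]$ and take $G_0=\{f_0,\ldots,f_{s-1},f_s',-f_0,\ldots,-f_{s-1},-f_s'\}$ as in Lemma \ref{7.6}.2. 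In both cases the listed elements are $\{0,1\}$-combinations of $e_1,\ldots,e_s$ together with their negatives (and $f_s'=-e_s$), hence lie in $G_{\mathcal P}$; Lemma \ref{7.6}.1 and \ref{7.6}.2 identify the respective sets of distances as $\{d\}$; and as $s$ varies the numbers $2s-3$ and $2s-4$ together exhaust $[1,2r-3]$.

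This leaves (1), which I expect to be the \textbf{main obstacle}. I would argue by induction on $r$: the cases $r=1,2$ are trivial ($\mathsf D=2$ and $\mathsf D=3$), and $r=3$ already follows from Lemma \ref{7.6}.1 with $s=3$, whose Davenport constant equals $2\cdot3-1=5=\mathsf F_5$ by Theorem \ref{7.4}.1 (note that the rank-$m$ cube configuration sits inside $G_{\mathcal P}$ on the coordinates $e_1,\ldots,e_m$, so these bounds propagate upward for free). For the inductive step it is enough to establish a recursion of the form $\mathsf D(G_r)\ge\mathsf D(G_{r-1})+\mathsf D(G_{r-2})$, where $G_m$ denotes the rank-$m$ cube configuration as in Lemma \ref{7.6}, since this forces $\mathsf D(G_r)\ge\mathsf F_{r+1}+\mathsf F_r=\mathsf F_{r+2}$. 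To prove the recursion one glues an atom of length $\mathsf D(G_{r-1})$ supported on $e_1,\ldots,e_{r-1}$ to a suitably shifted atom of length $\mathsf D(G_{r-2})$, arranged so that the two pieces share a single bridging coordinate which is absorbed by the element along which they are glued --- in the spirit of the merging argument in the proof of Lemma \ref{7.6}.3, but set up so that no summand is lost. The hard part is \emph{verifying that the glued sequence is genuinely an atom}: the cube $G_{\mathcal P}$ supports a wealth of short zero-sum relations, so one must choose the bridging element and the signs of its coordinates so that any hypothetical proper zero-sum subsequence, after projection to the two coordinate blocks, is forced to consist of either all or none of the elements coming from each block, which then contradicts the value of the bridging coordinate. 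Carrying out this minimality check (and pinning down the exact recursion, e.g.\ refining Lemma \ref{7.6}.3) is where the real work lies; parts (2) and (3) are, by comparison, bookkeeping on top of Lemma \ref{4.4}, Proposition \ref{7.2}, and Lemma \ref{7.6}.
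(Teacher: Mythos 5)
Your treatment of (2) and (3) is correct and coincides with the paper's: (2) is read off from Proposition \ref{7.2}.2(d) (after noting $G_{\mathcal P}=-G_{\mathcal P}$, $G_{\mathcal P}$ finite, and $H$ not factorial), and (3) is obtained exactly as in the paper by feeding the configurations of Lemma \ref{7.6}.1 (odd distances $2s-3$, $s\in[2,r]$) and Lemma \ref{7.6}.2 (even distances $2s-4$, $s\in[3,r]$) into the definition of $\Delta^*$ via divisor-closed submonoids $\mathcal B(G_0)\subset\mathcal B(G_{\mathcal P})$. One small caution: the inference ``$\mathsf D(G_{\mathcal P})>1$, hence $H$ is not factorial'' is not valid in general (e.g.\ $G_0=\{g,-g\}$ with $g$ of infinite order has $\mathsf D=2$ and $\mathcal B(G_0)$ factorial); here you should instead exhibit an atom $W$ of length $\ge 3$ (e.g.\ $e_1e_2(-e_1-e_2)$) and note that $W(-W)$ has factorizations of lengths $2$ and $|W|$.

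The genuine gap is (1). You correctly identify it as the main obstacle, but what you offer is a plan, not a proof: the proposed recursion $\mathsf D(G_r)\ge\mathsf D(G_{r-1})+\mathsf D(G_{r-2})$ is asserted, not established, and you explicitly defer the decisive step --- verifying that the glued sequence is an atom, i.e.\ that it admits no proper nonempty zero-sum subsequence. Lemma \ref{7.6}.3, which is what the paper actually proves about gluing, only yields $\mathsf D(G_r)\ge\mathsf D(G_s)+\mathsf D(G_{r-s})-1$, and taking $s=r-1$ gives merely linear growth $\mathsf D(G_r)\ge\mathsf D(G_{r-1})+1$; the Fibonacci lower bound requires a genuinely sharper construction. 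For what it is worth, the paper does not prove (1) internally either: it cites the companion manuscript \cite{Ba-Ge-Gr-Sm14}. So your account of (2) and (3) is a faithful reconstruction of the paper's argument, but (1) remains unproved in your write-up, and the heuristic you sketch is not yet a substitute for the external reference.
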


\begin{proof}
See \cite{Ba-Ge-Gr-Sm14} for the proof of (1).
(2) follows from Proposition \ref{7.2}.

Note that for every $s \in [2,r]$ we have $2s-3 \in \Delta^* (H)$ and,  by Lemma \ref{7.6}, for all $s \in [3,r]$ we have $2s-4 \in \Delta^* (H)$.
This implies that the interval $[1, 2r-3]$ is contained in $\Delta^* (H)$, giving (3).
\end{proof}

\medskip
The third class of Krull monoids studied in this subsection are Krull monoids with finite cyclic class group having prime divisors in each class. Thus Theorem \ref{7.9} describes the arithmetic of the monoids constructed   in Theorem \ref{2dimensional-classgroups}. Holomorphy rings in  global fields are Krull monoids with finite class group and prime divisors in all classes. For this reason this class of Krull monoids has received a great deal of attention.

\medskip
\begin{theorem} \label{7.9}
Let $H$ be a Krull monoid with finite cyclic class group $G$ of order $|G|=n \ge 3$, and suppose that every class contains a prime divisor. Then:
\begin{enumerate}[(1)]
\item $\mathsf c (H) = \omega (H) = \mathsf D (G) =  n$ and $\Delta (H) = [1, n-2]$.

\smallskip
\item For every $k \in \mathbb N$ the set $\mathcal U_k (H)$ is a finite interval, whence $\mathcal U_k (H) = [\lambda_k (H), \rho_k (H)]$. Moreover,  for all $l \in \mathbb N_0$ with $ln+j \ge 1$,
    \[
\rho_{2k+j}(H) = kn + j \quad \text{for} \quad j \in [0,1] \quad \text{and} \quad
\lambda_{ln+j}(H) = \begin{cases} 2l+j \quad  & \text{for} \quad j \in [0,1] \\
                  2l+2 \quad & \text{for} \quad j \in [2, n-1].
                  \end{cases}
\]

\item
\[
\max \Delta^* (H) = n-2 \quad \text{and} \quad \max \bigl(
\Delta^* (H) \setminus \{n-2\} \bigr) = \left\lfloor \frac{n}{2}
\right\rfloor - 1 .
\]
\end{enumerate}
\end{theorem}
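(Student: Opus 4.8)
Since every class of the finite cyclic group $G \cong C_n$ contains a prime divisor, Lemma \ref{4.4} reduces all the asserted statements to the corresponding statements about the monoid of zero-sum sequences $\mathcal B(G)$ over $G = C_n$ (with $G_{\mathcal P} = G$); in particular $\rho_k(H) = \rho_k(G)$, $\lambda_k(H) = \lambda_k(G)$, $\mathcal U_k(H) = \mathcal U_k(G)$, $\Delta(H) = \Delta(G)$, $\mathsf c(H) = \mathsf c(G)$, $\omega(H) \le \mathsf D(G)$, and $\Delta^*(H) = \Delta^*(G)$. So the whole proof is really a citation-driven assembly of known facts about $\mathcal B(C_n)$. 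I would organize it as four blocks.

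\emph{Block 1 (part (1)).} It is classical that $\mathsf D(C_n) = n$ (e.g. \cite[Theorem 5.5.9 or Proposition 2.5.9]{Ge-HK06a}). For $C_n$ with $n \ge 3$ one has $\mathsf c(C_n) = \mathsf D(C_n) = n$; this is a standard computation — the lower bound $\mathsf c(G) \ge 2 + \max\Delta(G)$ together with $\max\Delta(C_n) = n-2$ gives $\mathsf c(C_n) \ge n$, and the reverse inequality $\mathsf c(G) \le \mathsf D(G)$ holds for every Krull monoid (Lemma \ref{2.2}(4) via Lemma \ref{4.4}). The equality $\omega(H) = \mathsf D(G)$: we have $\omega(H) \le \mathsf D(G_{\mathcal P}) = n$ from Lemma \ref{4.4}(3), and $\omega(H) \ge \mathsf c(H) = n$ from Lemma \ref{2.2}(4), so $\omega(H) = n$. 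Finally $\Delta(C_n) = [1, n-2]$ is \cite[Theorem 6.7.1 or Corollary 5.3.1]{Ge-HK06a} — or can be derived from the fact that $C_n$ is half-factorial-free and all differences up to $n-2$ are realized by zero-sum sequences of the form $g^a(-g)^b 0^c$ and their combinations.

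\emph{Block 2 (part (2)).} That each $\mathcal U_k(C_n)$ is a (finite) interval is \cite[Theorem 6.6.3 or Theorem 4.2.1]{Ge-HK06a}, valid because $\mathsf D(C_n) < \infty$ and $1 \in \Delta^*(C_n)$ (so $\min\Delta = 1$ and an AAMP with period $\{0,1\}$ is an ordinary interval); combined with the general definitions $\rho_k = \sup\mathcal U_k$, $\lambda_k = \min\mathcal U_k$ this gives $\mathcal U_k(H) = [\lambda_k(H), \rho_k(H)]$. The exact values of $\rho_{2k+j}$ and $\lambda_{ln+j}$ are exactly the content of Lemma \ref{4.3}(2) applied with $G_0 = G = C_n$, once one knows $\rho_2(C_n) = \mathsf D(C_n) = n$ — and $\rho_2(C_n) = n$ because for a generator $g$ the sequence $g^n(-g)^n \in \mathcal B(C_n)$ has $\mathsf L = \{2, n\}$ (factor as $(g(-g))^n$ or as $(g^n)((-g)^n)$). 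Lemma \ref{4.3}(2) then yields $\rho_{2k} = kn$, $\rho_{2k+1} \in \{kn+1, \ldots\}$; the precise value $\rho_{2k+1} = kn+1$ needs the upper bound $\rho_{2k+1}(C_n) \le kn+1$, which is known for cyclic groups (see \cite[Theorem 6.8.1 or Proposition 6.3.1]{Ge-HK06a}, or the argument used in Theorem \ref{7.4}(3)). Likewise Lemma \ref{4.3}(2) gives $\lambda_{ln+j} = 2l+j$ for $j \in [0,1]$ and $2l + \lceil 2j/n \rceil \le \lambda_{ln+j} \le 2l+2$ for $j \in [2,n-1]$; the lower bound $\lambda_{ln+j} \ge 2l+2$ in that range follows because $\lambda_{ln+j} \ge 2l + \lambda_j \ge 2l + 2$ as soon as $j \ge 2$ (any element of length-set containing $j \ge 2$ that is not $\{j\}$ forces $\min \ge 2$; more carefully one uses $\lambda_j(C_n) \ge 2$ for $2 \le j \le n-1$, which is standard).

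\emph{Block 3 (part (3)) — the main obstacle.} The determination of $\max\Delta^*(H)$ and of $\max(\Delta^*(H)\setminus\{n-2\})$ is the delicate point; this is precisely \cite[Theorem 6.8.12]{Ge-HK06a} (the description of $\Delta^*$ for cyclic groups). The value $\max\Delta^*(C_n) = n - 2$ comes from the divisor-closed submonoid $\mathcal B(\{g, -g\})$ with $g$ a generator, for which $\Delta = \{n-2\}$ (this is the $r = \alpha = 1$... actually $r=1$, $\alpha=1$ degenerate case, handled directly: $\mathcal B(\{g,-g\})$ has atoms $g^n$, $(-g)^n$, $g(-g)$, so $\Delta = \{n-2\}$). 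The subtler claim $\max(\Delta^*(C_n)\setminus\{n-2\}) = \lfloor n/2\rfloor - 1$ requires (i) exhibiting $g_0 \in C_n$ of order $m$ close to $n/2$ — take $g_0$ of order $\lceil n/2 \rceil$ when that divides $n$, giving $\mathcal B(\{g_0,-g_0\})$ with $\Delta = \{\lceil n/2\rceil - 2\}$ — hmm, one must instead use a subset $\{g, -g, h\}$ or a cyclic subgroup of order $d \mid n$ with $d$ maximal subject to $d \ne n$, giving $\min\Delta(\mathcal B(C_d)) = \gcd$-type value, and (ii) an upper bound argument showing that every divisor-closed submonoid $S \subsetneq \mathcal B(C_n)$ with $\min\Delta(S) > \lfloor n/2\rfloor - 1$ must in fact have $\min\Delta(S) = n-2$. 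Part (ii) is the genuinely hard combinatorial lemma and is exactly where I would simply invoke \cite[Theorem 6.8.12]{Ge-HK06a} rather than reprove it. So the honest statement of the proof is: parts (1) and (2) are routine assembly from Lemmas \ref{2.2}, \ref{4.3}, \ref{4.4} and classical facts about $\mathsf D(C_n)$ and unions of sets of lengths; part (3) is a direct quotation of the structure of $\Delta^*(C_n)$ from \cite[Theorem 6.8.12]{Ge-HK06a} transported to $H$ via $\Delta^*(H) = \Delta^*(G_{\mathcal P})$ (Lemma \ref{4.4}(3)). I would write the proof in that order, flag that (3) is the only part requiring the deep Chapter 6 machinery of \cite{Ge-HK06a}, and keep the write-up to roughly half a page.
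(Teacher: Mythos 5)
Your overall strategy coincides with the paper's: the printed proof of Theorem \ref{7.9} is literally three citations --- \cite[Theorem 6.7.1]{Ge-HK06a} for (1), \cite[Corollary 5.3.2]{Ge09a} for (2), and \cite[Theorem 6.8.12]{Ge-HK06a} for (3) --- after the implicit reduction to $\mathcal B (C_n)$ via Lemma \ref{4.4}. So your Blocks 1 and 3 match the paper exactly, and your decision to quote Theorem 6.8.12 for $\Delta^*$ rather than reprove it is precisely what the authors do.

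The one place where you go beyond the paper is the partial re-derivation of part (2), and there is a genuine error in it: you write $\lambda_{ln+j} \ge 2l + \lambda_j$, but $\lambda_k$ is subadditive, not superadditive --- from $\mathcal U_k (H) + \mathcal U_l (H) \subset \mathcal U_{k+l}(H)$ one only gets $\lambda_{k+l}(H) \le \lambda_k (H) + \lambda_l(H)$, as stated explicitly in Section \ref{3}. So your inequality yields the \emph{upper} bound $\lambda_{ln+j} \le 2l + \lambda_j = 2l+2$ (using $\lambda_j (C_n) = 2$ for $j \in [2, n-1]$, realized by $U(-U)$ with $U$ an atom of length $j$), not the lower bound. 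The correct route to $\lambda_{ln+j} \ge 2l+2$ for $j \in [2, n-1]$ goes through the $\rho$-values you already have: if $L \in \mathcal L (C_n)$ contains $ln+j$ and $m = \min L \le 2l+1$, then $ln+j \le \max L \le \rho_m (C_n) \le \rho_{2l+1}(C_n) = ln+1$, a contradiction since $j \ge 2$. This in turn needs the sharp value $\rho_{2k+1}(C_n) = kn+1$, which, as you correctly flag, does not follow from Lemma \ref{4.3} alone and is part of what the citation to \cite[Corollary 5.3.2]{Ge09a} supplies. With that one repair your write-up is a faithful, slightly expanded version of the paper's proof.
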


\begin{proof}
The proof of (1) can be found in \cite[Theorem 6.7.1]{Ge-HK06a} and the proof of (3) can be found in \cite[Theorem 6.8.12]{Ge-HK06a}.
For (2) see \cite[Corollary 5.3.2]{Ge09a}.
\end{proof}

\medskip
Much recent research is devoted to the arithmetic of Krull monoids discussed in Theorem \ref{7.9}. We briefly address some open questions. Let $H$ be  as above and suppose that $n \ge 5$. The precise values of $\mathsf t (H)$ and of $\mathsf c_{\mon} (H)$ are unknown. It is easy to check that $\mathsf D (G) = n < \mathsf t (H)$ (in contrast to what we have in Theorem \ref{7.4}). For recent results on lower and upper bounds of the tame degree, see \cite{Ga-Ge-Sc14a}. We remark that there is a standing conjecture that the monotone catenary degree is that $n = \mathsf c (H) = \mathsf c_{\mon} (H)$ (this coincides what we have in Theorem  \ref{7.4}; see \cite{Ge-Yu13a}). For recent progress on $\Delta^* (H)$ we refer to \cite{Pl-Sc13a}.

\medskip
Having at least a partial description of the arithmetic of the three monoids described in Theorems \ref{7.4}, \ref{7.8}, and \ref{7.9}, we now work to show that except for in a small number of exceptions, these monoids have vastly different arithmetic. After some preliminary work this distinction is made clear in Corollary \ref{7.10}.

\medskip
\begin{lemma} \label{7.7}
Let $G$ be an abelian group with finite total rank and let $G_0 \subset G$ be a subset with $G_0 = -G_0$. Suppose that $\mathcal L (G_0) = \mathcal L (C_n)$ for some $n \ge 5$. Then there exists an absolutely irreducible element $U \in \mathcal A (G_0)$ with $|U| = \mathsf D (G_0)$.
\end{lemma}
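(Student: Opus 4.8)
\textbf{Setup and strategy.} The plan is to argue by contradiction: assume no absolutely irreducible atom of $\mathcal B (G_0)$ has length $\mathsf D (G_0)$, and derive that $\mathcal L (G_0)$ must differ from $\mathcal L (C_n)$ for every $n \ge 5$. Since $\mathcal L (G_0) = \mathcal L (C_n)$ forces in particular $\rho (G_0) = \rho (C_n) = n/2$ and, via Lemma \ref{4.3} together with $\rho_2 (C_n) = \mathsf D (C_n) = n$, also $\rho_2 (G_0) = \mathsf D (G_0) = n$ (here one uses that $\rho_2 (G_0) = \mathsf D (G_0)$ whenever $G_0 = -G_0$ and, e.g., Proposition \ref{7.2}.2(d) applied with $G_{\mathcal P} = G_0$; more precisely one first checks $\mathsf D (G_0) = n$ from the shape of $\mathcal U_2$ and $\rho_2$). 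Thus $\mathsf D (G_0) = n$ and we seek an absolutely irreducible $U \in \mathcal A (G_0)$ with $|U| = n$.

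\textbf{Key steps.} First I would fix an atom $V = g_1 \cdot \ldots \cdot g_n \in \mathcal A (G_0)$ of maximal length $n = \mathsf D (G_0)$; such $V$ exists by definition of the Davenport constant. The element $W = V(-V) = g_1 (-g_1) \cdot \ldots \cdot g_n (-g_n)$ lies in $\mathcal B (G_0)$ and, since each $g_i (-g_i) \in \mathcal A (G_0)$, we get $n \in \mathsf L (W)$ as well as $2 \in \mathsf L (W)$, so $\{2, n\} \subset \mathsf L (W)$; because $\rho (G_0) = n/2$ this forces $\mathsf L(W) = \mathsf L(V(-V))$ to have both extremes $2$ and $n$ and hence, comparing with $\mathcal L (C_n)$, to be exactly the "extremal" length set $\{2\} \cup [?, n]$ occurring in $\mathcal B (C_n)$ only for the factorization pattern $g^{n} \cdot (g^{n})$-type elements (this is where the structural description of $\mathcal L (C_n)$, e.g. \cite[Theorem 6.7.1]{Ge-HK06a} and the surrounding analysis, is invoked). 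The crucial point to extract is: any element $B \in \mathcal B (G_0)$ with $2, n \in \mathsf L(B)$ must, up to the transfer picture, look like $B = U_1 \cdot \ldots \cdot U_n$ with all $U_i$ of length $2$ and with $U_1 \cdot \ldots \cdot U_n$ also equal to a product of two atoms of length $n$; writing $U_i = (-h_i) h_i$ this says $h_1 \cdot \ldots \cdot h_n \in \mathcal A (G_0)$ and $(-h_1) \cdot \ldots \cdot (-h_n) \in \mathcal A(G_0)$. So from $\{2,n\} \subset \mathsf L(W)$ we recover that $V$ itself can be taken with the property that \emph{every} subproduct relation is controlled. Then I would show this $V$ is absolutely irreducible: using the criterion recalled after Proposition \ref{7.2} (an atom $p_1^{k_1} \cdots p_m^{k_m}$ is absolutely irreducible iff $(k_1,\dots,k_m)$ is minimal in the relevant $\Gamma$ and the torsion-free rank of $\langle [p_1],\dots,[p_m]\rangle$ equals $m-1$), translated to $\mathcal B(G_0)$: $V = g_1 \cdot \ldots \cdot g_n$ with distinct $g_i$ is absolutely irreducible iff $\langle g_1, \ldots, g_n\rangle$ has rank $n-1$ and $(1,\dots,1)$ is minimal among zero-sum exponent vectors on $\{g_1,\dots,g_n\}$. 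Minimality of $(1,\dots,1)$ holds because $V$ is an atom. For the rank condition, if $\langle g_1,\dots,g_n\rangle$ had rank $\le n-2$, then there would be two independent relations among the $g_i$; I would combine these to produce, inside $\mathcal B(G_0)$, an element $B$ with $2 \in \mathsf L(B)$ but with a factorization of length strictly between $2$ and $n$ that cannot be "straightened", contradicting that $\mathcal L(G_0) = \mathcal L(C_n)$ has the rigid extremal shape for such $B$ (one uses here that $\mathcal U_2(C_n) = [2,n]$ is an interval, forcing every length in $[2,n]$ to be realized, yet the relation-counting shows some intermediate length is \emph{not} realizable when the rank drops — actually the cleaner route is: rank $\le n-2$ lets one split off a genuine product structure contradicting $V$ being an atom of full length in a way compatible with $\rho(G_0)=n/2$). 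Having rank $=n-1$ and minimality, $V$ is absolutely irreducible with $|V| = \mathsf D(G_0)$, as desired.

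\textbf{Main obstacle.} The hard part will be the rank argument — showing $\langle g_1, \ldots, g_n \rangle$ has torsion-free rank exactly $n-1$ — and more precisely ruling out that $G_0$ could realize the length system $\mathcal L(C_n)$ while all its full-length atoms $V$ sit in subgroups of rank $\le n-2$. The natural approach is quantitative: if every $V \in \mathcal A(G_0)$ with $|V| = n$ lived in a rank-$\le n-2$ subgroup, there would be "too many" independent zero-sum relations, which one leverages to build an element whose set of lengths is an AAMP with a forbidden difference or a forbidden gap, contradicting that $\Delta(C_n) = [1,n-2]$ and the known (interval) shape of $\mathcal U_k(C_n)$ from Theorem \ref{7.9}. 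Executing this cleanly — likely via a short case analysis on how the relations interact, and invoking $n \ge 5$ so that $\Delta(C_n) = [1,n-2]$ is genuinely long — is where the real work lies; the absolute-irreducibility criterion itself is then a mechanical check. I would also double-check the degenerate possibility that $\mathsf D(G_0) > n$ is excluded at the outset (it is, since $\rho_2(G_0) = \mathsf D(G_0)$ and $\rho_2(C_n) = n$), so that "$|U| = \mathsf D(G_0)$" and "$|U| = n$" coincide.
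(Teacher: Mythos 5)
Your skeleton matches the paper's (first pin down $\mathsf D(G_0)=\rho_2(G_0)=\rho_2(C_n)=n$, then produce an atom of length $n$ and verify absolute irreducibility via the rank criterion, the exponent-minimality being automatic for an atom), and you correctly identify the rank argument as the crux. But there are two genuine gaps. First, you fix an \emph{arbitrary} atom $V$ of length $n$ and try to prove that \emph{it} is absolutely irreducible; the lemma only asserts existence, and the paper's proof hinges on a careful selection that you gesture at ("$V$ can be taken with every subproduct relation controlled") but never carry out. Concretely: the paper uses that each $L_k=\{2k+\nu(n-2)\,\colon\,\nu\in[0,k]\}$ lies in $\mathcal L(C_n)=\mathcal L(G_0)$, deduces that its realizer in $\mathcal B(G_0)$ must be a product of pairs $U_i(-U_i)$ of full-length atoms, and extracts by pigeonhole a single $U$ with $\mathsf L\bigl(U^k(-U)^k\bigr)=L_k$ for \emph{all} $k$, then takes such a $U$ of minimal support. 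Your route only controls $k=1$: knowing $2k,nk\in\mathsf L\bigl(V^k(-V)^k\bigr)$ and $\mathsf L\bigl(V^k(-V)^k\bigr)\in\mathcal L(C_n)$ does not force this set to equal $L_k$ for $k\ge 2$ (already in $C_5$, $\mathsf L\bigl(g^5(-g)^5(2g)^5(-2g)^5\bigr)$ contains $8\notin L_2=\{4,7,10\}$, so $\mathcal L(C_n)$ contains sets with the right min and max other than $L_k$). Without the "for all $k$" statement you cannot conclude that every atom dividing $V^k(-V)^k$ has length in $\{2,n\}$, and that conclusion is exactly the engine of the rank argument.

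Second, the rank argument itself is left unexecuted, and the two routes you sketch do not work as stated: $\mathcal U_2(C_n)=[2,n]$ being an interval says nothing about which individual sets of lengths occur, and "rank $\le n-2$ contradicts $V$ being an atom of full length" is false in general (in $C_n$ itself the full-length atom $g^n$ has support generating a rank-$0$ group). The correct mechanism, which needs the selection above, is: a dependence among a proper subset of $\supp(U)$ yields, via $G_0=-G_0$, an atom $V'$ with $V'\mid U^k(-U)^k$ and $|V'|>2$, hence $|V'|=n$, contradicting the minimality of $\supp(U)$. As it stands the proposal is an accurate outline of the difficulty rather than a proof.
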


\begin{proof}
First observe that $\mathsf D (G_0) = \rho_2 (G_0) = \rho_2 (C_n) = \mathsf D(C_n) = n$ and, by \cite[Theorem 3.4.2]{Ge-HK06a},  $\mathcal A (G_0)$ is finite, say $\mathcal A (G_0) = \{ U_1, -U_1, \ldots, U_q, - U_q\}$. If $g \in C_n$ with $\ord (g) = n$, then for all $k \in \N$ we have

\[
L_k = \{ 2k + \nu (n-2) \, \colon \, \nu \in [0,k] \} = \mathsf L \big( g^{nk} (-g)^{nk} \big)   \in \mathcal L (C_n) = \mathcal L (G_0) \,.
\]
Since $\rho (L_k) = \rho (G_0) = \mathsf D (G_0)/2$, there exists, for every $k \in \N$, a tuple $(k_1, \ldots, k_q) \in \N_0^{(q)}$ such that $k_1+ \cdots + k_q = k$ and
\[
L_k = \mathsf L \big( (-U_1)^{k_1}U_1^{k_1} \cdot \ldots \cdot (-U_q)^{k_q} U_q^{k_q} \big) \,.
\]
Therefore there exists  $\lambda \in [1,q]$ such that $\mathsf L \big( (-U_{\lambda})^k U_{\lambda}^k \big) = L_k$ for every $k \in \N$. Set $U = U_{\lambda}$ and note that
 for every $V \in \mathcal A (G_0)$ with $V \t (-U)^k U^k$ for some $k \in \N$, it follows that  $|V| \in \{2, n\}$. After changing notation if necessary, we may suppose that there is no $V \in \mathcal A (G_0)$ such that $|V| = n$, $\supp (V) \subsetneq \supp (U)$, and $V \t U^k (-U)^k$ for some $k \in \mathbb N$.

In order to show that $U$ is absolutely irreducible, it remains to verify that the torsion-free rank of $\langle \supp (U) \rangle$ is $|\supp (U)|-1$.
Assume to the contrary that there exist $t \in [2, |\supp (U)|-1]$ and $g_1, \ldots, g_t \in \supp (U)$ which are linearly dependent. Then there are $s \in [1,t]$, $m_1, \ldots, m_s \in \mathbb N$, and $m_{s+1}, \ldots, m_t \in - \mathbb N$ such that
\[
m_1 g_1 + \cdots + m_s g_s + (-m_{s+1})(-g_{s+1}) + \cdots + (-m_t)(-g_t) = 0 \,.
\]
Then
\[
V = g_1^{m_1} \cdot \ldots \cdot g_s^{m_s} (-g_{s+1})^{-m_{s+1}} \cdot \ldots \cdot (-g_t)^{-m_t} \in \mathcal B (G_0) \,.
\]
Without restriction we may suppose that the above equation is minimal and that $V \in \mathcal A (G_0)$. Since $V \t U^k (-U)^k$ for some $k \in \mathbb N$ and $|V| > 2$, we obtain a contradiction to the minimality of $\supp (U)$.
\end{proof}

\medskip
The following corollary highlights that the observed arithmetical phenomena in our case studies --- Theorems \ref{7.4}, \ref{7.8}, and \ref{7.9} --- are characteristic for the respective Krull monoids. In particular, this illustrates that the structure of direct-sum decompositions over the one-dimensional Noetherian local rings with finite representation type studied in Section \ref{5} can be quite different from the structure of direct-sum decompositions over the two-dimensional Noetherian local Krull domains with finite representation type studied in Section \ref{6}. As characterizing tools we use the system of sets of lengths along with the behavior of absolutely irreducible elements.

\medskip
\begin{corollary} \label{7.10}
For $i \in [1,3]$, let $H_i$ and $H_i'$ be  Krull monoids with class groups $G_i$ and $G_i'$. Further  suppose that
\begin{itemize}
\item $G_1$ and $G_1'$ are finitely generated and torsion-free of rank $r_1$ and $r_1'$ with sets of classes containing prime divisors as in Theorem \ref{7.4}
      $($with parameters $\alpha, \alpha' \in \N$ such that $\alpha + r_1 \ge \alpha'+r_1' > 2)$.
\item $G_2$ and $G_2'$ are finitely generated and torsion-free of rank $r_2 \ge r_2' \ge 3$ with sets of classes containing prime divisors as in Theorem \ref{7.8}.
\item $G_3$ and $G_3'$ are finite cyclic of order $|G_3|  \ge |G_3'| \ge 5$ such that every class contains a prime divisor.
\end{itemize}
Then:
\begin{enumerate}[(1)]
\item $\mathcal L (H_1) = \mathcal L (H_1')$ if and only if $r_1+\alpha=r_1'+\alpha'$. If this holds, then the arithmetic behavior of the absolutely irreducible elements of $H_1$ and $H_1'$ coincide in the sense of Corollary \ref{7.5} if and only if $r_1=r_1'$.

\item $\mathcal L (H_2) = \mathcal L (H_2')$ if and only if $r_2=r_2'$.

\item $\mathcal L (H_3) = \mathcal L (H_3')$ if and only if $|G_3|=|G_3'|$.

\item $\mathcal L (H_1) \ne \mathcal L (H_2)$ and $\mathcal L (H_1) \ne \mathcal L (H_3)$.

\item For $i \in [2,3]$, let $s_i$ denote the maximal number of absolutely irreducible elements  $u_1, \ldots, u_{s_i} \in H_i  $ such that $2 \in \mathsf L (u_1 \cdot \ldots \cdot u_{s_i})$. Then either $\mathcal L (H_2) \ne \mathcal L (H_3)$ or $s_2 \ne s_3$.
\end{enumerate}
\end{corollary}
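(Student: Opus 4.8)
The plan is to prove parts (1), (2), and (3) by extracting two kinds of invariants from the system of sets of lengths $\mathcal L(H_i)$: an invariant that pins down the Davenport constant $\mathsf D(G_{i,\mathcal P})$, and — where the Davenport constant alone does not suffice — an invariant that recovers the rank of the class group. For $H_1$ and $H_1'$, Theorem \ref{7.4}(3)(f) gives an explicit description of $\mathcal L(H_1)$ depending only on $\mathsf D(G_{1,\mathcal P}) = r_1+\alpha$; since this system determines and is determined by the single number $r_1+\alpha$, we get $\mathcal L(H_1) = \mathcal L(H_1')$ iff $r_1+\alpha = r_1'+\alpha'$. The claim about absolutely irreducible elements is then immediate from Corollary \ref{7.5}: the quantity ``$r+1$'' defined there via property {\bf (P)} is recoverable from the arithmetic of $H_i$, so the behavior of absolutely irreducible elements coincides iff $r_1 = r_1'$.

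For part (3), $\mathcal L(H_3)$ is the system of sets of lengths of a cyclic group $C_n$ with $n = |G_3| \ge 5$; by the characterization results for finite cyclic groups cited in the introduction (and recoverable here since $\mathsf c(H_3) = \mathsf D(C_n) = n$ and $\sup\Delta(H_3) = n-2$ by Theorem \ref{7.9}(1)), distinct orders $\ge 5$ give distinct systems of sets of lengths, so $\mathcal L(H_3) = \mathcal L(H_3')$ iff $|G_3| = |G_3'|$. For part (2), the key input is Theorem \ref{7.8}(3): $[1,2r_2-3] \subset \Delta^*(H_2) \subset \Delta(H_2)$, while $\Delta^*(H_2)$ is read off from $\mathcal L(H_2)$ (it is $\{\min\Delta(S) : S \text{ a divisor-closed submonoid}\}$, and divisor-closed submonoids of $\mathcal B(G_{2,\mathcal P})$ correspond to subsets $G_0 \subset G_{2,\mathcal P}$, whose systems of sets of lengths are visible inside $\mathcal L(H_2)$). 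One then argues that $\max\Delta^*(H_2)$ together with the gap structure below it determines $r_2$: Lemma \ref{7.6} shows $2s-3, 2s-4 \in \Delta^*(H_2)$ for the relevant ranges of $s$, and $\mathsf D(G_{2,\mathcal P}) \ge \mathsf F_{r_2+2}$ grows strictly, so larger rank forces a strictly larger system. Hence $\mathcal L(H_2) = \mathcal L(H_2')$ iff $r_2 = r_2'$.

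For parts (4) and (5) the strategy is to separate the three families by arithmetic fingerprints. For (4): in Theorem \ref{7.4} we have $\mathsf c(H_1) = \mathsf D(G_{1,\mathcal P})$ and $\Delta(H_1) = \{r_1+\alpha-2\}$ is a singleton, so every set of lengths is an arithmetical progression with a single fixed difference; in contrast $\Delta(H_2) \supset [1,2r_2-3]$ with $r_2 \ge 3$ has at least two elements, and $\Delta(H_3) = [1,n-2]$ with $n \ge 5$ also has at least three elements. Since $|\Delta(H)|$ is recoverable from $\mathcal L(H)$, this forces $\mathcal L(H_1) \ne \mathcal L(H_2)$ and $\mathcal L(H_1) \ne \mathcal L(H_3)$. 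For (5): suppose $\mathcal L(H_2) = \mathcal L(H_3)$. Then $\mathsf D(G_{2,\mathcal P}) = \rho_2(H_2) = \rho_2(H_3) = \mathsf D(C_n) = n \ge 5$, and $G_{2,\mathcal P} = -G_{2,\mathcal P}$, so Lemma \ref{7.7} applies and produces an absolutely irreducible $U \in \mathcal A(G_{2,\mathcal P})$ with $|U| = \mathsf D(G_{2,\mathcal P})$. Lifting to $H_2$ via the transfer homomorphism (choosing prime divisors $p_i \in \mathcal P \cap g_i$, $q_i \in \mathcal P \cap(-g_i)$ for $g_i \in \supp U$ and using that $G_2$ is torsion-free so each $p_iq_i$ is absolutely irreducible) shows $s_2 \ge \mathsf D(G_{2,\mathcal P}) = n$. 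On the other hand, for the finite cyclic monoid $H_3$, Proposition \ref{7.2}(2)(d) does not apply in the torsion case, and one shows directly that an absolutely irreducible atom of $\mathcal B(C_n)$ has the form $g^{\ord(g)}$, so any product $u_1 \cdot \ldots \cdot u_s$ of absolutely irreducible elements with $2$ in its set of lengths has $s \le 2$; hence $s_3 \le 2 < n \le s_2$, a contradiction, proving either $\mathcal L(H_2) \ne \mathcal L(H_3)$ or $s_2 \ne s_3$.

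The main obstacle I expect is part (2): unlike the cyclic and the Theorem \ref{7.4} cases, we only have \emph{partial} arithmetic information for the cube-type monoids of Theorem \ref{7.8}, so the proof that $r_2 \ne r_2'$ implies $\mathcal L(H_2) \ne \mathcal L(H_2')$ must go entirely through $\Delta^*(H)$ and the monotonicity of $\mathsf D(G_r)$ in Lemma \ref{7.6}(3), being careful that $\Delta^*$ really is extractable from $\mathcal L$ (via divisor-closed submonoids, which on the zero-sum side are exactly the $\mathcal B(G_0)$ for $G_0 \subset G_{\mathcal P}$, and the interval $[1,2r-3]$ together with $\max\Delta^*$ distinguishing consecutive ranks). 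The secondary subtlety is the torsion-case bound $s_3 \le 2$ in part (5), which needs the structural fact about absolutely irreducible atoms over cyclic groups; this is standard (cf.\ \cite[Proposition 7.1.4]{Ge-HK06a}) but should be stated carefully.
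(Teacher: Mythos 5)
Parts (1), (3), and (4) of your proposal are correct and essentially the paper's own arguments ($\mathcal L(H_1)$ is determined by $r_1+\alpha$ via Theorem \ref{7.4}, $\max \Delta(H_3)=|G_3|-2$ via Theorem \ref{7.9}, and $|\Delta(H_1)|=1$ versus $|\Delta(H_i)|>1$ for $i=2,3$). The problems are in parts (2) and (5).

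For part (2) your primary route does not work. You propose to recover $\Delta^*(H_2)$ from $\mathcal L(H_2)$ and then read off $r_2$ from ``$\max\Delta^*(H_2)$ together with the gap structure below it.'' Two objections: first, $\Delta^*(H)$ is defined via divisor-closed submonoids of $H$, and it is not justified (and not known in general) that it can be extracted from the bare system $\mathcal L(H)$ --- from a family of subsets of $\N$ one cannot tell which sets of lengths belong to a common divisor-closed submonoid. Second, and decisively, Theorem \ref{7.8}(3) only gives the one-sided containment $[1,2r_2-3]\subset \Delta^*(H_2)\subset[1,\mathsf c(H_2)-2]$; the value of $\max\Delta^*(H_2)$ is unknown, so even if $\Delta^*$ were visible in $\mathcal L$, you could not conclude $r_2=r_2'$ from $\Delta^*(H_2)=\Delta^*(H_2')$. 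The paper's argument is much shorter and avoids $\Delta^*$ entirely: $\rho_2(H_2)$ is manifestly determined by $\mathcal L(H_2)$, Theorem \ref{7.8}(2) gives $\rho_2(H_2)=\mathsf D(G_{\mathcal P})$, and Lemma \ref{7.6}(3) gives the strict monotonicity $\mathsf D(G_r)>\mathsf D(G_{r-1})$, whence $\mathsf D(G_{\mathcal P})=\mathsf D(G_{\mathcal P}')$ forces $r_2=r_2'$. You have all these ingredients in hand (you use $\rho_2=\mathsf D$ in part (5) and cite Lemma \ref{7.6}(3) at the end), but the argument you actually wrote for (2) does not assemble them correctly.

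For part (5), your claim $s_3\le 2$ is true but the justification ``an absolutely irreducible atom of $\mathcal B(C_n)$ has the form $g^{\ord(g)}$, \emph{so} any product with $2$ in its set of lengths has $s\le 2$'' is not a proof; the implication requires an additional argument (e.g.\ a cross-number bound: $\mathsf k\bigl(g_1^{\ord(g_1)}\cdots g_s^{\ord(g_s)}\bigr)=s=\mathsf k(U_1)+\mathsf k(U_2)<3$ since every minimal zero-sum sequence over $C_n$ has cross number $<3/2$). The paper sidesteps this stronger claim: it assumes $s_2=s_3=n$, uses that the only minimal zero-sum sequences of length $n$ over $C_n$ are $g^n$ with $\ord(g)=n$ to pin down the shape of the factorization $v_1v_2=u_1\cdots u_n$, and then contradicts absolute irreducibility of the $u_i$ via \cite[Proposition 7.1.5]{Ge-HK06a}. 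Either route closes the argument, but yours needs the missing step supplied.
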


\begin{proof}
The if and only if statement in (1) follows immediately from Theorem \ref{7.4}. Suppose that $\mathcal L (H_1) = \mathcal L (H_1')$. Then the assertion in (1) on the arithmetic behavior of absolutely irreducible elements follows from Corollary \ref{7.5}.

\smallskip
To prove (2), first note that one implication is clear, both for $H_2$ and $H_3$. Suppose that $\mathcal L (H_2) = \mathcal L (H_2')$, and let $G_{\mathcal P} \subset G_2$ and $G_{\mathcal P}' \subset G_2'$ denote the set of classes containing prime divisors. Theorem \ref{7.8} implies that
\[
\mathsf D (G_{\mathcal P}) = \rho_2 (H) = \rho_2 (H') = \mathsf D (G_{\mathcal P}') \,,
\]
and thus Lemma \ref{7.6} implies $r_2=r_2'$. Now consider (3). If $\mathcal L (H_3) = \mathcal L (H_3')$, then Theorem \ref{7.9} implies that
\[
|G_3|-2 = \max \Delta (H_3) = \max \Delta (H_3') = |G_3'| - 2 \,.
\]

\smallskip
For (4), note that $\mathcal L (H_1)$ is distinct from both $\mathcal L (H_2)$ and $\mathcal L (H_3)$ since $|\Delta (H_1)| = 1$, $|\Delta (H_2)| > 1$, and $|\Delta (H_3)| > 1$.

\smallskip
For (5) we assume that $\mathcal L (H_2) = \mathcal L (H_3)$ and let $G_{\mathcal P} \subset G_2$ denote the set of classes containing prime divisors. Theorems \ref{7.8} and \ref{7.9} imply that
\[
\mathsf D (G_{\mathcal P}) = \rho_2 (H_2) = \rho_2 (H_3) = |G_3| \,.
\]
By  Proposition \ref{7.3} we obtain that $\mathsf D (G_{\mathcal P}) = s_2$. Now assume to the contrary that $s_2 = s_3$.  If $|G_3|=n$, then there are absolutely irreducible elements $u_1, \ldots, u_n$ and atoms $v_1, v_2 \in \mathcal A (H_3)$ such that $v_1v_2 = u_1 \cdot \ldots \cdot u_n$. Without restriction, we  suppose  $H_3$ is reduced and we consider a divisor theory $H \hookrightarrow \mathcal F ({\mathcal P})$. Since a minimal zero-sum sequence of length $n$ over $G_3$ consists of one element of order $n$ repeated $n$ times, the factorization of the atoms $v_1, v_2, u_1, \ldots, u_n $ in $\mathcal F ({\mathcal P})$ must have the following form: $v_1 = p_1 \cdot \ldots \cdot p_n$, $v_2 = q_1 \cdot \ldots \cdot q_n$, and $u_i = p_iq_i$ for all $i \in [1, n]$, where $p_1, \ldots, p_n, q_1, \ldots, q_n \in {\mathcal P}$, $[p_1] = \cdots = [p_n] \in G_3$, and $[q_1] = \cdots = [q_n] = [-p_1]$. But \cite[Proposition 7.1.5]{Ge-HK06a} implies that the elements $u_1, \ldots, u_n$ are not absolutely irreducible, a contradiction.
\end{proof}

\medskip
\begin{remark} \label{7.11}
Let $H_2$ and $H_3$ be as in Corollary \ref{7.10}. We set $n = |G_3|$, $r = r_2$, and let $G_{\mathcal P, r} \subset G_2$ denote the set of classes containing prime divisors. Assume that $\mathcal L (H_2) = \mathcal L (H_3)$. Then
\[
\mathsf F_{r+2} \le \mathsf D (G_{\mathcal P, r}) = \rho_2 (H_2) = \rho_2 (H_3) = n \,.
\]
That is, the orders of the cyclic groups for which $\mathcal L (H_2) = \mathcal L (H_3)$ grow faster than the sequence of Fibonacci numbers. We conjecture that $\mathcal L (H_2)$ and $\mathcal L (H_3)$ are always distinct but have not further investigated this (rather delicate combinatorial) problem which would require a more detailed investigation of $\mathsf D (G_{\mathcal P, r})$.

Now suppose that $H$ is a Krull monoid with class group $G$ such that every class contains a prime divisor. If $\mathcal L (H) = \mathcal L (H_3)$, then following Theorem \ref{7.9}, one can show that $G$ is isomorphic to the finite cyclic group $G_3$ (see \cite[Corollary 5.3.3]{Ge09a}). Therefore sets of lengths characterize Krull monoids with finite cyclic class group having the property that every class contains a prime divisor.
\end{remark}

\quad

\subsection{Small sets $G_{\mathcal P}$ of classes containing prime divisors and   limits of arithmetical characterizations} \label{6c}

In this final subsection we study the arithmetic of Krull monoids having small sets of classes containing prime divisors. This study pertains to the monoids of Theorem \ref{DivisorTheory2}, Example
\ref{4.19},  Example \ref{E:isomorphicG_0}, and Theorem \ref{2dimensional-classgroups}. The most striking phenomenon here is that these systems of sets of lengths are additively closed (see Proposition \ref{7.14}). As a consequence, if $\mathcal L (H)$ is such a system and $H'$ is a monoid with $\mathcal L (H') \subset \mathcal L (H)$, then $\mathcal L (H \times H') = \mathcal L (H)$ (see  Example \ref{splitexample-1}, Example \ref{splitexample-2}, and Corollary \ref{7.15}).
These phenomena are in strong contrast to the results in the previous subsection, and they show up natural limits for obtaining arithmetical characterization results. Recall that, for $l \in \N_0$ and $d \in \N$, $P_l (d) = \{0, d, \ldots, ld\}$.

\medskip
\begin{proposition} \label{7.12}
Let $H$ be a Krull monoid with infinite cyclic class group $G$ and suppose that
\[
G_{\mathcal P} = \{-2e, -e, 0, e, 2e\} \subset G = \langle e \rangle
\]
is the set of classes containing prime divisors.
Then there is a transfer homomorphism $\theta \colon H \to \mathcal B (C_3)$, and hence
      \[
      \mathcal L (H) = \mathcal L (C_3)  = \mathcal L (C_2 \oplus C_2) = \bigl\{ y
      + 2k + P_k (1) \, \colon \, y,\, k \in \N_0 \bigr\} \,.
      \]
Moreover, $\mathcal L (H)$ coincides with the system of sets of lengths of the Krull monoid studied in Theorem \ref{7.4} with parameters $r=2$ and $\alpha = 1$.
\end{proposition}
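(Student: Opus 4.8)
The plan is to use Lemma~\ref{4.4} to transfer the computation of $\mathcal L(H)$ into the monoid of zero-sum sequences $\mathcal B(G_{\mathcal P})$, and then to analyse $\mathcal B(G_{\mathcal P})$ directly. Since $\boldsymbol\beta\colon H\to\mathcal B(G_{\mathcal P})$ of Lemma~\ref{4.4} gives $\mathcal L(H)=\mathcal L(G_{\mathcal P})$, it suffices to work inside $\mathcal B(G_{\mathcal P})$. As $0\in G_{\mathcal P}$ is a prime element we have $\mathcal B(G_{\mathcal P})=\mathcal B(\{0\})\times\mathcal B(G_{\mathcal P}^{\bullet})$ with $\mathcal B(\{0\})\cong(\N_0,+)$, so $\mathcal L(G_{\mathcal P})=\{\{y\}+L\colon y\in\N_0,\ L\in\mathcal L(G_{\mathcal P}^{\bullet})\}$ and it is enough to understand $\mathcal L(G_{\mathcal P}^{\bullet})$ for $G_{\mathcal P}^{\bullet}=\{-2e,-e,e,2e\}$. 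Identifying $\langle e\rangle$ with $\Z$ and $e$ with $1$, a routine case analysis (split a minimal zero-sum sequence according to whether it contains both $1,-1$ or both $2,-2$) shows $\mathcal A(G_{\mathcal P}^{\bullet})=\{U_0,U_1,U_2,U_3\}$ with $U_0=1\,(-1)$, $U_1=2\,(-2)$, $U_2=1^2(-2)$, $U_3=2\,(-1)^2$, so $\mathsf D(G_{\mathcal P}^{\bullet})=3$.

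Next I would describe all factorizations. Encoding a sequence over $G_{\mathcal P}^{\bullet}$ by the multiplicity vector of $(1,2,-1,-2)$ identifies $\mathcal B(G_{\mathcal P}^{\bullet})$ with $\{(a,b,c,d)\in\N_0^4\colon a+2b=c+2d\}$, and a rank-one lattice computation shows that the syzygies among $U_0,U_1,U_2,U_3$ are generated by the single relation $U_0^2U_1=U_2U_3$. Given $S\in\mathcal B(G_{\mathcal P}^{\bullet})$, writing a candidate factorization as $U_0^{x_0}U_1^{x_1}U_2^{x_2}U_3^{x_3}$ and matching the four multiplicities leaves exactly one free parameter $t$, constrained to an explicit integer interval, with the factorization length an affine function of $t$ of slope $-1$. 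Hence every $L\in\mathcal L(G_{\mathcal P}^{\bullet})$ is a finite nonempty interval, and a short bookkeeping step (bounding $\min L$ from below by $2(\max L-\min L)$) shows $L=y+2k+P_k(1)$ for suitable $y,k\in\N_0$; conversely $U_0^{y+2k}U_1^{k}$ realizes $y+2k+P_k(1)$. Therefore $\mathcal L(H)=\mathcal L(G_{\mathcal P})=\{y+2k+P_k(1)\colon y,k\in\N_0\}$.

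The equalities $\mathcal L(C_3)=\mathcal L(C_2\oplus C_2)=\{y+2k+P_k(1)\colon y,k\in\N_0\}$ are classical (see \cite[Section~7.3]{Ge-HK06a}); alternatively they follow from the same parametrization, since $\mathcal B(C_3^{\bullet})$ (atoms $g^3,(2g)^3,g\,(2g)$, relation $(g\,(2g))^3=g^3(2g)^3$) and $\mathcal B((C_2\oplus C_2)^{\bullet})$ (atoms $e_1^2,e_2^2,e_3^2,e_1e_2e_3$ with $e_1+e_2+e_3=0$, relation $e_1^2e_2^2e_3^2=(e_1e_2e_3)^2$) are each finitely presented by a single relation equating a three-atom product with a two-atom product and satisfy $\mathsf D=3$, so the identical computation yields the same system of sets of lengths. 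Finally, the \emph{moreover} clause is immediate: for the Krull monoid of Theorem~\ref{7.4} with $r=2$ and $\alpha=1$ (so $r+\alpha=3>2$ and $e_1+e_2=e_0$), part~(3)(f) of that theorem gives $\mathcal L=\{m+\{2k^{*}+(r+\alpha-2)\lambda\colon\lambda\in[0,k^{*}]\}\colon m,k^{*}\in\N_0\}$, and since $r+\alpha-2=1$ this is precisely $\{y+2k+P_k(1)\colon y,k\in\N_0\}$.

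The delicate point, which I expect to be the main obstacle, is the asserted transfer homomorphism $\theta\colon H\to\mathcal B(C_3)$. There is no isomorphism $\mathcal B(G_{\mathcal P})\cong\mathcal B(C_3)$: the single defining relation of $\mathcal B(G_{\mathcal P})$ equates a two-atom product of \emph{distinct} atoms with a three-atom product of distinct atoms, whereas in $\mathcal B(C_3)$ the three-atom side is a pure power; and the obvious candidate, the map induced by the reduction $\Z\to\Z/3\Z$ on $G_{\mathcal P}$, is not length-preserving (it already fails on $1^3(-1)^3$). The robust route, and the one I would carry out, is therefore to bypass an explicit $\theta$ and obtain $\mathcal L(H)=\mathcal L(C_3)=\mathcal L(C_2\oplus C_2)$ from the explicit factorization analysis of $\mathcal B(G_{\mathcal P})$ together with the matching analyses of $\mathcal B(C_3)$ and $\mathcal B(C_2\oplus C_2)$ sketched above.
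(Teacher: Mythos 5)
Your direct computation is sound and arrives at the correct system of sets of lengths, but it takes a genuinely different route from the paper, and your suspicion about the transfer homomorphism is justified. The paper's proof does exactly what you call ``the obvious candidate'': it defines $\widetilde{\theta}$ on $\mathcal F(G_{\mathcal P}^{\bullet})$ by $\widetilde{\theta}(e)=\widetilde{\theta}(-2e)=g$ and $\widetilde{\theta}(-e)=\widetilde{\theta}(2e)=-g$ (i.e.\ reduction mod $3$), claims the restriction $\theta$ to $\mathcal B(G_{\mathcal P}^{\bullet})$ is a transfer homomorphism, and verifies {\bf (T2)} by a downward induction on $|\widetilde B|$. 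That verification is flawed, and your element $A=e^3(-e)^3$ is a genuine counterexample: the only atom of $\mathcal B(G_{\mathcal P}^{\bullet})$ dividing $A$ is $(-e)e$, so $\mathsf L(A)=\{3\}$, while $\theta(A)=g^3(-g)^3=g^3\cdot(-g)^3$ has $\mathsf L(\theta(A))=\{2,3\}$; equivalently, {\bf (T2)} fails for $\widetilde B=g^3$, since the only subsequence $B$ of $A$ with $\theta(B)=g^3$ is $e^3$, which has nonzero sum. (In the paper's induction, the step from $(m,m')$ to $(m,m'-3)$ offers only quadruples needing $m_2\ge 1$ or $m_2'+1\le k_2'$, both impossible when $k_2=k_2'=0$.)

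Your decision to bypass $\theta$ is therefore not merely convenient but necessary: no transfer homomorphism $H\to\mathcal B(C_3)$ exists at all, because a transfer homomorphism sends atoms to atoms, is surjective up to units, and preserves sets of lengths, while $\mathcal B(C_3)$ has the atom $u=g(-g)$ with $\mathsf L(u^3)=\{2,3\}$ and every atom $U$ of $\mathcal B(G_{\mathcal P})$ (hence, via $\boldsymbol \beta$, every atom of $H$) satisfies $|\mathsf L(U^3)|=1$. So the first clause of the proposition must be weakened to the equality of systems of sets of lengths, which is exactly what your argument delivers: the one-parameter description of $\mathsf Z(S)$ shows every $L\in\mathcal L(G_{\mathcal P}^{\bullet})$ is an interval, the bound $\min L\ge 2(\max L-\min L)$ follows from $\rho\le\mathsf D(G_{\mathcal P}^{\bullet})/2=3/2$, and $U_0^{y+2k}U_1^{k}$ realizes $y+2k+P_k(1)$. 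Two small cautions: your atom list should read $U_3=2\,(-1)^2$ (as you have it) with the relation $U_0^2U_1=U_2U_3$, which checks out; and the phrase ``the identical computation'' for $\mathcal B(C_3^{\bullet})$ is slightly loose, since its relation has a pure cube on the three-atom side and the parametrization differs in detail --- it is cleaner to cite $\mathcal L(C_3)=\mathcal L(C_2\oplus C_2)=\{y+2k+P_k(1)\colon y,k\in\N_0\}$ from \cite[Theorem 7.3.2]{Ge-HK06a}, as you also offer to do. The ``moreover'' clause via Theorem \ref{7.4} with $r+\alpha-2=1$ is correct.
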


\begin{proof}
By Lemma \ref{4.4} there is a transfer homomorphism $\boldsymbol \beta \colon H \to \mathcal B (G_{\mathcal P})$.
Since the composition of two transfer homomorphisms is a transfer homomorphism, it is sufficient to show that there is a transfer homomorphism $\theta' \colon \mathcal B (G_{\mathcal P}) \to \mathcal B (C_3)$. Write $C_3 = \{0, g, -g\}$. Since $\mathcal B (G_{\mathcal P}) = \mathcal F ( \{0\}) \times \mathcal B (G_{\mathcal P}^{\bullet})$ and $\mathcal B (C_3) = \mathcal F (\{0\}) \times \mathcal B ( \{-g, g\})$, it suffices to show that there is a transfer homomorphism $\theta \colon \mathcal B (G_{\mathcal P}^{\bullet}) \to \mathcal B (\{-g, g\})$. In this case,  $\mathcal L (H) = \mathcal L (G_{\mathcal P}) = \mathcal L (C_3)$. Moreover,  $\mathcal L (C_3) = \mathcal L (C_2 \oplus C_2)$ has the form given in \cite[Theorem 7.3.2]{Ge-HK06a} and this coincides with the system of sets of lengths in Theorem \ref{7.4}, provided $(r, \alpha) = (2,1)$.

Note that $\mathcal A (G_{\mathcal P}^{\bullet}) = \{V, -V, U_1, U_2\}$, where $V = e^2 (-2e)$, $U_1 = (-e)e$, and $U_2 = (-2e)(2e)$, and
$\mathcal A (\{-g,g\}) = \{ \overline V, - \overline V, \overline U \}$, where $\overline V = g^3$ and $\overline U = (-g)g$.
Then there is a monoid epimorphism $\widetilde \theta \colon \mathcal F (G_{\mathcal P}^{\bullet}) \to \mathcal F ( \{-g, g\})$ satisfying $\widetilde \theta (e) = \widetilde \theta (-2e) = g$ and $\widetilde \theta (-e) = \widetilde \theta (2e) = -g$. If
\[
A = e^{k_1}(-e)^{k_1'}(2e)^{k_2}(-2e)^{k_2'} \in \mathcal F (G_{\mathcal P}^{\bullet}) \quad \text{with} \quad k_1, k_1', k_2, k_2' \in \mathbb N_0 \,,
\]
then $A \in \mathcal B (G_{\mathcal P}^{\bullet})$ if and only if $k_1-k_1'+2(k_2-k_2')=0$. If this holds, then $k_1+k_2'-(k_1'+k_2) \equiv 0 \mod 3$ and hence
\[
\widetilde \theta (A) = g^{k_1+k_2'}(-g)^{k_1'+k_2} \in \mathcal B ( \{-g, g\}) \,.
\]
Thus $\theta = \widetilde \theta \mid_{\mathcal B (G_{\mathcal P}^{\bullet})}  \colon \mathcal B (G_{\mathcal P}^{\bullet}) \to \mathcal B ( \{-g, g\})$ is a monoid epimorphism satisfying  $\theta (V) = \overline V$, $\theta (-V) = - \overline V$,  $\theta (U_1) = \theta (U_2) = \overline U$, and $\theta^{-1} (1) = \{1\}  = \mathcal B (G_{\mathcal P}^{\bullet})^{\times}$.

Thus in order to show that $\theta$ is a transfer homomorphism, it remains to verify Property {\bf (T2)}. Let $A \in \mathcal B (G_{\mathcal P}^{\bullet})$ be as above and suppose that
\[
\theta (A) = \widetilde B \widetilde C
\]
with $\widetilde B, \widetilde C \in \mathcal B ( \{-g, g\})$ and $\widetilde B = g^m (-g)^{m'}$ such that $m \in [0, k_1+k_2']$, $m' \in [0, k_1'+k_2]$ and $m \equiv m' \mod 3$. Our goal is to find $B, C \in \mathcal B (G_{\mathcal P}^{\bullet})$ such that $A = B C$, $\theta (B) = \widetilde B$, and $\theta (C) = \widetilde C$. Clearly it is sufficient to find $B \in \mathcal B (G_{\mathcal P}^{\bullet})$ with $B \t A$ and $\theta (B) = \widetilde B$, that is, to find parameters
\[
m_1 \in [0, k_1], \ m_1' \in [0, k_1'], \ m_2 \in [0, k_2], \ \ \text{and} \ \ m_2' \in [0, k_2']
\]
such that
\[
m_1+m_2' = m, \ m_1' + m_2 = m' , \quad \text{and} \quad m_1-m_1' + 2(m_2-m_2') = 0 \,. \bf{\tag{$\bf C1$}}
\]
To do so we proceed by induction on $|\widetilde B|$. If $|\widetilde B| = |A|$, then $k_1+k_1'+k_2+k_2' = |A| = |\widetilde B| = m+m'$  and hence $m = k_1+k_1'$ and $m' = k_1'+k_2$. Thus we set
\[
m_1 = k_1, \ m_1' = k_1', \ m_2 = k_2 , \ \text{and} \ m_2' = k_2' \,,
\]
and the assertion is satisfied with $B=A$. Suppose now that the quadruple $(m_1, m_1', m_2, m_2')$ satisfies {\bf (C1)} with respect to the pair $(m, m')$. Dividing $\widetilde B$ by an atom of $\mathcal B ( \{-g, g\})$ (if possible) shows that we must verify that there are solutions to {\bf (C1)} with respect to each of the pairs
\[
(m-1, m'-1),  (m-3,m'), \ \text{and} \ (m, m'-3) \quad \text{ in} \quad \mathbb N_0^{(2)} \,.
\]
One checks respectively that at least one of  the following quadruples satisfy {\bf (C1)}.
\begin{itemize}
\item $(m_1-1, m_1'-1, m_2, m_2')$ or $(m_1, m_1', m_2-1, m_2'-1)$

\item $(m_1-2, m_1', m_2, m_2'-1)$ or $(m_1-3, m_1'-1, m_2+1, m_2')$

\item $(m_1, m_1'-2, m_2-1, m_2')$ or $(m_1-1, m_1'-3, m_2, m_2'+1)$
\end{itemize}
Now the assertion follows by the induction hypothesis.
\end{proof}

\medskip
\begin{proposition} \label{7.13}
Let $H$ be a Krull monoid with free abelian class group $G$ of rank $2$. Let $(e_1, e_2)$ be a basis of $G$  and suppose that
\[
G_{\mathcal P} = \{0, e_1,  e_2, 2e_2, e_1+2e_2, - e_1,  -e_2,  -2e_2,  -e_1-2e_2 \}
\]
is the set of classes containing prime divisors.
Then there is a transfer homomorphism $\theta \colon H \to \mathcal B (C_4)$ and hence
\[
\mathcal L (H) = \mathcal L (C_4)   =  \bigl\{ y + k+1 + P_k (1) \, \colon\, y,
      \,k \in \N_0 \bigr\} \,\cup\,  \bigl\{ y + 2k + P_k (2) \, \colon
      \, y,\, k \in \N_0 \bigr\}  \supset \mathcal L (C_3) \,.
\]
\end{proposition}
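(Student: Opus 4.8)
The plan is to mimic the proof of Proposition~\ref{7.12}. By Lemma~\ref{4.4} there is a transfer homomorphism $\boldsymbol\beta\colon H\to\mathcal B(G_{\mathcal P})$, and since a composite of transfer homomorphisms is again one, it suffices to produce a transfer homomorphism $\theta'\colon\mathcal B(G_{\mathcal P})\to\mathcal B(C_4)$. Writing $C_4=\{0,g,2g,3g\}$ and using $\mathcal B(G_{\mathcal P})=\mathcal F(\{0\})\times\mathcal B(G_{\mathcal P}^{\bullet})$ and $\mathcal B(C_4)=\mathcal F(\{0\})\times\mathcal B(C_4^{\bullet})$ with $C_4^{\bullet}=\{g,2g,3g\}$, this reduces to constructing a transfer homomorphism $\theta\colon\mathcal B(G_{\mathcal P}^{\bullet})\to\mathcal B(C_4^{\bullet})$. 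Once $\theta$ (hence $\theta'$, hence $\theta'\circ\boldsymbol\beta$) is in hand, Lemma~\ref{4.4} gives $\mathcal L(H)=\mathcal L(G_{\mathcal P})=\mathcal L(C_4)$; the explicit form of $\mathcal L(C_4)$ quoted in the statement can be computed directly in $\mathcal B(C_4)$ or found in \cite{Ge-HK06a}, and the inclusion $\mathcal L(C_4)\supset\mathcal L(C_3)$ is immediate on rewriting $L=y+2k+P_k(1)$ as $(y+k-1)+k+1+P_k(1)$ for $k\ge1$ (with the cases $k=0$ read off directly).

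For the candidate $\theta$, I would take the group epimorphism $\phi\colon G\to C_4$ determined by $\phi(e_1)=\phi(e_2)=g$; then $\phi(2e_2)=2g$, $\phi(e_1+2e_2)=3g$, and no element of $G_{\mathcal P}^{\bullet}$ lies in $\ker\phi$. Hence $\phi$ induces a length-preserving monoid homomorphism $\widetilde\theta\colon\mathcal F(G_{\mathcal P}^{\bullet})\to\mathcal F(C_4^{\bullet})$, and since $\sigma(\widetilde\theta(S))=\phi(\sigma(S))$ it restricts to $\theta\colon\mathcal B(G_{\mathcal P}^{\bullet})\to\mathcal B(C_4^{\bullet})$; moreover $|\widetilde\theta(S)|=|S|$ forces $\theta^{-1}(1)=\{1\}$. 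The next step is to enumerate $\mathcal A(\mathcal B(G_{\mathcal P}^{\bullet}))$: a straightforward coordinate count shows it consists of the four inverse pairs $h(-h)$ for $h\in\{e_1,e_2,2e_2,e_1+2e_2\}$, the four length-three atoms $e_2^2(-2e_2)$, $(-e_2)^2(2e_2)$, $e_1(2e_2)(-e_1-2e_2)$, $(-e_1)(-2e_2)(e_1+2e_2)$, and the two length-four atoms $e_1e_2^2(-e_1-2e_2)$ and $(-e_1)(-e_2)^2(e_1+2e_2)$, with no minimal zero-sum sequence of length exceeding four. One then checks that $\widetilde\theta$ maps these ten atoms onto the six atoms $g\cdot 3g$, $(2g)^2$, $g^2\cdot 2g$, $(3g)^2\cdot 2g$, $g^4$, $(3g)^4$ of $\mathcal B(C_4^{\bullet})$; in particular $\theta$ is surjective, so (T1) holds, and as a byproduct $\mathsf D(G_{\mathcal P})=\mathsf D(C_4)=4$.

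It remains to verify property (T2), which is the heart of the argument. Given $A=\prod_{h\in G_{\mathcal P}^{\bullet}}h^{\mathsf v_h(A)}\in\mathcal B(G_{\mathcal P}^{\bullet})$ and a factorization $\theta(A)=\widetilde B\widetilde C$ in $\mathcal B(C_4^{\bullet})$, it suffices to find $B\mid A$ in $\mathcal F(G_{\mathcal P}^{\bullet})$ with $\sigma(B)=0$ and $\theta(B)=\widetilde B$, since then $C:=B^{-1}A$ automatically satisfies $\sigma(C)=0$ and $\theta(C)=\widetilde C$. Encoding $\widetilde B$ by the triple $(m_1,m_2,m_3)$ of multiplicities of $g,2g,3g$ and $B$ by multiplicities $n_h$ with $0\le n_h\le\mathsf v_h(A)$, the conditions on $B$ become a linear system: three fibre equations ($\sum_{h\in\phi^{-1}(g)}n_h=m_1$, and similarly for $2g$ and $3g$) together with the two coordinate equations that express $\sigma(B)=0$ in the $e_1$- and $e_2$-directions. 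I would prove solvability by induction on $|\widetilde B|=m_1+m_2+m_3$, the base case $|\widetilde B|=|A|$ forcing $B=A$, and in the inductive step dividing $\widetilde B$ by each of the six atoms of $\mathcal B(C_4^{\bullet})$ in turn; for each atom one writes down a short list of candidate decrements of the current solution $(n_h)$ and uses the two coordinate equations to show that at least one candidate keeps all multiplicities nonnegative, exactly as the single integer equation (C1) is used in the proof of Proposition~\ref{7.12}.

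The main obstacle will be the bookkeeping in this inductive step: $C_4$ has six atoms (against three for $C_3$) and $G_{\mathcal P}^{\bullet}$ has eight elements spread over three fibres, so considerably more cases and candidate decrements must be tracked than in Proposition~\ref{7.12}, and one has to check that the two coordinate constraints, rather than a single one, jointly suffice to supply a nonnegative candidate at every step. The verification that $\mathsf D(G_{\mathcal P})=4$, i.e.\ the absence of longer minimal zero-sum sequences, is a secondary routine point settled by the same coordinate-counting argument and is consistent with the atom-to-atom behavior of $\widetilde\theta$ already recorded.
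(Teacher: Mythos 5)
Your proposal is correct and follows essentially the same route as the paper: reduce via Lemma \ref{4.4} to constructing a transfer homomorphism $\mathcal B(G_{\mathcal P}^{\bullet})\to\mathcal B(C_4^{\bullet})$, take exactly the map the paper uses (your derivation of $\widetilde\theta$ as induced by the group epimorphism $\phi(e_1)=\phi(e_2)=g$ is a slightly cleaner way to see that it preserves zero-sum sequences than the paper's explicit congruence check), enumerate the same ten atoms, and verify {\bf (T2)} by the same induction on $|\widetilde B|$, dividing by atoms of $\mathcal B(C_4^{\bullet})$. The case-by-case bookkeeping you flag as the remaining work is left at the same level of detail in the paper itself.
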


\begin{proof}
As in Proposition \ref{7.12}, it suffices to show that there is a transfer homomorphism  $\theta \colon \mathcal B (G_{\mathcal P}^{\bullet}) \to \mathcal B (C_4^{\bullet})$. Then $\mathcal L (H) = \mathcal L (G_{\mathcal P}) = \mathcal L (C_4)$ and  $\mathcal L (C_4)$ has the form given in \cite[Theorem 7.3.2]{Ge-HK06a}. Proposition \ref{7.12} shows that $\mathcal L (C_3) \subset \mathcal L (C_4)$.

We note that
$\mathcal A (G_{\mathcal P}^{\bullet}) = \{W, -W, V_1, -V_1, V_2, -V_2, U_1, U_2, U_3, U_4  \}$, where
\[
\begin{aligned}
W & = e_1 e_2 e_2 (-e_1-2e_2), \ V_1 = e_1 (2e_2) (-e_1-2e_2), \ V_2 = e_2 e_2 (-2e_2), \\
U_1 & = (-e_1)e_1, \ U_2 = (-e_2)e_2), \ U_3 = (-e_1-2e_2)(e_1+2e_2), \ \text{and} \ U_4 = (-2e_2)(2e_2) \,.
\end{aligned}
\]
We set $C_4 = \{0, g, 2g, -g\}$ and observe that $\mathcal A (C_4^{\bullet}) = \{\overline W, - \overline W, \overline V, - \overline V, \overline{U_1}, \overline{U_2} \}$, where
\[
\overline W = g^4, \ \overline V = g^2 (2g), \ \overline{U_1} = (-g)g, \quad  \text{and} \quad \overline{U_2} = (2g)(2g) \,.
\]
There is a monoid epimorphism $\widetilde \theta \colon \mathcal F (G_{\mathcal P}^{\bullet}) \to \mathcal F (C_4^{\bullet})$ satisfying
\[
\widetilde \theta (e_1) = \widetilde \theta (e_2) = \widetilde \theta (-e_1-2e_2) =  g, \
\widetilde \theta (-e_1) = \widetilde \theta (-e_2) = \widetilde \theta (e_1+2e_2) =  -g, \ \text{and} \
\widetilde \theta (2e_2) = \widetilde \theta (-2e_2) = 2g \,.
\]
If
\[
A = e_1^{k_1}(-e_1)^{k_1'}e_2^{k_2} (-e_2)^{k_2'}(2e_2)^{k_3}(-2e_2)^{k_3'}(e_1+2e_2)^{k_4}(-e_1-2e_2)^{k_4'} \in \mathcal F (G_{\mathcal P}^{\bullet}) \,,
\]
with $k_1, k_1', \ldots, k_4, k_4' \in \mathbb N_0$, then $A \in \mathcal B (G_{\mathcal P}^{\bullet})$ if and only if
\[
k_1-k_1' + k_4-k_4' = 0 \quad \text{and} \quad k_2-k_2' + 2k_3-2k_3' + 2k_4-2k_4' = 0 \,.
\]
If this holds, then
\[
k_1-k_1' + k_2-k_2' - (k_4-k_4') + 2k_3+2k_3' \equiv 0 \mod 4
\]
and hence
\[
\widetilde \theta (A) = g^{k_1+k_2+k_4'}(-g)^{k_1'+k_2'+k_4}(2g)^{k_3+k_3'} \in \mathcal B (C_4^{\bullet}) \,.
\]
Thus $\theta = \widetilde \theta \mid_{\mathcal B (G_{\mathcal P}^{\bullet})} \colon \mathcal B (G_{\mathcal P}^{\bullet})  \to \mathcal B (C_4^{\bullet})$ is a monoid epimorphism satisfying  $\theta (W) = \overline W$, $\theta (-W) = - \overline W$, $\theta (V_1) = \theta (V_2) = \overline V$, $\theta (-V_1) = \theta (-V_2) = - \overline V$,  $\theta (U_1) = \theta (U_2) = \theta (U_3 ) = \overline{ U_1}$,  $\theta (U_4) = \overline{U_2}$, and $\theta^{-1} (1) = \{1\}  = \mathcal B (G_{\mathcal P}^{\bullet})^{\times}$.

Thus in order to show that $\theta$ is a transfer homomorphism, it remains to verify Property {\bf (T2)}. Let $A \in \mathcal B (G_{\mathcal P}^{\bullet})$ be as above and suppose that
\[
\theta (A) = \widetilde B \widetilde C
\]
with $\widetilde B, \widetilde C \in \mathcal B ( C_4^{\bullet})$ and $\widetilde B = g^m (-g)^{m'}(2g)^{m''}$ such that
\[
m \in [0, k_1+k_2+k_4'], \ m' \in [0, k_1'+k_2'+k_4], \ m'' \in [0, k_3+k_3'],  \ \text{and} \ m-m'+2m'' \equiv 0 \mod 4 \,.
\]
Our goal is to find $B, C \in \mathcal B (G_{\mathcal P}^{\bullet})$ such that $A = B C$, $\theta (B) = \widetilde B$, and $\theta (C) = \widetilde C$. It will suffice to find  $B \in \mathcal B (G_{\mathcal P}^{\bullet})$ with $B \t A$ and $\theta (B) = \widetilde B$.
Thus we must find parameters
\[
m_{\nu} \in [0, k_{\nu}] \ \text{and} \ m_{\nu}' \in [0, k_{\nu}'] \quad \text{for} \quad \nu \in [1,4]
\]
such that
\[
\begin{aligned}
m_1+m_2+m_4' & = m, \ m_1'+m_2'+m_4 = m', \ m_3+m_3' = m'', \\
m_1-m_1' + m_4-m_4' & = 0 \quad \text{and} \quad m_2-m_2'+2m_3-2m_3'+2m_4-2m_4' = 0 \,. \qquad \qquad {\bf (C2)}
\end{aligned}
\]
We proceed by induction on $|\widetilde B| = m+m'+m''$. If $|\widetilde B| = |A|$, then we set $m_{\nu} = k_{\nu}$ and $m_{\nu}' = k_{\nu}'$ for all $\nu \in [1,4]$, and the assertion is satisfied with $B =A$. Suppose now that the octuplet $(m_1, m_1', \ldots, m_4, m_4')$ satisfies {\bf(C2)} with respect to the triple $(m, m', m'')$.
Dividing $\widetilde B$ by an element of $\mathcal A ( C_4^{\bullet})$ (if possible) shows that we must verify  that there are solutions to {\bf (C2)} with respect to each of the triples
\[
(m-1, m'-1, m''),  (m-2,m', m''-1),
(m, m'-2, m''-1), (m,m', m''-2), (m-4, m', m''), \ \text{and} \ (m, m'-4, m'')
\]
provided that they lie in $\mathbb N_0^{(8)}$.
As in proof of the previous proposition, one finds the  required solutions  and hence the assertion follows by the induction hypothesis.
\end{proof}

\medskip
Let $\mathcal L$ be a family of subsets of $\mathbb Z$. We say that $\mathcal L$ is {\it additively closed} if the sumset $L+L' \in \mathcal L$ for all $L, L' \in \mathcal L$.

\medskip
\begin{proposition} \label{7.14}
Let $G$ be a finite cyclic group. Then $\mathcal L (G)$ is additively closed if and only if $|G| \le 4$.
\end{proposition}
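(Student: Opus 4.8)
The plan is to prove the two implications separately: the easy direction $|G|\le 4$ from the explicit descriptions of the systems $\mathcal L(C_n)$ that are already available, and the direction $|G|\ge 5$ by exhibiting an explicit failure of additive closure.

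For $|G|\le 2$ the monoid $\mathcal B(G)$ is factorial, so every set of lengths is a singleton and $\mathcal L(G)$ is trivially additively closed. For $|G|=3$, Proposition \ref{7.12} (cf.\ \cite[Theorem 7.3.2]{Ge-HK06a}) gives $\mathcal L(C_3)=\{y+2k+P_k(1)\,:\,y,k\in\N_0\}$, and each such set is just the interval $[y+2k,\,y+3k]$; the sumset of two intervals of this shape is again of this shape (add the parameters $y$, add the parameters $k$), so $\mathcal L(C_3)$ is additively closed. For $|G|=4$, Proposition \ref{7.13} describes $\mathcal L(C_4)$ as the union of the family of intervals $[y+k+1,\,y+2k+1]$ and the family of arithmetic progressions $\{y+2k,\,y+2k+2,\dots,y+4k\}$ of difference $2$. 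One then checks the three types of sumsets: interval $+$ interval, and progression-of-difference-$2$ $+$ progression-of-difference-$2$, stay in their respective families by a direct computation of the parameters; and for an interval $I$ of length $k_1$ plus a difference-$2$ progression $P$ of length $k_2$, if $k_1=0$ the sum is again a difference-$2$ progression, while if $k_1\ge 1$ then $I$ contains two consecutive integers, so the translates $I+p$ $(p\in P)$ overlap or abut and $I+P$ is the full interval $[\min(I+P),\max(I+P)]$, which lies in the first family. Hence $\mathcal L(C_4)$ is additively closed.

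For the converse, suppose $n=|G|\ge 5$ and write $G=\langle g\rangle$. Recalling the description of atoms of $\mathcal B(C_n)$: the only atoms dividing $g^n(-g)^n$ are $g(-g)$, $g^n$ and $(-g)^n$, so $\mathsf L\big(g^n(-g)^n\big)=\{2,n\}$; and since for $n\ge 5$ a relation $(a-b)+2(c-d)\equiv 0\pmod n$ with $|a-b|\le 2,\ |c-d|\le 1$ forces $(a-b)+2(c-d)=0$, the only atoms dividing $g^2(-2g)\cdot(-g)^2(2g)$ are $g(-g)$, $(2g)(-2g)$, $g^2(-2g)$ and $(-g)^2(2g)$, so $\mathsf L\big(g^2(-2g)(-g)^2(2g)\big)=\{2,3\}$. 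Thus $\{2,n\},\{2,3\}\in\mathcal L(C_n)$, and if $\mathcal L(C_n)$ were additively closed we would obtain
\[
\{4,\,5,\,n+2,\,n+3\}\;=\;\{2,3\}+\{2,n\}\;\in\;\mathcal L(C_n)\,,
\]
the four listed values being pairwise distinct because $n\ge 5$. So it suffices to show that $\{4,5,n+2,n+3\}$ is \emph{not} a set of lengths over $C_n$.

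This last step is the main obstacle. The cheap invariants are of no help: by Theorem \ref{7.9} one has $\lambda_{n+3}(C_n)=4$ and $\rho_4(C_n)=2n$, so the bounds $\min L\ge\lambda_{\max L}(C_n)$ and $\max L\le\rho_{\min L}(C_n)$ are both satisfied, and the $\mathrm{AAMP}$ structure theorem (Proposition \ref{7.2}.2(a)) is likewise too coarse, since $\{4,5,n+2,n+3\}$ is an $\mathrm{AAMP}$ with difference $1$ and bound $n-3$. One therefore argues directly with zero-sum sequences: assuming $B\in\mathcal B(C_n)$ has $\mathsf L(B)=\{4,5,n+2,n+3\}$, one may cancel copies of the prime $0$ to arrange $0\nmid B$, so every atom dividing $B$ has length in $[2,n]$; a factorization of length $n+3$ forces $|B|\ge 2(n+3)$ and one of length $4$ forces $|B|\le 4n$, so the atoms of length $>2$ in any length-$(n+3)$ factorization contribute at most $|B|-2(n+3)\le 2n-6$ to the total length, and the long atoms are rigid (powers $h^n$). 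The heart of the argument is then a combinatorial analysis of how such a length-$(n+3)$ factorization can be transformed, one atom at a time, toward the length-$4$ or length-$5$ factorizations; this produces an intermediate factorization of $B$ of length $6$ or of length $n+1$, contradicting $\mathsf L(B)=\{4,5,n+2,n+3\}$ since neither $6$ nor $n+1$ belongs to this set for any $n\ge 5$. Hence $\mathcal L(C_n)$ is not additively closed for $n\ge 5$, which together with the first two paragraphs completes the proof. I expect this final combinatorial analysis to be the genuinely delicate point, since all structural results quoted so far only bound the irregularity of sets of lengths rather than excluding this specific $4$-element pattern.
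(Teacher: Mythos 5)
Your treatment of $|G|\le 4$ is correct and is essentially the paper's argument (factoriality for $n\le 2$, then a direct check of the three types of sumsets using the explicit descriptions from Propositions \ref{7.12} and \ref{7.13}). The problem is the direction $n\ge 5$: the entire argument rests on the claim that $\{4,5,n+2,n+3\}$ is not a set of lengths over $C_n$, and you do not prove this -- you only assert that a ``combinatorial analysis'' of chains of factorizations would produce an intermediate length $6$ or $n+1$. That is the whole content of the implication, and it is genuinely unclear that it can be carried out: note for instance that the obvious candidate $B=g^{n+2}(-g)^{n+2}(2g)(-2g)$ (the product of the two witnesses for $\{2,n\}$ and $\{2,3\}$) already satisfies $6\in\mathsf L(B)$, via the factorization through the atoms $g^{n-2}(2g)$, $(-g)^{n-2}(-2g)$ and $\big(g(-g)\big)^4$; so the sumset is not realized by the natural product, but excluding \emph{every} $B\in\mathcal B(C_n)$ with $\mathsf L(B)=\{4,5,n+2,n+3\}$ is a delicate inverse problem that none of the invariants you list ($\lambda_k$, $\rho_k$, the AAMP structure theorem, the catenary degree) resolves. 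As written, the proof has a gap exactly at its crucial step.

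The paper avoids this single-set obstruction by iterating the sumset. Since $\{2,d+2\}\in\mathcal L(C_n)$ for every $d\in[1,n-2]$, additive closure would put the $k$-fold sumset $2k+P_k(d)$ in $\mathcal L(C_n)$ for all $k$, i.e.\ arbitrarily long arithmetical progressions with difference $d$. By the structure theorem for sets of lengths (\cite[Corollary 4.3.16]{Ge-HK06a}), such differences must divide some element of $\Delta^*(C_n)$. Taking $d=n-3$ and using Theorem \ref{7.9}(3), namely $\max\Delta^*(C_n)=n-2$ and $\max\bigl(\Delta^*(C_n)\setminus\{n-2\}\bigr)=\lfloor n/2\rfloor-1$, one sees that $n-3$ divides no element of $\Delta^*(C_n)$ once $n\ge 5$, a contradiction. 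If you want to salvage your approach, the fix is to replace your one fixed sumset by this family of iterated sumsets, which converts the problem into a statement about $\Delta^*$ that is already known.
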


\begin{proof}
We suppose that $|G| = n$ and distinguish four cases.

First assume that $n \le 2$. Since $\mathcal B (G)$ is factorial, it follows that $\mathcal L (G) = \{ \{m\} \, \colon m \in \mathbb N_0 \}$ which is obviously additively closed.

\smallskip
Next assume that $n=3$. By Proposition \ref{7.12} we have $\mathcal L (C_3)   = \bigl\{ y
      + 2k + P_k (1) \, \colon \, y,\, k \in \N_0 \bigr\}$. If $y_1, y_2, k_1, k_2 \in \mathbb N_0$. Then
\[
\big(y_1 + 2k_1 + P_{k_1} (1)    \big) +    \big(y_2 + 2k_2 + P_{k_2} (1)    \big) = (y_1+y_2) + 2(k_1+k_2) + P_{k_1+k_2}(1) \in \mathcal L (C_3) \,,
\]
and hence $\mathcal L (C_3)$ is additively closed.

\smallskip
Now assume that $n=4$. By Proposition \ref{7.13} we have
\[
\mathcal L (C_4)   =  \bigl\{ y + k+1 + P_k (1) \, \colon\, y,
      \,k \in \N_0 \bigr\} \,\cup\,  \bigl\{ y + 2k + P_k (2) \, \colon
      \, y,\, k \in \N_0 \bigr\}  \,.
\]
Clearly, the sumset of two sets of the first form is of the first form again, and the sumset of two sets of the second form again the second form.
Thus it remains to consider the sumset $L_1+L_2$ where $L_1$ has the first form, $L_2$ has the second form, and both $L_1$ and $L_2$ have more than one element. If $y_1, y_2 \in \mathbb N_0$ and $ k_1, k_2 \in \mathbb N$, then
\[
\big(y_1 + k_1+1 + P_{k_1} (1)    \big) +    \big(y_2 + 2k_2 + P_{k_2} (2)    \big) = (y_1+y_2)+(k_1+2k_2)+1+P_{k_1+2k_2}(1) \in \mathcal L (C_4) \,.
\]

\smallskip
Finally, assume that $n \ge 5$ and assume to the contrary that $\mathcal L (G)$ is additively closed.
Let $d \in [1, n-2]$. Then $\{2, d+2\} \in \mathcal L (G)$ by \cite[Theorem 6.6.2]{Ge-HK06a}, and hence the $k$-fold sumset
\[
\{2, d+2\} + \cdots + \{2, d+2\} =
2k +  P_k (d)
\]
lies in $\mathcal L (G)$ for all $k \in \mathbb N$. Then \cite[Corollary 4.3.16]{Ge-HK06a} implies that $n-3$ divides some $d \in \Delta^* (G)$. By Theorem \ref{7.9} we have
\[
\max \Delta^* (G) = n-2 \quad \text{and} \quad \max \bigl(
\Delta^* (G) \setminus \{n-2\} \bigr) = \left\lfloor \frac{n}{2}
\right\rfloor - 1 \,,
\]
a contradiction to $n \ge 5$.
\end{proof}

\begin{corollary} \label{7.15}~

\begin{enumerate}[(1)]
\item Let $H$ be an atomic monoid such that $\mathcal L (H)$ is additively closed, and let $H'$ be an atomic monoid with $\mathcal L (H') \subset \mathcal L (H)$. Then $\mathcal L (H \times H') = \mathcal L (H)$.

\smallskip
\item Let $H$ be an atomic monoid with $\mathcal L (H) = \mathcal L (C_n)$ for $n \in [3,4]$. For $k \in \mathbb N$  and $i \in [1,k]$, let $H_i$ be an atomic monoid with $\mathcal L (H_i) \subset \mathcal L (C_n)$. Then $\mathcal L (H \times H_1 \times \cdots \times H_k) = \mathcal L (C_n)$.
\end{enumerate}
\end{corollary}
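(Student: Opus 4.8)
The statement (Corollary~\ref{7.15}) has two parts, and the second is an immediate iteration of the first together with Propositions~\ref{7.12} and~\ref{7.13}, so the real content is part~(1). The plan is to prove part~(1) directly from the defining property of sets of lengths in a direct product, then deduce part~(2) by induction on $k$.

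For part~(1), first I would recall the elementary fact that for atomic monoids $H$ and $H'$ one has
\[
\mathcal L (H \times H') = \{ L + L' \, \colon \, L \in \mathcal L (H), \ L' \in \mathcal L (H') \} \,,
\]
since $\mathcal A (H \times H') = \bigl( \mathcal A (H) \times \{1\} \bigr) \uplus \bigl( \{1\} \times \mathcal A (H') \bigr)$ (up to associates), and a factorization of $(a,a') \in H \times H'$ is just a pair consisting of a factorization of $a$ and a factorization of $a'$, so that $\mathsf L_{H \times H'}(a,a') = \mathsf L_H(a) + \mathsf L_{H'}(a')$. Next, using the hypothesis $\mathcal L(H') \subset \mathcal L(H)$, every such sumset $L + L'$ with $L \in \mathcal L(H)$ and $L' \in \mathcal L(H')$ is a sumset of two elements of $\mathcal L(H)$, hence lies in $\mathcal L(H)$ because $\mathcal L(H)$ is additively closed; this gives $\mathcal L(H \times H') \subset \mathcal L(H)$. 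For the reverse inclusion, note that $\{0\} \in \mathcal L(H')$ (the empty factorization of the identity; every atomic monoid has a unit), so for any $L \in \mathcal L(H)$ we have $L = L + \{0\} \in \mathcal L(H \times H')$. Thus $\mathcal L(H \times H') = \mathcal L(H)$.

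For part~(2), I would argue by induction on $k$. By Proposition~\ref{7.12} (for $n=3$) and Proposition~\ref{7.13} (for $n=4$), combined with Proposition~\ref{7.14}, the family $\mathcal L(C_n)$ is additively closed for $n \in [3,4]$; hence $\mathcal L(H)$ is additively closed. For $k = 1$, part~(1) applied with $H$ and $H' = H_1$ (noting $\mathcal L(H_1) \subset \mathcal L(C_n) = \mathcal L(H)$) gives $\mathcal L(H \times H_1) = \mathcal L(H) = \mathcal L(C_n)$. For the inductive step, set $H^{(k-1)} = H \times H_1 \times \cdots \times H_{k-1}$; by the induction hypothesis $\mathcal L(H^{(k-1)}) = \mathcal L(C_n)$, which is additively closed, and $\mathcal L(H_k) \subset \mathcal L(C_n) = \mathcal L(H^{(k-1)})$, so part~(1) applied to the pair $H^{(k-1)}$, $H_k$ yields $\mathcal L(H \times H_1 \times \cdots \times H_k) = \mathcal L(C_n)$, completing the induction.

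**Main obstacle.** There is no deep obstacle here; the corollary is a soft consequence of the sumset description of $\mathcal L$ for direct products together with the already-established additive closure (Proposition~\ref{7.14}). The one point that needs a little care is making the identity $\mathcal L(H \times H') = \{L + L' \, \colon \, L \in \mathcal L(H),\ L' \in \mathcal L(H')\}$ precise for reduced atomic monoids (the atoms of a product are exactly the atoms of each factor, paired with the identity of the other), and recording explicitly that $\{0\} \in \mathcal L(H')$ so that the inclusion $\mathcal L(H) \subset \mathcal L(H \times H')$ holds. Everything else is bookkeeping.
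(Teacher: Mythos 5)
Your proposal is correct and follows essentially the same route as the paper: both rest on the identity $\mathcal L(H\times H')=\{L+L'\,\colon L\in\mathcal L(H),\ L'\in\mathcal L(H')\}$ together with additive closure, with the paper treating $H_1\times\cdots\times H_k$ in one step where you induct on $k$ (an immaterial difference). Your explicit observations that $\{0\}\in\mathcal L(H')$ gives the reverse inclusion and that the atoms of a product are the atoms of the factors are exactly the details the paper leaves implicit.
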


\begin{proof}
Since $\mathcal L (H \times H') = \{L + L' \, \colon L \in \mathcal L (H), L' \in \mathcal L (H') \}$, (1) follows.

\smallskip
For (2), we set $H' = H_1 \times \cdots \times H_k$. Since $\mathcal L (C_n)$ is additively closed by Proposition \ref{7.14}, it follows that  $\mathcal L (H') = \{ L_1 + \cdots + L_k \, \colon L_i \in \mathcal L (H_i), i \in [1,k] \} \subset \mathcal L (C_n)$. Finally 1. implies that $\mathcal L (H \times H') = \mathcal L (H)$.
\end{proof}

\bigskip
We conclude this manuscript by suggesting a rich program for further study. Any progress in these directions will lead to a better understanding of direct-sum decompositions of classes of modules where each module has a semilocal endomorphism ring.
Moreover, this program could stimulate new studies in combinatorial Factorization Theory where much of the  focus has been  on Krull monoids having finite class group.

\smallskip
\noindent
{\bf Program for Further Study.}

\smallskip
\noindent
{\bf A. Module-Theoretic Aspect.} Let $R$ be a ring and let $\mathcal C$ be a class of right $R$-modules which is closed under finite direct sums, direct summands, and isomorphisms, and such that the endomorphism ring $\End_R (M)$ is semilocal for each module $M$ in $\mathcal C$ (such classes of modules  are presented in a systematical way in \cite{Fa04a}). Then $\mathcal V ( \mathcal C)$, the monoid of isomorphism classes of modules in $\mathcal C$ is a reduced Krull monoid with class group $G$ and set  $G_{\mathcal P} \subset G$ of classes containing prime divisors.

\smallskip
 Since the long-term goal --- to determine the characteristic of $\mathcal V(\mathcal C)$ --- is out of reach in most cases, the focus of study should be  on those properties of $G_{\mathcal P}$ which have most crucial influence on the arithmetic of direct-sum decompositions. In particular,
 \begin{itemize}
 \item Is $G_{\mathcal P}$ finite or infinite ?
 \item Is $G_{\mathcal P}$  well-structured in the sense of Proposition \ref{7.2} ?
 \end{itemize}

\medskip
\noindent{\bf B. Arithmetical Aspect of Direct-Sum Decompositions.} Let $H$ be a Krull monoid with finitely generated class group $G$ and let $G_{\mathcal P} \subset G$ denote the set of classes containing prime divisors.

\medskip
\noindent
{\bf 1.} {\it Finiteness conditions.}
 \begin{enumerate}
\item[(a)] Characterize the finiteness of arithmetical invariants (as introduced in Section \ref{3}) and the validity of structural finiteness results
       (as given in Proposition \ref{7.2}, 2.(a) and 2.(b)).

       \noindent
       For infinite cyclic groups much work in this direction can  be found  done in \cite{Ge-Gr-Sc-Sc10}.

\item[(b)] If $G_{\mathcal P}$ contains an infinite group, then every finite subset $L \subset \N_{\ge 2}$ occurs as a set of lengths in $H$ (see Proposition \ref{7.2}) and hence $\Delta (H) = \N$, and $\mathcal U_k (H) = \N_{\ge 2}$ for all $k \ge 2$. Weaken the assumption on $G_{\mathcal P}$ to obtain similar results.

    \noindent
    A weak condition on $G_{\mathcal P}$ enforcing that $\Delta (H) = \N$ can be found in \cite{Ha02c}.
\end{enumerate}

\medskip
\noindent
{\bf 2.} {\it Upper bounds and precise formulas.} Suppose that $G$ is torsion-free, say $G_{\mathcal P} \subset G = \Z^{(q)} \subset (\R^{(q)}, |\cdot|)$, where $|\cdot | \colon \R^{(q)} \to \R_{\ge 0}$ is a Euclidean norm.

\begin{enumerate}
\item[(a)] If $G_{\mathcal P} \subset \{ x \in \R \, \colon \, |x| \le M\}$ for some $M \in \N$, then derive upper bounds for the arithmetical invariants in terms of $M$.

\item[(b)] If $G_{\mathcal P}$ has a simple geometric structure (e.g., the set of vertices in a cube; see Examples \ref{cube-and-its-negative} and \ref{bigcube}), derive precise formulas for the arithmetical invariants, starting with the Davenport constant.

    \noindent
    A first result in this direction can be found in \cite{Ba-Ge-Gr-Sm14}.

\item[(c)]  Determine the extent to which  the arithmetic of a Krull monoid with $G_{\mathcal P}$ as in (b) is characteristic for $G_{\mathcal P}$. In particular, determine how this compares with the arithmetic of a   Krull monoid $H'$ where $G_{\mathcal P}'$ has the same  geometric structure as $G_{\mathcal P}$ with different parameters and how this compares with the arithmetic of  a Krull monoid having finite class group and prime divisors in all classes.
\end{enumerate}

\providecommand{\bysame}{\leavevmode\hbox to3em{\hrulefill}\thinspace}
\providecommand{\MR}{\relax\ifhmode\unskip\space\fi MR }
\providecommand{\MRhref}[2]{%
  \href{http://www.ams.org/mathscinet-getitem?mr=#1}{#2}
}
\providecommand{\href}[2]{#2}

\end{document}